\newcommand{\zed}{\mathbb{Z}}
\newcommand{\C}{\mathbb{C}}
\newcommand{\fil}{\mathcal{F}}
\newcommand{\im}{\mathrm{Im}}
\newcommand{\ve}{\varepsilon}
\newcommand{\id}{\mathrm{id}}
\newcommand{\tw}{\mathrm{tw}}
\newcommand{\rank}{\mathrm{rank}}
\newcommand{\gdim}{\mathrm{gdim}}
\newcommand{\slmf}{\mathfrak{sl}}
\theoremstyle{plain}
\newtheorem{theorem}{Theorem}[section]
\newtheorem{lemma}[theorem]{Lemma}
\newtheorem{corollary}[theorem]{Corollary}
\newtheorem{question}[theorem]{Question}
\theoremstyle{definition}
\newtheorem{definition}[theorem]{Definition}
\newtheorem{acknowledgments}{Acknowledgments\ignorespaces}
\theoremstyle{remark}
\newtheorem{remark}[theorem]{Remark}
\numberwithin{equation}{section}
\begin{document}

\title{Equivariant Khovanov-Rozansky Homology and Lee-Gornik Spectral Sequence}

\author{Hao Wu}

\thanks{The author was partially supported by NSF grant DMS-1205879 and a Collaboration Grant for Mathematicians from the Simons Foundation.}

\address{Department of Mathematics, The George Washington University, Monroe Hall, Room 240, 2115 G Street, NW, Washington DC 20052, USA. Telephone: 1-202-994-0653, Fax: 1-202-994-6760}

\email{haowu@gwu.edu}

\subjclass[2010]{Primary 57M27}

\keywords{Equivariant Khovanov-Rozansky homology, homological thickness, Lee-Gornik spectral sequence, exact couple} 

\begin{abstract}
Lobb observed in \cite{Lobb-2-twists} that each equivariant $\slmf(N)$ Khovanov-Rozansky homology over $\C[a]$ admits a standard decomposition of a simple form. 

In the present paper, we derive a formula for the corresponding Lee-Gornik spectral sequence in terms of this decomposition. Based on this formula, we give a simple alternative definition of the Lee-Gornik spectral sequence using exact couples. We also demonstrate that an equivariant $\slmf(N)$ Khovanov-Rozansky homology over $\C[a]$ can be recovered from the corresponding Lee-Gornik spectral sequence via this formula. Therefore, these two algebraic invariants are equivalent and contain the same information about the link.  

As a byproduct of the exact couple construction, we generalize Lee's endomorphism on the rational Khovanov homology to a natural $\bigwedge^\ast \C^{N-1}$-action on the $\slmf(N)$ Khovanov-Rozansky homology.

A numerical link invariant called torsion width comes up naturally in our work. It determines when the corresponding Lee-Gornik spectral sequence collapses and is bounded from above by the homological thickness of the $\slmf(N)$ Khovanov-Rozansky homology. We use the torsion width to explain why the Lee spectral sequences of certain H-thick links collapse so fast.
\end{abstract}

\maketitle

\section{Introduction}\label{sec-intro}

Our goal is to understand the equivariant $\slmf(N)$ Khovanov-Rozansky homology defined by Krasner in \cite{Krasner} and its relations to other versions of the Khovanov-Rozansky homology. Since the algebra is much easier over a principal ideal domain, we focus on equivariant $\slmf(N)$ Khovanov-Rozansky homologies over $\C[a]$. 

Lobb observed in \cite{Lobb-2-twists} that each such homology admits a standard decomposition of a simple form. The first result of the present paper is a decomposition formula for the corresponding Lee-Gornik spectral sequence in terms of Lobb's decomposition. Based on this formula, we define a simple exact couple whose spectral sequence is isomorphic to the corresponding Lee-Gornik spectral sequence minus some repeated pages. 

We also explain how to recover the $\zed^{\oplus2}$-graded $\C[a]$-module structure of the equivariant $\slmf(N)$ Khovanov-Rozansky homology from the corresponding Lee-Gornik spectral sequence. Therefore, an equivariant $\slmf(N)$ Khovanov-Rozansky homology over $\C[a]$ and the corresponding Lee-Gornik spectral sequence determine each other and encode the same information of the link. When recovering the equivariant $\slmf(N)$ Khovanov-Rozansky homology, a numerical link invariant, the torsion width, shows up naturally. It determines exactly when the Lee-Gornik spectral sequence collapses and is bounded from above by the homological thickness of the $\slmf(N)$ Khovanov-Rozansky homology. It also allows us to explain the fast collapsing of the Lee spectral sequences of certain H-thick links.

The aforementioned exact couples equip the $\slmf(N)$ Khovanov-Rozansky homology with extra differentials. Using these differentials, we define a natural $\bigwedge^\ast \C^{N-1}$-action on the $\slmf(N)$ Khovanov-Rozansky homology, which generalizes Lee's endomorphism $\Phi$ on the rational Khovanov homology defined in \cite[Section 4]{Lee2}. In the process of this construction, we prove the non-existence of ``small" torsion components in certain equivariant $\slmf(N)$ Khovanov-Rozansky homologies over $\C[a]$. 

In the remainder of this section, we briefly review the background of this work and state our results. All links and link cobordisms in this paper are oriented.

\subsection{Equivariant $\slmf(N)$ Khovanov-Rozansky homology over $\C[a]$} Following the grading convention in \cite{KR1}, let $x$ be a homogeneous variable of degree $2$ and $a$ a homogeneous variable of degree $2k$, where $k$ is a positive integer. We consider the following homogeneous polynomial of degree $2(N+1)$ in $\C[x,a]$.
\begin{equation}\label{def-P}
P(x,a)=x^{N+1} + \sum_{j=1}^{\left\lfloor \frac{N}{k} \right\rfloor} \lambda_j a^j x^{N+1-jk}, 
\end{equation}
where $\lambda_1,\dots, \lambda_{\left\lfloor \frac{N}{k} \right\rfloor} \in \C$.

For any oriented link diagram $D$, one can use $P(x,a)$ to specialize Krasner's construction in \cite{Krasner} to give a bounded chain complex $C_P(D)$ of graded matrix factorizations over $\C[a]$. We will review the construction of $C_P(D)$ in more details in Section \ref{sec-homology-def}. For now, recall that $C_P(D)$ comes with: 
\begin{itemize}
	\item Two $\zed$-gradings: the homological grading and the polynomial grading;
	\item A filtration: the $x$-filtration $\fil_x$;
	\item Two differential maps: $d_{mf}$ from the underlying matrix factorizations and $d_\chi$ from crossing information.
\end{itemize}
The homology $H(C_P(D), d_{mf})$ is a finitely generated free $\C[a]$-module that inherits both $\zed$-gradings and the $x$-filtration. The equivariant $\slmf(N)$ Khovanov-Rozansky homology of $D$ over $\C[a]$ with potential $P(x,a)$ is defined to be the homology
\begin{equation}\label{def-H_P}
H_P(D) = H(H(C_P(D), d_{mf}), d_\chi),
\end{equation}
which, again, inherits both $\zed$-gradings and the $x$-filtration. 

As a special case of Krasner's work in \cite{Krasner}, we have the following theorem.

\begin{theorem}\cite{Krasner}\label{thm-H_P-invariance}
Every Reidemeister move of $D$ induces a homotopy equivalence of $C_P(D)$ that preserves both $\zed$-gradings and the $x$-filtration. Consequently, $H_P(D)$, with its two $\zed$-gradings and $x$-filtration, is invariant under Reidemeister moves.
\end{theorem}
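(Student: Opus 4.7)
The proof plan is to reduce the statement to the graded version that is the main result of \cite{Krasner} and then verify the additional claim about the $x$-filtration by tracking the $x$-degrees in Krasner's chain homotopies. Concretely, Krasner works with an arbitrary potential $W \in \C[x][a_1,\dots,a_m]$ and shows that for each Reidemeister move there exists a chain homotopy equivalence of the associated matrix-factorization complexes that is homogeneous with respect to both the homological and polynomial gradings. Specializing the ground ring and potential to $\C[a]$ and $P(x,a)$ of (\ref{def-P}) immediately yields the bigraded part of the theorem; so the only new content is the statement that these same homotopies are filtered with respect to $\fil_x$.

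The key observation that enables the filtration check is structural: every term of $P(x,a)-x^{N+1}$ has $x$-degree at most $N+1-k$, so if one assigns $x$ the natural $x$-filtration degree $1$ and declares $a$ to live in $x$-filtration degree $0$, then $\partial_x P$, $\partial_a P$, and the Koszul factorizations produced from the pieces of the MOY resolutions all split as a top-degree piece (the $N$-Khovanov-Rozansky term from $x^{N+1}$) plus terms of strictly smaller $x$-filtration. The plan is therefore to inspect, one Reidemeister move at a time, each of the structural morphisms that Krasner uses (the saddle maps $\chi_0,\chi_1$, the unit/counit of the wide-edge MOY moves, the Jacobi algebra quotient maps, and the idempotent decompositions) and verify that, written in explicit matrix form with entries in $\C[x,a]$, each entry lies in a prescribed $\fil_x$-filtration level dictated by the difference of polynomial gradings. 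Because the homotopies are given by explicit polynomial formulas, this is a mechanical, finite check.

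The main conceptual step, and the one most prone to bookkeeping errors, will be the Reidemeister III homotopy, where Krasner's proof uses the ``MOY~III'' decomposition and an associated idempotent. Here the homotopies involve compositions of several structural morphisms and contain the differentials $d_{mf}$ themselves; one has to verify that the extra, $a$-dependent corrections produced by $P(x,a)-x^{N+1}$ do not break the filtration property enjoyed by the Khovanov-Rozansky version. The cleanest way to phrase this is to observe that the associated graded complex $\mathrm{gr}_{\fil_x} C_P(D)$ is, by construction, the usual $\slmf(N)$ Khovanov-Rozansky complex with an extra polynomial variable $a$, and the structural morphisms reduce on the associated graded to their Khovanov-Rozansky counterparts. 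A filtered Whitehead-type lemma (a filtered chain map between filtered bounded complexes of projective $\C[a]$-modules whose associated graded is a homotopy equivalence is itself a filtered homotopy equivalence) then promotes Krasner's graded homotopy equivalences to filtered ones.

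Finally, the ``consequently'' clause is automatic: $d_{mf}$ and $d_\chi$ both preserve the gradings and filtration, so taking successive homologies preserves them as well, and any filtered $\zed^{\oplus 2}$-graded homotopy equivalence of $(C_P(D),d_{mf},d_\chi)$ induces a filtered $\zed^{\oplus 2}$-graded isomorphism on $H_P(D)$.
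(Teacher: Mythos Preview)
Your proposal is correct and your primary approach---inspecting Krasner's explicit chain maps and homotopies and verifying that each entry has the required $x$-degree---is exactly what the paper does (or rather, what it asserts in the Remark following the theorem: the paper gives no independent proof, it simply cites \cite{Krasner} and \cite{Wu-color-equi} for the graded homotopy equivalences and states that the $x$-filtration claim is visible from the explicit formulas in those references). Your filtered-Whitehead alternative is a pleasant reorganization that the paper does not use; just be careful that the lemma must be applied at the level of chain complexes over the homotopy category of matrix factorizations (or, equivalently, after passing to $(H(C_P(D),d_{mf}),d_\chi)$ via Lemma~\ref{lemma-H_P-Gamma-fil} and Corollary~\ref{cor-graph-homology-free}), not to $C_P(D)$ viewed merely as a complex of $\C[a]$-modules, since $d_{mf}$ itself is not $\fil_x$-preserving but only filtered of degree $N+1$.
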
 

\begin{remark}
Strictly speaking, Krasner only proved the invariance under braid-like Reidemeister moves. But the proof of the invariance under Reidemeister move II$_b$ is very similar and given in \cite[Theorem 8.2]{Wu-color-equi}. 

Also, the $x$-filtration is not mentioned in \cite{Krasner}. But, from its definition in Section \ref{sec-homology-def} below, one can see that the homotopy equivalence associated to Reidemeister moves given in \cite{Krasner,Wu-color-equi} preserve the $x$-filtration.
\end{remark}

Define $C_N(D) = C_P(D)/aC_P(D)$. Then $C_N(D)$ is isomorphic to the $\slmf(N)$ Khovanov-Rozansky chain complex in \cite{KR1}. It inherits from $C_P(D)$: 
\begin{itemize}
	\item the homological grading and the polynomial grading\footnote{The increasing filtration induced by this polynomial grading is the same as the $x$-filtration that $C_N(D)$ inherits from $C_P(D)$.},
	\item both differential maps: $d_{mf}$ and $d_\chi$.
\end{itemize}
The homology
\begin{equation}\label{def-H_N}
H_N(D) = H(H(C_N(D), d_{mf}), d_\chi),
\end{equation}
is the $\slmf(N)$ Khovanov-Rozansky homology defined in \cite{KR1}. The invariance of $H_N(D)$ was established by Khovanov and Rozansky in \cite{KR1} but can now be viewed as a corollary of Theorem \ref{thm-H_P-invariance}. 

\begin{corollary}\cite{KR1}\label{cor-H_N-invariance}
Every Reidemeister move of $D$ induces a homotopy equivalence of $C_N(D)$ that preserves both the homological grading and the polynomial grading. Consequently, $H_N(D)$, with its homological grading and polynomial grading, is invariant under Reidemeister moves.
\end{corollary}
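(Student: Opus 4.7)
The plan is to deduce the corollary directly from Theorem \ref{thm-H_P-invariance} by tensoring with $\C = \C[a]/(a)$ over $\C[a]$. Since $C_N(D) = C_P(D)/a C_P(D) = C_P(D) \otimes_{\C[a]} \C$, the passage from the equivariant complex to the ordinary $\slmf(N)$ complex is exactly the base change functor along the ring surjection $\C[a] \twoheadrightarrow \C$ sending $a \mapsto 0$. My task is just to observe that this functor carries all the structure guaranteed by Theorem \ref{thm-H_P-invariance} to the corresponding structure on $C_N$.

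First I would verify that $C_P(D)$, as a chain complex of graded matrix factorizations, has underlying $\C[a]$-modules that are free. This is immediate from Krasner's construction in \cite{Krasner}, since the matrix factorizations assigned to MOY pieces are built as Koszul-type factorizations over polynomial rings $\C[a][x_1,\dots,x_n]$ whose underlying modules are free $\C[a]$-modules, and the tensor products and direct sums in the definition of $C_P(D)$ preserve freeness. Consequently $-\otimes_{\C[a]} \C$ is exact on $C_P(D)$, and it takes $\C[a]$-linear chain-homotopy equivalences to $\C$-linear chain-homotopy equivalences.

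Next, for each Reidemeister move between $D$ and $D'$, Theorem \ref{thm-H_P-invariance} supplies morphisms $f: C_P(D) \to C_P(D')$ and $g: C_P(D') \to C_P(D)$, together with chain homotopies (both at the $d_{mf}$-level inside each matrix factorization and at the $d_\chi$-level between them), all $\C[a]$-linear and of bidegree zero in the homological and polynomial gradings. Applying $-\otimes_{\C[a]} \C$ yields maps $\bar f$, $\bar g$ and reduced homotopies witnessing $\bar f \bar g \simeq \id$, $\bar g \bar f \simeq \id$ in $C_N$, and homogeneity of $f, g$ passes to $\bar f, \bar g$ since the quotient of a bigraded map by the homogeneous element $a$ is again bigraded of the same bidegree. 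This establishes the first assertion of the corollary; the second then follows by applying $H(H(\cdot, d_{mf}), d_\chi)$ and using functoriality of taking homology twice.

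I do not foresee a serious obstacle, because the core content is already packaged in Theorem \ref{thm-H_P-invariance}. The one point that requires attention is ensuring that Krasner's explicit Reidemeister homotopies are both $\C[a]$-linear and homogeneous with respect to the two gradings, but this is exactly the content that Theorem \ref{thm-H_P-invariance} (together with \cite[Theorem 8.2]{Wu-color-equi} for move II$_b$) records; the underlying maps ($\chi$-maps, saddle maps, etc.) are written down as polynomial matrices over $\C[a]$ whose entries are manifestly homogeneous.
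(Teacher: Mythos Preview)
Your proposal is correct and follows essentially the same approach as the paper: the paper's proof is the one-line observation that the standard quotient map $C_P(D) \rightarrow C_N(D)$ preserves homotopy equivalence, so the corollary follows from Theorem~\ref{thm-H_P-invariance}. Your write-up simply unpacks what ``preserves homotopy equivalence'' means (base change along $\C[a] \twoheadrightarrow \C$, freeness, $\C[a]$-linearity and homogeneity of the maps), which is a reasonable elaboration of the same argument.
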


\begin{proof}
The standard quotient map $C_P(D) \rightarrow C_N(D)$ preserves homotopy equivalence. So Corollary \ref{cor-H_N-invariance} follows from Theorem \ref{thm-H_P-invariance}.
\end{proof}

Define $\hat{C}_P(D) = C_P(D)/(a-1)C_P(D)$. Then $\hat{C}_P(D)$ is a bounded chain complex of filtered matrix factorizations over $\C$. It inherits from $C_P(D)$: 
\begin{itemize}
	\item the homological grading,
	\item the $x$-filtration $\fil_x$,
	\item both differential maps: $d_{mf}$ and $d_\chi$.
\end{itemize}
We call the homology
\begin{equation}\label{def-hat-H_P}
\hat{H}_P(D) = H(H(\hat{C}_P(D), d_{mf}), d_\chi),
\end{equation}
the deformed $\slmf(N)$ Khovanov-Rozansky homology with potential $P(x,1)$. This version of the Khovanov-Rozansky homology was originally introduced by Lee \cite{Lee2} in the $\slmf(2)$ case and then by Gornik \cite{Gornik} in the general $\slmf(N)$ case. Its invariance was first established by the author in \cite{Wu7} but can now be viewed as a corollary of Theorem \ref{thm-H_P-invariance}. 

\begin{corollary}\cite{Wu7}\label{cor-hat-H_P-invariance}
Every Reidemeister move of $D$ induces a homotopy equivalence of $\hat{C}_P(D)$ that preserves both the homological grading and the $x$-filtration. Consequently, $\hat{H}_P(D)$, with its homological grading and $x$-filtration, is invariant under Reidemeister moves.
\end{corollary}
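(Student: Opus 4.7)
The plan is to mimic the proof of Corollary \ref{cor-H_N-invariance}, replacing the ideal $(a) \subset \C[a]$ by the ideal $(a-1)$. The key observation is that
\[
\hat{C}_P(D) = C_P(D) \otimes_{\C[a]} \C[a]/(a-1)\C[a]
\]
is obtained from $C_P(D)$ by base change along the ring surjection $\C[a] \to \C$, $a \mapsto 1$. Therefore every $\C[a]$-linear chain map (respectively chain homotopy) on $C_P(D)$ descends to a chain map (respectively chain homotopy) on $\hat{C}_P(D)$, and compositions of descended maps are the descents of compositions. In particular, the standard quotient map $C_P(D) \to \hat{C}_P(D)$ preserves homotopy equivalences.

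Next, I would invoke Theorem \ref{thm-H_P-invariance}: each Reidemeister move of $D$ induces a homotopy equivalence of $C_P(D)$ which, as constructed in \cite{Krasner, Wu-color-equi}, is $\C[a]$-linear and preserves both the homological grading and the $x$-filtration. Passing to the quotient by $(a-1)$ and applying the preceding paragraph, we obtain an induced homotopy equivalence of $\hat{C}_P(D)$.

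Finally I would verify that the two structures claimed in the statement survive the quotient. The homological grading is untouched because the element $a-1$ has homological degree zero, so it descends to $\hat{C}_P(D)$ and the descended Reidemeister maps still respect it. The polynomial grading, by contrast, is destroyed because $a$ has positive polynomial degree $2k$; what replaces it is precisely the $x$-filtration, which is an \emph{increasing} filtration and therefore descends naturally to the quotient (the image of $\fil_x^i C_P(D)$ in $\hat{C}_P(D)$ defines $\fil_x^i \hat{C}_P(D)$). Since the Reidemeister homotopy equivalences on $C_P(D)$ preserve $\fil_x$, their descents preserve the quotient filtration, and the corollary follows.

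The only real work, which is not genuinely an obstacle since it is already noted in the remark following Theorem \ref{thm-H_P-invariance}, is the bookkeeping check that the chain maps and homotopies associated to Reidemeister moves in \cite{Krasner, Wu-color-equi} are simultaneously $\C[a]$-linear and $\fil_x$-preserving; granted this, the corollary is a purely formal consequence of Theorem \ref{thm-H_P-invariance}, exactly parallel to how Corollary \ref{cor-H_N-invariance} was deduced.
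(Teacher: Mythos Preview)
Your proposal is correct and follows essentially the same approach as the paper: the paper's proof consists of the single observation that the standard quotient map $C_P(D) \rightarrow \hat{C}_P(D)$ preserves homotopy equivalence, so the corollary follows from Theorem~\ref{thm-H_P-invariance}. Your write-up simply unpacks this remark in more detail, making explicit the base-change description, the descent of chain maps and homotopies through the quotient, and the survival of the homological grading and $x$-filtration.
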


\begin{proof}
The standard quotient map $C_P(D) \rightarrow \hat{C}_P(D)$ preserves homotopy equivalence. So Corollary \ref{cor-hat-H_P-invariance} follows from Theorem \ref{thm-H_P-invariance}.
\end{proof}

\subsection{The Lee-Gornik spectral sequences} 
\begin{theorem}\cite{Gornik,Lee2}\label{thm-spectral-sequence}
Let $D$ be a diagram of an oriented link $L$. Then
\begin{itemize}
	\item The $x$-filtration $\fil_x$ on the chain complex $(H(\hat{C}_P(D), d_{mf}), d_\chi)$ induces a spectral sequence $\{\hat{E}_r(L)\}$ converging to $\hat{H}_P(L)$ with $\hat{E}_1(L) \cong H_N(L)$,
	\item The $x$-filtration $\fil_x$ on the chain complex $(H(C_P(D), d_{mf}), d_\chi)$ induces a spectral sequence $\{E_r(L)\}$ converging to $H_P(L)$ with $E_1(L) \cong H_N(L)\otimes_\C \C[a]$.
\end{itemize}
\end{theorem}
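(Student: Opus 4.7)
The natural strategy is the standard spectral sequence of a bounded filtered chain complex, applied to $(H(C_P(D), d_{mf}), d_\chi)$ (resp.\ $(H(\hat{C}_P(D), d_{mf}), d_\chi)$) equipped with the inherited $x$-filtration $\fil_x$. I would split the argument into checking (i) that $d_\chi$ is filtration-preserving, (ii) that the filtration is bounded in each bidegree so that strong convergence to the total homology holds, and (iii) that the first page is as claimed.

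For (i) and (ii), from the definition of $d_\chi$ each morphism entering the differential is attached to a crossing and is built out of elementary maps between Koszul matrix factorizations that, by direct inspection, do not raise $x$-degree; hence $d_\chi(\fil_x^p)\subseteq \fil_x^p$. Boundedness follows because $D$ has finitely many crossings and each matrix factorization attached to a resolution is finite rank and free over $\C[x,a]$, so in each homological and polynomial bidegree the induced $x$-filtration on $H(C_P(D),d_{mf})$ has finite length. Classical filtered-complex theory then produces a convergent spectral sequence with abutment $H_P(L)$ (resp.\ $\hat{H}_P(L)$).

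For (iii), the key observation is that in the associated graded the potential $P(x,a)$ reduces to its leading term $x^{N+1}$, since every deformation term $\lambda_j a^j x^{N+1-jk}$ has strictly smaller $x$-degree than $x^{N+1}$. Consequently, as complexes of matrix factorizations with potential $x^{N+1}$, one obtains natural identifications $\mathrm{gr}_{\fil_x} C_P(D)\cong C_N(D)\otimes_\C\C[a]$ and $\mathrm{gr}_{\fil_x}\hat{C}_P(D)\cong C_N(D)$. The differential induced by $d_\chi$ on the associated graded is the undeformed Khovanov--Rozansky one, so passing to $(d_{mf},d_\chi)$-homology yields the claimed $E_1$ pages $H_N(L)\otimes_\C\C[a]$ and $H_N(L)$.

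The main obstacle is verifying that the two-step homology commutes with passage to the associated graded, i.e., that $\mathrm{gr}_{\fil_x}H(C_P(D),d_{mf})\cong H(\mathrm{gr}_{\fil_x}C_P(D),d_{mf}^{(0)})$, where $d_{mf}^{(0)}$ denotes the $x$-degree-preserving part of $d_{mf}$. This reduces to a local computation at a single resolution: there the matrix factorization is a finite-rank free $\C[x,a]$-module carrying a bounded $x$-filtration, and for such finitely-filtered Koszul-type complexes an auxiliary spectral sequence (which collapses for degree reasons) shows that taking associated graded commutes with $d_{mf}$-homology. Assembling these local identifications along the hypercube of resolutions then completes the computation of $E_1$ and hence the theorem.
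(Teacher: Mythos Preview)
Your outline matches the paper's strategy: the paper proves the $\{E_r(L)\}$ statement as Theorem~\ref{thm-spectral-sequence-general}, using Corollary~\ref{cor-graph-homology-free} for bounded/exhaustive filtration (convergence) and Lemma~\ref{lemma-chi-commute} to identify $E_0$ with $(H(C_P(D),d_{mf}^{(0)}),d_\chi^{(0)})$, whence $E_1\cong H_N(L)\otimes_\C\C[a]$. The $\hat{E}_r$ part is cited from \cite{Gornik,Wu7}.

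There is, however, a real gap at the step you label ``the main obstacle.'' The claim that the auxiliary spectral sequence for $(C_P(\Gamma),d_{mf},\fil_x)$ ``collapses for degree reasons'' is not a formal statement about finitely-filtered Koszul complexes; no filtration-degree argument forces it. What makes it collapse is the nontrivial input from \cite{KR1} that $H_N^{\ve+1}(\Gamma)=0$ for a closed MOY graph $\Gamma$ with rotation number $\ve$: this concentrates the $E_1$-page $H_N(\Gamma)\otimes_\C\C[a]$ in a single $\zed_2$-degree, so every higher differential (which shifts $\zed_2$-degree by $1$) vanishes. The paper does not phrase it as a spectral-sequence collapse but instead proves the isomorphism $\fil_x^pH_P^\ve(\Gamma)/\fil_x^{p-1}H_P^\ve(\Gamma)\cong H_N^{\ve,p}(\Gamma)\otimes_\C\C[a]$ directly (Lemma~\ref{lemma-H_P-Gamma-fil}) via an explicit inductive lifting, using precisely this vanishing $\im d_\ve^{(0)}=\ker d_{\ve+1}^{(0)}$ at every step. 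Either argument works, but you must invoke that vanishing explicitly; without it the step fails.

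A second point you gloss over: ``assembling these local identifications along the hypercube'' requires that the isomorphisms $\phi_p$ intertwine the induced maps $\chi_0,\chi_1$ on $\mathrm{gr}\,H_P$ with their top $x$-homogeneous parts $\chi_0^{(0)},\chi_1^{(0)}$ on $H(C_P,d_{mf}^{(0)})$. This is Lemma~\ref{lemma-chi-commute} in the paper and needs an actual check from the explicit matrices for $\chi_0,\chi_1$; it is not automatic.
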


\begin{remark}
Only the $E_0$-pages of $\{\hat{E}_r(L)\}$ and $\{E_r(L)\}$ depend on the choice of the diagram $D$. By Theorem \ref{thm-H_P-invariance} and Corollary \ref{cor-hat-H_P-invariance}, for $r\geq 1$, $\hat{E}_r(L)$ and $E_r(L)$ are link invariants.
\end{remark}

The spectral sequence $\{\hat{E}_r(L)\}$ was first observed by Lee \cite{Lee2} in the $\slmf(2)$ case and then generalized to the $\slmf(N)$ case by Gornik \cite{Gornik}. A complete construction of $\{\hat{E}_r(L)\}$ can be found in \cite{Wu7}. The construction of $\{E_r(L)\}$ is very similar and given in Section \ref{sec-spectral-sequence} below\footnote{In fact, we construct a somewhat more general spectral sequence. See Theorem \ref{thm-spectral-sequence-general} below.}. We call $\{\hat{E}_r(L)\}$ the Lee-Gornik spectral sequence over $\C$ and $\{E_r(L)\}$ the Lee-Gornik spectral sequence over $\C[a]$. 

\subsection{Lobb's decomposition theorem}  As shown in Section \ref{sec-spectral-sequence} below, $(H(C_P(D), d_{mf}), d_\chi)$ is a bounded chain complex of finitely generated graded free $\C[a]$-module. Lobb \cite{Lobb-2-twists} observed that this implies \linebreak $(H(C_P(D), d_{mf}), d_\chi)$ decomposes into a direct sum of simple graded chain complexes of the forms 
\begin{eqnarray}
\label{def-F-i-s} F_{i,s} & = & 0\rightarrow \C[a]\|i\|\{s\} \rightarrow 0, \\
\label{def-T-i-m-s} T_{i,m,s} & = & 0\rightarrow \C[a]\|i-1\|\{s+2km\} \xrightarrow{a^m} \C[a]\|i\|\{s\} \rightarrow 0,
\end{eqnarray}
where $\|i\|$ indicates that the component is at homological degree $i$ and, following \cite{KR1}, $\{s\}$ means shifting the polynomial grading up by $s$. Therefore, $H_P(D)$ is the direct sum of a free graded $\C[a]$-module and torsion components of the form $\C[a]/(a^m)$. The torsion part of $H_P(D)$ is not yet well understood. But the free part of $H_P(D)$ is relatively simple and can be explicitly described using the deformed $\slmf(N)$ Khovanov-Rozansky homology $\hat{H}_P(D)$. Theorem \ref{thm-H_P-decomp} below is a more precise formulation of the decomposition of $H_P(L)$ observed by Lobb in \cite{Lobb-2-twists}.

For any oriented link $L$, denote by $\hat{H}^i_P(L)$ the component of $\hat{H}_P(L)$ of homological grading $i$ and by $\hat{\mathcal{H}}^i_P(L)$ the graded $\C$-linear space associated to the filtered space $(\hat{H}^i_P(L),\fil_x)$. That is, $\hat{\mathcal{H}}^i_P(L) = \bigoplus_{j\in \zed} \hat{\mathcal{H}}^{i,j}_P(L)$, where $\hat{\mathcal{H}}^{i,j}_P(L) = \fil_x^j \hat{H}^i_P(L) / \fil_x^{j-1} \hat{H}^i_P(L)$.

\begin{theorem}\cite{Lobb-2-twists}\label{thm-H_P-decomp}
Given an oriented link $L$ and a homological degree $i$, there is a (possibly empty) finite sequence 
\[
\{(m_{i,1},s_{i,1}),\dots, (m_{i,n_i},s_{i,n_i})\} \subset \zed_{>0} \times \zed
\] 
such that, as graded $\C[a]$-modules, 
\begin{equation}\label{eq-H_P-decomp}
H^i_P(L) \cong (\hat{\mathcal{H}}^i_P(L) \otimes_{\C} \C[a]) \oplus \bigoplus_{l=1}^{n_i} ( \C[a]/(a^{m_{i,l}}))\{s_{i,l}\},
\end{equation}
where $H^i_P(L)$ is component of $H_P(L)$ of homological grading $i$. Moreover, the sequence $$\{(m_{i,1},s_{i,1}),\dots, (m_{i,n_i},s_{i,n_i})\}$$ is unique up to permutation.
\end{theorem}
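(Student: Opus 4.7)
The plan is to deduce the decomposition from the chain-level decomposition of $(H(C_P(D),d_{mf}),d_\chi)$ into pieces $F_{i,s}$ and $T_{i,m,s}$ stated in the excerpt, then identify the free summands with $\hat{\mathcal{H}}^\ast_P(L)\otimes_\C\C[a]$ by specializing $a=1$.

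First, I would invoke the decomposition asserted just before the theorem: since $(H(C_P(D),d_{mf}),d_\chi)$ is a bounded chain complex of finitely generated graded free $\C[a]$-modules and $\C[a]$ is a PID, the classification of such complexes yields a splitting into direct summands of the forms $F_{i,s}$ and $T_{i,m,s}$. Taking homology componentwise: each $F_{i,s}$ contributes one copy of $\C[a]\{s\}$ in homological degree $i$; each $T_{i,m,s}$ contributes $\C[a]/(a^{m})\{s\}$ in homological degree $i$ (and nothing in degree $i-1$, since multiplication by $a^m$ is injective on $\C[a]$). Collecting summands in homological degree $i$ gives
\[
H^i_P(L) \cong \bigoplus_{s} \C[a]\{s\}^{\oplus f_{i,s}} \;\oplus\; \bigoplus_{l=1}^{n_i}\C[a]/(a^{m_{i,l}})\{s_{i,l}\},
\]
where $f_{i,s}$ counts the number of $F_{i,s}$ summands and $\{(m_{i,l},s_{i,l})\}$ records the torsion pieces.

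Next, I would identify the free part with the associated graded of $\hat{H}^i_P(L)$. Specializing $a \mapsto 1$ kills the torsion summands (each $T_{i,m,s}$ becomes the acyclic complex $0\to\C\xrightarrow{1}\C\to 0$) while converting each $F_{i,s}$ into a copy of $\C$ sitting at $x$-filtration level $s$. Hence $\hat{H}^i_P(L) \cong \bigoplus_s \C^{\oplus f_{i,s}}$ as a filtered $\C$-vector space, with exactly $f_{i,s}$ generators of $x$-filtration level $s$. Taking the associated graded yields $\dim_\C\hat{\mathcal{H}}^{i,s}_P(L) = f_{i,s}$, so the free part of $H^i_P(L)$ is isomorphic to $\hat{\mathcal{H}}^i_P(L)\otimes_\C\C[a]$ as graded $\C[a]$-modules. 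This gives the decomposition \eqref{eq-H_P-decomp}.

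Uniqueness up to permutation of the torsion sequence is immediate from the structure theorem for finitely generated modules over the PID $\C[a]$ in its graded form: the elementary divisors $a^{m_{i,l}}$ and their grading shifts $s_{i,l}$ are determined by $H^i_P(L)$. I expect the main obstacle to be the first step, namely justifying the chain-level decomposition of $(H(C_P(D),d_{mf}),d_\chi)$ into copies of $F_{i,s}$ and $T_{i,m,s}$; this requires knowing that the underlying modules are finitely generated graded free over $\C[a]$ (deferred to Section \ref{sec-spectral-sequence}) and then invoking the graded analogue of the fact that every bounded complex of finitely generated free modules over a PID splits into 2-term elementary pieces, which can be proved inductively by smith-normal-form reductions of the matrices representing $d_\chi$ in each homological degree.
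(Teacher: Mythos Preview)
Your outline follows the same route as the paper: decompose the chain complex $(H(C_P(D),d_{mf}),d_\chi)$ into elementary pieces $F_{i,s}$ and $T_{i,m,s}$, read off the homology, and identify the free part by setting $a=1$. However, you have misidentified where the real work lies. The Smith-normal-form reduction you flag as the ``main obstacle'' is indeed routine (the paper proves it as Lemma~\ref{lemma-graded-chain-decomp} by the inductive elimination you describe). The genuine gap in your argument is the sentence ``converting each $F_{i,s}$ into a copy of $\C$ sitting at $x$-filtration level $s$,'' and the conclusion $\dim_\C \hat{\mathcal H}^{i,s}_P(L)=f_{i,s}$ that you draw from it.

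The decomposition into $F_{i,s}$'s and $T_{i,m,s}$'s is obtained in the category of \emph{graded} $\C[a]$-modules, using only the polynomial grading. The $x$-filtration on $H(C_P(D),d_{mf})$ is an entirely separate structure: $d_{mf}$ is not homogeneous for the $x$-grading on $C_P(\Gamma)$, so the homology only inherits a filtration, and there is no a priori reason a homogeneous generator of polynomial degree $s$ should have $x$-filtration degree exactly $s$, nor that the direct-sum decomposition should respect $\fil_x$. After setting $a=1$ you therefore cannot simply assert that the image of the $F_{i,s}$ generator lies in $\fil_x^s\hat H^i_P(L)\setminus\fil_x^{s-1}\hat H^i_P(L)$, nor that these images give a basis adapted to the filtration. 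The paper spends all of Subsection~\ref{subsec-fil-decomp} establishing precisely this compatibility: Lemma~\ref{lemma-homology-basis-degree} shows $\deg_x[u_j]=\deg[u_j]$ for any homogeneous basis, Lemma~\ref{lemma-homogeneous-basis-fil} upgrades the graded decomposition to a filtered one, and Lemma~\ref{lemma-homogeneous-basis-fil-pi-1} pushes this through $\pi_{a-1}$ to control the filtration on $\hat H_P$. Without these, your specialization argument computes $\dim_\C\hat H^i_P(L)$ but not its associated graded $\hat{\mathcal H}^{i,\ast}_P(L)$, and the identification of the free part fails.
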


A complete proof of Theorem \ref{thm-H_P-decomp} is given in Subsection \ref{subsec-homology-decomp} below. A byproduct of this theorem is a decomposition of $H_N(L)$, which we formulate in the following corollary. See Subsection \ref{subsec-homology-decomp} below for its proof.

\begin{corollary}\label{cor-H_N-decomp}
Using notations in Theorem \ref{eq-H_P-decomp}, we have
\begin{equation}\label{eq-H_N-decomp}
H^i_N(L) \cong \hat{\mathcal{H}}^i_P(L) \oplus \left(\bigoplus_{l=1}^{n_i}\C\{s_{i,l}\}\right) \oplus \left(\bigoplus_{l=1}^{n_{i+1}} \C\{2km_{i+1,l}+s_{i+1,l}\}\right),
\end{equation}
where $H^i_N(L)$ is component of $H_N(L)$ of homological grading $i$.
\end{corollary}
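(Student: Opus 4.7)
The plan is to deduce this from Theorem~\ref{thm-H_P-decomp} by base change along $\C[a]\twoheadrightarrow \C[a]/(a)\cong \C$, i.e., by setting $a=0$.

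First I would observe that, because $H(C_P(D),d_{mf})$ is a free $\C[a]$-module (as recorded in the introduction), the identification $C_N(D)=C_P(D)/aC_P(D)$ gives
\[
(H(C_N(D),d_{mf}),d_\chi)\;\cong\;(H(C_P(D),d_{mf}),d_\chi)\otimes_{\C[a]}\C
\]
as chain complexes of graded $\C$-vector spaces. Next, the decomposition of $(H(C_P(D),d_{mf}),d_\chi)$ into summands $F_{i,s}$ and $T_{i,m,s}$ provided by Lobb (and revisited in Subsection~\ref{subsec-homology-decomp}) lives at the chain level, so tensoring with $\C$ over $\C[a]$ preserves it. Term by term, $F_{i,s}\otimes_{\C[a]}\C$ becomes the one-term complex $\C\|i\|\{s\}$, while $T_{i,m,s}\otimes_{\C[a]}\C$ becomes $0\to\C\|i-1\|\{s+2km\}\xrightarrow{0}\C\|i\|\{s\}\to 0$, the differential vanishing since $a^m\equiv 0\pmod{a}$ for $m\geq 1$.

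Then I would take the $d_\chi$-homology of each summand: $F_{i,s}$ contributes $\C\{s\}$ in homological degree $i$, while $T_{i,m,s}$ contributes $\C\{s\}$ in degree $i$ and $\C\{s+2km\}$ in degree $i-1$. Collecting contributions at homological degree $i$ yields exactly the three pieces of \eqref{eq-H_N-decomp}: the $F$-summands at degree $i$ assemble (by Theorem~\ref{thm-H_P-decomp}) to $\hat{\mathcal{H}}^i_P(L)\otimes_\C\C=\hat{\mathcal{H}}^i_P(L)$; the right ends of the $T_{i,m_{i,l},s_{i,l}}$ summands contribute $\bigoplus_{l=1}^{n_i}\C\{s_{i,l}\}$; and the left ends of the $T_{i+1,m_{i+1,l},s_{i+1,l}}$ summands contribute $\bigoplus_{l=1}^{n_{i+1}}\C\{s_{i+1,l}+2km_{i+1,l}\}$.

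The main point that needs to be justified is that the chain-level decomposition of Theorem~\ref{thm-H_P-decomp} really exists and is preserved by base change, but this is exactly Lobb's observation and will be established in Subsection~\ref{subsec-homology-decomp} via the classification of finitely generated modules over the PID $\C[a]$, applied to the bounded chain complex of finitely generated free graded $\C[a]$-modules $(H(C_P(D),d_{mf}),d_\chi)$. Once that is in hand, Corollary~\ref{cor-H_N-decomp} is simply Theorem~\ref{thm-H_P-decomp} read modulo $a$.
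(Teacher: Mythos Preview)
Your proposal is correct and follows essentially the same approach as the paper: reduce the chain-level decomposition of $(H(C_P(D),d_{mf}),d_\chi)$ into $F_{i,s}$'s and $T_{i,m,s}$'s modulo $a$ (using freeness of $H(C_P(D),d_{mf})$, which the paper invokes via Lemma~\ref{lemma-basis-pi-0}), and then read off the homology of each summand. The only cosmetic difference is that the paper phrases the base change as the quotient $H(C_P(D),d_{mf})/aH(C_P(D),d_{mf})$ rather than $\otimes_{\C[a]}\C$.
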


\subsection{Decompositions of the Lee-Gornik spectral sequences} The first results of the present paper are formulas for $\{E_r(L)\}$ and $\{\hat{E}_r(L)\}$ in terms of decomposition \eqref{eq-H_P-decomp}. To state our results, we need to introduce a non-standard tensor product ``$\boxtimes$" of bigraded vector spaces.\footnote{The definition of ``$\boxtimes$" in Definition \ref{def-box-times} comes from the normalization we use in the definition of the spectral sequence of a filtered chain complex. If one uses a different normalization, then the definition of ``$\boxtimes$" needs to change accordingly.}

\begin{definition}\label{def-box-times}
Let $\mathcal{H} = \bigoplus_{i,j} \mathcal{H}^{i,j}$ and $E = \bigoplus_{p,q} E^{p,q}$ be two $\zed^{\oplus 2}$-graded $\C$-spaces. Then 
\[
\mathcal{H} \boxtimes E = \bigoplus_{\alpha,\beta} (\mathcal{H} \boxtimes E)^{\alpha,\beta}
\] 
is the $\zed^{\oplus 2}$-graded $\C$-space satisfying 
\[
(\mathcal{H} \boxtimes E)^{\alpha,\beta} = \bigoplus_{j+p=\alpha,~q+i-j=\beta}  \mathcal{H}^{i,j} \otimes_\C E^{p,q}.
\]
\end{definition}

Next, we define the $x$-filtration $\fil_x$ of $F_{i,s}$, $T_{i,m,s}$, $\hat{F}_{i,s} = F_{i,s}/(a-1)F_{i,s}$ and $\hat{T}_{i,m,s} = T_{i,m,s}/(a-1)T_{i,m,s}$. 
\begin{eqnarray}
\label{def-F-i-s-fil}
\fil_x^p F_{i,s} & = & \begin{cases}
0\rightarrow \C[a]\|i\|\{s\} \rightarrow 0 & \text{if } p\geq s, \\
0 & \text{if } p< s.
\end{cases} \\
\label{def-T-i-m-s-fil}
\fil_x^p T_{i,m,s} & = & \begin{cases}
0\rightarrow \C[a]\|i-1\|\{s+2km\} \xrightarrow{a^m} \C[a]\|i\|\{s\} \rightarrow 0 & \text{if } p\geq s+2km \\
0\rightarrow \C[a]\|i\|\{s\} \rightarrow 0 & \text{if } s \leq p< s+2km \\
0 & \text{if }  p<s.
\end{cases} \\
\label{def-hat-F-i-s-fil}
\fil_x^p \hat{F}_{i,s} & = & \begin{cases}
0\rightarrow \C\|i\|\rightarrow 0 & \text{if } p\geq s, \\
0 & \text{if } p< s,
\end{cases} \\
\label{def-hat-T-i-m-s-fil}
\fil_x^p \hat{T}_{i,m,s} & = & \begin{cases}
0\rightarrow \C\|i-1\| \xrightarrow{1} \C\|i\| \rightarrow 0 & \text{if } p\geq s+2km \\
0\rightarrow \C\|i\|\rightarrow 0 & \text{if } s \leq p< s+2km \\
0 & \text{if }  p<s,
\end{cases}
\end{eqnarray}

The filtered chain complexes $F_{i,s}$, $T_{i,m,s}$, $\hat{F}_{i,s}$ and $\hat{T}_{i,m,s}$ are very simple. Their spectral sequences are given in the following lemma, which is proved in Subsection \ref{subsec-spectral-sequence-decomp} below.

\begin{lemma}\label{lemma-ss-components}
For any $r\geq 0$,
\begin{eqnarray}
\label{eq-F-i-s-ss} E_r^{p,q}(F_{i,s}) & \cong & \begin{cases}
\C[a]\{s\} & \text{if } p=s \text{ and } q=i-s, \\
0 & \text{otherwise,}
\end{cases} \\
\label{eq-T-i-m-s-ss}E_r^{p,q}(T_{i,m,s})  & \cong & \begin{cases}
(\C[a]/(a^m))\{s\} & \text{if } p=s, ~q=i-s \text{ and } r \geq 2km+1, \\
\C[a]\{s\} & \text{if } p=s, ~q=i-s \text{ and } r \leq 2km, \\
\C[a]\{s+2km\} & \text{if } p= s+2km,~q=i-1-s-2km \text{ and } r \leq 2km, \\
0 & \text{otherwise,}
\end{cases} \\
\label{eq-hat-F-i-s-ss}E_r^{p,q}(\hat{F}_{i,s}) & \cong & \begin{cases}
\C & \text{if } p=s \text{ and } q=i-s, \\
0 & \text{otherwise.}
\end{cases} \\
\label{eq-hat-T-i-m-s-ss}E_r^{p,q}(\hat{T}_{i,m,s})  & \cong & \begin{cases}
\C & \text{if } p=s, ~q=i-s \text{ and } r \leq 2km, \\
\C & \text{if } p= s+2km,~q=i-1-s-2km \text{ and } r \leq 2km, \\
0 & \text{otherwise.}
\end{cases} 
\end{eqnarray}
Note that:
\begin{itemize}
  \item Isomorphisms \eqref{eq-F-i-s-ss} and \eqref{eq-T-i-m-s-ss} preserve the polynomial grading.
	\item Both $\{E_r(T_{i,m,s})\}$ and $\{E_r(\hat{T}_{i,m,s})\}$ collapse exactly at their $E_{2km+1}$-pages.\footnote{We say that a spectral sequence $\{E_r\}$ collapses exactly at its $E_t$-page if $E_{t-1} \ncong E_t$ but $E_{t+r} \cong E_t$ $\forall r\geq0$.}
	\item Both $\{E_r(F_{i,s})\}$ and $\{E_r(\hat{F}_{i,s})\}$ collapse at their $E_0$-pages.
	\item $E_r(F_{i,s}) \cong \C\|i\|\{s\} \boxtimes E_r(F_{0,0})$, $E_r(\hat{F}_{i,s}) \cong \C\|i\|\{s\} \boxtimes E_r(\hat{F}_{0,0})$, where ``$\boxtimes$" is the product defined in Definition \ref{def-box-times} and $\C\|i\|\{s\}$ is the $\zed^{\oplus2}$-graded $\C$-space given by
\[
(\C\|i\|\{s\})^{p,q} = \begin{cases}
\C & \text{if } p=i \text{ and } q=s, \\
0 & \text{otherwise.}
\end{cases} 
\]
\end{itemize}
\end{lemma}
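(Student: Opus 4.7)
The approach is a direct case-by-case computation. For each of the four filtered complexes, the plan is to (i) form the $E_0$-page as the associated graded of the filtration, (ii) verify that the induced $d_0$ vanishes, (iii) identify the smallest $r$ for which the original differential contributes a nonzero $d_r$, and (iv) compute the resulting kernel and cokernel.

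For $F_{i,s}$ and $\hat{F}_{i,s}$, the filtrations \eqref{def-F-i-s-fil} and \eqref{def-hat-F-i-s-fil} have only a single jump at $p=s$ (from $0$ to the whole complex), so the associated graded equals the original complex placed at filtration degree $s$. Since the underlying differential is zero, every $d_r$ vanishes and the spectral sequence is concentrated at the single bidegree $(p,q)=(s,i-s)$ with value $\C[a]\{s\}$ (respectively $\C$); the bigrading matches because $p+q=i$ is the homological degree.

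For $T_{i,m,s}$, the filtration \eqref{def-T-i-m-s-fil} has two jumps: at $p=s$ the target $\C[a]\|i\|\{s\}$ is added, and at $p=s+2km$ the source $\C[a]\|i-1\|\{s+2km\}$ is added. The associated graded consists of the source and target sitting at filtration levels $s+2km$ and $s$ respectively, each with zero induced $d_0$, since the image $a^m\C[a]$ lies in $\fil^s$, strictly below the source's filtration level. The key observation is that $a^m$ lowers the filtration by exactly $2km$, so tracing through the standard construction shows that $d_r=0$ for $0\leq r\leq 2km-1$ (the image still lies in too-high filtration to be detected modulo $\fil^{s+2km-r-1}$) while $d_{2km}$ is the map $a^m$ itself. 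This $d_{2km}$ is injective with image $a^m\C[a]\{s\}$, giving
\[
E_{2km+1}^{s+2km,\,i-1-s-2km}=0, \qquad E_{2km+1}^{s,\,i-s}=(\C[a]/(a^m))\{s\},
\]
after which only a single bidegree remains nonzero and the spectral sequence stabilizes. The analysis for $\hat{T}_{i,m,s}$ is identical except that $d_{2km}$ becomes the identity $\C\to\C$, killing both bidegrees at page $2km+1$.

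The three bulleted remarks then come essentially for free. Polynomial grading is preserved because $a^m$ is $\C[a]$-homogeneous of degree $2km$, compatible with the grading shift $\{s+2km\}\to\{s\}$ between source and target; the collapse pages for $T_{i,m,s}$ and $\hat{T}_{i,m,s}$ are exactly $E_{2km+1}$ because $d_{2km}$ is nontrivial; and the $\boxtimes$-identity for the $F$-cases is a direct rewriting of the single-bidegree answer through Definition \ref{def-box-times}. The only delicate point in the whole argument is the filtration-shift bookkeeping that pins the first nontrivial differential to degree $r=2km$; once that is in place, everything else reduces to an elementary kernel/cokernel calculation.
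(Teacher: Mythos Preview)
Your proposal is correct and follows essentially the same approach as the paper: a direct case-by-case computation of the spectral sequence for each of the four simple filtered complexes. The paper carries this out by plugging into the explicit formulas $Z_r^{p,q}=\fil^p C^{p+q}\cap d^{-1}(\fil^{p-r}C^{p+q+1})$, $B_r^{p,q}=\fil^p C^{p+q}\cap d(\fil^{p+r}C^{p+q-1})$, $E_r^{p,q}=Z_r^{p,q}/(Z_{r-1}^{p-1,q+1}+B_{r-1}^{p,q})$, whereas you argue by locating the unique page on which the differential can be nonzero; these are two phrasings of the same computation, and the paper likewise leaves the $\hat{F}$ and $\hat{T}$ cases to the reader. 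One minor remark: your justification that $d_r=0$ for $r<2km$ is simpler than you indicate---once $E_1$ has only the two spots $(s+2km,i-1-s-2km)$ and $(s,i-s)$, the bidegree $(-r,r+1)$ of $d_r$ forces $r=2km$ for any nonzero differential, with no further filtration bookkeeping needed.
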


Combining Lemma \ref{lemma-ss-components} and the following theorem, we get explicit formulas for $\{E_r(L)\}$ and $\{\hat{E}_r(L)\}$ in terms of Lobb's decomposition (Theorem \ref{thm-H_P-decomp}.) 

\begin{theorem}\label{thm-spectral-sequence-decomp}
For an oriented link $L$, let $\hat{\mathcal{H}}_P(L)= \bigoplus_{i\in \zed} \hat{\mathcal{H}}^i_P(L) = \bigoplus_{(i,j)\in \zed^{\oplus 2}} \hat{\mathcal{H}}^{i,j}_P(L)$ and, for each $i$, 
\[
\{(m_{i,1},s_{i,1}),\dots, (m_{i,n_i},s_{i,n_i})\} \subset \zed_{>0} \times \zed
\] 
the sequence given in Theorem \ref{thm-H_P-decomp}. Then, for any $r\geq 1$,
\begin{eqnarray}
\label{eq-spectral-sequence-decomp-hat-E}  \hat{E}_r(L) & \cong & (\hat{\mathcal{H}}_P(L) \boxtimes E_r(\hat{F}_{0,0})) \oplus \bigoplus_{i \in \zed} \bigoplus_{l=1}^{n_i} E_r (\hat{T}_{i,m_{i,l},s_{i,l}}), \\
\label{eq-spectral-sequence-decomp-E} E_r(L) & \cong & (\hat{\mathcal{H}}_P(L) \boxtimes E_r(F_{0,0})) \oplus \bigoplus_{i \in \zed} \bigoplus_{l=1}^{n_i} E_r (T_{i,m_{i,l},s_{i,l}}), \end{eqnarray}
where isomorphism \eqref{eq-spectral-sequence-decomp-hat-E} preserves the usually $(p,q)$-grading of spectral sequences, while isomorphism \eqref{eq-spectral-sequence-decomp-E} preserves the usually $(p,q)$-grading of spectral sequences as well as the polynomial grading of each $E_r^{p,q}$-component.
\end{theorem}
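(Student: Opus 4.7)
The plan is to upgrade Lobb's decomposition (Theorem \ref{thm-H_P-decomp}) from a splitting of graded $\C[a]$-modules to a splitting of \emph{filtered} graded chain complexes, and then exploit the additivity of the spectral sequence construction. Concretely, the main technical step is to establish that there is a decomposition
\[
(H(C_P(D), d_{mf}), d_\chi) \cong \bigoplus_{i,s}\bigl(V^{i,s} \otimes_\C F_{i,s}\bigr) \oplus \bigoplus_{i \in \zed} \bigoplus_{l=1}^{n_i} T_{i, m_{i,l}, s_{i,l}}
\]
of filtered, bigraded chain complexes of $\C[a]$-modules, with each $F_{i,s}$ and $T_{i,m,s}$ summand carrying exactly the $x$-filtration prescribed by \eqref{def-F-i-s-fil} and \eqref{def-T-i-m-s-fil}, and with $V^{i,s}$ some $\C$-vector space. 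This filtered refinement is expected to be proved in Subsection \ref{subsec-homology-decomp}: since $(H(C_P(D), d_{mf}), d_\chi)$ is a finitely generated free $\C[a]$-module whose differential $d_\chi$ is homogeneous with respect to the homological grading, the polynomial grading, and the $x$-filtration, one can choose a simultaneously homogeneous basis and run Gaussian elimination to put $d_\chi$ in the prescribed block diagonal form.

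I would then identify $V^{i,s}$ with $\hat{\mathcal{H}}^{i,s}_P(L)$ by tensoring with $\C[a]/(a-1)$ over $\C[a]$ and taking $d_\chi$-homology. Since $H(C_P(D), d_{mf})$ is free over $\C[a]$, this yields $(H(\hat{C}_P(D), d_{mf}), d_\chi)$, and the filtered decomposition transports to one involving $\hat{F}_{i,s}$ and $\hat{T}_{i,m,s}$. Each $\hat{T}_{i,m,s}$ is acyclic (its map becomes the identity), and each $\hat{F}_{i,s}$ contributes a single class in $\hat{H}^i_P(L)$ at $x$-filtration level exactly $s$. Passing to the associated graded with respect to $\fil_x$ produces the identification $V^{i,s} \cong \hat{\mathcal{H}}^{i,s}_P(L)$.

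With the filtered decomposition in hand, the spectral sequence functor, being additive on direct sums of filtered complexes, gives for every $r \geq 1$
\[
E_r(L) \cong \bigoplus_{i,s}\bigl(\hat{\mathcal{H}}^{i,s}_P(L) \otimes_\C E_r(F_{i,s})\bigr) \oplus \bigoplus_{i \in \zed} \bigoplus_{l=1}^{n_i} E_r(T_{i, m_{i,l}, s_{i,l}}).
\]
Invoking the last bullet of Lemma \ref{lemma-ss-components}, $E_r(F_{i,s}) \cong \C\|i\|\{s\} \boxtimes E_r(F_{0,0})$, and unpacking Definition \ref{def-box-times}, the first sum collapses to $\hat{\mathcal{H}}_P(L) \boxtimes E_r(F_{0,0})$, proving \eqref{eq-spectral-sequence-decomp-E}; the $(p,q)$-grading and polynomial grading are preserved because every step above is compatible with them. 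For \eqref{eq-spectral-sequence-decomp-hat-E}, I would apply the same spectral sequence argument to the filtered decomposition of $(H(\hat{C}_P(D), d_{mf}), d_\chi)$ obtained above and use the analogous identity $E_r(\hat{F}_{i,s}) \cong \C\|i\|\{s\} \boxtimes E_r(\hat{F}_{0,0})$; the $\hat{T}$-summands now contribute $E_r(\hat{T}_{i,m_{i,l},s_{i,l}})$ rather than vanishing, since the spectral sequence is computed before passing to homology.

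The main obstacle is the filtered refinement of Lobb's decomposition in the first step, in particular making sure that in each torsion block the paired generators $(v,w)$ connected by $d_\chi w = a^m v$ sit at $x$-filtration levels precisely $s$ and $s+2km$, so the block carries the exact filtration \eqref{def-T-i-m-s-fil} and not some more exotic shift. This should follow from the compatibility of the $x$-filtration with the polynomial grading on $(H(C_P(D), d_{mf}), d_\chi)$ (roughly, $x$-level equals polynomial degree minus $2k$ times $a$-degree), which allows one to choose representatives of minimal $a$-content at each polynomial degree.
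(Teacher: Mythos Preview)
Your proposal is correct and follows essentially the same architecture as the paper: decompose $(H(C_P(D),d_{mf}),d_\chi)$ into $F_{i,s}$'s and $T_{i,m,s}$'s as filtered chain complexes, then use additivity of the spectral-sequence construction together with Lemma~\ref{lemma-ss-components}. The identification of the free summands with $\hat{\mathcal{H}}_P(L)$ via $\C[a]/(a-1)$ and the passage to the hatted version are also handled the same way.

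The one place where the paper's organization is sharper than what you sketch is the ``filtered refinement'' you flag as the main obstacle. You propose to run Gaussian elimination while tracking the $x$-filtration simultaneously, and your heuristic ``$x$-level equals polynomial degree minus $2k$ times $a$-degree'' is the right intuition at the chain level but does not immediately persist to homology classes. The paper instead separates concerns: it first proves, in Subsection~\ref{subsec-fil-decomp} (not \ref{subsec-homology-decomp}), the general structural fact that for \emph{any} homogeneous basis $\{[u_j]\}$ of the free $\C[a]$-module $H_P(\Gamma)$ one has $\deg_x[u_j]=\deg[u_j]$ (Lemma~\ref{lemma-homology-basis-degree}), and more strongly that $\sum_j f_j[u_j]\in\fil_x^l$ iff $f_j=0$ whenever $\deg[u_j]>l$ (Lemma~\ref{lemma-homogeneous-basis-fil}). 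The proof of these uses that $\pi_a$ sends homogeneous bases of $H_P(\Gamma)$ to bases of $H_N(\Gamma)$ and that the filtration on $H_N(\Gamma)$ is induced by the polynomial grading. Once this is in place, \emph{any} graded $\C[a]$-module decomposition of $(H(C_P(D),d_{mf}),d_\chi)$ is automatically a filtered decomposition with exactly the filtrations \eqref{def-F-i-s-fil}--\eqref{def-T-i-m-s-fil}, so one can run the purely graded Gaussian elimination of Lemma~\ref{lemma-graded-chain-decomp} and get the filtered statement for free. The analogous Lemma~\ref{lemma-homogeneous-basis-fil-pi-1} handles the $\hat{C}_P$ side. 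This decoupling is what makes your last-paragraph worry disappear without any delicate bookkeeping during the elimination.
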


Theorem \ref{thm-spectral-sequence-decomp} is proved in Subsection \ref{subsec-spectral-sequence-decomp} below. The key to its proof is that, when the chain complex $(H(C_P(D), d_{mf}), d_\chi)$ is decomposed into complexes of the forms $F_{i,s}$ and $T_{i,m,s}$, the $x$-filtration decomposes accordingly. This is established in Subsection \ref{subsec-fil-decomp}.

\subsection{Lee-Gornik spectral sequence via exact couples}\label{subsec-intro-couples} Let us recall the definition of exact couples of $\zed^{\oplus 2}$-graded $\C$-linear spaces.

\begin{definition}\label{def-exact-couple}
An exact couple of $\zed^{\oplus 2}$-graded $\C$-linear spaces is a tuple $(A,E,f,g,h)$ such that
\begin{itemize}
	\item $A$ and $E$ are $\zed^{\oplus 2}$-graded $\C$-linear spaces,
	\item $A \xrightarrow{f} A$, $A \xrightarrow{g} E$ and $E \xrightarrow{h} A$ are homogeneous homomorphisms of $\zed^{\oplus 2}$-graded $\C$-linear spaces,
	\item the triangle 
	\[
	\xymatrix{
	A \ar[rr]^{f} && A  \ar[ld]^{g} \\
	& E \ar[lu]^{h} &
	}
	\]
	is exact.
\end{itemize}
\end{definition}

Any exact couple $(A,E,f,g,h)$ has a derived couple $(A',E',f',g',h')$, which is itself an exact couple. We will review the definition of the derived couple in Subsection \ref{subsec-couples}. For now, we just point out that $d:=g\circ h$ is a differential on $E$, and $E'$ is defined to be the homology of $(E,d)$. 

Starting with an exact couple $(A^{(1)},E^{(1)},f^{(1)},g^{(1)},h^{(1)})$, one can inductive define a sequence \linebreak $\{(A^{(r)},E^{(r)},f^{(r)},g^{(r)},h^{(r)})\}$ of exact couples, where $(A^{(r)},E^{(r)},f^{(r)},g^{(r)},h^{(r)})$ is the derived couple of \linebreak $(A^{(r-1)},E^{(r-1)},f^{(r-1)},g^{(r-1)},h^{(r-1)})$. Let $d^{(r)} = g^{(r)} \circ h^{(r)}$. Then $\{(E^{(r)},d^{(r)})\}$ is the spectral sequence induced by $(A^{(1)},E^{(1)},f^{(1)},g^{(1)},h^{(1)})$.

Now let $D$ be a link diagram. Recall that $C_N(D) = C_P(D)/aC_P(D)$. Denote by $\pi_a$ the standard quotient map $C_P(D) \rightarrow C_N(D)$, which induces a homomorphism $H(C_P(D),d_{mf}) \xrightarrow{\pi_a} H(C_N(D),d_{mf})$. But $H(C_P(D),d_{mf})$ is a free $\C[a]$-module (see for example Corollary \ref{cor-graph-homology-free} below). So there is a short exact sequence 
\[
0 \rightarrow H(C_P(D),d_{mf}) \xrightarrow{a} H(C_P(D),d_{mf}) \xrightarrow{\pi_a} H(C_N(D),d_{mf}) \rightarrow 0,
\]
which induces an exact couple
\[
	\xymatrix{
	H_P(D) \ar[rr]^{a} && H_P(D)  \ar[ld]^{\pi_a} \\
	& H_N(D) \ar[lu]^{\Delta} &
	},
\]
where $\Delta$ is the connecting homomorphism from the long exact sequence construction, which is homogeneous with bidegree $(1,-2k)$.

\begin{theorem}\label{thm-exact-couple-lee-gornik}
Denote by $\{(\tilde{E}^{(r)}(D),d^{(r)})\}$ the spectral sequence induced by the exact couple 
\[
(A^{(1)}(D),\tilde{E}^{(1)}(D),f^{(1)},g^{(1)},h^{(1)})=(H_P(D),H_N(D),a,\pi_a,\Delta).
\] 
Then 
\[
\tilde{E}^{(r)}_{p,q}(D) \cong \hat{E}_{2k(r-1)+1}^{q,p-q}(D),
\]
where $\{\hat{E}_{r}(D)\}$ is the Lee-Gornik spectral sequence of $D$ over $\C$ given in Theorem \ref{thm-spectral-sequence}.
\end{theorem}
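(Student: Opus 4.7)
The plan is to reduce the theorem to a direct computation on each Lobb summand. First, I would argue that Lobb's decomposition of the chain complex $(H(C_P(D), d_{mf}), d_\chi)$ from Theorem~\ref{thm-H_P-decomp} is compatible with the short exact sequence $0 \to H(C_P(D), d_{mf}) \xrightarrow{a} H(C_P(D), d_{mf}) \xrightarrow{\pi_a} H(C_N(D), d_{mf}) \to 0$ summand-by-summand, so after passing to $d_\chi$-homology and invoking naturality of the connecting homomorphism $\Delta$, the exact couple $(H_P(D), H_N(D), a, \pi_a, \Delta)$ splits as a direct sum of exact couples, one for each $F_{i,s}$ or $T_{i,m,s}$ component. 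Because forming derived couples is additive, the spectral sequence $\{\tilde{E}^{(r)}(D)\}$ is the direct sum of the pieces coming from each summand, and by Theorem~\ref{thm-spectral-sequence-decomp} the same kind of decomposition holds for $\{\hat{E}_r(D)\}$. It therefore suffices to verify the stated isomorphism separately on each $F_{i,s}$ and each $T_{i,m,s}$.

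On an $F_{i,s}$ summand, $H_P(F_{i,s}) \cong \C[a]\|i\|\{s\}$ and $H_N(F_{i,s}) \cong \C\|i\|\{s\}$. Since multiplication by $a$ is injective on $\C[a]$, the six-term long exact sequence forces $\Delta = 0$, so $d^{(1)} = \pi_a \circ \Delta = 0$ and every derived couple agrees with the first one, giving $\tilde{E}^{(r)}(F_{i,s}) \cong \C\|i\|\{s\}$ at bigraded position $(p,q) = (i,s)$ for all $r$. By~\eqref{eq-hat-F-i-s-ss}, $\hat{E}_{2k(r-1)+1}^{s,\, i-s}(\hat{F}_{i,s}) \cong \C$ for every $r$ as well, which matches under the reindexing $\tilde{E}^{(r)}_{p,q} \leftrightarrow \hat{E}^{q,\, p-q}$.

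On a $T_{i,m,s}$ summand, a direct computation with $d_\chi = a^m$ gives $H_P(T_{i,m,s}) \cong (\C[a]/(a^m))\|i\|\{s\}$ and $H_N(T_{i,m,s}) \cong \C\|i\|\{s\} \oplus \C\|i-1\|\{s+2km\}$. Unwinding the zig-zag construction of the connecting homomorphism shows that $\Delta$ kills the summand $\C\|i\|\{s\}$ and sends $\C\|i-1\|\{s+2km\}$ isomorphically onto the one-dimensional socle $\C \cdot a^{m-1}\subset H_P(T_{i,m,s})$, which lives in polynomial grading $s+2k(m-1)$. The key calculation is that the differential $d^{(r)}: \tilde{E}^{(r)} \to \tilde{E}^{(r)}$ on this summand is essentially ``apply $\Delta$, lift through $r-1$ applications of $a$, then apply $\pi_a$''; this composition vanishes whenever $r < m$ (since $\Delta$ lands in $\C \cdot a^{m-1}$, which is still in the kernel of $\pi_a$ after only $r-1<m-1$ liftings) and is an isomorphism $\C\|i-1\|\{s+2km\} \xrightarrow{\cong} \C\|i\|\{s\}$ precisely when $r = m$. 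Consequently $\tilde{E}^{(r)}(T_{i,m,s})$ equals $H_N(T_{i,m,s})$ for $1 \leq r \leq m$ and vanishes for $r \geq m+1$. Comparing with~\eqref{eq-hat-T-i-m-s-ss}, $\hat{E}_t(\hat{T}_{i,m,s})$ is nonzero in the same two bigraded positions exactly as long as $t \leq 2km$ and is zero once $t \geq 2km+1$; substituting $t = 2k(r-1)+1$ converts these cutoffs to $r \leq m$ and $r \geq m+1$ respectively, matching perfectly.

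The main obstacle will be careful bookkeeping of bigradings: one must verify that the summand-by-summand decomposition of the exact couple really respects both $\zed^{\oplus 2}$-gradings and that the ``$A$-and-$E$'' indexing of the derived-couple spectral sequence converts correctly into the standard filtered-complex ``$(p,q)$'' indexing used for $\hat{E}_r$. The appearance of the factor $2k$ in the page-index formula $t = 2k(r-1)+1$ is traced to the fact that $f = a$ shifts the $x$-filtration by $2k$, so a single step of the derived couple absorbs $2k$ consecutive pages of $\{\hat{E}_r\}$; the intermediate pages of $\{\hat{E}_r\}$ on each $T_{i,m,s}$ are exactly the ``repeated pages'' that the exact-couple formulation collapses together.
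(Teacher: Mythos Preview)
Your proposal is correct and follows essentially the same route as the paper: decompose the exact couple along Lobb's direct sum, compute the derived-couple spectral sequence on each $F_{i,s}$ and $T_{i,m,s}$ piece (this is the paper's Lemma~\ref{lemma-ec-components}), and compare with the formulas in Lemma~\ref{lemma-ss-components} via Theorem~\ref{thm-spectral-sequence-decomp}. Your description of $\Delta$ and of $d^{(r)}$ on a $T_{i,m,s}$ summand is exactly the computation the paper carries out, and your identification of the page shift $t = 2k(r-1)+1$ with the cutoff $r\le m \Leftrightarrow t\le 2km$ is the comparison the paper makes in its one-line proof of the theorem.
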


The proof of Theorem \ref{thm-exact-couple-lee-gornik} in Subsection \ref{subsec-couples} below is straightforward. We simply compute the sequence of derived exact couples for each component in decomposition \eqref{eq-H_P-decomp} and compare it to Lemma \ref{lemma-ss-components}.

\begin{remark}
From Theorem \ref{thm-exact-couple-lee-gornik}, it may seem like $\{\tilde{E}^{(r)}\}$ is missing a lot of pages of $\{\hat{E}_{r}(D)\}$. But, by Lemma \ref{lemma-ss-components} and Theorem \ref{thm-spectral-sequence-decomp}, one can see that $\hat{E}_{2km+1}(L)\cong \hat{E}_{2km+2}(L)\cong \cdots \cong \hat{E}_{2k(m+1)}(L)$ for any link $L$ and any non-negative integer $m$. So the missing pages are just identical copies of pages of $\{\tilde{E}^{(r)}\}$.
\end{remark}

\subsection{A natural $\bigwedge^\ast \C^{N-1}$-action on $H_N(L)$}\label{subsec-intro-action} For a link $L$, we take a closer look at the exact couple $(H_P(L),H_N(L),a,\pi_a,\Delta)$ defined in the previous subsection. It equips $H_N(L)$ with a differential $d^{(1)}=\pi_a \circ \Delta$. Note that all the above construction depends on a particular homogeneous polynomial $P(x,a)$ of form \eqref{def-P}. In this subsection, we temporarily bring $P$ back in the notation of this differential on $H_N(L)$ and write $d_P^{(1)}$ instead of $d^{(1)}$. 

We consider the polynomial 
\begin{equation}\label{def-P-i}
P_i(x,b_i)=x^{N+1} + b_ix^{i}, 
\end{equation}
where $1\leq i \leq N$ and $b_i$ is a homogeneous variable of degree $2N+2-2i$. Applying the exact couple constructed in Theorem \ref{thm-exact-couple-lee-gornik} to $P_i$, we define on $H_N(L)$ a homogeneous differential map $\delta_i :=d_{P_i}^{(1)}$ of homological degree $1$ and polynomial degree $2i-2N-2$. 

We prove that $\delta_1,\dots,\delta_{N-1}$ give a natural $\bigwedge^\ast \C^{N-1}$-action on $H_N(L)$ and this action can not be extended by adding other $d_P^{(1)}$'s. The following is a lemma needed in the construction, which provides some control on how small a torsion component in Lobb's decomposition can be.\footnote{Corollary \ref{cor-ht-bound} provides some control on how large a torsion component in Lobb's decomposition can be.}

\begin{lemma}\label{lemma-torsion-lower-bound}
Let $a$ be a homogeneous variable of degree $2k$, $2\leq m \leq \left\lfloor\frac{N}{k}\right\rfloor$ and
\[
P(x,a) = x^{N+1} + \sum_{i=m}^{\left\lfloor\frac{N}{k}\right\rfloor} \lambda_i a^i x^{N+1-ki},
\]
where $\lambda_m,\dots,\lambda_{\left\lfloor\frac{N}{k}\right\rfloor}$ are scalars. Then, for any link $L$, we have $m_{i,l}\geq m$  $\forall ~i,l$ in decomposition \eqref{eq-H_P-decomp} of $H_P(L)$. That is, $H_P(L)$ does not contain torsion components isomorphic to any of $\C[a]/(a),\dots,\C[a]/(a^{m-1})$.
\end{lemma}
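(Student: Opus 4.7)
The plan is to exploit the congruence $P(x,a)\equiv x^{N+1}\pmod{a^m}$, which holds because by hypothesis $P$ contains no terms $a^j x^{N+1-jk}$ with $0<j<m$. First I would argue that functoriality of Krasner's matrix factorization construction under the ring quotient $\C[a]\to\C[a]/(a^m)$ produces an identification
\[
C_P(D)/a^m C_P(D) \cong C_N(D)\otimes_\C \C[a]/(a^m)
\]
compatible with both $d_{mf}$ and $d_\chi$; the content here is to check that the explicit vertex factorizations and chi maps used in the construction of $C_P(D)$ specialize correctly after reduction modulo $a^m$. Since $H(C_P(D),d_{mf})$ is a free $\C[a]$-module by Corollary \ref{cor-graph-homology-free}, the order of taking $d_{mf}$-homology and quotienting by $a^m$ is irrelevant, so the identification descends to
\[
H(C_P(D),d_{mf})/a^m H(C_P(D),d_{mf}) \cong H(C_N(D),d_{mf})\otimes_\C \C[a]/(a^m).
\]

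Next I would apply $d_\chi$-homology to the short exact sequence of chain complexes
\[
0\to H(C_P(D),d_{mf}) \xrightarrow{a^m} H(C_P(D),d_{mf}) \to H(C_P(D),d_{mf})/a^m H(C_P(D),d_{mf}) \to 0,
\]
obtaining a long exact sequence linking $H_P(L)$ with $H_N(L)\otimes_\C\C[a]/(a^m)$. Taking $\C$-dimensions in homological degree $i$ then yields the numerical identity
\[
m\cdot\dim_\C H_N^i(L) = \dim_\C\bigl(H_P^i(L)/a^m H_P^i(L)\bigr) + \dim_\C\ker\bigl(a^m\colon H_P^{i+1}(L)\to H_P^{i+1}(L)\bigr).
\]

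Finally I would evaluate both sides using Lobb's decomposition (Theorem \ref{thm-H_P-decomp}) and its consequence for $H_N$ (Corollary \ref{cor-H_N-decomp}). The free $\C[a]$-summand of $H_P^i(L)$ contributes $m\cdot\dim_\C\hat{\mathcal{H}}_P^i(L)$ to the right-hand side, and the same quantity appears in the left-hand side via Corollary \ref{cor-H_N-decomp}, so these terms cancel. Using $\dim_\C(\C[a]/(a^{m_{i,l}})\otimes_{\C[a]}\C[a]/(a^m))=\min(m_{i,l},m)$ and $\dim_\C\ker(a^m\text{ on }\C[a]/(a^{m_{i+1,l}}))=\min(m_{i+1,l},m)$, what remains is
\[
m\,n_i + m\,n_{i+1} = \sum_{l=1}^{n_i}\min(m_{i,l},m) + \sum_{l=1}^{n_{i+1}}\min(m_{i+1,l},m).
\]
Each summand on the right is at most $m$ with equality if and only if $m_{i,l}\geq m$, so this equation forces $m_{i,l}\geq m$ for every $i$ and $l$, giving the desired conclusion.

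The step I anticipate requiring the most care is the initial identification $C_P(D)/a^m\cong C_N(D)\otimes_\C\C[a]/(a^m)$ at the chain-complex level, including compatibility with $d_\chi$, rather than just a comparison of homologies. Once that functoriality is cleanly set up, the remainder is a bookkeeping dimension count against decompositions \eqref{eq-H_P-decomp} and \eqref{eq-H_N-decomp}.
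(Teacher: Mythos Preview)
Your proof is correct and shares the same decisive first step as the paper: the identification $C_P(D)/a^mC_P(D)\cong C_N(D)\otimes_\C\C[a]/(a^m)$, which the paper isolates as Lemma~\ref{lemma-quotient-isomorphism-P-1-P-2} (applied with $P_1=x^{N+1}$ and $P_2=P$). Your worry about compatibility with $d_\chi$ is exactly what that lemma checks.

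Where you and the paper diverge is in how the conclusion is extracted from this identification. The paper stays at the chain level: it uses the Lobb decomposition of $(H(C_P(D),d_{mf}),d_\chi)$ into pieces $F_{i,s}$ and $T_{i,l,s}$, reduces a hypothetical $T_{i,l,s}$ with $l<m$ modulo $a^m$, reads off a $\C[a]/(a^l)$ summand in $H(H(\mathscr{C}_{P,m}(D),d_{mf}),d_\chi)$, and obtains a contradiction with $H_N(L)\otimes\C[a]/(a^m)$ via the graded uniqueness Lemma~\ref{lemma-graded-module-decomp-unique}. You instead pass to homology, run the long exact sequence of $a^m$-multiplication, and reduce to the dimension identity
\[
m\,n_i+m\,n_{i+1}=\sum_{l}\min(m_{i,l},m)+\sum_{l}\min(m_{i+1,l},m),
\]
which forces all $m_{i,l}\ge m$. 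Your route is a bit more economical in that it only needs the homology-level statements (Theorem~\ref{thm-H_P-decomp} and Corollary~\ref{cor-H_N-decomp}) rather than the chain-level decomposition and Lemma~\ref{lemma-graded-module-decomp-unique}; the paper's route, on the other hand, gives the slightly finer information that the graded $\C[a]$-module $H(H(\mathscr{C}_{P,m}(D),d_{mf}),d_\chi)$ has no small torsion summands, not just a numerical coincidence.
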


\begin{theorem}\label{thm-delta-action}
Let $L$ be any link. As endomorphisms of $H_N(L)$, 
\begin{enumerate}
	\item $\delta_N=0$;
	\item $\delta_i\delta_j+\delta_j\delta_i = 0$ for any $1\leq i, j \leq N-1$;
	\item each $\delta_i$ is natural in the sense that it commutes with homomorphisms of $H_N(L)$ induced by link cobordisms;
	\item for a polynomial $P(x,a)=x^{N+1} + \sum_{j=1}^{\left\lfloor \frac{N}{k} \right\rfloor} \lambda_j a^j x^{N+1-jk}$ with $\deg a = 2k$ and $\lambda_i \in \C$, 
\[
d_P^{(1)} = \begin{cases}
0 & \text{if } \lambda_1 =0 \text{ or } k=1, \\
\lambda_1 \delta_{N+1-k} & \text{otherwise.}
\end{cases}
\] 
\end{enumerate}

Let $V$ be a $\zed^{\oplus 2}$-graded $(N-1)$-dimensional $\C$-linear space with a homogeneous basis $\{v_1,\dots,v_{N-1}\}$ such that $v_i$ has bidegree $(1,2i-2N-2)$. Then the mapping $v_i\mapsto \delta_i$ induces a natural $\zed^{\oplus 2}$-grading preserving action of $\bigwedge^\ast V$ on $H_N(L)$.
\end{theorem}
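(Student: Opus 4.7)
The plan is to first establish statement (4), from which (1) follows immediately, then prove the naturality (3) and use it together with a bivariate equivariant construction to establish (2); the $\bigwedge^\ast$-action is then automatic from graded-commutativity. All of (4), (2), and (3) rest on an explicit computation of $d_P^{(1)}=\pi_a\circ\Delta$ on each summand of Lobb's decomposition from Theorem \ref{thm-H_P-decomp}. On a free summand $F_{i,s}$ the generator is already a $d_\chi$-cycle in $C_P(D)$, so $\Delta=0$. On a torsion summand $T_{i,m,s}$, lifting the generator of $H_N^{i-1}$ at polynomial degree $s+2km$ to $1\in\C[a]\{s+2km\}$ and applying $d_\chi$ produces $a^m\in\C[a]\{s\}$, so $\Delta$ sends the generator to $[a^{m-1}]\in(\C[a]/(a^m))\{s\}=H_P^i(T_{i,m,s})$. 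Reducing modulo $a$ kills $[a^{m-1}]$ when $m\geq 2$ and returns the generator when $m=1$. Hence $d_P^{(1)}$ vanishes on every summand except the $T_{i,1,s}$-pieces, on which it is the canonical isomorphism $\C\{s+2k\}\xrightarrow{\sim}\C\{s\}$.

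To prove (4), when $\lambda_1=0$ Lemma \ref{lemma-torsion-lower-bound} applied with $m=2$ forbids any $T_{i,1,s}$-summand in $H_P(L)$, so the preparation above gives $d_P^{(1)}=0$. When $k=1$, the next step is to perform the homogeneous change of variable $x\mapsto x-\tfrac{\lambda_1}{N+1}a$; since $\deg a=2=\deg x$, this is compatible with the matrix-factorization grading and with the global assignment of variables to edges of the diagram, so it induces an isomorphism $C_P(D)\cong C_{P^\ast}(D)$ where $P^\ast$ has no linear $a$-term, whence $d_P^{(1)}=d_{P^\ast}^{(1)}=0$. This specializes to (1) via $P=P_N$. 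For $k\geq 2$ and $\lambda_1\neq 0$, both $d_P^{(1)}$ and $\lambda_1\delta_{N+1-k}$ are supported on $m=1$ torsion summands (by the preparation and by Lemma \ref{lemma-torsion-lower-bound} applied to $P-\lambda_1 a x^{N+1-k}$), so it suffices to compare them there. The coefficient $\lambda_1$ will arise from a rescaling $a\mapsto\lambda_1 b$, identifying the $T_{i,1,s}$-pieces of $H_P(L)$ with those of $H_{P_{N+1-k}}(L)$ up to the scalar $\lambda_1$.

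For (3), Krasner's construction is functorial under link cobordisms and produces $\C[a]$-linear maps on $H_P(L)$ that commute with multiplication by $a$ and with the quotient $\pi_a$; by the naturality of the snake lemma, the connecting homomorphism $\Delta$, and thus $d_P^{(1)}=\pi_a\circ\Delta$, commutes with cobordism-induced maps, giving the naturality of each $\delta_i$. For (2), the strategy is to introduce the bivariate polynomial $\tilde P(x,b_i,b_j)=x^{N+1}+b_i x^i+b_j x^j$ and build Krasner's equivariant complex over $\C[b_i,b_j]$. Modding out by $b_j$ recovers $C_{P_i}(D)$, and the exact couple associated with multiplication by $b_i$ on $C_{\tilde P}(D)$ descends to the exact couple defining $\delta_i$, with the symmetric statement for $\delta_j$. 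The composition $\delta_i\delta_j+\delta_j\delta_i$ is then the bi-Bockstein of the Koszul complex of the regular sequence $(b_i,b_j)$ acting on a free $\C[b_i,b_j]$-module, and vanishes by the exactness of the Koszul complex in positive degree.

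Taking $i=j$ in (2) and using $\mathrm{char}\,\C=0$ yields $\delta_i^2=0$, and combined with the anticommutativity and (3) this upgrades $v_i\mapsto\delta_i$ to the claimed natural $\zed^{\oplus 2}$-graded $\bigwedge^\ast V$-action. The hardest step will be the case $k\geq 2$, $\lambda_1\neq 0$ of (4): matching $d_P^{(1)}$ with $\lambda_1\delta_{N+1-k}$ requires showing that the scalar $\lambda_1$ of the leading $a$-term of $P$ controls the $m=1$ torsion contribution in Lobb's decomposition linearly, which is exactly where Lemma \ref{lemma-torsion-lower-bound} is indispensable. A secondary technical hurdle is the naturality/specialisation needed to identify the two occurrences of $\delta_i$ in the argument for (2), but this should follow from the functoriality of Krasner's construction under quotients of the equivariant parameter ring.
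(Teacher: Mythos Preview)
Your outline for Parts (1), (2), (3) is essentially the paper's, and the computation of $d_P^{(1)}$ on each Lobb summand is exactly Lemma \ref{lemma-d-(1)-tor}. The anti-commutativity via a bivariate Koszul/bi-Bockstein argument is the same idea as the paper's explicit $3\times 3$ diagram chase (Lemma \ref{lemma-delta-anti-commute-general}), and the $k=1$ substitution $x\mapsto x-\tfrac{\lambda_1}{N+1}a$ matches Corollary \ref{cor-delta-N=0}. One small caveat: this substitution does not literally identify the Koszul matrix factorizations $C_P(\Gamma_{i,j;p,q})$ and $C_{P^\ast}(\Gamma_{i,j;p,q})$, because $x_ix_j-x_px_q$ is not invariant under the shift. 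The paper has to invoke row operations on Koszul factorizations and the uniqueness of $\chi_0,\chi_1$ up to homotopy and scaling (Lemma \ref{lemma-def-chi}) to get an isomorphism in the homotopy category; your sentence ``induces an isomorphism $C_P(D)\cong C_{P^\ast}(D)$'' hides this.

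The real gap is in Part (4) for $k\geq 2$, $\lambda_1\neq 0$. You propose to compare $d_P^{(1)}$ and $\lambda_1\delta_{N+1-k}$ on the $m=1$ torsion pieces of their respective Lobb decompositions. But these are decompositions of \emph{different} equivariant homologies $H_P(L)$ and $H_{P_{N+1-k}}(L)$, and there is no a priori reason the $m=1$ pieces should cut out the same pair of subspaces of $H_N(L)$. A rescaling $a\mapsto \mu b$ normalizes the linear coefficient of $P$ but does not remove the higher-order terms $\lambda_j a^j$, so it does not identify $H_P$ with $H_{P_{N+1-k}}$; and ``Lemma \ref{lemma-torsion-lower-bound} applied to $P-\lambda_1 a x^{N+1-k}$'' only tells you about the homology of a \emph{third} polynomial, not about how the first two compare. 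Knowing that both maps are isomorphisms on their own $m=1$ pieces does not make them equal as endomorphisms of $H_N(L)$.

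The paper closes this gap by abandoning the Lobb decomposition at this point and instead reinterpreting $d_P^{(1)}$ as the connecting homomorphism of the short exact sequence
\[
0\to H(C_N(D),d_{mf})\xrightarrow{a} H(C_P(D)/a^2 C_P(D),d_{mf})\xrightarrow{\pi_a} H(C_N(D),d_{mf})\to 0
\]
(Lemma \ref{lemma-d-1-short}). The middle term only depends on $P$ modulo $a^2$, i.e.\ on $\lambda_1$ alone; the paper makes this precise in Lemma \ref{lemma-quotient-isomorphism-P-1-P-2}, which says that if $P_1$ and $P_2$ agree to first order in $a$ then $C_{P_1}(D)/a^2$ and $C_{P_2}(D)/a^2$ are \emph{identical} as chain complexes of matrix factorizations. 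Corollary \ref{cor-P-truncation} then gives $d_P^{(1)}=d_{x^{N+1}+\lambda_1 a x^{N+1-k}}^{(1)}$ directly, and a rescaling (Lemma \ref{lemma-delta-scaling}) produces the factor $\lambda_1$. This mod-$a^2$ viewpoint is the missing ingredient in your argument.
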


\begin{remark}
In the case $N=2$, we get just one differential $\delta_1$ on the rational Khovanov homology $H_2(L)$. This $\delta_1$ is essentially the differential $\Phi$ in \cite[Section 4]{Lee2}.
\end{remark}

\begin{question}
Are there more relations between $\delta_1,\dots,\delta_{N-1}$? That is, does the above $\bigwedge^\ast V$-action factor through a quotient ring of $\bigwedge^\ast V$? 
\end{question}

In Subsection \ref{subsec-example-link} below, we compute $H_{P_i}(L)$ for the closed $2$-braid $L$ in Figure \ref{fig-link-example} and observe that, on $H_N(L)$, the differentials $\delta_1,\dots,\delta_{N-1}$ are non-zero, but $\delta_i\delta_j =0$ for any $1\leq i,j\leq N-1$.

\begin{figure}[ht]
\[
\xymatrix{
 \setlength{\unitlength}{1pt}
\begin{picture}(120,70)(-20,-10)

\qbezier(0,0)(5,0)(10,10)
\qbezier(10,10)(15,20)(20,20)

\qbezier(9,12)(0,30)(0,35)
\qbezier(11,8)(15,0)(20,0)

\qbezier(20,0)(25,0)(30,10)
\qbezier(30,10)(35,20)(40,20)

\qbezier(20,20)(25,20)(29,12)
\qbezier(31,8)(35,0)(40,0)

\qbezier(40,0)(45,0)(50,10)
\qbezier(50,10)(55,20)(60,20)

\qbezier(40,20)(45,20)(49,12)
\qbezier(51,8)(55,0)(60,0)

\qbezier(60,0)(65,0)(70,10)
\qbezier(70,10)(80,30)(80,35)

\qbezier(60,20)(65,20)(69,12)
\qbezier(71,8)(75,0)(80,0)

\qbezier(0,0)(-20,0)(-20,30)
\qbezier(-20,30)(-20,60)(40,60)
\qbezier(40,60)(100,60)(100,30)
\qbezier(100,30)(100,0)(80,0)

\qbezier(0,35)(0,50)(40,50)
\qbezier(80,35)(80,50)(40,50)

\put(40,60){\vector(-1,0){0}}

\put(40,50){\vector(-1,0){0}}

\put(35,-10){$L$}

\end{picture}
}
\]

\caption{An example}\label{fig-link-example}

\end{figure}
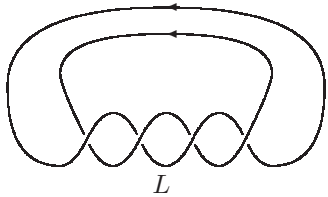

\subsection{The torsion width} In turns out that one can recover the $\zed^{\oplus 2}$-graded module structure of $H_P(L)$ from $\{\hat{E}_r(L)\}$ using Lemma \ref{lemma-ss-components} and Theorem \ref{thm-spectral-sequence-decomp}. We describe an algorithm that does this in Subsection \ref{subsec-recover-H_P} below. Roughly speaking, we look at the pages of $\{\hat{E}_r(L)\}$ backward starting from $\hat{E}_\infty(L)$ to recover first the free part of $H_P(L)$ and then the torsion components from large to small. To do this, we need to know where to start, that is, $\{\hat{E}_r(L)\}$ collapses at what page. For this purpose, we introduce a numerical link invariant called torsion width.

\begin{definition}\label{def-tw}
Let $L$ be an oriented link. Using the notations in Theorem \ref{thm-H_P-decomp}, we define the torsion width of $H_P(L)$ to be $\tw_P(L) = \max\{m_{i,l}~|~ i\in \zed, 1\leq l \leq n_i\}$\footnote{We use the convention that $\tw_P(L) =0$ if $H_P(L)$ is a free $\C[a]$-module.}, which, by Theorem \ref{thm-H_P-decomp}, is a link invariant. Equivalently, one has $\tw_P(L) = \min\{ m ~|~ m\in\zed_{\geq0},~a^m H_P(L) \text{ is free}\}$.
\end{definition}

\begin{corollary}\label{cor-E-collapse}
Let $L$ be an oriented link with torsion width $\tw_P(L) = w$, and $D$ a diagram of $L$. Assume $D$ is not a union of disjoint circles embedded in the plane. Then both spectral sequences $\{E_r(L)\}$ and $\{\hat{E}_r(L)\}$ from Theorem \ref{thm-spectral-sequence} collapse exactly at their $E_{2kw+1}$-pages, where $2k=\deg a$. Consequently, $\{\tilde{E}^{(r)}(L)\}$ collapses exactly at its $E^{(w+1)}$-page. 
\end{corollary}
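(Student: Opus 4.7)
The plan is to deduce the collapse behaviour of both spectral sequences directly from the block decomposition supplied by Theorem \ref{thm-spectral-sequence-decomp}, and then transport the conclusion across Theorem \ref{thm-exact-couple-lee-gornik} to handle the exact-couple version.

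First I would invoke Theorem \ref{thm-spectral-sequence-decomp} to write, for every $r \geq 1$,
\[
\hat{E}_r(L) \cong (\hat{\mathcal{H}}_P(L) \boxtimes E_r(\hat{F}_{0,0})) \oplus \bigoplus_{i,l} E_r(\hat{T}_{i, m_{i,l}, s_{i,l}}),
\]
and the analogous formula for $E_r(L)$. By Lemma \ref{lemma-ss-components} each $E_r(F_{0,0})$ and $E_r(\hat{F}_{0,0})$ is $r$-independent, whereas for a torsion block $T_{i,m,s}$ or $\hat{T}_{i,m,s}$ the pages are constant on $0 \leq r \leq 2km$, constant on $r \geq 2km+1$, and carry a single genuine differential between the $E_{2km}$- and $E_{2km+1}$-pages. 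Summing over the decomposition, every block of $\hat{E}_r(L)$ and $E_r(L)$ is stable once $r \geq 2kw+1$, and the passage from page $2kw$ to page $2kw+1$ is non-trivial because some $m_{i,l}$ achieves the value $w = \tw_P(L)$. Hence both spectral sequences collapse exactly at their $E_{2kw+1}$-pages whenever $w \geq 1$.

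The case $w = 0$ must be treated separately, since Lobb's decomposition then contains no torsion summands and the formula above already exhibits $\hat{E}_r(L)$ and $E_r(L)$ as $r$-independent for $r \geq 1$. To conclude collapse \emph{exactly} at the $E_1$-page I still need $E_0 \ncong E_1$, and this is where the hypothesis that $D$ is not a disjoint union of planar circles enters: $D$ then carries at least one crossing, at which the chi-map is not strictly $x$-filtration decreasing, so its image in the associated graded is non-zero. Consequently the induced differential $d_0$ on the $E_0$-page of $(H(C_P(D),d_{mf}),d_\chi)$ is non-trivial and $E_0 \ncong E_1$, as required.

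Finally, the statement about $\{\tilde{E}^{(r)}(L)\}$ follows by transporting the above across the identification $\tilde{E}^{(r)}_{p,q}(L) \cong \hat{E}^{q,p-q}_{2k(r-1)+1}(L)$ of Theorem \ref{thm-exact-couple-lee-gornik}. Under this identification the pages $\tilde{E}^{(1)}, \tilde{E}^{(2)}, \ldots$ pick out exactly the subsequence $\hat{E}_1, \hat{E}_{2k+1}, \hat{E}_{4k+1}, \dots$, one representative from each constant block $\{\hat{E}_{2k(m-1)+1}, \ldots, \hat{E}_{2km}\}$; since the sole jump of $\{\hat{E}_r\}$ beyond page $1$ lies between $\hat{E}_{2kw}$ and $\hat{E}_{2kw+1}$, in the subsequence this becomes precisely the step from $\tilde{E}^{(w)}$ to $\tilde{E}^{(w+1)}$, so $\{\tilde{E}^{(r)}(L)\}$ collapses exactly at $\tilde{E}^{(w+1)}$. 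The one genuine obstacle is the $w = 0$ case, where one has to extract from the presence of a crossing that $d_0 \neq 0$; everything else is bookkeeping on top of Theorem \ref{thm-spectral-sequence-decomp} and Lemma \ref{lemma-ss-components}.
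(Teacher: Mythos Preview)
Your approach is the same as the paper's: both deduce the collapse from Lemma \ref{lemma-ss-components} and Theorem \ref{thm-spectral-sequence-decomp}, and then pass to $\{\tilde{E}^{(r)}\}$ via Theorem \ref{thm-exact-couple-lee-gornik}. The paper's own proof is in fact terser than yours and does not spell out the $w=0$ case separately.

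Two small points to tighten. First, your claim that ``the sole jump of $\{\hat{E}_r\}$ beyond page $1$ lies between $\hat{E}_{2kw}$ and $\hat{E}_{2kw+1}$'' is not quite right: there is one jump for \emph{each} distinct value of $m$ occurring among the $m_{i,l}$, located between $\hat{E}_{2km}$ and $\hat{E}_{2km+1}$. What you actually need, and what the decomposition gives, is that the \emph{last} such jump is at $2kw \to 2kw+1$; this suffices for exact collapse at $E_{2kw+1}$ and, after reindexing, at $\tilde{E}^{(w+1)}$.

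Second, in the $w=0$ case your argument that $d_0 \neq 0$ is slightly incomplete. Knowing the $\chi$-map has $x$-filtration degree exactly $0$ tells you its top homogeneous part $\chi^{(0)}$ is a nonzero map of matrix factorizations, but you still need it to be nonzero on $H(\,\cdot\,,d_{mf}^{(0)})$. This follows because, by Lemma \ref{lemma-chi-commute} and the proof of Theorem \ref{thm-spectral-sequence-general}, the differential $d_0$ on $E_0$ is identified with the ordinary Khovanov--Rozansky differential $d_\chi$ on $H(C_N(D),d_{mf})\otimes_\C \C[a]$, and the $\chi$-maps there are homotopically nontrivial (Lemma \ref{lemma-def-chi}), hence nonzero on homology whenever $D$ has a crossing.
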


\begin{proof}
From Lemma \ref{lemma-ss-components}, we know that $\{E_r(F_{i,s})\}$ and $\{E_r(\hat{F}_{i,s})\}$ both collapse exactly at their $E_0$-pages, while $\{E_r(T_{i,m,s})\}$ and $\{E_r(\hat{T}_{i,m,s})\}$ both collapse exactly at their $E_{2km+1}$-pages. So this corollary follows from Theorem \ref{thm-spectral-sequence-decomp}.
\end{proof}

Next we define the thickness of the $\slmf(N)$ Khovanov-Rozansky homology.

\begin{definition}\label{def-ht}
For an oriented link $L$, denote by $H_N^{i,j}(L)$ the component of $H_N(L)$ of homological degree $i$ and polynomial degree $j$. Define the $\slmf(N)$ homological thickness $\mathrm{ht}_N(L)$ and the local $\slmf(N)$ homological thickness $\mathrm{lht}_N(L)$ of $L$ to be
\begin{eqnarray}
\label{eq-def-ht} \mathrm{ht}_N(L) & = & \max\{1+\frac{1}{2}[(2i_1 +j_1)-(2i_2+j_2)]~|~ H_N^{i_1,j_1}(L) \neq 0, ~H_N^{i_2,j_2}(L) \neq 0,\} \\
\label{eq-def-lht} \mathrm{lht}_N(L) & = & \max\{\frac{j_1-j_2}{2}~|~ \exists i \in \zed, \text{ such that } H_N^{i,j_1}(L) \neq 0, ~H_N^{i+1,j_2}(L) \neq 0.\}
\end{eqnarray}
\end{definition}

Of course, $\mathrm{ht}_N(L)$ is a naive generalization of the homological thickness of the rational Khovanov homology. Note that $\mathrm{lht}_N(L)$ is not always defined. For example, $\mathrm{lht}_N(\text{unknot})$ is not defined. Even when $\mathrm{lht}_N(L)$ is defined, it is not clear whether it is always non-negative. But, from their definitions, one can see that $\mathrm{lht}_N(L) \leq \mathrm{ht}_N(L)$ if $\mathrm{lht}_N(L)$ is defined. See \cite[Figure 24 and Table 2]{Elliott} for a knot $K_1$ satisfying $\mathrm{lht}_2(K_1) =3< \mathrm{ht}_2(K_1)=4$.

\begin{corollary}\label{cor-ht-bound} 
$k\cdot \tw_P(L) \leq \mathrm{ht}_N(L)$ and, if $H_N(L) \ncong \hat{\mathcal{H}}_P(L) := \bigoplus_{i\in \zed} \hat{\mathcal{H}}^i_P(L)$, then $\mathrm{lht}_N(L)$ is defined and $k\cdot\tw_P(L) \leq \mathrm{lht}_N(L)$, where $2k=\deg a$.
\end{corollary}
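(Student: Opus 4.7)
The plan is to extract both inequalities directly from Corollary \ref{cor-H_N-decomp}, which expresses $H^i_N(L)$ as a direct sum whose last two terms come from the torsion components of $H_P(L)$: each summand $(\C[a]/(a^{m_{i,l}}))\{s_{i,l}\}$ of $H^i_P(L)$ contributes a copy of $\C\{s_{i,l}\}$ to $H^i_N(L)$ via the middle block and, by the $n_{i+1}$-indexed block (read with $i$ replaced by $i-1$), a copy of $\C\{s_{i,l}+2km_{i,l}\}$ to $H^{i-1}_N(L)$. These two explicit nonzero bidegrees will serve as the witnesses for both $\mathrm{ht}_N$ and $\mathrm{lht}_N$.

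Fix $i_0, l_0$ with $m_{i_0,l_0} = \tw_P(L) =: w$, and set $s = s_{i_0,l_0}$. Then $H_N^{i_0,s}(L) \neq 0$ and $H_N^{i_0-1,\, s+2kw}(L) \neq 0$. Taking $(i_2,j_2)=(i_0,s)$ and $(i_1,j_1)=(i_0-1,\,s+2kw)$ in \eqref{eq-def-ht}, a one-line computation gives
\[
1 + \tfrac{1}{2}\bigl[(2i_1+j_1)-(2i_2+j_2)\bigr] = 1 + \tfrac{1}{2}(2kw-2) = kw,
\]
so $\mathrm{ht}_N(L) \geq kw$. Taking $i = i_0-1$ in \eqref{eq-def-lht} likewise gives $H_N^{i,\,s+2kw}\neq 0$ and $H_N^{i+1,\,s}\neq 0$, yielding $\mathrm{lht}_N(L) \geq \tfrac{(s+2kw)-s}{2} = kw$ whenever $w\geq 1$.

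To finish, I will dispose of the edge cases. If $\tw_P(L)=0$ then $H_P(L)$ is free and the first inequality is immediate, since $\mathrm{ht}_N(L) \geq 1$ holds whenever $H_N(L)$ is nonzero, which is automatic for a link. For the second inequality, the hypothesis $H_N(L) \ncong \hat{\mathcal{H}}_P(L)$ forces some $n_i>0$ via Corollary \ref{cor-H_N-decomp}, for if every $n_i$ were zero the corollary would collapse each $H^i_N(L)$ onto its $\hat{\mathcal{H}}^i_P(L)$ summand. Hence $\tw_P(L)\geq 1$, and the pair produced above both makes $\mathrm{lht}_N(L)$ defined and realizes the bound $k\cdot\tw_P(L)$.

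The whole argument reduces to careful bookkeeping on the indices in Corollary \ref{cor-H_N-decomp}, so no serious obstacle arises. The only subtle point is recognizing that the polynomial-degree shift $+2km_{i+1,l}$ appearing in the $n_{i+1}$-indexed summand of $H^i_N(L)$ is precisely the gap that realizes $k\cdot \tw_P(L)$ in both thickness definitions, so that a single pair of bidegrees from the maximal torsion component simultaneously witnesses both inequalities.
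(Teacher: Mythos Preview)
Your proof is correct and follows essentially the same approach as the paper: both arguments use Corollary~\ref{cor-H_N-decomp} to observe that each torsion component $(\C[a]/(a^m))\|i\|\{s\}$ of $H_P(L)$ produces the pair $\C\|i\|\{s\}$ and $\C\|i-1\|\{s+2km\}$ in $H_N(L)$, and then read off the thickness bounds from this pair. The paper states this observation in one sentence and leaves the arithmetic to the reader, whereas you have written out the index computations and edge cases explicitly.
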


\begin{proof}
By Corollary \ref{cor-H_N-decomp}, each torsion component $(\C[a]/(a^m))\|i\|\{s\}$ of $H_P(L)$ generates a pair of $1$-dimensional components of $H_N(L)$: $\C\|i\|\{s\}$ and $\C\|i-1\|\{2km+s\}$. Corollary \ref{cor-ht-bound} follows from this observation.
\end{proof}

\subsection{Recovering $H_P(L)$ from $\{\tilde{E}^{(r)}(L)\}$}\label{subsec-recover-H_P} In this subsection, we give an algorithm to recover the $\zed^{\oplus2}$-graded $\C[a]$-module structure of $H_P(L)$ from the $\zed^{\oplus2}$-graded $\C$-linear space structure on pages of the Lee-Gornik spectral sequence over $\C$. We write down the algorithm in terms of $\{\tilde{E}^{(r)}(L)\}$ to have slightly simpler notations.

From page $\tilde{E}^{(1)}(L) = H_N(L)$, one can find the $\slmf(N)$ homological thickness $\mathrm{ht}_N(L)$ of $L$. By Corollary \ref{cor-ht-bound}, we know that $\tw_P(L) \leq \tau:= \left\lfloor \frac{\mathrm{ht}_N(L)}{k}\right\rfloor$. So $a^\tau H_P(L)$ is a free $\C[a]$-module and, by Corollary \ref{cor-E-collapse}, $\{\tilde{E}^{(r)}(L)\}$ collapses at or before the page $\tilde{E}^{(\tau+1)}(L)$.

Now consider the pages $\{\tilde{E}^{(r)}(L) ~|~ 1\leq r \leq \tau+1\}$. By Lemma \ref{lemma-ss-components} and Theorems \ref{thm-spectral-sequence-decomp}, \ref{thm-exact-couple-lee-gornik}, we observe the following.
\begin{enumerate}[1.]
	\item Start with $\tilde{E}^{(\tau+1)}(L) \cong \tilde{E}^{(\infty)}(L)$. Note that:
	\begin{itemize}
		\item Each free component $\C[a]\|i\|\{s\}$ of $H_P(L)$ contributes a $1$-dimensional component $\C\|i\|\{s\}$ to $\tilde{E}^{(\infty)}(L)$.
	  \item Torsion components of $H_P(L)$ contribute nothing to $\tilde{E}^{(\infty)}(L)$.
  \end{itemize}
        So we can recover all the generators of the free part of $H_P(L)$ from $\tilde{E}^{(\tau+1)}(L)$.
	\item Next look at $\tilde{E}^{(\tau)}(L)$. 
	\begin{itemize}
	  \item Each free component $\C[a]\|i\|\{s\}$ of $H_P(L)$ contributes a $1$-dimensional component $\C\|i\|\{s\}$ to $\tilde{E}^{(\tau)}(L)$.
	  \item Each component $\C[a]/(a^\tau)\|i\|\{s\}$ of of $H_P(L)$ contributes a component $\C\|i\|\{s\} \oplus \C\|i-1\|\{2k\tau+s\}$ to $\tilde{E}^{(\tau)}(L)$.
	  \item For $m<\tau$, a component $\C[a]/(a^m)\|i\|\{s\}$ of of $H_P(L)$ contributes nothing to $\tilde{E}^{(\tau)}(L)$.
  \end{itemize}
        Since we know all the generators of the free part of $H_P(L)$ from the previous step, we can recover all generators of torsion components of $H_P(L)$ of the form $\C[a]/(a^\tau)\|i\|\{s\}$.
  \item For any $1\leq r<\tau$, assume we have recovered all generators of free components and torsion components of the form $\C[a]/(a^m)\|i\|\{s\}$ of $H_P(L)$, where   $r+1\leq m \leq \tau$. Look at the page $\tilde{E}^{(r)}(L)$.
	\begin{itemize}
	  \item Each free component $\C[a]\|i\|\{s\}$ of $H_P(L)$ contributes a $1$-dimensional component $\C\|i\|\{s\}$ to $\tilde{E}^{(r)}(L)$.
	  \item If $m\geq r$, each component $\C[a]/(a^m)\|i\|\{s\}$ of of $H_P(L)$ contributes $\C\|i\|\{s\} \oplus \C\|i-1\|\{2km+s\}$ to $\tilde{E}^{(r)}(L)$.
	  \item For $m<r$, a component $\C[a]/(a^m)\|i\|\{s\}$ of of $H_P(L)$ contributes nothing to $\tilde{E}^{(r)}(L)$.
  \end{itemize}
        So we can recover all generators of torsion components of $H_P(L)$ of the form $\C[a]/(a^r)\|i\|\{s\}$.
\end{enumerate}

The above algorithm allows us to inductively recover the $\zed^{\oplus2}$-graded $\C[a]$-module structure of $H_P(L)$ from $\{\tilde{E}^{(r)}(L)\}$. In particular, we have proved the following theorem.

\begin{theorem}\label{thm-H_P-E-equivalent}
$H_P(L)$ and $\{\tilde{E}^{(r)}(L)\}$ (or, equivalently $\{\hat{E}_r(L)\}$) determine each other and encode the same information of the link $L$.
\end{theorem}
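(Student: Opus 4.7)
The plan is to prove the two directions of the equivalence separately, with the reverse direction being the substantive content.

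For the forward direction (that $\{\tilde{E}^{(r)}(L)\}$ is determined by $H_P(L)$), I would simply combine Theorem \ref{thm-H_P-decomp}, Theorem \ref{thm-spectral-sequence-decomp}, and Lemma \ref{lemma-ss-components}: Lobb's decomposition data $\{(m_{i,l},s_{i,l})\}$ together with $\hat{\mathcal{H}}_P(L)$ (which is itself a quotient of subspaces of $\hat{H}_P(L)$, an invariant readable from $H_P(L)$ via the standard identification of the $E_\infty$-page) explicitly computes every page $\hat{E}_r(L)$. The passage to $\{\tilde{E}^{(r)}(L)\}$ is then Theorem \ref{thm-exact-couple-lee-gornik}.

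For the reverse direction, the goal is to reconstruct, from the collection of $\zed^{\oplus 2}$-graded $\C$-spaces $\{\tilde{E}^{(r)}(L)\}$, the multiset of free summands $\C[a]\|i\|\{s\}$ and the multiset of torsion summands $(\C[a]/(a^{m_{i,l}}))\|i\|\{s_{i,l}+2km_{i,l}\}$ appearing in Theorem \ref{thm-H_P-decomp}. First, I would extract $\mathrm{ht}_N(L)$ from the first page $\tilde{E}^{(1)}(L)\cong H_N(L)$ and set $\tau:=\lfloor\mathrm{ht}_N(L)/k\rfloor$; Corollary \ref{cor-ht-bound} gives $\tw_P(L)\leq \tau$, so by Corollary \ref{cor-E-collapse} the spectral sequence is stationary from $\tilde{E}^{(\tau+1)}(L)$ onward. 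Reading off $\tilde{E}^{(\tau+1)}(L)\cong\tilde{E}^{(\infty)}(L)$ and using Lemma \ref{lemma-ss-components} (which says each torsion summand dies on or before its $E_{2km+1}$-page while each free summand $\C[a]\|i\|\{s\}$ contributes a single one-dimensional piece $\C\|i\|\{s\}$ to every page) identifies the free part of $H_P(L)$ exactly.

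Next I would descend inductively through the pages $\tilde{E}^{(\tau)}(L),\tilde{E}^{(\tau-1)}(L),\dots,\tilde{E}^{(1)}(L)$. At step $r$, assuming the free part and the torsion summands $(\C[a]/(a^m))\|i\|\{s\}$ with $m>r$ have already been extracted, Lemma \ref{lemma-ss-components} (reindexed via Theorem \ref{thm-exact-couple-lee-gornik}) gives the per-summand contribution to $\tilde{E}^{(r)}(L)$: a free summand contributes the one-dimensional piece $\C\|i\|\{s\}$; a torsion summand with parameter $m\geq r$ contributes $\C\|i\|\{s\}\oplus \C\|i-1\|\{2km+s\}$; and one with $m<r$ contributes nothing. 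Subtracting the known contributions from $\tilde{E}^{(r)}(L)$ leaves precisely the contributions of summands with $m=r$, which are then uniquely identifiable by their bigrading pattern. The induction terminates at $r=1$, at which point every summand of $H_P(L)$ has been located, so $H_P(L)$ is reconstructed as a $\zed^{\oplus 2}$-graded $\C[a]$-module.

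The only real obstacle is well-definedness of the subtraction at each inductive step: one must know that the previously recovered summands contribute exactly, not approximately, to page $r$. This is the content of Lemma \ref{lemma-ss-components}, which pins down the persistence of each $T_{i,m,s}$-summand on precisely the pages $1\leq r\leq 2km$, and its disappearance at $r=2km+1$. Once that lemma and Theorem \ref{thm-spectral-sequence-decomp} are in hand the argument is purely formal bookkeeping; no additional link-theoretic input is required, and the conclusion of Theorem \ref{thm-H_P-E-equivalent} follows.
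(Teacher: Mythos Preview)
Your proposal is correct and follows essentially the same approach as the paper: the paper's proof in Subsection~\ref{subsec-recover-H_P} is precisely this backward induction through the pages $\tilde{E}^{(\tau+1)}(L),\tilde{E}^{(\tau)}(L),\dots,\tilde{E}^{(1)}(L)$, using $\tau=\lfloor\mathrm{ht}_N(L)/k\rfloor$ and the per-summand contributions from Lemma~\ref{lemma-ss-components}. One minor slip: where you write the torsion summands as $(\C[a]/(a^{m_{i,l}}))\|i\|\{s_{i,l}+2km_{i,l}\}$, the polynomial shift should just be $\{s_{i,l}\}$ (cf.\ Theorem~\ref{thm-H_P-decomp}), but your subsequent description of the algorithm uses the correct bigradings, so this does not affect the argument.
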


\begin{remark}
With minor language changes, all the above theorems their proofs generalize to the colored $\slmf(N)$ link homology defined in \cite{Wu-color,Wu-color-equi,Wu-color-ras}.
\end{remark}

\subsection{Fast collapsing of the Lee spectral sequence and other observations} As we have seen, each torsion component $(\C[a]/(a^{m}))\|i\|\{s\}$ of $H_P(L)$ in Lobb's decomposition contributes a $2$-dimensional direct sum component $\C\|i\|\{s\} \oplus \C\|i-1\|\{2km+s\}$ to $H_N(L)$. Based on this, we make several observations. 

First, the pairing of $\C\|i\|\{s\}$ and $\C\|i-1\|\{2km+s\}$ is a generalization of \cite[Theorem 1.4]{Lee2}, which states that, except those with homological degree $0$, all homogeneous generators of the rational Khovanov homology of an alternating knot appear in pairs of bi-degree difference $(-1,4)$. Here, we use the torsion width to slightly generalize this theorem. 

\begin{corollary}\label{cor-sl-2}
Suppose $N=2$, $\deg a =4$ and $P(x,a)=x^3-ax$. Assume that $\mathrm{lht}_2(L) \leq 3$ for a link $L$. Then $\tw_P(L) \leq 1$. Consequently, $\{\tilde{E}^{(r)}(L)\}$ collapses at its $E^{(1)}$- or $E^{(2)}$-page. Moreover, there exists a (possibly empty) sequence of pairs of integers $\{(i_1,s_1),\dots,(i_n,s_n)\}$ such that 
\[
H_2(L) \cong \hat{\mathcal{H}}_P(L) \oplus \bigoplus_{l=1}^n (\C\|i_l\|\{s_l\} \oplus \C\|i_l-1\|\{s_l+4\}).
\] 
\end{corollary}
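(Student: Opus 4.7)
The plan is to derive everything from the torsion-width bound in Corollary \ref{cor-ht-bound} and then read off the module decomposition via Corollary \ref{cor-H_N-decomp}. With $N=2$, $\deg a = 4$, and $P(x,a)=x^3-ax$, we have $k=2$ and $\lfloor N/k\rfloor=1$, so Lobb's decomposition already expresses $H_P(L)$ as a sum of free summands and torsion summands $(\C[a]/(a^{m}))\|i\|\{s\}$ with $m\geq 1$. The whole problem reduces to showing $m\leq 1$.

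First I would split into two cases according to whether $H_2(L)\cong\hat{\mathcal{H}}_P(L)$. In the easy case where this isomorphism holds, Lobb's decomposition must be torsion-free (otherwise the torsion summands would contribute extra pairs to $H_N(L)$ by Corollary \ref{cor-H_N-decomp}), so $\tw_P(L)=0$, the empty list $\{(i_l,s_l)\}$ produces the claimed direct sum decomposition, and Corollary \ref{cor-E-collapse} gives collapse at $E^{(1)}$ (the trivial-unlink case is handled separately and is immediate since $H_P$ is free). Otherwise $\mathrm{lht}_2(L)$ is defined, and the second inequality of Corollary \ref{cor-ht-bound} yields $2\,\tw_P(L) = k\cdot\tw_P(L)\leq \mathrm{lht}_2(L)\leq 3$, forcing $\tw_P(L)\leq 1$; Corollary \ref{cor-E-collapse} then gives collapse exactly at $E^{(\tw_P(L)+1)}$, which is $E^{(1)}$ or $E^{(2)}$.

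For the module decomposition, since $\tw_P(L)\leq 1$ every torsion summand appearing in Theorem \ref{thm-H_P-decomp} has $m_{i,l}=1$. Substituting $m_{i,l}=1$ and $2k=4$ into the formula of Corollary \ref{cor-H_N-decomp} shows that each torsion summand $(\C[a]/(a))\|i\|\{s\}$ of $H_P(L)$ contributes exactly the pair $\C\|i\|\{s\}\oplus\C\|i-1\|\{s+4\}$ to $H_2(L)$, in addition to the free contribution $\hat{\mathcal{H}}_P(L)$. Enumerating the torsion summands as $(i_1,s_1),\dots,(i_n,s_n)$ gives the stated isomorphism.

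No step presents a genuine obstacle; the corollary is really a packaging of previously established results. The only mild point requiring care is invoking Corollary \ref{cor-E-collapse}, whose hypothesis excludes diagrams that are planar disjoint unions of circles, but in that degenerate case $H_P(L)$ is free and the conclusion is immediate from the empty torsion sequence.
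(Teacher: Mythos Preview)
Your proposal is correct and follows essentially the same route as the paper: case-split on whether $H_P(L)$ has torsion (equivalently, whether $H_2(L)\cong\hat{\mathcal{H}}_P(L)$), invoke Corollary~\ref{cor-ht-bound} in the non-free case to force $\tw_P(L)\le 1$, and then read off the collapse and decomposition from Corollaries~\ref{cor-E-collapse} and~\ref{cor-H_N-decomp}. Your extra remark about the planar-circles hypothesis in Corollary~\ref{cor-E-collapse} is a valid caveat the paper leaves implicit.
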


\begin{proof}
If $H_P(L)$ is a free $\C[a]$-module, then $\tw_P(L) =0$. So $\{\tilde{E}^{(r)}(L)\}$ collapses at its $E^{(1)}$-page and $H_2(L) \cong \hat{\mathcal{H}}_P(L)$.

Now assume $H_P(L)$ has torsions. Then $\tw_P(L) \geq 1$ and, by Corollary \ref{cor-H_N-decomp}, $H_2(L) \ncong \hat{\mathcal{H}}_P(L)$. Thus, by Corollary \ref{cor-ht-bound}, we have $2\tw_P(L) \leq \mathrm{lht}_2(L)\leq 3$. So $\tw_P(L) \leq 1$. This shows that, in this case, $\tw_P(L) = 1$ and, therefore, $\{\tilde{E}^{(r)}(L)\}$ collapses at its $E^{(2)}$-page by Corollary \ref{cor-E-collapse}. The decomposition of $H_2(L)$ follows from Corollary \ref{cor-H_N-decomp}.
\end{proof}

\begin{remark}
In \cite{Shu-Colloquium}, Shumakovitch observed that, in all the examples he knew, the Lee spectral sequence collapses at its $E^{(2)}$-page, even for H-thick links. Corollary \ref{cor-sl-2} explains why the Lee spectral sequences of some H-thick links collapse so fast. 

For example, consider the H-thick knot $K_1$ in \cite[Figure 24]{Elliott}. From \cite[Table 2]{Elliott}, one can see that $\mathrm{lht}_2(K_1)= 3$ and $\mathrm{ht}_2(K_1)= 4$. By Corollary \ref{cor-sl-2}, $\{\tilde{E}^{(r)}(K_1)\}$ collapses at its $E^{(2)}$-page.
\end{remark}

Next, we look at the two ends of the $\slmf(N)$ Khovanov-Rozansky homology of a knot.

\begin{corollary}\label{cor-H_N-ends}
Fix a positive integer $N$. For a knot $K$, define $h_{\mathrm{min}} = \min \{i~|~ H_N^i(K) \neq 0\}$ and $h_{\mathrm{max}} = \max \{i~|~ H_N^i(K) \neq 0\}$. Moreover, for a fixed $i$, define $g^i_{\mathrm{min}}= \min \{j~|~ H_N^{i,j}(K) \neq 0\}$ and $g^i_{\mathrm{max}}= \max \{j~|~ H_N^{i,j}(K) \neq 0\}$, where $H_N^{i,j}(K)$ is the component of $H_N^{i}(K)$ of polynomial grading $j$. 
\begin{enumerate}
	\item If $h_{\mathrm{min}}<0$, then $\dim_\C  H_N^{h_{\mathrm{min}}}(K) \leq \dim_\C  H_N^{h_{\mathrm{min}}+1}(K)$ and $g^{h_{\mathrm{min}}}_{\mathrm{min}}>g^{h_{\mathrm{min}}+1}_{\mathrm{min}}$.
	\item If $h_{\mathrm{max}}>0$, then $\dim_\C  H_N^{h_{\mathrm{max}}}(K) \leq \dim_\C  H_N^{h_{\mathrm{max}}-1}(K)$ and $g^{h_{\mathrm{max}}}_{\mathrm{max}}<g^{h_{\mathrm{max}}-1}_{\mathrm{max}}$.
	\item If $h_{\mathrm{max}}=h_{\mathrm{min}}=0$, then $H_N(K) \cong \hat{\mathcal{H}}_P(K)$ for any $P=P(x,a)$ of form \eqref{def-P}.
\end{enumerate}
\end{corollary}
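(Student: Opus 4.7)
The plan is to extract everything from Corollary \ref{cor-H_N-decomp} together with Gornik's theorem that, for a knot $K$, $\hat{H}_P(K)$ is concentrated in homological degree $0$ (with total dimension $N$). In particular, $\hat{\mathcal{H}}^i_P(K) = 0$ for every $i \neq 0$, so whenever $i \neq 0$ the only contributions to $H^i_N(K)$ in the decomposition \eqref{eq-H_N-decomp} come from torsion summands of $H_P(K)$.

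For part (1), I first exploit $H^{h_{\min}-1}_N(K) = 0$: reading \eqref{eq-H_N-decomp} at index $h_{\min}-1$ forces both $n_{h_{\min}-1} = 0$ (no tops of torsions there) and $n_{h_{\min}} = 0$ (no bottoms landing there). Since $h_{\min} < 0$ gives $\hat{\mathcal{H}}^{h_{\min}}_P(K) = 0$, \eqref{eq-H_N-decomp} at $i = h_{\min}$ collapses to
\[
H^{h_{\min}}_N(K) \;\cong\; \bigoplus_{l=1}^{n_{h_{\min}+1}} \C\{2km_{h_{\min}+1,l}+s_{h_{\min}+1,l}\},
\]
so $\dim_\C H^{h_{\min}}_N(K) = n_{h_{\min}+1}$. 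The matching tops $\bigoplus_l \C\{s_{h_{\min}+1,l}\}$ sit inside $H^{h_{\min}+1}_N(K)$ as a direct summand, giving $\dim_\C H^{h_{\min}+1}_N(K) \geq n_{h_{\min}+1}$. The grading inequality $g^{h_{\min}}_{\min} > g^{h_{\min}+1}_{\min}$ follows because each bottom sits at polynomial degree $s_{h_{\min}+1,l} + 2km_{h_{\min}+1,l}$ while the corresponding top sits at $s_{h_{\min}+1,l}$, so the minimum on the bottom side exceeds that of the top contributions by at least $2k \geq 2$, and $g^{h_{\min}+1}_{\min}$ is no larger than the minimum over those tops. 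Part (2) is the entirely symmetric argument at the top end: $H^{h_{\max}+1}_N(K) = 0$ kills $n_{h_{\max}+1}$ and $n_{h_{\max}+2}$, so the only torsions remaining at homological degree $h_{\max}$ produce matching bottoms in $H^{h_{\max}-1}_N(K)$ shifted upward in polynomial grading by $2km_{h_{\max},l}$, which gives both the dimension and grading comparisons.

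Part (3) uses the same mechanism globally. The hypothesis $h_{\max} = h_{\min} = 0$ gives $H^i_N(K) = 0$ for every $i \neq 0$. Feeding each such vanishing into \eqref{eq-H_N-decomp} and using $\hat{\mathcal{H}}^i_P(K) = 0$ forces $n_i = n_{i+1} = 0$; as $i$ ranges over all nonzero integers, this kills every $n_j$ (with $n_0 = 0$ coming from the vanishing of $H^{-1}_N(K)$ and $n_1 = 0$ from the vanishing of $H^1_N(K)$). Hence $H_P(K)$ has no torsion, and \eqref{eq-H_N-decomp} at $i = 0$ reduces to $H_N(K) \cong \hat{\mathcal{H}}^0_P(K) = \hat{\mathcal{H}}_P(K)$. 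Since the decomposition can be taken for any $P$ of the form \eqref{def-P}, the conclusion holds for every such $P$. There is no serious obstacle beyond the bookkeeping; the only non-elementary input is Gornik's theorem on $\hat{H}_P$ of a knot, which is precisely what distinguishes knots from general links in this statement.
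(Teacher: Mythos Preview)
Your approach is essentially the same as the paper's, but one point needs correction. Gornik's theorem (that $\hat{H}_P(K)$ is concentrated in homological degree $0$ for a knot) does \emph{not} hold for an arbitrary $P$ of form \eqref{def-P}; it holds for the specific generic deformation $P_1(x,b_1)=x^{N+1}+b_1 x$, and this is exactly the polynomial the paper invokes. For Parts (1) and (2) this is harmless: the conclusions concern only $H_N$, so you may (and should) fix $P=P_1$ at the outset and run your argument with that choice, just as the paper does.

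For Part (3), however, the conclusion is asserted for \emph{every} $P$ of form \eqref{def-P}, and for such general $P$ you cannot appeal to Gornik's theorem. Fortunately your invocation of $\hat{\mathcal{H}}^i_P(K)=0$ there is unnecessary: since \eqref{eq-H_N-decomp} is a direct-sum decomposition, the vanishing of $H^i_N(K)$ for $i\neq 0$ already forces every summand on the right-hand side to vanish, so $n_i=n_{i+1}=0$ follows immediately (and $\hat{\mathcal{H}}^i_P(K)=0$ for $i\neq 0$ is then a \emph{consequence}, not an input). This is precisely how the paper argues Part (3): any torsion component would force $H_N(K)$ to occupy at least two homological degrees, contradicting $h_{\max}=h_{\min}=0$. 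With these two clarifications your proof matches the paper's.
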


\begin{proof}
For Part (1), consider the polynomial $P_1(x,b_1)=x^{N+1}+b_1 x$. By \cite[Theorem 2]{Gornik}, $\hat{H}_{P_1}(K)$ is supported on homological degree $0$. Therefore, the free part of $H_{P_1}(K)$ is supported on homological degree $0$. Since $h_{\mathrm{min}}<0$, $H_N^{h_{\mathrm{min}}}(K)$ comes entirely from torsion components of $H_{P_1}(K)$ at homological degree $h_{\mathrm{min}}+1$. Part (1) follows from this observation.

The proof of Part (2) is very similar and left to the reader. 

For part (3), note that, if $H_P(K)$ has torsion components, then $H_N(L)$ should occupy at least two homological degrees. But $h_{\mathrm{max}}=h_{\mathrm{min}}=0$. So $H_P(K)$ is free. Then Part (3) follows from Corollary \ref{cor-H_N-decomp}.
\end{proof}

Finally, we consider the equivariant $\slmf(N)$ Khovanov-Rozansky homology of closed negative braids.

\begin{corollary}\label{cor-negative-braids}
Let $P(x,a)$ be any polynomial of form \eqref{def-P}. Suppose the link $L$ is the closure of a negative braid, then $H_{P}^1(L) \cong 0$ and $H_{P}^0(L)$, $H_{P}^2(L)$ are both free $\C[a]$-modules. 

In particular, if a knot $K$ is the closure of a negative braid, then $H_{P_1}^1(K) \cong H_{P_1}^2(K) \cong 0$, where $P_1(x,b_1)=x^{N+1}+b_1 x$ and $b_1$ is a homogeneous variable of degree $2N$.
\end{corollary}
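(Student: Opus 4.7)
My plan is to combine Lobb's decomposition (Theorem \ref{thm-H_P-decomp}), its $H_N$-counterpart Corollary \ref{cor-H_N-decomp}, the support of the KR chain complex of a negative braid, and the classical extremal vanishing $H_N^{-1}(L)=0$ for closures of negative braids. For a negative braid diagram $D$ with $c$ crossings, each negative crossing contributes a local chain complex in homological degrees $\{-1,0\}$, so $C_P(D)$ is supported in $[-c,0]$. In particular $H_P^i(L)=0$ for every $i>0$, which immediately yields $H_P^1(L)\cong 0$ and $H_P^2(L)=0$ (hence vacuously free), together with the knot-case vanishings $H_{P_1}^1(K)=H_{P_1}^2(K)=0$. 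The one substantive claim left is that $H_P^0(L)$ is a free $\C[a]$-module.

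To prove this, I would invoke the classical vanishing $H_N^{-1}(L)=0$ for $L$ the closure of a negative braid, the $\slmf(N)$-analogue (obtained by mirror) of Khovanov's well-known fact $H_N^1(\hat\beta)=0$ for non-trivial positive braid closures; this was originally proved for $N=2$ and is extended to general $\slmf(N)$ by analogous extremal chain-level analyses in the KR-homology literature. Granting this, suppose $(\C[a]/(a^{m_{0,l}}))\{s_{0,l}\}$ is a torsion summand of $H_P^0(L)$ appearing in Lobb's decomposition. By Corollary \ref{cor-H_N-decomp}, it would contribute a nonzero $\C\{2km_{0,l}+s_{0,l}\}$ direct summand to $H_N^{-1}(L)$, a contradiction. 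Hence $H_P^0(L)$ has no torsion summand, and Theorem \ref{thm-H_P-decomp} then reads $H_P^0(L)\cong\hat{\mathcal{H}}^0_P(L)\otimes_\C\C[a]$, a free $\C[a]$-module.

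The main obstacle is producing a clean, self-contained proof of $H_N^{-1}(L)=0$ for arbitrary $\slmf(N)$ within the paper's framework. A direct strategy is to inspect the two rightmost degrees $i=-1,0$ of the complex $(H(C_P(D),d_{mf}),d_\chi)$: at $i=0$ one has the MF-homology of the Seifert graph of $\hat\beta$ (a disjoint union of $n$ circles, hence a free $\C[a]$-module), while at $i=-1$ the term decomposes as a direct sum over crossings of MF-homologies of MOY graphs with a single singular vertex. One then needs to verify, via the $\zeta$-maps at individual crossings and the standard MOY relations, that $d_\chi^{-1,0}$ has image (after setting $a=0$) complementary to a canonical set of representatives of $\hat{\mathcal{H}}^0_P(L)$. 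Carrying out this bookkeeping uniformly in $N$, $n$ and $c$ is the technical heart of the argument.
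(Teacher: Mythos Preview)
Your grading convention is inverted relative to the paper. From \eqref{eq-def-complex-crossing+}--\eqref{eq-def-complex-crossing-}, a \emph{negative} crossing contributes a local complex in homological degrees $\{0,1\}$, not $\{-1,0\}$. Hence for a negative braid diagram $D$ with $c$ crossings, $C_P(D)$ is supported in $[0,c]$, not in $[-c,0]$. This flips the roles of your ``easy'' and ``hard'' parts.

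Concretely: the vanishing $H_N^{-1}(L)=0$ that you labor to establish is immediate from support, and your deduction that $H_P^0(L)$ is free then goes through exactly as you wrote (a torsion summand in degree $0$ would produce, via Corollary \ref{cor-H_N-decomp}, a nonzero contribution to $H_N^{-1}(L)$). Conversely, your claim that $H_P^1(L)=0$ and $H_P^2(L)=0$ ``by support'' is false: both degrees lie inside $[0,c]$. The actual input here is Stosic's theorem that $H_N^1(L)=0$ for closures of negative braids (this is \cite[Theorem 5]{Stosic-negative-braids}, the $\slmf(N)$ version of the positive-braid vanishing you mention). Feeding $H_N^1(L)=0$ into Corollary \ref{cor-H_N-decomp} with $i=1$ kills the free part of $H_P^1$, the torsion of $H_P^1$, and the torsion of $H_P^2$ simultaneously; thus $H_P^1(L)=0$ and $H_P^2(L)$ is free (not zero in general). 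The knot claim for $P_1$ then follows since the free part of $H_{P_1}(K)$ is concentrated in degree $0$ by Gornik. So your mechanism (Lobb's decomposition plus Corollary \ref{cor-H_N-decomp}) is exactly right; you just need to apply it at degree $1$ using Stosic's result rather than at degree $-1$.
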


\begin{proof}
By the definition of $H_{N}(L)$ in \cite{KR1}, we have $C_{N}^i(L) = 0$ if $i<0$. So $H_{N}^i(L) \cong 0$ if $i<0$. This implies that $H_{P}^0(L)$ is free. In \cite[Theorem 5]{Stosic-negative-braids}, Stosic proved that $H_{N}^1(L) \cong 0$, which implies that $H_{P}^1(L) \cong 0$ and $H_{P}^2(L)$ is a free $\C[b_1]$-module. 

For the knot $K$, recall that the free part of $H_{P_1}(K)$ is supported on homological degree $0$. So $H_{P_1}^2(K)$ being free means it vanishes.
\end{proof}

\subsection{Organization of this paper} We review the constructions of $H_P(L)$, $E_r(L)$ and $\hat{E}_r(L)$ in Sections \ref{sec-homology-def} and \ref{sec-spectral-sequence}. Then we prove Theorems \ref{thm-H_P-decomp}, \ref{thm-spectral-sequence-decomp} and \ref{thm-exact-couple-lee-gornik} in Section \ref{sec-spectral-sequence-decomp}. After that, we define the $\bigwedge^\ast \C^{N-1}$-action in Section \ref{sec-action}.

We assume the reader is somewhat familiar with the construction of the $\slmf(N)$ Khovanov-Rozansky homology in \cite{KR1}.

\begin{acknowledgments}
The author would like to thank Alexander Shumakovitch for very interesting discussions.
\end{acknowledgments}

\section{Definition of $H_P$}\label{sec-homology-def}

In the remainder of this paper, $N$ is a fixed positive integer with $N \geq 2$. We review the construction of equivariant $\slmf(N)$ Khovanov-Rozansky in a more general setting, which is needed in Section \ref{sec-action}. In the current section and Section \ref{sec-spectral-sequence} below, 
\begin{equation}\label{def-P-general}
P=P(x,a_1,\dots,a_n)=x^{N+1} + xF(x,a_1,\dots,a_n), 
\end{equation}
where $x$ is a homogeneous variable of degree $2$, $a_j$ is a homogeneous variable of degree $2k_j$, and $F(x,a_1,\dots,a_n)$ is a homogeneous element of $\C[x,a_1,\dots,a_n]$ of degree $2N+2$ satisfying $F(x,0\dots,0)=0$.

\subsection{Graded and filtered matrix factorizations} Let $R=\C[x_1,\dots,x_m,a_1,\dots,a_n]$, where $x_1,\dots,x_m$ are homogeneous variables of degree $2$ and $a_j$ is a homogeneous variable of degree $2k_j$ for $1\leq j\leq n$. We endow two structures on $R$: 
\begin{itemize}
	\item The polynomial grading with degree function $\deg$ given by 
	\[
	\deg (\prod_{j=1}^n a_j^{p_j} \cdot \prod_{i=1}^m x_i^{l_i}) = \sum_{j=1}^n 2k_j p_j + \sum_{i=1}^m 2l_i.
	\]
	\item The $x$-filtration $0= \fil_x^{-1} R \subset \fil_x^0 R \subset \cdots \subset \fil_x^n R \subset \cdots$ such that $(\prod_{i=1}^n a_j^{p_j} \cdot \prod_{i=1}^m x_i^{l_i}) \in \fil_x^n R$ if and only if $\sum_{i=1}^m 2l_i\leq n$.  The degree function $\deg_x$ of $\fil_x$ is given by 
	\[
	\deg_x (\prod_{j=1}^n a_j^{p_j} \cdot \prod_{i=1}^m x_i^{l_i}) = \sum_{i=1}^m 2l_i.
	\]
\end{itemize}
Unless otherwise specified, when we say an element is homogeneous, we mean it is homogeneous with respect to the polynomial grading.

\begin{definition}\label{def-grading-filtration}
Let $M$ be an $R$-module. We say that $M$ is a graded $R$-module if it is endowed with a grading $M=\bigoplus_{i} M_i$ such that, for any homogeneous element $r$ of $R$, $rM_i \subset M_{i+\deg r}$. We say that $M$ is an $x$-filtered $R$-module if it is endowed with an increasing filtration $\fil_x$ such that, for any element $r$ of $R$, $r\fil_x^i M \subset \fil_x^{i+\deg_x r} M$.
\end{definition}

\begin{definition}\label{def-mf}
Let $w$ be a homogeneous element of $R$ with $\deg{w}=2N+2$. A matrix factorization $M$ of $w$ over $R$ is a collection of two free $R$-modules $M^0$, $M^1$ and two $R$-module homomorphisms $d^0:M^0\rightarrow M^1$, $d^1:M^1\rightarrow M^0$, called differential maps, such that $d^1d^0=w\cdot\id_{M^0}$ and $d^0d^1=w\cdot\id_{M^1}$. We usually write $M$ as $M^0 \xrightarrow{d^0} M^1 \xrightarrow{d^1} M^0$.

We call $M$ graded if $M^0$, $M^1$ are graded $R$-modules and $d^0$, $d^1$ are homogeneous homomorphisms with $\deg d^0 = \deg d^1 =N+1$. 

We call $M$ $x$-filtered if $M^0$ and $M^1$ are $x$-filtered $R$-modules and $\deg_x d^0, \deg_x d^1  \leq N+1$.
\end{definition}

In the definition of $H_P$, we use only Koszul matrix factorizations defined below.

\begin{definition}\label{def-mf-Koszul}
Let $b$ and $c$ be homogeneous elements of $R$ with $\deg{(bc)}=2N+2$. Denote by $(b,c)_R$ the Koszul matrix factorization 
\[
R \xrightarrow{b} R\{N+1-\deg{b}\} \xrightarrow{c} R,
\]
where $b,~c$ act on $R$ by multiplication and ``$\{s\}$" means shifting by $s$ both the polynomial grading and the $x$-filtration\footnote{That is, in $R\{s\}$, $\deg (\prod_{j=1}^n a_j^{p_j} \cdot \prod_{i=1}^m x_i^{l_i}) = s+\sum_{j=1}^n 2k_j p_j + \sum_{i=1}^m 2l_i$ and $\deg_x (\prod_{j=1}^n a_j^{p_j} \cdot \prod_{i=1}^m x_i^{l_i}) = s + \sum_{i=1}^m 2l_i$.} of $R$. This matrix factorization of $bc$ is both graded and $x$-filtered.

For homogeneous elements $b_1\cdots,b_l,c_1,\cdots,c_l$ of $R$ with $\deg(b_ic_i)=2N+2$, $i=1,\cdots,l$, denote by
\[
\left(%
\begin{array}{cc}
  b_1 & c_1 \\
  b_2 & c_2 \\
  \vdots & \vdots \\
  b_l & c_l \\
\end{array}%
\right)_R
\]
the Koszul matrix factorization $(b_1,c_1)_R \otimes_R(b_2,c_2)_R\otimes_R\cdots\otimes_R(b_l,c_l)_R$. This matrix factorization of $w=\sum_{i=1}^l b_ic_i$ is again both graded and $x$-filtered. 

When $R$ is clear from context, we drop it from the notation.
\end{definition}

\begin{definition}\label{definition-Koszul-basis}
As a free $R$-module, $(b,c)_R$ has a basis $\{1_0,1_1\}$, where $1_\ve$ is the ``$1$" in the copy of $R$ with $\zed_2$-grading $\ve$. More generally, the tensor product
\[
\left(%
\begin{array}{cc}
  b_1 & c_1 \\
  b_2 & c_2 \\
  \vdots & \vdots \\
  b_l & c_l \\
\end{array}%
\right)_R = (b_1,c_1)_R \otimes_R(b_2,c_2)_R\otimes_R\cdots\otimes_R(b_l,c_l)_R
\]
has a basis $\{1_{\vec{\ve}}~|~\vec{\ve}=(\ve_1,\dots,\ve_l) \in \zed_2^{l}\}$, where $1_{\vec{\ve}} = 1_{\ve_1} \otimes\cdots\otimes 1_{\ve_l}$. We call $\{1_{\vec{\ve}}\}$ the standard basis for this Koszul matrix factorization.

Note that 
\begin{itemize}
	\item $\{1_{\vec{\ve}}\}$ is a homogeneous basis with respect to the polynomial grading.
	\item $\deg 1_{\vec{\ve}} = \deg_x 1_{\vec{\ve}}$ for every $\vec{\ve}$.
	\item $\deg_x(\sum_{\vec{\ve}} f_{\vec{\ve}} 1_{\vec{\ve}}) \leq l$ if and only if $\deg_x f_{\vec{\ve}} \leq l- \deg_x 1_{\vec{\ve}}$ for every $\vec{\ve}$.
\end{itemize}
\end{definition}

\subsection{The matrix factorization associated to a MOY graph}

\begin{definition}\label{def-MOY-graph}

A MOY graph $\Gamma$ is a finite oriented graph embedded in $\mathbb{R}^2$ with the following properties:
\begin{enumerate}
    \item Edges of $\Gamma$ are divided into two types: regular edges and wide edges.
    \item Vertices of $\Gamma$ are of two types: 
     \begin{itemize}
	   \item endpoints: $1$-valent vertices that are endpoints of regular edges,
	   \item internal vertices: $3$-valent vertices with 
	   \begin{itemize}
    	\item either two regular edges pointing inward and one wide edge pointing outward,
    	\item or two regular edges pointing outward and one wide edge pointing inward.
     \end{itemize}
     \end{itemize}
\end{enumerate}
We say that $\Gamma$ is closed if it has no endpoints.

A marking of $\Gamma$ consists of 
\begin{enumerate}
	\item A finite set of marked points on $\Gamma$ such that: 
	 \begin{itemize}
	 \item every regular edge contains at least one marked point, no wide edges contain any marked points,
	 \item every endpoint of $\Gamma$ is marked, none of the internal vertices are marked.
   \end{itemize}
	\item An assignment that assigns to each marked point a different homogeneous variable of degree $2$.
\end{enumerate}
\end{definition}

In the rest of this subsection, we fix a MOY graph $\Gamma$ and a marking of $\Gamma$. Assume $x_1,\dots,x_m$ are the variables assigned to the marked points of $\Gamma$. Define $R=\C[x_1,\dots,x_m,a_1,\dots,a_n]$, where $a_j$ is a homogeneous variable of degree $2k_j$ for $1\leq j\leq n$. Let $P(x,a_1,\dots,a_n)$ be the homogeneous polynomial given in \eqref{def-P-general}.

Now cut $\Gamma$ at all the marked points. This cuts $\Gamma$ into a collect of simple marked MOY graphs $\Gamma_1,\dots,\Gamma_l$ of the two types in Figure \ref{fig-pieces-of-Gamma}.

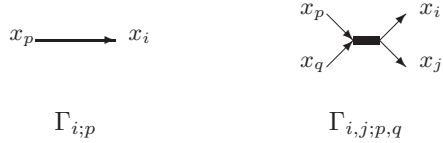
\begin{figure}[ht]
\[
\xymatrix@R=3pt{
 \setlength{\unitlength}{1pt}
\begin{picture}(50,20)(-10,0)

\put(-10,10){\small{$x_p$}}

\put(0,10){\vector(1,0){30}}

\put(35,10){\small{$x_i$}}

\end{picture} & & \setlength{\unitlength}{1pt}
\begin{picture}(50,20)(-10,0)

\put(-10,20){\small{$x_p$}}

\put(-10,0){\small{$x_q$}}

\put(0,0){\vector(1,1){10}}

\put(0,20){\vector(1,-1){10}}

\put(20,10){\vector(1,1){10}}

\put(20,10){\vector(1,-1){10}}

\put(35,20){\small{$x_i$}}

\put(35,0){\small{$x_j$}}

\linethickness{3pt}

\put(10,10){\line(1,0){10}}

\end{picture} \\
 \Gamma_{i;p} & & \Gamma_{i,j;p,q}
}
\]

\caption{Pieces of $\Gamma$}\label{fig-pieces-of-Gamma}

\end{figure}

Define 
\begin{equation}\label{eq-def-v}
v_{i;p}= \frac{P(x_i,a_1,\dots,a_n)-P(x_p,a_1,\dots,a_n)}{x_i-x_p}. 
\end{equation}
Since $P(x_i,a_1,\dots,a_n) + P(x_j,a_1,\dots,a_n)$ is symmetric in $x_i$ and $x_j$, there is a unique polynomial \linebreak $G(X,Y,a_1,\dots,a_n)$ satisfying 
\[
G(x_i+x_j,x_i x_j,a_1,\dots,a_n)=P(x_i,a_1,\dots,a_n) + P(x_j,a_1,\dots,a_n).
\] 
Define
\begin{eqnarray}
\label{eq-def-u1} u_{i,j;p,q}^{\left\langle 1\right\rangle} & = & \frac{G(x_i+x_j,x_i x_j,a_1,\dots,a_n)-G(x_p+x_q,x_i x_j,a_1,\dots,a_n)}{x_i+x_j-x_p-x_q}, \\
\label{eq-def-u2} u_{i,j;p,q}^{\left\langle 2\right\rangle} & = & \frac{G(x_p+x_q,x_i x_j,a_1,\dots,a_n)- G(x_p+x_q,x_p x_q,a_1,\dots,a_n)}{x_ix_j-x_px_q}.
\end{eqnarray}
Note that $v_{i;j},~u_{i,j;p,q}^{\left\langle 1\right\rangle},~u_{i,j;p,q}^{\left\langle 2\right\rangle}$ are all homogeneous elements of $R$.

\begin{definition}\label{def-mf-MOY}
\begin{eqnarray}
\label{eq-def-mf-Gamma-ij} C_P(\Gamma_{i;p}) & := & (v_{i;p}, x_i-x_p)_R, \\
\label{eq-def-mf-Gamma-ijpq} C_P(\Gamma_{i,j;p,q})& := & 
\left(%
\begin{array}{cc}
  u_{i,j;p,q}^{\left\langle 1\right\rangle} & x_i+x_j-x_p-x_q \\
  u_{i,j;p,q}^{\left\langle 2\right\rangle} & x_ix_j-x_px_q \\
\end{array}%
\right)_{R}\{-1\}, \\
\label{eq-def-mf-Gamma} C_P(\Gamma) & := & C_P(\Gamma_1) \otimes_R C_P(\Gamma_2) \otimes_R \cdots \otimes_R C_P(\Gamma_l).
\end{eqnarray}
\end{definition}

Note that 
\begin{enumerate}
	\item $C_P(\Gamma_{i;p})$ is a Koszul matrix factorization of $w_{i;p}= P(x_i,a_1,\dots,a_n)-P(x_p,a_1,\dots,a_n)$.
	\item $C_P(\Gamma_{i,j;p,q})$ is a Koszul matrix factorization of 
	      \[
	      w_{i,j;p,q}=P(x_i,a_1,\dots,a_n)+P(x_j,a_1,\dots,a_n)-P(x_p,a_1,\dots,a_n)-P(x_q,a_1,\dots,a_n).
	      \]
	\item $C_P(\Gamma)$ is a Koszul matrix factorization of 
	\[
	w= \sum_{x_i \text{ is assigned to an endpoint.}} \pm P(x_i,a_1,\dots,a_n),
	\]
	where the sign is positive if $\Gamma$ points towards the corresponding endpoint and is negative if $\Gamma$ points away from the corresponding endpoint. In particular, $w=0$ if $\Gamma$ is closed.
\end{enumerate}

\begin{definition}\label{def-H-P-Gamma}
For a closed MOY graph $\Gamma$, define 
\begin{enumerate}
	\item $H_P(\Gamma)$ to be the homology of $C_P(\Gamma)$, which inherits the polynomial grading, the $x$-filtration and the $\zed_2$-grading of $C_P(\Gamma)$,
	\item $H_N(\Gamma)$ to be the homology of $C_N(\Gamma)=C_P(\Gamma)/(a_1,\dots,a_n)\cdot C_P(\Gamma)$, which inherits the polynomial grading and the $\zed_2$-grading of $C_P(\Gamma)$,
	\item $\hat{H}_P(\Gamma)$ to be the homology of $\hat{C}_P(\Gamma)=C_P(\Gamma)/(a_1-1,\dots,a_n-1)\cdot C_P(\Gamma)$, which inherits the $x$-filtration and the $\zed_2$-grading of $C_P(\Gamma)$.
\end{enumerate}

\end{definition}

\subsection{The chain complex associated to a link diagram}  A marking of a link diagram $D$ consists of
\begin{enumerate}
	\item a finite collection of marked points on $D$ such that none of the crossings are marked and every arc between two crossings contains at least one marked point,
	\item an assignment that assigns to each marked point a different homogeneous variable of degree $2$.
\end{enumerate}

Let $D$ be an oriented link diagram with a marking. Assume $x_1,\dots,x_m$ are the variables assigned to the marked points of $D$. Define $R=\C[x_1,\dots,x_m,a_1,\dots,a_n]$, where $a_j$ is a homogeneous variable of degree $2k_j$. Cut $D$ at its marked points. This cuts $D$ into simple pieces $D_1,\dots,D_l$ of the types shown in Figure \ref{fig-D-pieces}.

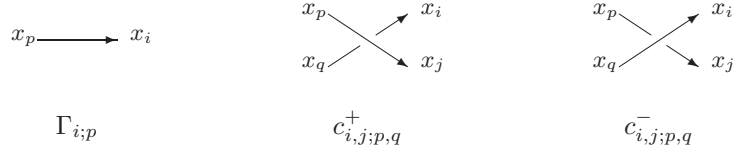
\begin{figure}[ht]
\[
\xymatrix@R=3pt{
\setlength{\unitlength}{1pt}
\begin{picture}(50,20)(-10,0)

\put(-10,10){\small{$x_p$}}

\put(0,10){\vector(1,0){30}}

\put(35,10){\small{$x_i$}}

\end{picture} && \setlength{\unitlength}{1pt}
\begin{picture}(50,20)(-10,0)

\put(-10,20){\small{$x_p$}}

\put(-10,0){\small{$x_q$}}

\put(0,20){\vector(3,-2){30}}

\put(0,0){\line(3,2){12}}

\put(18,12){\vector(3,2){12}}

\put(35,20){\small{$x_i$}}

\put(35,0){\small{$x_j$}}

\end{picture} && \setlength{\unitlength}{1pt}
\begin{picture}(50,20)(-10,0)

\put(-10,20){\small{$x_p$}}

\put(-10,0){\small{$x_q$}}

\put(0,20){\line(3,-2){12}}

\put(18,8){\vector(3,-2){12}}

\put(0,0){\vector(3,2){30}}

\put(35,20){\small{$x_i$}}

\put(35,0){\small{$x_j$}}

\end{picture} \\
\Gamma_{i;p} && c^+_{i,j;p,q} && c^-_{i,j;p,q}
}
\]

\caption{Pieces of $D$}\label{fig-D-pieces}

\end{figure}

We define the chain complex $C_P(\Gamma_{i;p})$ to be
\begin{equation}\label{eq-def-complex-arc}
C_P(\Gamma_{i;p}) = 0 \rightarrow C_P(\Gamma_{i;p})\|0\| \rightarrow 0,
\end{equation}
where the term ``$C_P(\Gamma_{i;p})$" on the right hand side is the Koszul matrix factorization defined in Definition \ref{def-mf-MOY} and ``$\|0\|$" means this term is at homological degree $0$.

To define $C_P(c^\pm_{i,j;p,q})$, we need the following lemma.

\begin{figure}[ht]
\[
\xymatrix@R=3pt{
\setlength{\unitlength}{1pt}
\begin{picture}(50,20)(-10,0)

\put(-10,20){\small{$x_p$}}

\put(-10,0){\small{$x_q$}}

\put(0,0){\vector(1,0){30}}

\put(0,20){\vector(1,0){30}}

\put(35,20){\small{$x_i$}}

\put(35,0){\small{$x_j$}}

\end{picture} &\ar@<3ex>[rr]^{\chi_0} && \ar@<-1ex>[ll]^{\chi_1}&  \setlength{\unitlength}{1pt}
\begin{picture}(50,20)(-10,0)

\put(-10,20){\small{$x_p$}}

\put(-10,0){\small{$x_q$}}

\put(0,0){\vector(1,1){10}}

\put(0,20){\vector(1,-1){10}}

\put(20,10){\vector(1,1){10}}

\put(20,10){\vector(1,-1){10}}

\put(35,20){\small{$x_i$}}

\put(35,0){\small{$x_j$}}

\linethickness{3pt}

\put(10,10){\line(1,0){10}}

\end{picture}   \\
\Gamma_{i;p}\sqcup \Gamma_{j;q} &&&& \Gamma_{i,j;p,q}
}
\]

\caption{Homomorphisms $\chi_0$ and $\chi_1$}\label{fig-chi}

\end{figure}
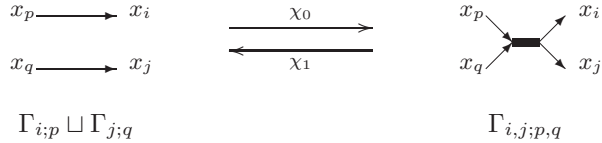

\begin{lemma}\cite{KR1,Krasner,Wu-color-equi}\label{lemma-def-chi}
Up to homotopy and scaling, there is a unique homotopically non-trivial homomorphism $\chi_0: C_P(\Gamma_{i;p}\sqcup \Gamma_{j;q})\rightarrow C_P(\Gamma_{i,j;p,q})$ with $\deg \chi_0 =  1$. And, up to homotopy and scaling, there is a unique homotopically non-trivial homomorphism $\chi_1: C_P(\Gamma_{i,j;p,q})\rightarrow C_P(\Gamma_{i;p}\sqcup \Gamma_{j;q})$ with $\deg \chi_1 = 1$. Moreover, these homomorphisms satisfy $\deg_x \chi_0 = \deg_x \chi_1 = 1$ and, up to scaling by non-zero scalars, $\chi_1 \circ \chi_0 \simeq (x_p-x_j) \id_{C_P(\Gamma_{i;p}\sqcup \Gamma_{j;q})}$, $\chi_0 \circ \chi_1 \simeq (x_p-x_j) \id_{C_P(\Gamma_{i,j;p,q})}$.
\end{lemma}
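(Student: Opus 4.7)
The plan is to follow the explicit construction used in \cite{KR1,Krasner,Wu-color-equi}, by putting $C_P(\Gamma_{i;p}\sqcup\Gamma_{j;q})$ into a form that makes direct comparison with $C_P(\Gamma_{i,j;p,q})$ transparent. First I would use the standard elementary row operation on Koszul matrix factorizations — replacing $(b_1,c_1;b_2,c_2)$ by $(b_1+\alpha c_2,c_1;b_2-\alpha c_1,c_2)$, which preserves the potential $b_1c_1+b_2c_2$ and yields an isomorphic Koszul factorization — to transform $(v_{i;p},x_i-x_p;v_{j;q},x_j-x_q)_R$ into a Koszul factorization whose right-hand column is $(x_i+x_j-x_p-x_q,\,x_ix_j-x_px_q)$. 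This is possible because the $R$-ideals generated by $\{x_i-x_p,\,x_j-x_q\}$ and by $\{x_i+x_j-x_p-x_q,\,x_ix_j-x_px_q\}$ agree on $\C[x_i,x_j,x_p,x_q]$, and an explicit invertible change-of-basis matrix is easily written down (as in Section 6 of \cite{KR1}).

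After this normalization, $C_P(\Gamma_{i;p}\sqcup\Gamma_{j;q})$ and $C_P(\Gamma_{i,j;p,q})$ share identical ``$c$'' columns, so both sides are Koszul factorizations of $w=P(x_i)+P(x_j)-P(x_p)-P(x_q)$ on the same regular sequence, differing only in their ``$b$'' columns. On the standard bases $\{1_{\vec\ve}\}$ I would then write $\chi_0$ and $\chi_1$ explicitly as $R$-linear maps given by multiplication by specific polynomials, modeled on the formulas appearing in \cite{KR1,Wu-color-equi}. Verifying that these are chain maps of matrix factorizations and checking the polynomial degree $\deg\chi_0=\deg\chi_1=1$ and filtration degree $\deg_x\chi_0=\deg_x\chi_1=1$ reduces to tracking the (polynomial and $x$-filtration) degrees of the coefficients produced by the change-of-basis, together with those of $u_{i,j;p,q}^{\langle 1\rangle}$ and $u_{i,j;p,q}^{\langle 2\rangle}$ from \eqref{eq-def-u1}--\eqref{eq-def-u2}; these are all manifestly of the claimed degree.

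For uniqueness up to homotopy and scaling I would compute $\Hom_{\MF}\!\bigl(C_P(\Gamma_{i;p}\sqcup\Gamma_{j;q}),C_P(\Gamma_{i,j;p,q})\bigr)$. Since source and target factor the same potential $w$, this Hom complex is a matrix factorization of $0$, and because both sides are Koszul it is quasi-isomorphic to a $\zed^{\oplus2}$-graded Koszul complex over the quotient $R/(x_i+x_j-x_p-x_q,\,x_ix_j-x_px_q)$. A direct graded dimension count in this quotient then shows that the polynomial-degree-$1$ component of the $\zed_2$-graded cohomology is one-dimensional over $\C$, producing the asserted uniqueness of $\chi_0$ and of $\chi_1$. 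Having fixed the scalar ambiguity, I would verify the composition formulas $\chi_1\circ\chi_0\simeq(x_p-x_j)\id$ and $\chi_0\circ\chi_1\simeq(x_p-x_j)\id$ by direct evaluation on standard basis elements, along the lines of \cite[Proposition 29]{KR1}.

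The main technical obstacle is the bookkeeping needed to guarantee that $\deg_x\chi_0$ and $\deg_x\chi_1$ are exactly $1$ and not merely bounded by some larger integer. Naive choices of representatives produce apparent higher $x$-filtration terms, and one must either choose representatives carefully or exhibit explicit homotopies that kill the excess $x$-filtration. Once this degree-tracking is done cleanly, the remainder of the argument is formal: it amounts to the standard fact that homotopically non-trivial maps between Koszul matrix factorizations sharing a common regular sequence of $c$-columns are classified by the cohomology of an explicit Koszul complex, which is easy to compute dimension-wise.
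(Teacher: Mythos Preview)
Your proposal contains a genuine error in the first step. The two ideals $(x_i-x_p,\,x_j-x_q)$ and $(x_i+x_j-x_p-x_q,\,x_ix_j-x_px_q)$ do \emph{not} coincide in $\C[x_i,x_j,x_p,x_q]$: the second is strictly contained in the first (in the quotient by the second ideal, $(x_i-x_p)(x_i-x_q)=0$ but $x_i-x_p\neq 0$). Correspondingly, the change-of-basis matrix you would need in order to transform the $c$-column $(x_i-x_p,\,x_j-x_q)$ into $(x_i+x_j-x_p-x_q,\,x_ix_j-x_px_q)$ has determinant $x_p-x_j$, which is not a unit in $R$. So there is no isomorphism of Koszul factorizations putting $C_P(\Gamma_{i;p}\sqcup\Gamma_{j;q})$ into a form with the same $c$-column as $C_P(\Gamma_{i,j;p,q})$, and your normalization step cannot be carried out. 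This is not a cosmetic issue: the non-unit determinant $x_p-x_j$ is exactly the obstruction, and it is the source of the relations $\chi_1\circ\chi_0\simeq(x_p-x_j)\id$ and $\chi_0\circ\chi_1\simeq(x_p-x_j)\id$ you are trying to prove.

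The paper avoids this by writing $C_P(\Gamma_{i;p}\sqcup\Gamma_{j;q})$ and $C_P(\Gamma_{i,j;p,q})$ directly as $2\times 2$ matrix factorizations and then simply exhibiting $\chi_0$ and $\chi_1$ as explicit pairs of $2\times 2$ matrices over $R$ (with one nontrivial entry $z$ built from $u_{i,j;p,q}^{\langle 1\rangle}$, $u_{i,j;p,q}^{\langle 2\rangle}$, $v_{j;q}$). The verification that these are chain maps with $\deg=\deg_x=1$ and that their compositions are $(x_p-x_j)\id$ is then a direct matrix calculation. For uniqueness the paper simply cites \cite[Lemma 4.13]{Wu-color-equi}; your $\Hom_{\MF}$ computation is a reasonable way to recover that, but you should do it without first assuming the two sides share a $c$-column.
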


\begin{proof}
The uniqueness of $\chi_0$ and $\chi_1$ is proved in a more general setting in \cite[Lemma 4.13]{Wu-color-equi}. Here, we only recall the constructions of $\chi_0$ and $\chi_1$ given by Krasner in \cite{Krasner}, which is a straightforward generalization of the corresponding construction by Khovanov and Rozansky in \cite{KR1}.

Recall that 
\[
C_P(\Gamma_{i;p}\sqcup \Gamma_{j;q}) = \left(%
\begin{array}{cc}
  v_{i;p} & x_i-x_p \\
  v_{j;q} & x_j-x_q \\
\end{array}%
\right)_{R} = \left[%
\begin{array}{c}
  R \\
  R\{2-2n\} \\
\end{array}%
\right] \xrightarrow{d^0}
\left[%
\begin{array}{c}
  R\{1-n\} \\
  R\{1-n\} \\
\end{array}%
\right] \xrightarrow{d^1}
\left[%
\begin{array}{c}
  R \\
  R\{2-2n\} \\
\end{array}%
\right],
\]
where 
$
d^0=\left(%
\begin{array}{cc}
  v_{i;p} & x_j-x_q \\
  v_{j;q} & -x_i+x_p \\
\end{array}%
\right)$, 
$d^1=\left(%
\begin{array}{cc}
  x_i-x_p & x_j-x_q \\
  v_{j;q} & -v_{i;p} \\
\end{array}%
\right)
$, and that
\[
 C_P(\Gamma_{i,j;p,q}) =  
\left(%
\begin{array}{cc}
  u_{i,j;p,q}^{\left\langle 1\right\rangle} & x_i+x_j-x_p-x_q \\
  u_{i,j;p,q}^{\left\langle 2\right\rangle} & x_ix_j-x_px_q \\
\end{array}%
\right)_{R}\{-1\} = 
\left[%
\begin{array}{c}
  R\{-1\} \\
  R\{3-2n\} \\
\end{array}%
\right] \xrightarrow{\delta^0}
\left[%
\begin{array}{c}
  R\{-n\} \\
  R\{2-n\} \\
\end{array}%
\right] \xrightarrow{\delta^1}
\left[%
\begin{array}{c}
  R\{-1\} \\
  R\{3-2n\} \\
\end{array}%
\right],
\]
where
$\delta^0=\left(%
\begin{array}{cc}
  u_{i,j;p,q}^{\left\langle 1\right\rangle} & x_ix_j-x_px_q \\
  u_{i,j;p,q}^{\left\langle 2\right\rangle} & -x_i-x_j+x_p+x_q \\
\end{array}%
\right)$,$\delta^1=\left(%
\begin{array}{cc}
  x_i+x_j-x_p-x_q & x_ix_j-x_px_q \\
  u_{i,j;p,q}^{\left\langle 2\right\rangle} & -u_{i,j;p,q}^{\left\langle 1\right\rangle} \\
\end{array}%
\right)$.

In the above explicit forms of $C_P(\Gamma_{i;p}\sqcup \Gamma_{j;q})$ and $C_P(\Gamma_{i,j;p,q})$, define $\chi_0: C_P(\Gamma_{i;p}\sqcup \Gamma_{j;q})\rightarrow C_P(\Gamma_{i,j;p,q})$ by the matrices $\chi_0^0=\left(%
\begin{array}{cc}
  x_p-x_j & 0 \\
  z & 1 \\
\end{array}%
\right)$ and $\chi_0^1=\left(%
\begin{array}{cc}
  x_p & -x_j \\
  -1 & 1 \\
\end{array}%
\right)$, and define $\chi_1: C_P(\Gamma_{i,j;p,q})\rightarrow C_P(\Gamma_{i;p}\sqcup \Gamma_{j;q})$ by the matrices $\chi_1^0=\left(%
\begin{array}{cc}
  1 & 0 \\
  -z & x_p-x_j \\
\end{array}%
\right)$ and $\chi_1^1=\left(%
\begin{array}{cc}
  1 & x_j \\
  1 & x_p \\
\end{array}%
\right)$, where $z= -u_{i,j;p,q}^{\left\langle 2\right\rangle}+\frac{u_{i,j;p,q}^{\left\langle 1\right\rangle}+x_iu_{i,j;p,q}^{\left\langle 2\right\rangle}-v_{j;q}}{x_i-x_p}$.
It is straightforward to verify that $\chi_0$ and $\chi_1$ satisfy all the properties in the lemma.
\end{proof}

We define 
\begin{eqnarray}
\label{eq-def-complex-crossing+} C_P(c^+_{i,j;p,q}) & = & 0 \rightarrow C_P(\Gamma_{i,j;p,q})\|-1\|\{N\} \xrightarrow{\chi_1} C_P(\Gamma_{i;p}\sqcup \Gamma_{j;q})\|0\|\{N-1\} \rightarrow 0, \\
\label{eq-def-complex-crossing-} C_P(c^-_{i,j;p,q}) & = & 0 \rightarrow C_P(\Gamma_{i;p}\sqcup \Gamma_{j;q})\|0\|\{1-N\} \xrightarrow{\chi_0} C_P(\Gamma_{i,j;p,q})\|1\|\{-N\} \rightarrow 0.
\end{eqnarray}

\begin{definition}\label{def-complex-link}
$C_P(D) = C_P(D_1) \otimes_R\cdots\otimes_R C_P(D_l)$, where $C_P(D_i)$ is defined in \eqref{eq-def-complex-arc}, \eqref{eq-def-complex-crossing+} and \eqref{eq-def-complex-crossing-}.
\end{definition}

We call the resolution $c^\pm_{i,j;p,q} \leadsto \Gamma_{i;p}\sqcup \Gamma_{j;q}$ a $0$-resolution and $c^\pm_{i,j;p,q} \leadsto \Gamma_{i,j;p,q}$ a $(\pm1)$-resolution. If we choose a $0$- or $(\pm1)$-resolution for every crossing in $D$, then we get a MOY graph, which we call a MOY resolution of $D$. Of course, the marking of $D$ induces a marking of each MOY resolution of $D$. Let $\mathcal{MOY}(D)$ be the set of all MOY resolutions of $D$. Denote by $w$ the writhe of $D$. For each $\Gamma \in \mathcal{MOY}(D)$, let 
\[
\hbar(\Gamma)= (\#\text{ of } (+1)\text{-resolutions in } \Gamma) -  (\#\text{ of } (-1)\text{-resolutions in } \Gamma).
\]
Then, as $\zed^{\oplus2}$-graded $R$-modules, 
\begin{equation}\label{eq-C_P-D-mf}
C_P(D) \cong \bigoplus_{\Gamma \in \mathcal{MOY}(D)} C_P(\Gamma)\|-\hbar(\Gamma)\|\{(N-1)w+\hbar(\Gamma)\}.
\end{equation}

Note that every MOY resolution $\Gamma$ of $D$ is a closed MOY graph. So $C_P(\Gamma)$ is a Koszul matrix factorization of $0$ and, therefore, a $\zed_2$-graded chain complex.\footnote{$C_P(D)$ and $H_P(D)$ both inherit this $\zed_2$-grading. But this $\zed_2$-grading on $H_P(D)$ is always pure and equal to the number of Seifert circles of $D$. So, unless otherwise specified, we do not keep track of this grading.} Thus, the differential maps of the matrix factorizations of the MOY resolutions of $D$ give rise to a differential map $d_{mf}$ on $C_P(D)$ satisfying:
\begin{itemize}
	\item $d_{mf}$ is homogeneous with  $\deg d_{mf} = \deg_x d_{mf} =N+1$,
	\item $d_{mf}$ preserves the homological grading.
\end{itemize}
The differential maps of $C_P(D_i)$ give rise to a differential map $d_\chi$ of $C_P(D)$ satisfying:
\begin{itemize}
	\item $d_{\chi}$ is homogeneous with $\deg d_{\chi} = \deg_x d_{\chi} =0$,
	\item $d_{\chi}$ raises the homological grading by $1$.
\end{itemize}

As in \eqref{def-H_P}, $H_P(D)$ is defined to be $H_P(D) = H(H(C_P(D), d_{mf}), d_\chi)$, which inherits both $\zed$-gradings and the $x$-filtration of $C_P(D)$. The invariance of $H_P(D)$ is stated in Theorem \ref{thm-H_P-invariance}. 

\section{The Lee-Gornik Spectral Sequence}\label{sec-spectral-sequence}

Now we review the construction of $\hat{E}_r(L)$ and construct $E_r(L)$. In this section, $P=P(x,a_1,\dots,a_n)$ is the polynomial defined in \eqref{def-P-general} and $H_P$ is the corresponding equivariant $\slmf(N)$ Khovanov-Rozansky homology over $\C[a_1,\dots,a_n]$.

\subsection{Structure of $H_P(\Gamma)$} Let $\Gamma$ be a closed MOY graph with a marking. As before, assume $x_1,\dots,x_m$ are the variables assigned to the marked points of $\Gamma$ and define $R=\C[x_1,\dots,x_m,a_1,\dots,a_n]$, where $a_j$ is a homogeneous variable of degree $2k_j$. $P(x,a_1,\dots,a_n)$ is the homogeneous polynomial given in \eqref{def-P-general}.

If we replace every wide edge in $\Gamma$ by a pair of parallel regular edges, that is, change $\Gamma_{i,j;p,q}$ to $\Gamma_{i;p}\sqcup \Gamma_{j;q}$ in Figure \ref{fig-chi}, then we change $\Gamma$ into a collection of oriented circles embedded in the plane. Denote by $\ve$ the total rotation number of this collection and call $\ve$ the rotation number of $\Gamma$. Furthermore, denote by $H_P^\ve(\Gamma)$ (resp. $\hat{H}_P^\ve(\Gamma)$) the component of $H_P(\Gamma)$ (resp. $\hat{H}_P(\Gamma)$) of $\zed_2$-grading $\ve$ and by $H_N^{\ve,p}(\Gamma)$ the component of $H_N(\Gamma)$ of $\zed_2$-grading $\ve$ and polynomial grading $p$.

\begin{lemma}\cite[Proposition 3.2]{Gornik}\label{lemma-hat-H_P-Gamma-fil}
As $\C$-linear spaces,
\begin{eqnarray}
\label{eq-graph-hat-homology-free-1} \hat{H}_P^{\ve+1}(\Gamma) & \cong & 0, \\
\label{eq-graph-hat-homology-free-2} \fil^p_x \hat{H}_P^\ve(\Gamma) / \fil^{p-1}_x \hat{H}_P^\ve(\Gamma) & \cong & H_N^{\ve,p}(\Gamma).
\end{eqnarray}
\end{lemma}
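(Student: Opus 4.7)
The plan is to establish both assertions simultaneously via the spectral sequence induced by the $x$-filtration on the matrix factorization complex $(\hat{C}_P(\Gamma), d_{mf})$: identify the associated graded with $C_N(\Gamma)$, read off the $E_1$-page as $H_N(\Gamma)$, and force collapse using a $\zed_2$-grading parity obstruction.

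For the first step, note that under the $x$-filtration $a_j$ has $\deg_x=0$ while $x_i$ has $\deg_x=2$, so the quotient filtration on $\hat{C}_P(\Gamma) = C_P(\Gamma)/(a_1-1,\dots,a_n-1)C_P(\Gamma)$ coincides with the filtration by polynomial $x$-degree of representatives in $\C[x_1,\dots,x_m]$. The hypothesis $P=x^{N+1}+xF(x,a_1,\dots,a_n)$ with $F$ homogeneous of degree $2N+2$ and $F(x,0,\dots,0)=0$ ensures that $\hat{v}_{i;p}:=v_{i;p}|_{a_j=1}$ has $x$-degree at most $2N$, with top-$x$-degree part exactly $\sum_{l=0}^N x_i^l x_p^{N-l}=v_{i;p}|_{a_j=0}$; an analogous statement holds for $u^{\langle 1\rangle}_{i,j;p,q}$ and $u^{\langle 2\rangle}_{i,j;p,q}$ at wide edges. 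Since the $x$-filtration is compatible with tensor products, this yields a natural identification $\mathrm{gr}_x \hat{C}_P(\Gamma)\cong C_N(\Gamma)$ of $\zed_2$-graded chain complexes, under which $x$-filtration degree on the left corresponds to polynomial degree on the right.

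For the second step, the $x$-filtration on $\hat{C}_P(\Gamma)$ is bounded below and exhaustive with finite-dimensional subquotients at each $x$-filtration level, so the standard convergence theorem delivers a spectral sequence $\{E_r\}$ with $E_1\cong H_N(\Gamma)$ (preserving the grading correspondence above) abutting to $\hat{H}_P(\Gamma)$. Every page inherits the $\zed_2$-grading from the matrix factorization structure, and since each $d_r$ is induced by $d_{mf}$ it raises this $\zed_2$-grading by $1$.

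The crux of the argument is to invoke the known fact from Khovanov-Rozansky/MOY theory (see \cite{KR1, Wu-color-equi}) that $H_N(\Gamma)$ is supported in the single $\zed_2$-grading $\ve$ determined by the rotation number of $\Gamma$; this is proved by induction on the number of wide edges using the MOY decomposition relations, with base case a disjoint union of oriented circles. Granted this, every $d_r\colon E_r^{\ve}\to E_r^{\ve+1}$ factors through a subquotient of $E_1^{\ve+1}=0$ and therefore vanishes, so the spectral sequence collapses at $E_1$. Matching graded pieces $\zed_2$-degree by $\zed_2$-degree then yields $\fil_x^p \hat{H}_P^\ve(\Gamma)/\fil_x^{p-1}\hat{H}_P^\ve(\Gamma)\cong H_N^{\ve,p}(\Gamma)$, while the absence of classes in $\zed_2$-degree $\ve+1$ gives $\hat{H}_P^{\ve+1}(\Gamma)\cong 0$. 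The main obstacle is verifying the $\zed_2$-concentration of $H_N(\Gamma)$; once that input is accepted, convergence and collapse are routine.
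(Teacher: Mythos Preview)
Your argument is correct and rests on the same key input the paper uses (via \cite{Wu7}): the $\zed_2$-concentration $H_N^{\ve+1}(\Gamma)=0$. The paper does not prove this lemma in the text but refers to \cite[Proposition~2.19]{Wu7}, remarking that the proof there is ``very similar to the proof of Lemma~\ref{lemma-H_P-Gamma-fil}''. That detailed proof decomposes $d_{mf}=\sum_l d_{mf}^{(l)}$ by $x$-degree, identifies $(C_P(\Gamma),d_{mf}^{(0)})$ with $C_N(\Gamma)\otimes_\C\C[a_1,\dots,a_n]$, and uses $\im d_\ve^{(0)}=\ker d_{\ve+1}^{(0)}$ to inductively lift cycles and absorb lower-order terms. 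This is precisely the hands-on unwinding of your spectral sequence collapse, so the two approaches are the same in substance.

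Two remarks on packaging. First, you glide over a technical point: $d_{mf}$ raises the $x$-filtration by $N{+}1$ rather than preserving it, and the underlying complex is $\zed_2$-periodic, so invoking ``the standard convergence theorem'' needs a word of justification (e.g., compute each $\hat{H}_P^{\ve}$, $\hat{H}_P^{\ve+1}$ via the associated bounded three-term complex, or shift the filtration on one $\zed_2$-component by $N{+}1$). Second, the paper's explicit inductive construction produces concrete isomorphisms $\hat{\phi}_p\colon H_N^{\ve,p}(\Gamma)\to \fil_x^p\hat{H}_P^\ve(\Gamma)/\fil_x^{p-1}\hat{H}_P^\ve(\Gamma)$ that are reused later---most notably in the commutative diagram~\eqref{diagram-quotients-commute} within the proof of Lemma~\ref{lemma-homogeneous-basis-fil-pi-1}, and analogously the maps $\phi_p$ from Lemma~\ref{lemma-H_P-Gamma-fil} appear in Lemma~\ref{lemma-chi-commute}. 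Your abstract collapse argument proves the isomorphisms exist but does not hand you these explicit maps, so for the downstream applications in this paper the constructive route is what is actually needed.
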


See for example \cite[Proposition 2.19]{Wu7} for a complete proof of Lemma \ref{lemma-hat-H_P-Gamma-fil}. Slightly modifying this proof, we get Lemma \ref{lemma-H_P-Gamma-fil}, which is mentioned in \cite{Wu-2braids} without proof. Since certain technical aspects of its proof are needed later on, we prove Lemma \ref{lemma-H_P-Gamma-fil} in details here.

\begin{lemma}\label{lemma-H_P-Gamma-fil}
As graded $\C[a_1,\dots,a_n]$-modules, 
\begin{eqnarray}
\label{eq-graph-homology-free-1} H_P^{\ve+1}(\Gamma) & \cong & 0, \\
\label{eq-graph-homology-free-2} \fil^p_x H_P^\ve(\Gamma) / \fil^{p-1}_x H_P^\ve(\Gamma) & \cong & H_N^{\ve,p}(\Gamma) \otimes_\C \C[a_1,\dots,a_n].
\end{eqnarray}
\end{lemma}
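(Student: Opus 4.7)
The plan is to adapt the filtered-complex argument used in \cite[Proposition 2.19]{Wu7} to establish Lemma \ref{lemma-hat-H_P-Gamma-fil}, but keeping the variables $a_1,\dots,a_n$ formal rather than specializing them to $1$. I would filter $C_P(\Gamma)$ by $\fil_x$ and study the resulting spectral sequence. Since $\deg_x a_j = 0$, the $\C[a_1,\dots,a_n]$-action preserves $\fil_x$, so the entire spectral sequence is naturally one of graded $\C[a_1,\dots,a_n]$-modules. Fixing any polynomial bidegree $(\ve,j)$, the corresponding graded piece $C_P^{\ve,j}(\Gamma)$ is finite-dimensional over $\C$ and the induced $x$-filtration on it is bounded, so convergence in each bidegree is automatic.

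To identify the $E_1$-page, I would compute the top-$x$-degree parts of the Koszul entries $v_{i;p}$, $u_{i,j;p,q}^{\langle 1\rangle}$, $u_{i,j;p,q}^{\langle 2\rangle}$ appearing in Definition \ref{def-mf-MOY}. Because $P(x,a_1,\dots,a_n)-x^{N+1} = xF(x,a_1,\dots,a_n)$ and each monomial of $F$ carries at least one $a_j$-factor (hence has $x$-degree strictly less than $2N$), the top $x$-degree parts of $v_{i;p}$, $u_{i,j;p,q}^{\langle 1\rangle}$, $u_{i,j;p,q}^{\langle 2\rangle}$ coincide term-for-term with the corresponding entries of the Koszul matrix factorization attached to the pure potential $x^{N+1}$. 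It follows that $\mathrm{gr}_x C_P(\Gamma)$ is naturally isomorphic, as a $\zed^{\oplus 2}$-graded $\C[a_1,\dots,a_n]$-linear chain complex, to $C_N(\Gamma)\otimes_\C \C[a_1,\dots,a_n]$ with differential $d_{mf,N}\otimes\mathrm{id}$. Passing to homology gives $E_1 \cong H_N(\Gamma)\otimes_\C \C[a_1,\dots,a_n]$ with the bigrading appearing in the statement of the lemma.

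Next, I would argue that the spectral sequence collapses at $E_1$ for a $\zed_2$-grading reason. By \cite{KR1}, $H_N(\Gamma)$ is supported entirely in the $\zed_2$-grading $\ve$ equal to the rotation number of $\Gamma$, and therefore so is $E_1 \cong H_N(\Gamma)\otimes_\C \C[a_1,\dots,a_n]$. Every higher differential $d_r$ changes the $\zed_2$-grading by $1$ and must therefore vanish. Reading off $E_\infty = E_1$ as the $\fil_x$-associated graded of $H_P(\Gamma)$ yields \eqref{eq-graph-homology-free-1} and \eqref{eq-graph-homology-free-2} directly.

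The main obstacle I expect is the term-by-term identification of $\mathrm{gr}_x C_P(\Gamma)$ with $C_N(\Gamma)\otimes_\C\C[a_1,\dots,a_n]$: one has to verify not only that the leading $x$-degree parts of $v_{i;p}$, $u_{i,j;p,q}^{\langle 1\rangle}$, $u_{i,j;p,q}^{\langle 2\rangle}$ reproduce the Koszul entries for $x^{N+1}$, but also that the ensuing isomorphism is honestly $\C[a_1,\dots,a_n]$-linear and respects both the polynomial grading and the $\zed_2$-grading. Once this identification is in place, the rest is formal and closely mirrors the specialization argument in \cite[Proposition 2.19]{Wu7}.
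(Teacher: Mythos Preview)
Your spectral sequence never gets off the ground: $d_{mf}$ does not preserve $\fil_x$. You correctly observe that the top $x$-degree part of $d_{mf}$ agrees with the Koszul differential for the pure potential $x^{N+1}$, but that very observation says this top part raises $x$-degree by $N+1$; hence $d_{mf}(\fil_x^p C_P(\Gamma)) \subset \fil_x^{p+N+1} C_P(\Gamma)$, which strictly contains $\fil_x^p C_P(\Gamma)$, so $\fil_x^p$ is not a subcomplex and there is no filtered-complex spectral sequence from $\fil_x$. (Nor does a uniform shift help on a $\zed_2$-periodic complex, since $d_{mf}$ also has components of $x$-degree $N+1-2l$, which is negative once $2l>N+1$.) The fix is to filter instead by the complementary ``$a$-degree'' $q = (\text{polynomial degree}) - (x\text{-degree})$: each homogeneous component $d^{(l)}$ of $d_{mf}$ has $a$-degree $2l \geq 0$, so this decreasing filtration is genuinely respected, its associated graded differential is $d^{(0)}$, your $\zed_2$-parity argument then forces $E_1 = E_\infty$, and convergence holds one polynomial degree at a time. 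One then translates back to $\fil_x$ using that the two filtrations determine each other on polynomial-homogeneous elements.

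The paper's route is different: it never invokes a spectral sequence but constructs the isomorphism $\phi_p$ by hand, writing $d_{mf} = \sum_l d^{(l)}$ and inductively adding lower-$x$-degree corrections to turn a $d^{(0)}$-cycle into a genuine $d_{mf}$-cycle (and similarly to prove \eqref{eq-graph-homology-free-1}). This is more laborious than a one-line degeneration argument, but the paper explicitly flags that the concrete map $\phi_p$ is reused later, in Lemmas~\ref{lemma-chi-commute} and~\ref{lemma-a-fixes-filtration}, so the extra work pays off downstream.
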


\begin{proof}
Note that $C_P(\Gamma)$ is also a graded $\C[x_1,\dots,x_m]$-module and $\fil_x$ is the increasing filtration induced by this grading structure. We call the grading of the graded $\C[x_1,\dots,x_m]$-module $C_P(\Gamma)$ the $x$-grading of $C_P(\Gamma)$. Denote by $d_0$ and $d_1$ the two differential maps of the matrix factorization $C_P(\Gamma)$. We decompose $d_0$ and $d_1$ into sums of homogeneous components with respect to the $x$-grading. That is,
\begin{eqnarray}
\label{eq-d-0-decomp} d_0 = \sum_{l=0}^{N} d_0^{(l)}, \\
\label{eq-d-1-decomp} d_1 = \sum_{l=0}^{N} d_1^{(l)},
\end{eqnarray}
where $d_1^{(l)}$ and $d_0^{(l)}$ are $R$-module homomorphisms and satisfy:
\begin{itemize}
	\item they are homogeneous with respect to both the polynomial grading and the $x$-grading,
	\item $\deg d_0^{(l)} = \deg d_1^{(l)} =N+1$, $\deg_x d_0^{(l)} = \deg_x d_1^{(l)} = N+1-2l$.
\end{itemize}
Recall that $C_P(\Gamma)$ is a matrix factorization of $0$. So $d_0 \circ d_1 =0$ and $d_1\circ d_0=0$. Comparing the homogeneous parts with respect to the $x$-grading, one gets that, for any $l\geq 0$,
\begin{eqnarray}
\label{eq-d-0-1-vanish} \sum_{i=0}^l d_0^{(i)}\circ d_{1}^{(l-i)}=0, \\
\label{eq-d-1-0-vanish} \sum_{i=0}^l d_1^{(i)}\circ d_{0}^{(l-i)}=0,
\end{eqnarray}
where we use the convention that $d_0^{(i)}=0$ and $d_1^{(i)}=0$ if $i >N$.

By the definition of $C_N(\Gamma)$, there is an isomorphism of $\zed_2$-periodic chain complexes of $\C[a_1,\dots,a_n]$-modules
\[
C_N(\Gamma)\otimes_\C \C[a_1,\dots,a_n] \cong C_P^0(\Gamma) \xrightarrow{d_0^{(0)}} C_P^1(\Gamma) \xrightarrow{d_1^{(0)}} C_P^0(\Gamma),
\]
that preserves the $\zed_2$-grading, the polynomial grading and the $x$-grading. So there is an isomorphism of $\C[a_1,\dots,a_n]$-modules 
\begin{equation}\label{eq-isomorphism-H_N-d-0}
H_N(\Gamma)\otimes_\C \C[a_1,\dots,a_n] \cong H(C_P(\Gamma), d^{(0)})
\end{equation} 
preserving the $\zed_2$-grading, the polynomial grading and the $x$-grading.

Now we are ready to prove that $H_P^{\ve+1}(\Gamma) = 0$. From \cite{KR1}, we know that $H_N^{\ve+1}(\Gamma)=0$. By \eqref{eq-isomorphism-H_N-d-0}, this means $H^{\ve+1}(C_P(\Gamma), d^{(0)})=0$. That is, $\im (d_{\ve}^{(0)}) =\ker (d_{\ve+1}^{(0)})$. Let $\alpha$ be any element in $\ker d_{\ve+1}$ that is homogeneous with respect to the polynomial grading with $\deg \alpha = g$. Decomposing $\alpha$ according to the $x$-grading, we get $\alpha = \sum_{i=-\infty}^{\infty} \alpha_l$, where $\alpha_l$ is homogeneous with respect to both the polynomial grading and the $x$-grading with $\deg \alpha_l =g$, $\deg_x \alpha_l = g-2l$. Of course,  $\alpha_l=0$ for $l<0$ and $l\gg1$ since the $x$-grading is bounded below.

Next, we construct inductively a sequence $\{\beta_l\}_{-\infty}^{\infty} \subset C_P^\ve(\Gamma)$ such that
\begin{enumerate}
	\item $\beta_l=0$ for $l<0$;
	\item each $\beta_l$ is homogeneous with respect to both the polynomial grading and the $x$-grading;
	\item $\deg \beta_l = g-N-1$ and $\deg_x \beta_l = g - 2l - N-1$;
	\item $\alpha_l=\sum_{i=0}^{N} d_\ve^{(i)}\beta_{l-i}$ for all $l \in \zed$.
\end{enumerate}
Again, since the $x$-grading is bounded below, $\beta_l=0$ for $l\gg1$. Note that $\{\beta_l\}_{-\infty}^{-1}$ is the zero sequence and satisfies conditions (1-4) for $l$ up to $-1$. Now assume that, for some $l\geq0$, there is a sequence $\{\beta_l\}_{-\infty}^{l-1}$ satisfies conditions (1-4) up to $l-1$. Let us construct $\beta_l$. In the equation $d_{\ve+1} \alpha=0$, compare the homogeneous parts with respect to the $x$-grading of $x$-degree $N+1+g-2l$. This gives us
\begin{eqnarray*}
0 & = & \sum_{j=0}^{N}  d_{\ve+1}^{(j)} \alpha_{l-j} =  d_{\ve+1}^{(0)} \alpha_{l} + \sum_{j=1}^{N}  d_{\ve+1}^{(j)} \alpha_{l-j} \\
& = &  d_{\ve+1}^{(0)} \alpha_{l} + \sum_{j=1}^{N}  d_{\ve+1}^{(j)} \sum_{i=0}^{N}  d_\ve^{(i)}\beta_{l-j-i} \\
& = &  d_{\ve+1}^{(0)} \alpha_{l} + \sum_{q=1}^{2N}  (\sum_{j=1}^q d_{\ve+1}^{(j)}d_\ve^{(q-j)})\beta_{l-q} \\
\text{(by \eqref{eq-d-0-1-vanish} and \eqref{eq-d-1-0-vanish}) } & = &  d_{\ve+1}^{(0)} \alpha_{l} - \sum_{q=1}^{N}  d_{\ve+1}^{(0)}d_\ve^{(q)}\beta_{l-q} = d_{\ve+1}^{(0)} (\alpha_{l} - \sum_{q=1}^{N}  d_\ve^{(q)}\beta_{l-q}). \\
\end{eqnarray*}
So $\alpha_{l} - \sum_{q=1}^{N} d_\ve^{(q)}\beta_{l-q} \in \ker (d_{\ve+1}^{(0)})= \im (d_{\ve}^{(0)})$. Therefore, there is a $\beta_l$ satisfying conditions (1-3) such that $d_{\ve}^{(0)} \beta_l = \alpha_{l} - \sum_{q=1}^{N} d_\ve^{(q)}\beta_{l-q}$. Thus, $\{\beta_l\}_{-\infty}^{l}$ satisfies conditions (1-4) above. This completes the inductive construction. Note that $\sum_{l=-\infty}^{\infty} \beta_l$ is a finite sum and therefore a well defined element of $C_P^\ve(\Gamma)$. We have 
\begin{eqnarray*}
\alpha & = & \sum_{l=-\infty}^{\infty}\alpha_l = \sum_{l=-\infty}^{\infty}\sum_{i=0}^{N} d_\ve^{(i)}\beta_{l-i} \\
       & = & \sum_{i=0}^{N} d_\ve^{(i)}(\sum_{l=-\infty}^{\infty}\beta_{l-i}) = \sum_{i=0}^{N} d_\ve^{(i)}(\sum_{l=-\infty}^{\infty}\beta_{l}) \\
       & = & d_\ve(\sum_{l=-\infty}^{\infty}\beta_{l}).
\end{eqnarray*}
So $\alpha \in \im d_\ve$. This shows $\im (d_{\ve}) =\ker (d_{\ve+1})$ and therefore $H_P^{\ve+1}(\Gamma) = 0$.

It remains to prove \eqref{eq-graph-homology-free-2}. According to \eqref{eq-isomorphism-H_N-d-0}, we only need to show that
\begin{equation}\label{eq-graph-homology-free-3}
\fil^p_x H_P^\ve(\Gamma) / \fil^{p-1}_x H_P^\ve(\Gamma) \cong H^{\ve,p}(C_P(\Gamma), d^{(0)}),
\end{equation} 
where $H^{\ve,p}(C_P(\Gamma), d^{(0)})$ is the direct sum component of the free $\C[a]$-module $H(C_P(\Gamma), d^{(0)})$ consisting of homogeneous elements of $\zed_2$-grading $\ve$ and $x$-grading $p$. 

Denote by $(\ker d_{\ve}^{(0)})_p$ the $\C[a]$-submodule of $\ker d_{\ve}^{(0)}$ consisting of elements homogeneous with respective to the $x$-grading of $x$-degree $p$. Next, for every $\alpha\in (\ker{d_\ve^{(0)}})_p$, we construct by induction a sequence $\{\alpha_l\}_{0}^{\infty}\subset C_P^\ve(\Gamma)$, such that $\alpha_0=\alpha$, $\alpha_l$ is a homogeneous element with respective to the $x$-grading of $x$-degree $p-2l$, and 
\begin{equation}\label{eq-def-alpha-l-ind}
\sum_{j=0}^{l} d_\ve^{(j)}\alpha_{l-j}=0, ~\forall ~l\in\zed,
\end{equation}
where we use the convention that $d_\ve^{(j)}=0$ for $j>N$. Again, since the $x$-grading is bounded below, $\alpha_l=0$ for $l\gg1$. Clearly, $\{\alpha_l\}_{0}^{0}$ with $\alpha_0=\alpha$ satisfies the initial condition and equation \eqref{eq-def-alpha-l-ind} up to $l=0$. Assume that, for some $l\geq 1$, $\{\alpha_l\}_{0}^{l-1}$ is constructed and satisfies the initial condition and equation \eqref{eq-def-alpha-l-ind} up to $l-1$. Let us find an $\alpha_l$. Note that
\begin{eqnarray*}
d_{\ve+1}^{(0)}(\sum_{j=1}^{l} d_\ve^{(j)}\alpha_{l-j}) & = & \sum_{j=1}^{l} d_{\ve+1}^{(0)}d_\ve^{(j)}\alpha_{l-j} \\
\text{(by \eqref{eq-d-0-1-vanish} and \eqref{eq-d-1-0-vanish}) }& = & -\sum_{j=1}^{l} \sum_{i=0}^{j-1}d_{\ve+1}^{(j-i)}d_\ve^{(i)}\alpha_{l-j} \\
                      (\text{setting }q=l-j+i)         & = & -\sum_{q=0}^{l-1} \sum_{i=0}^q d_{\ve+1}^{(l-q)}d_\ve^{(i)}\alpha_{q-i} \\
                                                   & = & -\sum_{q=0}^{l-1} d_{\ve+1}^{(l-q)}(\sum_{i=0}^q d_\ve^{(i)}\alpha_{q-i}) \\
             (\text{by induction hypothesis})      & = & 0
\end{eqnarray*}
But $H^{\ve+1}(C_P(\Gamma), d^{(0)})=0$, that is, $\im (d_{\ve}^{(0)}) =\ker (d_{\ve+1}^{(0)})$. So there is an $\alpha_l \in C_P^\ve(\Gamma)$ homogeneous with respective to the $x$-grading of $x$-degree $n-2l$ satisfying $d_{\ve}^{(0)} \alpha_l = -\sum_{j=1}^{l} d_\ve^{(j)}\alpha_{l-j}$. Thus, the sequence $\{\alpha_l\}_{0}^{l}$ satisfies the initial condition and equation \eqref{eq-def-alpha-l-ind} up to $l$. This completes the induction and we have the sequence $\{\alpha_l\}_{0}^{\infty}$. As explained above, $\sum_{l=0}^{\infty}\alpha_l$ is in fact a finite sum and therefore a well defined element of $C_P^\ve(\Gamma)$. Note that the homogeneous part of $d_\ve (\sum_{l=0}^{\infty}\alpha_l)$ with respect to the $x$-grading of $x$-degree $N+1+p-2l$ is $\sum_{j=0}^{l}d_\ve^{(j)}\alpha_{l-j}=0$ by equation \eqref{eq-def-alpha-l-ind}. This implies that  $d_\ve (\sum_{l=0}^{\infty}\alpha_l)=0$, that is, $\sum_{l=0}^{\infty}\alpha_l$ is a cycle in $(C_P(\Gamma),d)$.

Define $\tilde{\phi}_p: (\ker{d_\ve^{(0)}})_p \rightarrow \fil^p_x H_P^\ve(\Gamma) / \fil^{p-1}_x H_P^\ve(\Gamma)$ by $\alpha \mapsto [\sum_{l=0}^{\infty}\alpha_l]$. Since the top homogeneous component of $\sum_{l=0}^{\infty}\alpha_l$ with respect to the $x$-grading is $\alpha_0=\alpha$, one can see that $\tilde{\phi}_p(\alpha)$ does not depend on the choice of $\{\alpha_l\}_{0}^{\infty}$ and is well defined. It is also easy to verify that $\tilde{\phi}_p$ is a $\C[a_1,\dots,a_n]$-module homomorphism preserving the polynomial grading. Moreover, $\tilde{\phi}_p$ is surjective. To see this, note that any element of $\fil^p_x H_P^\ve(\Gamma) / \fil^{p-1}_x H_P^\ve(\Gamma)$ can be expressed as $[\sum_{l=0}^{\infty}\alpha_l]$, where $\alpha_l$ is a homogeneous element with respective to the $x$-grading of $x$-degree $p-2l$ and $d_\ve \sum_{l=0}^{\infty}\alpha_l =0$. Comparing the top homogeneous parts with respect to the $x$-grading on both sides of this equation, one gets $d_\ve^{(0)} \alpha_0=0$, which means $\alpha_0 \in (\ker{d_\ve^{(0)}})_p$. By the definition of $\tilde{\phi}_n$, one easily sees that $\tilde{\phi}_p(\alpha_0) = [\sum_{l=0}^{\infty}\alpha_l]$.

Denote by $(\im d_{\ve+1}^{(0)})_p$ the homogeneous component of $\im d_{\ve+1}^{(0)}$ with respect to the $x$-grading of $x$-degree $p$. We prove isomorphism \eqref{eq-graph-homology-free-3} by showing that $\ker \tilde{\phi}_p = (\im d_{\ve+1}^{(0)})_p$. Assume $\alpha \in \ker \tilde{\phi}_p$ and $\{\alpha_l\}_{0}^{\infty}$ is a sequence given by the above inductive construction. Then 
\begin{equation}\label{eq-comp-ker-tilde-phi-1}
\sum_{l=0}^{\infty}\alpha_l = d_{\ve+1}\beta+\gamma, 
\end{equation}
where $\gamma$ is a cycle in $\fil_x^{p-1}C_P^\ve(\Gamma)$, and $\beta\in C_P^{\ve+1}(\Gamma)$ satisfying $d_{\ve+1}\beta\in\fil_x^p C_P^\ve(\Gamma)$. We claim that we can choose $\beta$ so that $\deg_x \beta\leq p-N-1$. Assume that $\deg_x \beta=g> p-N-1$ and denote by $\beta_0$ the top homogeneous part of $\beta$ with respect to the $x$-grading. Comparing the top homogeneous parts with respect to the $x$-grading on both sides of equation \eqref{eq-comp-ker-tilde-phi-1}, we have $d_{\ve+1}^{(0)}\beta_0=0$. So there exists a homogeneous element $\theta\in C_P^\ve(\Gamma)$ of degree $g-N-1$ such that $d_\ve^{(0)}\theta=\beta_0$. Let $\beta'=\beta-d_\ve\theta$. Then $\beta'$ also satisfies the above equation, and $\deg_x \beta' < \deg_x \beta$. Repeat this process. Within finite steps, we get a $\hat{\beta} \in C_P^{\ve+1}(\Gamma)$ with $\deg_x\hat{\beta}\leq p-N-1$ and
\begin{equation}\label{eq-comp-ker-tilde-phi-2}
\sum_{l=0}^{\infty}\alpha_l = d_{\ve+1}\hat{\beta}+\gamma.
\end{equation}
Let $\hat{\beta}_0$ be the homogeneous part of $\hat{\beta}$ with respect to the $x$-grading of $x$-degree $p-N-1$. Comparing the top homogeneous parts with respect to the $x$-grading on both sides of equation \eqref{eq-comp-ker-tilde-phi-2}, one can see that $\alpha=\alpha_0=d_{\ve+1}^{(0)}\hat{\beta}_0$. This shows $\alpha\in(\im{d_{\ve+1}^{(0)}})_p$. So $\ker{\tilde{\phi}_p}\subset(\im{d_{\ve+1}^{(0)}})_p$. On the other hand, if $\alpha \in (\im{d_{\ve+1}^{(0)}})_p$, then $\alpha= d_{\ve+1}^{(0)} \beta$ for some $\beta \in  C_P^{\ve+1}(\Gamma)$ homogeneous with respect to the $x$-grading of $x$-degree $p-N-1$. So 
\[
\sum_{l=0}^{\infty}\alpha_l = d_{\ve+1}^{(0)} \beta + \sum_{l=1}^{\infty}\alpha_l = d_{\ve+1} \beta + (\sum_{l=1}^{\infty}\alpha_l - \sum_{j=1}^{N} d_{\ve+1}^{(j)} \beta) \in \ker{\tilde{\phi}_p}.
\]
Thus, $(\im{d_{\ve+1}^{(0)}})_p\subset\ker{\tilde{\phi}_p}$. This shows $(\im{d_{\ve+1}^{(0)}})_p=\ker{\tilde{\phi}_p}$ and, therefore, $\tilde{\phi}_p$ induces a $\C[a_1,\dots,a_n]$-module isomorphism $\phi_p: H^{\ve,p}(C_P(\Gamma), d^{(0)}) \rightarrow \fil^p_x H_P^\ve(\Gamma) / \fil^{p-1}_x H_P^\ve(\Gamma)$ preserving the polynomial grading.
\end{proof}

\begin{corollary}\label{cor-graph-homology-free}
Let $\Gamma$ be a closed MOY graph. Then 
\begin{enumerate}
  \item $\hat{H}_P(\Gamma)$ is a finite dimensional $\C$-space and its $x$-filtration is bounded and exhaustive.
	\item $H_P(\Gamma)$ is a finitely generated graded-free $\C[a_1,\dots,a_n]$-module and its $x$-filtration is bounded and exhaustive, where ``graded-free" means $H_P(\Gamma)$  is graded, free and admits a homogeneous basis.
\end{enumerate}
\end{corollary}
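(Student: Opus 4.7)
The plan is to deduce the corollary directly from the two preceding lemmas, using two standard inputs: the filtration $\fil_x$ on $C_P(\Gamma)$ is bounded below since $\fil_x^{-1}R = 0$, and the ordinary Khovanov--Rozansky graph homology $H_N(\Gamma)$ of a closed MOY graph $\Gamma$ is a finite-dimensional $\C$-space supported in a finite range of polynomial gradings (\cite{KR1}).

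For part (1), the successive quotient $\fil_x^p \hat{H}_P^\ve(\Gamma)/\fil_x^{p-1}\hat{H}_P^\ve(\Gamma) \cong H_N^{\ve,p}(\Gamma)$ from Lemma \ref{lemma-hat-H_P-Gamma-fil} is nonzero for only finitely many $p$. Boundedness below of $\fil_x$ on $C_P(\Gamma)$ gives $\fil_x^{-1}\hat{H}_P(\Gamma)=0$; vanishing of the subquotients for $p\gg 0$ forces $\fil_x^p\hat{H}_P(\Gamma)$ to stabilize for large $p$; exhaustiveness is inherited from $\hat{C}_P(\Gamma)$. Summing dimensions of the finitely many nonzero subquotients yields $\dim_\C \hat{H}_P(\Gamma) = \dim_\C H_N(\Gamma) < \infty$.

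For part (2), the identical argument applied to Lemma \ref{lemma-H_P-Gamma-fil} shows $\fil_x$ on $H_P(\Gamma)$ is bounded and exhaustive, since each subquotient $H_N^{\ve,p}(\Gamma) \otimes_\C \C[a_1,\dots,a_n]$ is zero outside a finite range of $p$. To establish graded-freeness I would fix a homogeneous $\C$-basis $\{e_\alpha\}$ of $H_N(\Gamma)$ with $\deg e_\alpha = p_\alpha$ and lift each $e_\alpha$ through the isomorphism $\phi_{p_\alpha}$ constructed in the proof of Lemma \ref{lemma-H_P-Gamma-fil} to obtain a class $\tilde e_\alpha \in \fil_x^{p_\alpha} H_P(\Gamma)$ that is homogeneous of polynomial degree $p_\alpha$. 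A descending induction on the bounded filtration then shows that $\{\tilde e_\alpha\}$ is a free homogeneous $\C[a_1,\dots,a_n]$-basis of $H_P(\Gamma)$: at each filtration step, both the freeness and the spanning property of the associated graded, supplied by Lemma \ref{lemma-H_P-Gamma-fil}, lift to the corresponding statements at the level of $H_P(\Gamma)$.

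The only step requiring real attention is checking that the lifts $\tilde e_\alpha$ can be chosen simultaneously polynomial-homogeneous of degree $p_\alpha$ and in $x$-filtration level $p_\alpha$. This, however, is exactly what the explicit inductive construction of $\phi_p$ in the proof of Lemma \ref{lemma-H_P-Gamma-fil} provides, since the $\alpha_l$'s produced there are homogeneous of the same polynomial degree as $\alpha$ while strictly decreasing in $x$-grading; once this observation is recorded, the rest of the argument reduces to routine filtered-module bookkeeping.
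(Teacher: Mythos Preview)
Your argument is correct and, for part (1) and the filtration bounds in part (2), essentially identical to the paper's. The difference lies in how you establish graded-freeness of $H_P(\Gamma)$.

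The paper proceeds more abstractly: it uses the short exact sequences $0\to \fil_x^{p-1}H_P(\Gamma)\to \fil_x^p H_P(\Gamma)\to H_N^{\ve,p}(\Gamma)\otimes_\C\C[a_1,\dots,a_n]\to 0$ to show by induction that each $\fil_x^p H_P(\Gamma)$ is finitely generated free (the quotient is free, so the sequence splits). This gives freeness of $H_P(\Gamma)$ itself once the filtration is seen to be bounded; graded-freeness is then obtained by invoking an external lemma \cite[Lemma 3.3]{Wu-color} to the effect that a finitely generated graded free module with grading bounded below admits a homogeneous basis. Your approach instead lifts a homogeneous basis of $H_N(\Gamma)$ directly through the isomorphisms $\phi_p$, producing explicit homogeneous generators and verifying freeness and spanning by the usual filtered-module argument. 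This is more self-contained and avoids the external citation. One small caveat: the proof of Lemma \ref{lemma-H_P-Gamma-fil} only asserts that the $\alpha_l$ are homogeneous for the $x$-grading, not the polynomial grading; you need the easy extra observation that if $\alpha_0$ is polynomial-homogeneous then each $\alpha_l$ can be chosen polynomial-homogeneous of the same degree (by taking the appropriate homogeneous component of any solution to $d_\ve^{(0)}\alpha_l=-\sum_{j\ge 1}d_\ve^{(j)}\alpha_{l-j}$). With that noted, your route is a legitimate alternative.
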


\begin{proof}
From \cite{KR1}, we know that $H_N(\Gamma)$ is finite dimensional and its polynomial grading is bound above and below. In addition, by their definitions, we know that the $x$-filtrations of $\hat{H}_P(\Gamma)$ and $H_P(\Gamma)$ are bounded below and exhaustive. Using Lemmas \ref{lemma-hat-H_P-Gamma-fil} and \ref{lemma-H_P-Gamma-fil}, one can inductively prove that, for every $p$,
\begin{itemize}
	\item $\fil_x^p \hat{H}_P(\Gamma)$ is a finite dimensional $\C$-space,
	\item $\fil_x^p H_P(\Gamma)$ is a finitely generated free $\C[a_1,\dots,a_n]$-module.
\end{itemize}
Lemmas \ref{lemma-hat-H_P-Gamma-fil} and \ref{lemma-H_P-Gamma-fil} also imply that the $x$-filtrations of $\hat{H}_P(\Gamma)$ and $H_P(\Gamma)$ are bounded above. Thus, 
\begin{itemize}
	\item $\hat{H}_P(\Gamma)$ is itself a finite dimensional $\C$-space,
	\item $H_P(\Gamma)$ is itself a finitely generated free $\C[a_1,\dots,a_n]$-module.
\end{itemize}
Finally, since the polynomial grading of $H_P(\Gamma)$ is bounded below, we know that $H_P(\Gamma)$ is a graded-free $\C[a_1,\dots,a_n]$-module by, for example, \cite[Lemma 3.3]{Wu-color}.
\end{proof}

\subsection{$E_r(L)$ and $\hat{E}_r(L)$} Using Lemmas \ref{lemma-hat-H_P-Gamma-fil}, \ref{lemma-H_P-Gamma-fil} and Corollary \ref{cor-graph-homology-free}, it is straightforward to prove Theorem \ref{thm-spectral-sequence}. We summarize the key observation in the proof as the following lemma.

\begin{lemma}\label{lemma-chi-commute}
Suppose $\Gamma_1$ is a closed MOY graph and $\Gamma_0$ is obtained from $\Gamma_1$ by replacing a wide edge by a pair of parallel regular edges.\footnote{That is, replacing a piece of $\Gamma_1$ of the form $\Gamma_{i,j;p,q}$ in Figure \ref{fig-chi} by $\Gamma_{i;p}\sqcup \Gamma_{j;q}$ in the same figure.} Denote by $\xymatrix{C_P(\Gamma_0) \ar@<.5ex>[r]^{\chi_0} & C_P(\Gamma_1) \ar@<.5ex>[l]^{\chi_1}}$ the homomorphisms induced by this local change and by $\chi_0^{(0)}$, $\chi_1^{(0)}$ the top homogeneous parts of $\chi_0$, $\chi_1$ with respect to the $x$-grading. In addition, we denote by $d_{mf}^{(0)}$ the top homogeneous parts of the differential maps of $C_P(\Gamma_0)$ and $C_P(\Gamma_1)$ with respect to the $x$-grading. Then
\begin{itemize}
	\item $\xymatrix{(C_P(\Gamma_0), d_{mf}^{(0)}) \ar@<.5ex>[r]^{\chi_0^{(0)}} & (C_P(\Gamma_0), d_{mf}^{(0)}) \ar@<.5ex>[l]^{\chi_1^{(0)}}}$ are homomorphisms of matrix factorizations of $0$.
	\item The following squares commute, where $\ve$ is the rotation number of $\Gamma_0$ and $\Gamma_1$, and $\phi_{p,\Gamma_0}$, $\phi_{p,\Gamma_1}$ are the isomorphisms constructed in the proof of Lemma \ref{lemma-H_P-Gamma-fil}.
	\[
	\xymatrix{
	H^{\ve,p}(C_P(\Gamma_0), d_{mf}^{(0)}) \ar[d]^{\chi_0^{(0)}} \ar[r]^{\phi_{p,\Gamma_0}} & \fil^p_x H_P^\ve(\Gamma_0) / \fil^{p-1}_x H_P^\ve(\Gamma_0)\ar[d]^{\chi_0} \\
	H^{\ve,p}(C_P(\Gamma_1), d_{mf}^{(0)}) \ar[r]^{\phi_{p,\Gamma_1}} & \fil^p_x H_P^\ve(\Gamma_1) / \fil^{p-1}_x H_P^\ve(\Gamma_1)
	} 
	\hspace{1.5pc}
	\xymatrix{
	H^{\ve,p}(C_P(\Gamma_1), d_{mf}^{(0)}) \ar[d]^{\chi_1^{(0)}} \ar[r]^{\phi_{p,\Gamma_1}} & \fil^p_x H_P^\ve(\Gamma_1) / \fil^{p-1}_x H_P^\ve(\Gamma_1)\ar[d]^{\chi_1} \\
	H^{\ve,p}(C_P(\Gamma_0), d_{mf}^{(0)}) \ar[r]^{\phi_{p,\Gamma_0}} & \fil^p_x H_P^\ve(\Gamma_0) / \fil^{p-1}_x H_P^\ve(\Gamma_0)
	} 
	\]
\end{itemize}
\end{lemma}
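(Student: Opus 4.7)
The proof naturally splits into two parts corresponding to the two assertions.

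For the first assertion, my starting point is that $\chi_0$ and $\chi_1$ are chain maps of matrix factorizations, so $d_{mf}\circ\chi_0=\chi_0\circ d_{mf}$ and $d_{mf}\circ\chi_1=\chi_1\circ d_{mf}$ on the appropriate factorizations. I decompose each of $d_{mf}$, $\chi_0$, $\chi_1$ into its $x$-graded homogeneous components exactly as in equations \eqref{eq-d-0-decomp}--\eqref{eq-d-1-decomp} of the proof of Lemma \ref{lemma-H_P-Gamma-fil}, writing $\chi_0=\sum_{k\geq 0}\chi_0^{(k)}$ with $\chi_0^{(0)}$ the top $x$-graded component, and similarly for $\chi_1$. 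Each commutation relation then splits into its own $x$-graded pieces, and extracting the top $x$-graded component on each side immediately yields $d_{mf}^{(0)}\circ\chi_0^{(0)}=\chi_0^{(0)}\circ d_{mf}^{(0)}$, and the analogue for $\chi_1^{(0)}$. Since the proof of Lemma \ref{lemma-H_P-Gamma-fil} already shows that $d_{mf}^{(0)}$ endows each of $C_P(\Gamma_0)$ and $C_P(\Gamma_1)$ with the structure of a matrix factorization of $0$, this gives the first bullet point.

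For the second assertion, I use the explicit construction of the isomorphism $\phi_p$ from the proof of Lemma \ref{lemma-H_P-Gamma-fil}. Given $[\alpha]\in H^{\ve,p}(C_P(\Gamma_0),d_{mf}^{(0)})$ represented by $\alpha\in(\ker d_\ve^{(0)})_p$, one constructs inductively a sequence $\{\alpha_l\}_{l\geq 0}$ with $\alpha_0=\alpha$ satisfying $\sum_{j=0}^{l}d_\ve^{(j)}\alpha_{l-j}=0$ for all $l$; then $\tilde\alpha:=\sum_l\alpha_l$ is a $d_{mf}$-cycle in $C_P(\Gamma_0)$ whose top $x$-graded component is $\alpha$, and $\phi_{p,\Gamma_0}([\alpha])=[\tilde\alpha]$. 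Applying $\chi_0$: because $\chi_0$ commutes with $d_{mf}$, $\chi_0(\tilde\alpha)$ is a $d_{mf}$-cycle in $C_P(\Gamma_1)$, and because $\chi_0$ respects the $x$-filtration, the top $x$-graded component of $\chi_0(\tilde\alpha)$ is exactly $\chi_0^{(0)}(\alpha)$. By the very definition of $\tilde\phi_{p,\Gamma_1}$ in Lemma \ref{lemma-H_P-Gamma-fil}, the class of $\chi_0(\tilde\alpha)$ in the associated graded therefore equals $\phi_{p,\Gamma_1}(\chi_0^{(0)}([\alpha]))$, while at the same time $[\chi_0(\tilde\alpha)]=\chi_0([\tilde\alpha])=\chi_0(\phi_{p,\Gamma_0}([\alpha]))$. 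Equating the two expressions proves commutativity of the first square, and the argument for the square involving $\chi_1$ is identical.

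The only real subtlety — more bookkeeping than a genuine obstacle — is the interaction between the $x$-filtration and the shift $\{-1\}$ built into the definition of $C_P(\Gamma_{i,j;p,q})$: one must verify that the top $x$-graded components of $\chi_0$ and $\chi_1$ do indeed land in the graded summands indicated in the two diagrams, so that the squares make sense with matching indices. This is confirmed by direct inspection of the explicit matrices for $\chi_0^0,\chi_0^1,\chi_1^0,\chi_1^1$ recorded in the proof of Lemma \ref{lemma-def-chi}. Once this is checked, both assertions reduce to the formal observation that $\chi_0$ and $\chi_1$ are filtered chain maps and that the inductive construction of $\phi_p$ is automatically compatible with any such map.
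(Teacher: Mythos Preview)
Your proposal is correct and follows exactly the route the paper indicates: the paper's own proof is simply ``This lemma follows easily from the constructions of $\chi_0$, $\chi_1$ and $\phi_n$. We leave the details to the reader,'' and you have filled in precisely those details by decomposing $\chi_0$, $\chi_1$, $d_{mf}$ into $x$-homogeneous pieces and tracing through the inductive construction of $\phi_p$.
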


\begin{proof}
This lemma follows easily from the constructions of $\chi_0$, $\chi_1$ and $\phi_n$. We leave the details to the reader.
\end{proof}

The part of Theorem \ref{thm-spectral-sequence} about $\hat{E}_r(L)$ is proved in \cite{Gornik,Wu7}. So we only need to prove the part about $\{E_r(L)\}$, which is a special case of the following theorem.

\begin{theorem}\label{thm-spectral-sequence-general}
$x$-filtration $\fil_x$ on the chain complex $(H(C_P(D), d_{mf}), d_\chi)$ induces a spectral sequence $\{E_r(L)\}$ converging to $H_P(L)$ with $E_1(L) \cong H_N(L)\otimes_\C \C[a_1,\dots,a_n]$.
\end{theorem}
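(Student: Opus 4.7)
The plan is to run the standard spectral sequence of a filtered chain complex on $(H(C_P(D),d_{mf}), d_\chi)$ equipped with the $x$-filtration, and then identify the low pages using the MOY decomposition together with Lemmas \ref{lemma-H_P-Gamma-fil} and \ref{lemma-chi-commute}.

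First I would make the inherited $x$-filtration on $H(C_P(D),d_{mf})$ precise by declaring $[\alpha] \in \fil^p_x H(C_P(D),d_{mf})$ if some $d_{mf}$-cycle representative $\tilde\alpha$ satisfies $\deg_x \tilde\alpha \leq p$; this is exactly the filtration used in the proof of Lemma \ref{lemma-H_P-Gamma-fil}. Since $\deg_x d_\chi = 0$ (the shift $\{\cdot\}$ in the definitions of $C_P(c^\pm_{i,j;p,q})$ exactly compensates the $\deg_x \chi_0 = \deg_x \chi_1 = 1$ from Lemma \ref{lemma-def-chi}), the map $d_\chi$ on $H(C_P(D),d_{mf})$ preserves this filtration, yielding a filtered chain complex and hence a spectral sequence $\{E_r(L)\}$ abutting to $H(H(C_P(D),d_{mf}),d_\chi) = H_P(L)$.

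To get actual convergence, I would observe that the MOY decomposition \eqref{eq-C_P-D-mf} gives
\[
H(C_P(D),d_{mf}) \cong \bigoplus_{\Gamma \in \mathcal{MOY}(D)} H_P(\Gamma)\|-\hbar(\Gamma)\|\{(N-1)w+\hbar(\Gamma)\},
\]
which is a finite direct sum. By Corollary \ref{cor-graph-homology-free}, each $H_P(\Gamma)$ has bounded and exhaustive $x$-filtration, so the $x$-filtration on $H(C_P(D),d_{mf})$ is bounded, and the homological grading is bounded since $D$ has finitely many crossings. Thus the spectral sequence converges to $H_P(L)$ in the strong sense.

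Next I would identify the $E_0$ and $E_1$ pages. Applying Lemma \ref{lemma-H_P-Gamma-fil} summand-by-summand in the MOY decomposition gives, as bigraded $\C[a_1,\dots,a_n]$-modules,
\[
E_0 \cong \bigoplus_{\Gamma} H_N(\Gamma)\otimes_\C \C[a_1,\dots,a_n]\|-\hbar(\Gamma)\|\{(N-1)w+\hbar(\Gamma)\} \cong H(C_N(D),d_{mf})\otimes_\C \C[a_1,\dots,a_n].
\]
The induced differential $d_0$ is the associated graded of $d_\chi$; only the top $x$-degree components $\chi_0^{(0)},\chi_1^{(0)}$ of the edge maps survive, and Lemma \ref{lemma-chi-commute} asserts exactly that these top parts correspond, under the isomorphism $\phi_p$ of Lemma \ref{lemma-H_P-Gamma-fil}, to the differentials making up $d_\chi$ on $C_N(D)$ tensored with $\id_{\C[a_1,\dots,a_n]}$. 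Because $\C[a_1,\dots,a_n]$ is $\C$-flat, taking homology yields
\[
E_1 \cong H(H(C_N(D),d_{mf}),d_\chi)\otimes_\C \C[a_1,\dots,a_n] = H_N(L)\otimes_\C \C[a_1,\dots,a_n].
\]

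The main obstacle is the identification in the last step: one must be sure that the spectral sequence's $d_0$, which a priori is just the operator induced on the associated graded by $d_\chi$, really is the full $d_\chi$ of the $\slmf(N)$ complex after the identification $H_N(\Gamma)\otimes\C[a_1,\dots,a_n] \cong H(C_P(\Gamma),d_{mf}^{(0)})$. This is precisely the commutativity of the two squares in Lemma \ref{lemma-chi-commute}, so once that lemma is in hand the computation of $E_1$ is a formal consequence. Everything else, including the treatment of $\{\hat E_r(L)\}$, follows by the same argument after replacing $\C[a_1,\dots,a_n]$ by $\C$ via $a_j \mapsto 1$ and invoking Lemma \ref{lemma-hat-H_P-Gamma-fil} in place of Lemma \ref{lemma-H_P-Gamma-fil}.
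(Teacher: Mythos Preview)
Your proposal is correct and follows essentially the same route as the paper: convergence from Corollary~\ref{cor-graph-homology-free}, identification of $E_0$ with $(H(C_P(D),d_{mf}^{(0)}),d_\chi^{(0)})\cong H(C_N(D),d_{mf})\otimes_\C\C[a_1,\dots,a_n]$ via Lemmas~\ref{lemma-H_P-Gamma-fil} and~\ref{lemma-chi-commute}, and hence $E_1\cong H_N(L)\otimes_\C\C[a_1,\dots,a_n]$. You are just slightly more explicit than the paper about the MOY decomposition and the flatness step.
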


\begin{proof}
By Corollary \ref{cor-graph-homology-free}, the $x$-filtration of $H(C_P(D),d_{mf})$ is bounded and exhaustive. So $E_r(L)$ converges to $H_P(L)$. It remains to show that $E_1(L) \cong H_N(L)\otimes_\C \C[a_1,\dots,a_n]$. By Lemma \ref{lemma-chi-commute}, we know that $E_0(L)$ is isomorphic to the chain complex $(H(C_P(D),d_{mf}^{(0)}), d_\chi^{(0)})$, where $d_{mf}^{(0)}$ and $d_\chi^{(0)}$ are the top homogeneous parts of $d_{mf}$ and $d_\chi$ with respect to the $x$-grading of $C_P(D)$. So $E_1(L)\cong H(H(C_P(D),d_{mf}^{(0)}), d_\chi^{(0)})$. On the other hand, by the definition of $H_N(L)$, it is easy to see that $H(H(C_P(D),d_{mf}^{(0)}), d_\chi^{(0)}) \cong H_N(L)\otimes_\C \C[a_1,\dots,a_n]$. So $E_1(L) \cong H_N(L)\otimes_\C \C[a_1,\dots,a_n]$.
\end{proof}

\section{Decomposition Theorems}\label{sec-spectral-sequence-decomp}

Next, we prove Theorems \ref{thm-H_P-decomp}, \ref{thm-spectral-sequence-decomp}, and compute the spectral sequences of the filtered chain complexes $F_{i,s}$, $T_{i,m,s}$, $\hat{F}_{i,s}$ and $\hat{T}_{i,m,s}$ defined in \eqref{def-F-i-s-fil}--\eqref{def-hat-T-i-m-s-fil}. Theorem \ref{thm-exact-couple-lee-gornik} follows easily from these.

In this section, $P=P(x,a)$ is a homogeneous polynomial of form \eqref{def-P} and $H_P$ is the corresponding equivariant $\slmf(N)$ Khovanov-Rozansky homology over $\C[a]$. Recall that $\deg a =2k$.

\subsection{A closer look at $\fil_x$}\label{subsec-fil-decomp} To prove Theorems \ref{thm-H_P-decomp} and \ref{thm-spectral-sequence-decomp}, we need to better understand the relation between the polynomial grading and the $x$-filtration. The goal of this subsection is to show that, for a closed MOY graph $\Gamma$, any direct sum decomposition of $H_P(\Gamma)$ in the category of graded $\C[a]$-modules is also a direct sum decomposition in the category of filtered $\C$-spaces. Theorems \ref{thm-H_P-decomp} and \ref{thm-spectral-sequence-decomp} both follow from this.

In the rest of this subsection, $\Gamma$ is a closed MOY graph with a marking, $x_1,\dots,x_m$ are the variables assigned to the marked points of $\Gamma$ and $R=\C[x_1,\dots,x_m,a]$, where $a$ is a homogeneous variable of degree $2k$. $P(x,a)$ is a homogeneous polynomial of form \eqref{def-P}. Unless otherwise specified, when we say an element is homogeneous, we mean it is homogeneous with respect to the polynomial grading.

We start with simple observations.

\begin{lemma}\label{lemma-degree-a}
Suppose $M$ is a Koszul matrix factorization over $R$ (see Definition \ref{def-mf-Koszul}) and $\rho$ is a homogeneous element of $M$. Then $\deg_x \rho \leq \deg \rho$, and $\deg_x \rho < \deg \rho$ if and only if $\rho \in aM$.
\end{lemma}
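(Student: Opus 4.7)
The plan is to reduce everything to a calculation on the standard basis $\{1_{\vec{\ve}}\}$ of the Koszul matrix factorization $M$ given in Definition~\ref{definition-Koszul-basis}. Since $M$ is free over $R=\C[x_1,\dots,x_m,a]$ with this basis, any element $\rho\in M$ has a unique expansion
\[
\rho = \sum_{\vec{\ve}} f_{\vec{\ve}}\, 1_{\vec{\ve}},\qquad f_{\vec{\ve}} \in R,
\]
and homogeneity of $\rho$ with $\deg\rho = d$ forces each nonzero $f_{\vec{\ve}}$ to be homogeneous of polynomial degree $d - \deg 1_{\vec{\ve}}$. Thus the statement about $\rho$ will follow from the corresponding statement for homogeneous $f\in R$ together with the three bullet points recalled at the end of Definition~\ref{definition-Koszul-basis}.

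First I would handle the scalar case. For a homogeneous $f = \sum c_{\vec{l},p}\, a^p \prod x_i^{l_i} \in R$ of polynomial degree $d$, every nonzero monomial satisfies $2kp + \sum 2l_i = d$, and by the definition of $\deg_x$,
\[
\deg_x f \;=\; \max\Bigl\{\, \textstyle\sum 2l_i \;:\; c_{\vec{l},p}\ne 0 \Bigr\} \;=\; d - 2k\, p_{\min},
\]
where $p_{\min}$ is the smallest $a$-exponent appearing in $f$. Hence $\deg_x f \le \deg f$, with equality exactly when $p_{\min}=0$, i.e.\ exactly when $f\notin aR$.

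Now I would assemble the general statement. Using $\deg 1_{\vec{\ve}} = \deg_x 1_{\vec{\ve}}$ and the filtration formula $\deg_x(\sum f_{\vec{\ve}} 1_{\vec{\ve}}) = \max_{\vec{\ve}}(\deg_x f_{\vec{\ve}} + \deg_x 1_{\vec{\ve}})$, I get
\[
\deg_x \rho \;=\; \max_{\vec{\ve}}\bigl(\deg_x f_{\vec{\ve}} + \deg 1_{\vec{\ve}}\bigr) \;\le\; \max_{\vec{\ve}}\bigl(\deg f_{\vec{\ve}} + \deg 1_{\vec{\ve}}\bigr) \;=\; d,
\]
by the scalar inequality applied to each $f_{\vec{\ve}}$. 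Equality $\deg_x\rho = \deg\rho$ then holds if and only if there exists some $\vec{\ve}$ with $f_{\vec{\ve}}\ne 0$ and $\deg_x f_{\vec{\ve}} = \deg f_{\vec{\ve}}$; by the scalar case this is equivalent to some $f_{\vec{\ve}}\notin aR$, i.e.\ to $\rho\notin aM$. Contrapositively, $\deg_x\rho < \deg\rho$ iff every $f_{\vec{\ve}}\in aR$, iff $\rho\in aM$.

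I do not expect a genuine obstacle here: the only subtlety is to verify that the scalar equivalence ``$\deg_x f = \deg f \iff f\notin aR$'' lifts correctly to the tensor product, which is handled cleanly by the third bullet of Definition~\ref{definition-Koszul-basis}. Homogeneity of $\rho$ (which forces a term-by-term degree condition on the $f_{\vec{\ve}}$) is essential; without it the equivalence would fail because mixed-degree cancellations could occur.
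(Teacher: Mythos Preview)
Your proposal is correct and follows essentially the same route as the paper's proof: expand $\rho$ in the standard basis $\{1_{\vec{\ve}}\}$, reduce to the scalar statement that for homogeneous $f\in R$ one has $\deg_x f\le \deg f$ with strict inequality iff $f\in aR$, and then lift using $\deg 1_{\vec{\ve}}=\deg_x 1_{\vec{\ve}}$ together with the filtration characterization from Definition~\ref{definition-Koszul-basis}. Your version is slightly more explicit (you identify $\deg_x f=d-2kp_{\min}$), but the argument is the same.
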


\begin{proof}
Let $\{1_{\vec{\ve}}\}$ be the standard basis for $M$ defined in Definition \ref{definition-Koszul-basis}. Then $\rho = \sum_{\vec{\ve}} f_{\vec{\ve}}  1_{\vec{\ve}}$, where $f_{\vec{\ve}}$ is a homogeneous element of $R$ with $\deg f_{\vec{\ve}} = \deg \rho - \deg  1_{\vec{\ve}}$. Note that, for every $f \in R$, $\deg_x f \leq \deg f$, and $\deg_x f < \deg f$ if and only if $f \in aR$. So $\deg_x f_{\vec{\ve}} + \deg_x 1_{\vec{\ve}} \leq \deg f_{\vec{\ve}} + \deg 1_{\vec{\ve}} = \deg \rho$ $\forall ~\vec{\ve}$. This shows that $\deg_x \rho \leq \deg \rho$. Moreover, $\deg_x \rho < \deg \rho$ if and only if $\deg_x f_{\vec{\ve}} < \deg f_{\vec{\ve}}$ $\forall~\vec{\ve}$ if and only if $f_{\vec{\ve}} \in aR$  $\forall~\vec{\ve}$.
\end{proof}

\begin{lemma}\label{lemma-quotient-basis}
Let $M$ be a finitely generated free $\C[a]$-module and $\{v_i\}$ a basis for $M$. For any $\lambda \in \C$, denote by $\pi_{a-\lambda}:M \rightarrow M/(a-\lambda)M$ the standard quotient map. Then $\{\pi_{a-\lambda}(v_i)\}$ is a basis for the $\C$-space $M/(a-\lambda)M$.
\end{lemma}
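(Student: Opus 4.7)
The plan is to exploit the freeness of $M$ directly, writing every element of $M$ uniquely in terms of the basis $\{v_i\}$ and then tracking what happens under the quotient map $\pi_{a-\lambda}$.

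First I would verify the spanning property. Any $m \in M$ can be written uniquely as $m = \sum_i f_i(a) v_i$ with $f_i(a) \in \C[a]$ and only finitely many $f_i$ nonzero. Since $a - \lambda$ divides $f_i(a) - f_i(\lambda)$ in $\C[a]$, each term $f_i(a) v_i - f_i(\lambda) v_i$ lies in $(a-\lambda)M$. Hence $\pi_{a-\lambda}(m) = \sum_i f_i(\lambda) \pi_{a-\lambda}(v_i)$, showing that $\{\pi_{a-\lambda}(v_i)\}$ spans $M/(a-\lambda)M$ as a $\C$-linear space.

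For linear independence, I would suppose $\sum_i c_i \pi_{a-\lambda}(v_i) = 0$ for some $c_i \in \C$ with only finitely many nonzero. This means $\sum_i c_i v_i \in (a-\lambda)M$, so there exist $g_i(a) \in \C[a]$ (finitely many nonzero) with
\[
\sum_i c_i v_i = (a-\lambda) \sum_i g_i(a) v_i = \sum_i (a-\lambda) g_i(a) v_i.
\]
By the uniqueness of expansion in the $\C[a]$-basis $\{v_i\}$, one has $c_i = (a-\lambda) g_i(a)$ in $\C[a]$ for every $i$. The left-hand side is a constant polynomial while the right-hand side either vanishes or has degree at least $1$ in $a$; comparing degrees forces $g_i = 0$, whence $c_i = 0$ for every $i$. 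Therefore $\{\pi_{a-\lambda}(v_i)\}$ is linearly independent over $\C$.

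There is no real obstacle here: the only subtlety is ensuring that the freeness of $M$ really is being used (it is, in both the uniqueness of the expansion $m = \sum_i f_i(a) v_i$ and in the absence of any $a$-torsion that could spoil the degree comparison). Equivalently, one can phrase the whole argument as the observation that the functor $(-) \otimes_{\C[a]} \C[a]/(a-\lambda)$ commutes with direct sums, applied to $M \cong \bigoplus_i \C[a] v_i$, but the hands-on bookkeeping above already suffices.
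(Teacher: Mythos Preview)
Your proof is correct and follows essentially the same approach as the paper's: both arguments verify spanning directly and prove linear independence by lifting a relation $\sum_i c_i v_i \in (a-\lambda)M$ to an equation $c_i = (a-\lambda)g_i(a)$ in $\C[a]$, then concluding $c_i = 0$ by a degree comparison. Your spanning step is slightly more explicit (reducing the $\C[a]$-coefficients to their values at $\lambda$), but the substance is identical.
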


\begin{proof}
Since $\{v_i\}$ spans $M$, we know that $\{\pi_{a-\lambda}(v_i)\}$ spans $M/(a-\lambda)M$. It remains to show that $\{\pi_{a-\lambda}(v_i)\}$ is linearly independent. Suppose $\{c_i\}\subset \C$ satisfies $\sum_i c_i \pi_{a-\lambda}(v_i)=0$. Then $\sum_i c_i v_i \in (a-\lambda)M$. Therefore, there are $f_i \in R$ such that $\sum_i c_i v_i = (a-\lambda) \sum_i f_i v_i$. That is, $\sum_i (c_i-(a-\lambda)f_i)v_i=0$. Since $\{v_i\}$ a basis for $M$, this means $c_i-(a-\lambda)f_i=0$ and, therefore, $c_i=f_i=0$ for every $i$. 
\end{proof}

\begin{lemma}\label{lemma-ring-components-degree}
For $f \in R$, denote by $f_i$ the homogeneous component of $f$ with $\deg f_i = i$. Then $\deg_x f_i \leq \deg_x f$ for every $i$.
\end{lemma}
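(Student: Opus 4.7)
The plan is very short since this is essentially a bookkeeping statement about the monomial expansion. The point is that both the polynomial grading and the $x$-filtration are diagonalized by the standard monomial basis of $R = \C[x_1,\dots,x_m,a]$, and the only feature of the two gradings I need is that the $x$-degree of a monomial $a^p \prod_l x_l^{q_l}$ equals $2\sum q_l$, while its polynomial degree equals $2kp + 2\sum q_l$, so that the polynomial degree determines the $x$-degree only up to multiples of $2k$ contributed by the powers of $a$.

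First I would write $f$ as a sum of monomials $f = \sum_\mu c_\mu \mu$ with $c_\mu \in \C^\ast$, and recall from the preceding discussion of $\fil_x$ that $\deg_x f$ is precisely the maximum of $\deg_x \mu$ over the monomials $\mu$ appearing with nonzero coefficient; this is just a restatement of the condition $f\in\fil_x^n R \iff 2\sum l_i \leq n$ for every monomial. Next, I would observe that $f_i$ is the sum of exactly those monomials $\mu$ appearing in $f$ whose polynomial degree equals $i$. In particular, each monomial of $f_i$ is also a monomial of $f$, and so its $x$-degree is bounded by $\deg_x f$. Taking the maximum over the (sub)collection of monomials that make up $f_i$ then yields $\deg_x f_i \leq \deg_x f$, which is the claim.

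There is no real obstacle here; the statement is essentially the assertion that the $x$-filtration on $R$ is compatible with the polynomial grading in the sense that each homogeneous piece $R_i$ inherits the filtration from $R$ without increase, and this follows immediately from the fact that both gradings are defined monomial-by-monomial.
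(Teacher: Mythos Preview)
Your argument is correct and is precisely the monomial-by-monomial reasoning that makes the lemma immediate; the paper's own proof is simply the word ``Obvious.'' There is nothing to add.
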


\begin{proof}
Obvious.
\end{proof}

\begin{lemma}\label{lemma-homology-components-degree}
\begin{enumerate}
	\item If $u \in \fil_x^n C_P(\Gamma)$, then all homogeneous components of $u$ are also in $\fil_x^n C_P(\Gamma)$.
	\item If $[u]\in \fil_x^n H_P(\Gamma)$, then all homogeneous components of $[u]$ are also in $\fil_x^n H_P(\Gamma)$.
\end{enumerate}
\end{lemma}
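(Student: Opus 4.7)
The plan is to reduce everything to the Koszul-basis description of $C_P(\Gamma)$ together with Lemma \ref{lemma-ring-components-degree}. Since by Definition \ref{def-mf-MOY} the object $C_P(\Gamma)$ is a tensor product of Koszul matrix factorizations, it carries a standard homogeneous basis $\{1_{\vec{\ve}}\}$ enjoying the two key properties recorded in Definition \ref{definition-Koszul-basis}: each $1_{\vec{\ve}}$ satisfies $\deg 1_{\vec{\ve}} = \deg_x 1_{\vec{\ve}}$, and $\deg_x(\sum_{\vec{\ve}} f_{\vec{\ve}} 1_{\vec{\ve}}) \leq n$ is equivalent to $\deg_x f_{\vec{\ve}} \leq n - \deg_x 1_{\vec{\ve}}$ for every $\vec{\ve}$. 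This transports the question from $C_P(\Gamma)$ to the polynomial ring $R=\C[x_1,\dots,x_m,a]$, where Lemma \ref{lemma-ring-components-degree} already controls how homogeneous decomposition interacts with $\deg_x$.

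For Part (1), I will expand $u = \sum_{\vec{\ve}} f_{\vec{\ve}} 1_{\vec{\ve}}$ in the standard basis and split each coefficient as $f_{\vec{\ve}} = \sum_j (f_{\vec{\ve}})_j$ into its polynomial-homogeneous pieces. Because each $1_{\vec{\ve}}$ is itself polynomial-homogeneous, the polynomial-degree-$d$ component of $u$ is simply
\[
u_d \;=\; \sum_{\vec{\ve}} (f_{\vec{\ve}})_{d-\deg 1_{\vec{\ve}}}\, 1_{\vec{\ve}}.
\]
Lemma \ref{lemma-ring-components-degree} yields $\deg_x (f_{\vec{\ve}})_j \leq \deg_x f_{\vec{\ve}} \leq n - \deg_x 1_{\vec{\ve}}$ for every $j$ and every $\vec{\ve}$, and the characterization recalled above then gives $\deg_x u_d \leq n$, i.e.\ $u_d \in \fil_x^n C_P(\Gamma)$.

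For Part (2), I will combine Part (1) with the fact that the matrix factorization differential $d_{mf}$ of $C_P(\Gamma)$ is polynomial-homogeneous of degree $N+1$ (Definition \ref{def-mf}). By definition of the induced filtration, any class $[u] \in \fil_x^n H_P(\Gamma)$ admits a representative cycle $v \in \fil_x^n C_P(\Gamma)$. Decompose $v = \sum_d v_d$ into polynomial-homogeneous components; by Part (1), each $v_d$ already lies in $\fil_x^n C_P(\Gamma)$. Polynomial-homogeneity of $d_{mf}$ forces the equation $d_{mf} v = 0$ to split degree by degree, so each $v_d$ is itself a cycle. Hence $[v_d] \in \fil_x^n H_P(\Gamma)$, and since $H_P(\Gamma)$ inherits its polynomial grading from $C_P(\Gamma)$, $[v_d]$ is precisely the polynomial-degree-$d$ homogeneous component of $[v]=[u]$.

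The only subtlety worth flagging is the compatibility between ``polynomial-homogeneous components in $H_P(\Gamma)$'' and ``polynomial-homogeneous components at the chain level,'' but this is immediate from the fact that $H_P(\Gamma)$ is graded by the polynomial grading descending from $C_P(\Gamma)$ through a polynomial-homogeneous differential; there is no genuine obstacle, and the argument is essentially a bookkeeping translation of Lemma \ref{lemma-ring-components-degree} through the Koszul basis.
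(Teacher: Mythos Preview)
Your proof is correct and follows essentially the same approach as the paper: both reduce Part (1) to Lemma \ref{lemma-ring-components-degree} via the standard Koszul basis and the characterization of $\fil_x$ in Definition \ref{definition-Koszul-basis}, and both deduce Part (2) by choosing a cycle representative in $\fil_x^n C_P(\Gamma)$, using polynomial-homogeneity of $d_{mf}$ to see that each homogeneous component is itself a cycle, and then invoking Part (1).
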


\begin{proof}
$C_P(\Gamma)$ is a Koszul matrix factorization. Denote by $\{1_{\vec{\ve}}\}$ the standard basis for $C_P(\Gamma)$ given in Definition \ref{definition-Koszul-basis}. Recall that $\{1_{\vec{\ve}}\}$ is a homogeneous basis with respect to the polynomial grading.

To prove Part (1) of the lemma, assume $u \in \fil_x^n C_P(\Gamma)$ and denote by $u_i$ the homogeneous component of $u$ with $\deg u_i =i$. Every $u_i$ can be uniquely expressed as $u_i = \sum_{\vec{\ve}} g_{i,\vec{\ve}} 1_{\vec{\ve}}$, where $g_{i,\vec{\ve}}$ is a homogeneous element of $R$ with $\deg g_{i,\vec{\ve}} = i - \deg 1_{\vec{\ve}}$. Then $u = \sum_i u_i = \sum_{\vec{\ve}} (\sum_i g_{i,\vec{\ve}}) 1_{\vec{\ve}}$. Since $u \in \fil_x^n C_P(\Gamma)$, we have $\deg_x \sum_i g_{i,\vec{\ve}} \leq n - \deg_x 1_{\vec{\ve}}$ for every $\vec{\ve}$. Note that $g_{i,\vec{\ve}}$ is the homogeneous component of $\sum_i g_{i,\vec{\ve}}$ of polynomial degree $i - \deg 1_{\vec{\ve}}$. Thus, by Lemma \ref{lemma-ring-components-degree}, we have $\deg_x g_{i,\vec{\ve}} \leq n-\deg_x 1_{\vec{\ve}}$. So $\deg_x u_i =\deg_x \sum_{\vec{\ve}} g_{i,\vec{\ve}} 1_{\vec{\ve}} \leq n$. This proves Part (1).

To prove Part (2), note that $[u]$ is represented by a cycle $u \in \fil_x^n C_P(\Gamma)$. Denote by $u_i$ the homogeneous component of $u$ with $\deg u_i =i$. Since the differential of $C_P(\Gamma)$ is homogeneous, each $u_i$ is a cycle, and $[u_i]$ is the homogeneous component of $[u]$ with $\deg [u_i]=i$. Then Part (2) of the lemma follows from Part (1). 
\end{proof}

By Corollary \ref{cor-graph-homology-free}, $H_P(\Gamma)$ is a finitely generated graded-free $\C[a]$-module. The next lemma determines the $x$-filtration degrees of elements of homogeneous bases for $H_P(\Gamma)$.

\begin{lemma}\label{lemma-homology-basis-degree}
Let $\{[u_j]\}$ be any homogeneous basis for the free $\C[a]$-module $H_P(\Gamma)$. Then $\deg_x [u_j] = \deg [u_j]$ for every $j$.
\end{lemma}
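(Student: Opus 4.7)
The plan is to show that if $\{[u_j]\}$ were a homogeneous basis with $\deg_x[u_j] < \deg[u_j]$ for some $j$, then that $[u_j]$ would lie in $aH_P(\Gamma)$, contradicting that its image in $H_P(\Gamma)/aH_P(\Gamma)$ is part of a $\C$-basis via Lemma \ref{lemma-quotient-basis}. So the task reduces to proving the dichotomy $\deg_x[u] \leq \deg[u]$ with strict inequality only for classes in $aH_P(\Gamma)$, mirroring Lemma \ref{lemma-degree-a} at the level of homology.

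First, I would dispense with the inequality $\deg_x[u_j] \leq \deg[u_j]$: pick any homogeneous cycle representative $u_j$ of polynomial degree $d := \deg[u_j]$; then Lemma \ref{lemma-degree-a} gives $\deg_x u_j \leq d$, so $[u_j] \in \fil_x^d H_P(\Gamma)$ by the definition of the induced filtration. The main work is proving that a strict inequality $\deg_x[u] < \deg[u]$ for a homogeneous class $[u]$ forces $[u] \in aH_P(\Gamma)$. Suppose $[u] \in \fil_x^p H_P(\Gamma)$ with $p < d := \deg[u]$. By definition, there is a cycle $u' \in \fil_x^p C_P(\Gamma)$ with $[u'] = [u]$; it need not be homogeneous, so I would decompose $u' = \sum_i u'_i$ into polynomial-homogeneous components. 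Since $d_{mf}$ is homogeneous with respect to the polynomial grading, each $u'_i$ is itself a cycle, and by Lemma \ref{lemma-homology-components-degree}(1) each lies in $\fil_x^p C_P(\Gamma)$. Because $[u]$ is purely of degree $d$ and the polynomial grading descends to $H_P(\Gamma)$, the homogeneous decomposition of $[u']$ gives $[u'_d] = [u]$ and $[u'_i] = 0$ for $i \neq d$.

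Now $u'_d$ is a homogeneous cycle of polynomial degree $d$ with $\deg_x u'_d \leq p < d$, so by Lemma \ref{lemma-degree-a} we can write $u'_d = aw$ for some $w \in C_P(\Gamma)$. Since $C_P(\Gamma)$ is a Koszul matrix factorization over $R$, it is a free $R$-module and hence $a$-torsion-free; thus $a\cdot d_{mf}(w) = d_{mf}(aw) = d_{mf}(u'_d) = 0$ forces $d_{mf}(w) = 0$, so $w$ is a cycle and $[u] = [u'_d] = a[w] \in aH_P(\Gamma)$. Combining with Lemma \ref{lemma-quotient-basis}, which ensures that each $[u_j]$ has nonzero image in $H_P(\Gamma)/aH_P(\Gamma)$ and therefore cannot lie in $aH_P(\Gamma)$, we conclude $\deg_x[u_j] = \deg[u_j]$ for every $j$.

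The main obstacle is the middle step: producing a homogeneous representative of $[u]$ that still witnesses the drop in $x$-filtration, and then promoting divisibility by $a$ at the chain level to divisibility by $a$ at the homology level. This step relies crucially on three facts already established in the paper: the homogeneity of $d_{mf}$ (so each polynomial-degree component of a cycle is a cycle), Lemma \ref{lemma-homology-components-degree}(1) (so homogeneous components retain the $x$-filtration bound), and the $a$-torsion-freeness of the Koszul matrix factorization $C_P(\Gamma)$ (so the quotient $w$ remains a cycle).
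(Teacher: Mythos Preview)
Your proof is correct and follows essentially the same route as the paper's: both pass to a homogeneous cycle representative with the lowered $x$-filtration, invoke Lemma~\ref{lemma-degree-a} to factor it as $a$ times a cycle, and then derive a contradiction from $[u_j]\in aH_P(\Gamma)$. The only difference is cosmetic: the paper phrases the final contradiction as a change-of-basis determinant equal to $a$, whereas you invoke Lemma~\ref{lemma-quotient-basis} directly to say a basis element cannot lie in $aH_P(\Gamma)$; your version is arguably the cleaner of the two.
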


\begin{proof}
By Lemma \ref{lemma-degree-a}, we know that $\deg_x [u_j] \leq \deg [u_j]$ for every $j$. Assume $n=\deg_x [u_j] < \deg [u_j] =l$ for a certain $j$. Then $[u_j]$ is represented by a cycle $u_j \in \fil_x^n C_P(\Gamma)$. Denote by $u_{j,i}$ the homogeneous component of $u_j$ with $\deg u_{j,i} = i$. By Lemma \ref{lemma-homology-components-degree}, we know that $u_{j,i} \in \fil_x^n C_P(\Gamma)$ for every $i$. Also, since the differential of $C_P(\Gamma)$ is homogeneous, each $u_{j,i}$ is itself a cycle. By comparing the homogeneous components in $[u_j] = \sum_i [u_{j,i}]$, we get $[u_j] = [u_{j,l}]$ and $[u_{j,i}] = 0$ if $i \neq l$. Note that $\deg_x u_{j,l} \leq n < l= \deg u_{j,l}$. So, by Lemma \ref{lemma-degree-a}, $u_{j,l} = a v$ for some $v \in C_P(\Gamma)$. It is easy to see that $v$ is a homogeneous cycle in $C_P(\Gamma)$ and that $\{[v]\}\cup \{u_i~|~i\neq j\}$ spans $H_P(\Gamma)$ and is $\C[a]$-linearly independent. In other words, $\{[v]\}\cup \{[u_i]~|~i\neq j\}$ is also a basis for $H_P(\Gamma)$. But this is impossible because, if this is true, then the determinant of the change-of-coordinates matrix from the basis $\{[u_j]\}$ to the basis $\{[v]\}\cup \{[u_i]~|~i\neq j\}$ is $a$, which is not invertible in $\C[a]$. 
\end{proof}

\begin{definition}\label{def-pi-0}
Denote by $\pi_a:C_P(\Gamma) \rightarrow C_N(\Gamma) (= C_P(\Gamma)/aC_P(\Gamma))$ the standard quotient map. To keep notations simple, we denote again by $\pi_a:H_P(\Gamma) \rightarrow H_N(\Gamma)$ the homomorphism induced by the quotient map $\pi_a$. 
\end{definition}

Recall that $C_N(\Gamma)$ inherits the polynomial grading of $C_P(\Gamma)$ via $\pi_a$, which makes $\pi_a$ a homogeneous map of degree $0$. Moreover, $C_N(\Gamma)$ also inherits the $x$-filtration of $C_P(\Gamma)$ via $\pi_a$. It is easy to see the $x$-filtration of $C_N(\Gamma)$ is the increasing filtration induced by its polynomial grading.

\begin{lemma}\label{lemma-basis-pi-0}
$\pi_a:H_P(\Gamma) \rightarrow H_N(\Gamma)$ is a surjective homogeneous homomorphism with $\deg \pi_a =0$ and $\ker \pi_a = a H_P(\Gamma)$. Moreover, any homogeneous basis for the free $\C[a]$-module $H_P(\Gamma)$ is mapped by $\pi_a$ to a homogeneous basis for the $\C$-space $H_N(\Gamma)$.
\end{lemma}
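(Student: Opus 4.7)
The plan is to establish the three claims (homogeneity of degree $0$, surjectivity with $\ker = aH_P(\Gamma)$, and the basis statement) by combining Lemma \ref{lemma-H_P-Gamma-fil}, Lemma \ref{lemma-quotient-basis}, and a standard long-exact-sequence argument for a short exact sequence of $\zed_2$-periodic chain complexes. Homogeneity of $\pi_a$ is immediate: on the chain level $\pi_a$ is the quotient by the $\zed^{\oplus 2}$-graded submodule $aC_P(\Gamma)$, so both the polynomial grading and the $\zed_2$-grading are preserved, and the induced map on homology has the same property.

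For surjectivity and the kernel, I would start from the short exact sequence of $\zed_2$-periodic chain complexes
\[
0 \longrightarrow C_P(\Gamma) \xrightarrow{\;a\;} C_P(\Gamma) \xrightarrow{\;\pi_a\;} C_N(\Gamma) \longrightarrow 0,
\]
which is exact in each $\zed_2$-component because $C_P(\Gamma)$ is a free $\C[a]$-module (it is a Koszul matrix factorization over $R$, hence free over the subring $\C[a]$). Passing to the associated long exact sequence in homology and invoking Lemma \ref{lemma-H_P-Gamma-fil} (which says $H_P^{\ve+1}(\Gamma)=0$), together with the analogous vanishing $H_N^{\ve+1}(\Gamma)=0$ from \cite{KR1}, collapses the long exact sequence to the short exact sequence
\[
0 \longrightarrow H_P^{\ve}(\Gamma) \xrightarrow{\;a\;} H_P^{\ve}(\Gamma) \xrightarrow{\;\pi_a\;} H_N^{\ve}(\Gamma) \longrightarrow 0.
\]
Surjectivity of $\pi_a$ and the identification $\ker\pi_a = aH_P(\Gamma)$ both read off immediately. (All maps here respect the polynomial grading, with the appropriate shifts from the degree of $a$.)

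For the final claim about bases, the short exact sequence above yields a canonical graded $\C$-linear isomorphism $H_P(\Gamma)/aH_P(\Gamma) \xrightarrow{\sim} H_N(\Gamma)$ induced by $\pi_a$. Given any homogeneous basis $\{[u_j]\}$ of the graded-free $\C[a]$-module $H_P(\Gamma)$ (which exists by Corollary \ref{cor-graph-homology-free}), Lemma \ref{lemma-quotient-basis} applied with $\lambda = 0$ shows that the images $\{[u_j] \bmod a\}$ form a $\C$-basis of $H_P(\Gamma)/aH_P(\Gamma)$. Transporting along the isomorphism, $\{\pi_a([u_j])\}$ is a homogeneous $\C$-basis of $H_N(\Gamma)$, completing the proof.

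I do not expect any serious obstacle; the only point requiring minor care is bookkeeping the polynomial-grading shifts in the connecting homomorphism of the long exact sequence, but since the relevant component $H_P^{\ve+1}(\Gamma)$ vanishes, these shifts never enter the final argument.
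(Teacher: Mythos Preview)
Your proposal is correct and essentially identical to the paper's own proof: both use the short exact sequence $0\to C_P(\Gamma)\xrightarrow{a}C_P(\Gamma)\xrightarrow{\pi_a}C_N(\Gamma)\to 0$, invoke the vanishing $H_P^{\ve+1}(\Gamma)=0$ (Lemma~\ref{lemma-H_P-Gamma-fil}) and $H_N^{\ve+1}(\Gamma)=0$ (from \cite{KR1}) to collapse the long exact sequence, and then apply Lemma~\ref{lemma-quotient-basis} for the basis statement.
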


\begin{proof}
By its definition, we know that $\pi_a:H_P(\Gamma) \rightarrow H_N(\Gamma)$ is a homogeneous homomorphism with $\deg \pi_a =0$. The short exact sequence $0\rightarrow C_P(\Gamma) \xrightarrow{a} C_P(\Gamma) \xrightarrow{\pi_a} C_N(\Gamma) \rightarrow 0$ induces a long exact sequence 
\[
\cdots\rightarrow H_N^{\ve+1}(\Gamma) \rightarrow H_P^{\ve}(\Gamma) \xrightarrow{a} H_P^\ve(\Gamma) \xrightarrow{\pi_a} H_N^\ve(\Gamma) \rightarrow H_P^{\ve+1}(\Gamma) \xrightarrow{a} H_P^{\ve+1}(\Gamma) \xrightarrow{\pi_a} H_N^{\ve+1}(\Gamma)  \rightarrow \cdots
\]
with $\zed_2$ homological grading, where $\ve$ is the rotation number of $\Gamma$. From \cite{KR1}, we know that $H_N^{\ve+1}(\Gamma)\cong 0$. By Lemma \ref{lemma-H_P-Gamma-fil}, we know that $H_P^{\ve+1}(\Gamma) \cong 0$. So the above long exact sequence becomes a short exact sequence
\[
0\rightarrow H_P(\Gamma) \xrightarrow{a} H_P(\Gamma) \xrightarrow{\pi_a} H_N(\Gamma) \rightarrow 0.
\] 
It follows from this that $\pi_a:H_P(\Gamma) \rightarrow H_N(\Gamma)$ is surjective and $\ker \pi_a = a H_P(\Gamma)$. The statement about bases follows then from Lemma \ref{lemma-quotient-basis}.
\end{proof}

\begin{lemma}\label{lemma-a-fixes-filtration}
For any $[u]\in H_P(\Gamma)$, $[u] \in \fil_x^n H_P(\Gamma)$ if and only if $a[u] \in \fil_x^n H_P(\Gamma)$.
\end{lemma}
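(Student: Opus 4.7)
The forward direction is immediate: if $u \in \fil_x^n C_P(\Gamma)$ is a cycle representing $[u]$, then $au$ represents $a[u]$ and has $\deg_x(au) = \deg_x u \le n$, because $\deg_x a = 0$.

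The converse is the substantive direction. My plan is to reduce first to the case where $[u]$ is polynomial-homogeneous, and then to extract a contradiction from the associated graded structure provided by Lemma \ref{lemma-H_P-Gamma-fil}. For the reduction, I would decompose $[u] = \sum_i [u]_i$ into its polynomial-homogeneous components. Since multiplication by $a$ shifts the polynomial grading uniformly by $2k$, the polynomial-homogeneous components of $a[u]$ are precisely the $a[u]_i$ (in degree $i + 2k$). Lemma \ref{lemma-homology-components-degree}(2) then makes both the hypothesis $a[u] \in \fil_x^n$ and the desired conclusion $[u] \in \fil_x^n$ equivalent to the same statement applied componentwise, reducing matters to the polynomial-homogeneous case.

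With $[u]$ homogeneous, suppose for contradiction that $[u] \notin \fil_x^n H_P(\Gamma)$. Since the $x$-filtration on $H_P(\Gamma)$ is bounded and exhaustive by Corollary \ref{cor-graph-homology-free}, there is a smallest integer $l^* \ge n+1$ with $[u] \in \fil_x^{l^*} H_P(\Gamma)$, and the image $\bar u$ of $[u]$ in the quotient $\fil_x^{l^*} H_P(\Gamma) / \fil_x^{l^* - 1} H_P(\Gamma)$ is nonzero. The isomorphism $\phi_{l^*}$ constructed in the proof of Lemma \ref{lemma-H_P-Gamma-fil} identifies this quotient, $\C[a]$-linearly, with $H_N^{\ve, l^*}(\Gamma) \otimes_\C \C[a]$, which is a free $\C[a]$-module. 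Because multiplication by $a$ is injective on a free $\C[a]$-module, $a\bar u \ne 0$ in the quotient. But $a\bar u$ is precisely the image of $a[u]$ there, so $a[u] \notin \fil_x^{l^* - 1} H_P(\Gamma) \supseteq \fil_x^n H_P(\Gamma)$, contradicting the hypothesis.

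The main obstacle, such as it is, lies in invoking the correct piece of Lemma \ref{lemma-H_P-Gamma-fil}: namely, the $\C[a]$-linearity of the isomorphism $\phi_p$ together with the freeness of $H_N^{\ve, p}(\Gamma) \otimes_\C \C[a]$ as a $\C[a]$-module. Without this, $a[u]$ could a priori have strictly smaller $x$-filtration degree than $[u]$; the freeness rules this out. The rest of the argument is bookkeeping on filtration degrees and on the polynomial-homogeneous decomposition.
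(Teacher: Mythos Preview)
Your proof is correct and follows essentially the same route as the paper: both argue the converse by contrapositive, locating the minimal filtration level $l$ with $[u]\in\fil_x^l\setminus\fil_x^{l-1}$ and then using the $\C[a]$-linear isomorphism $\phi_l$ from Lemma~\ref{lemma-H_P-Gamma-fil} together with freeness of $H_N^{\ve,l}(\Gamma)\otimes_\C\C[a]$ to conclude $a[u]\notin\fil_x^{l-1}$. Your preliminary reduction to the polynomial-homogeneous case via Lemma~\ref{lemma-homology-components-degree} is harmless but unnecessary---the paper omits it, and indeed your own core argument after that reduction never uses homogeneity.
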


\begin{proof}
Since the map $C_P(\Gamma) \xrightarrow{a}C_P(\Gamma)$ preserves the $x$-filtration, so does the map $H_P(\Gamma) \xrightarrow{a} H_P(\Gamma)$. Therefor, $a[u] \in \fil_x^n H_P(\Gamma)$ if $[u] \in \fil_x^n H_P(\Gamma)$.

Now assume $[u] \notin \fil_x^n H_P(\Gamma)$. Since the $x$-filtration is exhaustive, there is an $l>n$ such that  $[u] \in \fil_x^l H_P(\Gamma)$ and  $[u] \notin \fil_x^{l-1} H_P(\Gamma)$. Denote by $\ve$ the rotation number of $\Gamma$. Recall that, by Lemma \ref{lemma-H_P-Gamma-fil}, we have $H_P^{\ve+1}(\Gamma)=0$. Moreover, in the proof of Lemma \ref{lemma-H_P-Gamma-fil}, we constructed an isomorphism $\phi_l: H_N^{\ve,l}(\Gamma)\otimes_\C \C[a] \rightarrow \fil^l_x H_P^\ve(\Gamma) / \fil^{l-1}_x H_P^\ve(\Gamma)$ of $\C[a]$-modules. So $[u]\in H_P^\ve(\Gamma)$ and $\phi_l^{-1}([u]) \neq 0$. But $ H_N^{\ve,l}(\Gamma)\otimes_\C \C[a]$ is a free $\C[a]$-module. So $\phi_l^{-1}(a[u])= a\phi_l^{-1}([u]) \neq 0$. Thus, $a[u] \notin \fil_x^{l-1} H_P(\Gamma)$ and, therefore, $a[u] \notin \fil_x^n H_P(\Gamma)$.
\end{proof}

\begin{lemma}\label{lemma-homogeneous-basis-fil}
Let $\{[u_j]\}$ be a homogeneous basis for $H_P(\Gamma)$. For any $\{f_j\}\subset \C[a]$ and any $l \in \zed$, $\sum_j f_j [u_j] \in \fil_x^l H_P(\Gamma)$ if and only if $f_j=0$ whenever $\deg [u_j]>l$.
\end{lemma}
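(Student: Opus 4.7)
The plan is to prove the two implications separately.

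For the \emph{``if''} direction, I would first apply Lemma \ref{lemma-homology-basis-degree} to obtain $\deg_x[u_j] = \deg[u_j] =: s_j$ for each $j$. Iterating Lemma \ref{lemma-a-fixes-filtration} then yields $\deg_x(a^m[u_j]) = s_j$ for every $m \geq 0$. Expanding $f_j = \sum_m c_{j,m} a^m$, this gives $\deg_x(f_j[u_j]) \leq s_j$ whenever $f_j \neq 0$, so $\sum_j f_j[u_j] \in \fil_x^{\max\{s_j \,:\, f_j \neq 0\}} H_P(\Gamma)$. In particular, if all nonzero $f_j$ satisfy $s_j \leq l$, then the sum lies in $\fil_x^l H_P(\Gamma)$.

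For the \emph{``only if''} direction, I would argue by induction on the maximum power of $a$ appearing among the coefficients $\{f_j \,:\, s_j > l\}$, with the convention that this maximum is $-\infty$ (the base case) when all such $f_j$ vanish. Assume $[v] := \sum_j f_j[u_j] \in \fil_x^l H_P(\Gamma)$. The inductive step begins by applying the filtration-preserving projection $\pi_a \colon H_P(\Gamma) \to H_N(\Gamma)$ from Definition \ref{def-pi-0}, obtaining
\[
\pi_a[v] = \sum_j f_j(0)\, \pi_a[u_j] \in \fil_x^l H_N(\Gamma),
\]
where $f_j(0) \in \C$ denotes the constant term of $f_j$. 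Since $\{\pi_a[u_j]\}$ is a homogeneous basis of $H_N(\Gamma)$ by Lemma \ref{lemma-basis-pi-0}, and $\fil_x^l H_N(\Gamma)$ coincides with the polynomial-degree-$\leq l$ subspace (as noted in the paragraph following Definition \ref{def-pi-0}), separating by polynomial degree forces $f_j(0) = 0$ whenever $s_j > l$.

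Having established that $a$ divides $f_j$ for every $j$ with $s_j > l$, I would write $f_j = a g_j$ for such $j$. By the ``if'' direction already proved, the sum $\sum_{s_j \leq l} f_j[u_j]$ lies in $\fil_x^l H_P(\Gamma)$, hence
\[
a \cdot \sum_{s_j > l} g_j [u_j] = [v] - \sum_{s_j \leq l} f_j[u_j] \in \fil_x^l H_P(\Gamma).
\]
Lemma \ref{lemma-a-fixes-filtration} then strips off the factor of $a$, yielding $\sum_{s_j > l} g_j[u_j] \in \fil_x^l H_P(\Gamma)$. The maximum power of $a$ among the $g_j$ is strictly less than that among the original $f_j$, so the inductive hypothesis forces $g_j = 0$, hence $f_j = 0$, for every $j$ with $s_j > l$. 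The main obstacle is just setting up the induction so that the parameter strictly decreases; beyond that, the proof is a clean assembly of Lemmas \ref{lemma-homology-basis-degree}, \ref{lemma-basis-pi-0}, and \ref{lemma-a-fixes-filtration}, requiring no new technical input.
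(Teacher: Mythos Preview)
Your proof is correct and relies on the same key inputs as the paper's: Lemma~\ref{lemma-homology-basis-degree}, Lemma~\ref{lemma-a-fixes-filtration}, and Lemma~\ref{lemma-basis-pi-0}. The ``if'' direction is identical to the paper's.

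For the ``only if'' direction, the paper organizes the argument differently. Rather than inducting on the $a$-degree of the offending coefficients, the paper argues by contradiction: it sets $n = \max\{\deg[u_j] : f_j \neq 0\} > l$, then uses Lemma~\ref{lemma-homology-components-degree} to decompose $\sum_j f_j[u_j]$ into its \emph{homogeneous} components with respect to the polynomial grading. Each such component has the form $a^i[v_i]$ for a homogeneous $[v_i]$ of degree $n$, and Lemma~\ref{lemma-homology-components-degree} places each in $\fil_x^l$. Stripping the $a^i$ via Lemma~\ref{lemma-a-fixes-filtration} and projecting via $\pi_a$ then yields a linear relation among the top-degree $\pi_a[u_j]$, contradicting their independence.

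Your induction avoids Lemma~\ref{lemma-homology-components-degree} entirely by projecting via $\pi_a$ first to kill constant terms, then peeling off one factor of $a$ at a time. This is arguably a bit cleaner, since it does not require the auxiliary result on homogeneous components of filtered elements. The paper's route, in turn, makes the role of the polynomial grading more explicit and handles all powers of $a$ in one stroke rather than iteratively. Both are straightforward assemblies of the same lemmas; neither offers a real advantage over the other.
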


\begin{proof}
By Lemma \ref{lemma-homology-basis-degree}, we have $\deg_x [u_j] = \deg [u_j]$ for every $j$. If $f_j=0$ whenever $\deg [u_j]>l$, then, by Lemma \ref{lemma-a-fixes-filtration}, we know that $\sum_j f_j [u_j] \in \fil_x^l H_P(\Gamma)$.

Now assume $\sum_j f_j [u_j] \in \fil_x^l H_P(\Gamma)$. We prove by contradiction that $f_j=0$ whenever $\deg [u_j]>l$. Assuming the conclusion is not true, then $n:=\{\deg [u_j] ~|~ f_j\neq 0\} > l$. Without loss of generality, we assume
\[
n_j:=\deg_x [u_j]=\deg [u_j] \begin{cases}
=n & \text{if } 1\leq j \leq p, \\
<n & \text{if } p+1\leq j \leq p+q, \\
>n & \text{otherwise.}
\end{cases}
\]
By the definition of $n$, we know that $f_j=0$ unless $1\leq j\leq p+q$. For $1\leq j\leq p+q$, write $f_j=\sum_{i\geq0}c_{j,i}a^i$. Define $[v_i]= \sum_{j=1}^p c_{j,i} [u_j] +\sum_{j=p+1}^{p+q} c_{j,n+i-n_j} a^{n-n_j} [u_j]$. Then the homogeneous component of $\sum_j f_j [u_j]$ of polynomial degree $n+i$ is $a^i[v_i]$. By Lemma \ref{lemma-homology-components-degree}, we have $a^i[v_i]\in \fil_x^l H_P(\Gamma)$. Therefore, by Lemma \ref{lemma-a-fixes-filtration}, we have $[v_i]\in \fil_x^l H_P(\Gamma)$. Consider $\pi_a([v_i])= \sum_{j=1}^p c_{j,i} \pi_a([u_j]) \in H_N(\Gamma)$. On the one hand, we have that $\pi_a([v_i]) \in \fil_x^l H_N(\Gamma)$. On the other hand, we know that $\deg \pi_a([u_1])=\cdots=\pi_a([u_p])=n>l$. But the $x$-filtration on $H_N(\Gamma)$ is induced by the polynomial grading. We must have $\pi_a([v_i])= \sum_{j=1}^p c_{j,i} \pi_a([u_j])=0$. Lemma \ref{lemma-basis-pi-0} tells us that $\{\pi_a([u_1]),\dots,\pi_a([u_p])\}$ is linearly independent. So $c_{j,i}=0$ for $i\geq 0$ and $1\leq j\leq p$. In other words, $f_j=0$ for $1\leq j\leq p$. This contradicts the definition of $n$.
\end{proof}

\begin{definition}\label{def-pi-1}
Denote by $\pi_{a-1}:C_P(\Gamma) \rightarrow \hat{C}_P(\Gamma) (= C_P(\Gamma)/(a-1)C_P(\Gamma))$ the standard quotient map. To keep notations simple, we denote again by $\pi_{a-1}:H_P(\Gamma) \rightarrow \hat{H}_P(\Gamma)$ the homomorphism induced by the quotient map $\pi_{a-1}$. 
\end{definition}

Note that $\hat{C}_P(\Gamma)$ inherits the $x$-filtration of $C_P(\Gamma)$ through $\pi_{a-1}$.

\begin{lemma}\label{lemma-homogeneous-basis-fil-pi-1}
$\pi_{a-1}:H_P(\Gamma) \rightarrow \hat{H}_P(\Gamma)$ is a surjective homomorphism preserving the $x$-filtration with $\ker \pi_{a-1} = (a-1)H_P(\Gamma)$. Moreover, for any homogeneous basis $\{[u_j]\}$ for the free $\C[a]$-module $H_P(\Gamma)$, we have:
\begin{itemize}
	\item $\{\pi_{a-1}([u_j])\}$ is a basis for the $\C$-space $\hat{H}_P(\Gamma)$.
	\item $\deg_x \pi_{a-1}([u_j]) = \deg [u_j]$ for every $j$.
	\item For any $\{c_j\} \subset \C$, $\sum_j c_j \pi_{a-1}([u_j]) \in \fil_x^l\hat{H}_P(\Gamma)$ if and only if $c_j=0$ whenever $\deg [u_j] >l$.
\end{itemize}
\end{lemma}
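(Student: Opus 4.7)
The first two assertions (surjectivity and $\ker \pi_{a-1} = (a-1)H_P(\Gamma)$) should follow from the long exact sequence in $\zed_2$-homology attached to the short exact sequence $0 \to C_P(\Gamma) \xrightarrow{a-1} C_P(\Gamma) \xrightarrow{\pi_{a-1}} \hat{C}_P(\Gamma) \to 0$ of chain complexes of matrix factorizations: since $H_P^{\ve+1}(\Gamma) = 0$ by Lemma \ref{lemma-H_P-Gamma-fil} and $\hat{H}_P^{\ve+1}(\Gamma) = 0$ by Lemma \ref{lemma-hat-H_P-Gamma-fil}, the long exact sequence collapses into a short exact sequence $0 \to H_P(\Gamma) \xrightarrow{a-1} H_P(\Gamma) \xrightarrow{\pi_{a-1}} \hat{H}_P(\Gamma) \to 0$, giving both claims. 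That $\pi_{a-1}$ preserves the $x$-filtration is immediate from the definition of the $x$-filtration on $\hat{C}_P(\Gamma)$.

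Next I would handle the first bullet by applying Lemma \ref{lemma-quotient-basis} directly with $M = H_P(\Gamma)$, which is graded-free over $\C[a]$ by Corollary \ref{cor-graph-homology-free}, and $\lambda = 1$; this produces the $\C$-basis $\{\pi_{a-1}([u_j])\}$ of $\hat{H}_P(\Gamma)$.

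The remaining two bullets will be proved together by a dimension count. Write $n_j := \deg [u_j]$. By Lemma \ref{lemma-homology-basis-degree} we have $\deg_x [u_j] = n_j$, and since $\pi_{a-1}$ preserves the filtration, $\pi_{a-1}([u_j]) \in \fil_x^{n_j} \hat{H}_P(\Gamma)$. By Lemma \ref{lemma-basis-pi-0}, $\{\pi_a([u_j])\}$ is a homogeneous $\C$-basis of $H_N(\Gamma)$ with $\deg \pi_a([u_j]) = n_j$, whence $\dim_\C H_N^{\ve,n}(\Gamma) = \#\{j : n_j = n\}$. Since $\hat{H}_P(\Gamma)$ lies entirely in $\zed_2$-grading $\ve$ and its $x$-filtration is bounded (Corollary \ref{cor-graph-homology-free}), summing the graded-piece identification of Lemma \ref{lemma-hat-H_P-Gamma-fil} gives
\[
\dim_\C \fil_x^l \hat{H}_P(\Gamma) \;=\; \sum_{n \leq l} \dim_\C H_N^{\ve,n}(\Gamma) \;=\; \#\{j : n_j \leq l\}.
\]
The set $B_l := \{\pi_{a-1}([u_j]) : n_j \leq l\}$ lies inside $\fil_x^l \hat{H}_P(\Gamma)$, is $\C$-linearly independent (being a subset of the basis from the first bullet), and has cardinality equal to $\dim_\C \fil_x^l \hat{H}_P(\Gamma)$; hence $B_l$ is a $\C$-basis for $\fil_x^l \hat{H}_P(\Gamma)$. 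The third bullet is then immediate by expanding $\sum_j c_j \pi_{a-1}([u_j])$ in the basis $\{\pi_{a-1}([u_j])\}$ and noting the summands indexed by $n_j \leq l$ already lie in $\fil_x^l$. The second bullet follows as well: if $\pi_{a-1}([u_j])$ lay in $\fil_x^{n_j - 1} \hat{H}_P(\Gamma)$, it would be a $\C$-linear combination of $B_{n_j - 1}$, contradicting the linear independence of the full basis.

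The main obstacle, which I do not expect to be serious, is the bookkeeping in the dimension count: it requires simultaneously using freeness of $H_P(\Gamma)$ (Corollary \ref{cor-graph-homology-free}), the homogeneous basis statement of Lemma \ref{lemma-basis-pi-0}, the graded-piece identification of Lemma \ref{lemma-hat-H_P-Gamma-fil}, and the vanishing in $\zed_2$-grading $\ve+1$. Once this accounting is in place, every remaining step is formal linear algebra.
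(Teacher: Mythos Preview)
Your proposal is correct. The first paragraph and the first bullet match the paper's proof essentially verbatim: the long exact sequence associated to $0\to C_P(\Gamma)\xrightarrow{a-1}C_P(\Gamma)\xrightarrow{\pi_{a-1}}\hat{C}_P(\Gamma)\to 0$, combined with the vanishing in $\zed_2$-grading $\ve+1$, collapses to a short exact sequence; and the basis statement is exactly Lemma~\ref{lemma-quotient-basis} with $\lambda=1$.

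For the second and third bullets, however, you take a genuinely different and cleaner route. The paper constructs an explicit commutative square
\[
\xymatrix{
H_P^{\ve,n}(\Gamma) \ar[r]^{\pi_{a-1}} \ar[d]^{\pi_a} & \fil_x^n\hat{H}_P^{\ve}(\Gamma) \ar[d]^{\pi^{(n)}} \\
H_N^{\ve,n}(\Gamma) \ar[r]^-{\hat{\phi}_n}_-{\cong} & \fil_x^n\hat{H}_P^{\ve}(\Gamma)/\fil_x^{n-1}\hat{H}_P^{\ve}(\Gamma)
}
\]
using the isomorphism $\hat\phi_n$ built in the proof of Lemma~\ref{lemma-hat-H_P-Gamma-fil}, and then chases individual elements through it: for the second bullet, one checks $\pi^{(n_j)}(\pi_{a-1}([u_j]))=\hat\phi_{n_j}(\pi_a([u_j]))\neq 0$; for the third, one homogenizes $\sum_j c_j\pi_{a-1}([u_j])$ to $\sum_j c_j a^{n-n_j}[u_j]$ and pushes through the square at level $n=\max\{n_j:c_j\neq 0\}$. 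Your argument bypasses this diagram entirely: you use Lemma~\ref{lemma-hat-H_P-Gamma-fil} only to extract the dimension $\dim_\C\fil_x^l\hat{H}_P(\Gamma)=\#\{j:n_j\leq l\}$, match it against the linearly independent set $B_l=\{\pi_{a-1}([u_j]):n_j\leq l\}\subset\fil_x^l\hat{H}_P(\Gamma)$, and conclude $B_l$ is a basis of $\fil_x^l\hat{H}_P(\Gamma)$, from which both bullets drop out by linear independence. The paper's approach has the advantage of producing the commutative square as a byproduct (though it is not used again), while yours is more economical and avoids reopening the internal construction of $\hat\phi_n$.
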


\begin{proof}
By its definition, we know that $\pi_{a-1}:H_P(\Gamma) \rightarrow \hat{H}_P(\Gamma)$ preserves the $x$-filtration. The short exact sequence $0\rightarrow C_P(\Gamma) \xrightarrow{a-1} C_P(\Gamma) \xrightarrow{\pi_{a-1}} \hat{C}_P(\Gamma) \rightarrow 0$ induces a long exact sequence 
\[
\cdots\rightarrow \hat{H}_P^{\ve+1}(\Gamma) \rightarrow H_P^{\ve}(\Gamma) \xrightarrow{a-1} H_P^\ve(\Gamma) \xrightarrow{\pi_{a-1}} \hat{H}_P^\ve(\Gamma) \rightarrow H_P^{\ve+1}(\Gamma) \xrightarrow{a-1} H_P^{\ve+1}(\Gamma) \xrightarrow{\pi_{a-1}} \hat{H}_P^{\ve+1}(\Gamma)  \rightarrow \cdots
\]
with $\zed_2$ homological grading, where $\ve$ is the rotation number of $\Gamma$. By Lemmas \ref{lemma-hat-H_P-Gamma-fil} and \ref{lemma-H_P-Gamma-fil}, we know that $\hat{H}_P^{\ve+1}(\Gamma)\cong 0$ and $H_P^{\ve+1}(\Gamma)\cong0$. So the above long exact sequence becomes a short exact sequence
\[
0\rightarrow H_P(\Gamma) \xrightarrow{a-1} H_P(\Gamma) \xrightarrow{\pi_{a-1}} \hat{H}_P(\Gamma) \rightarrow 0.
\] 
Thus, $\pi_{a-1}:H_P(\Gamma) \rightarrow \hat{H}_P(\Gamma)$ is surjective and $\ker \pi_{a-1} = (a-1) H_P(\Gamma)$.

Now assume $\{[u_j]\}$ is a homogeneous basis for the free $\C[a]$-module $H_P(\Gamma)$ with $\deg [u_j] = n_j$. It follows from the above and Lemma \ref{lemma-quotient-basis} that $\{\pi_{a-1}([u_j])\}$ is a basis for the $\C$-space $\hat{H}_P(\Gamma)$. 

Since $\pi_{a-1}:H_P(\Gamma) \rightarrow \hat{H}_P(\Gamma)$ preserves the $x$-filtration, we get from Lemma \ref{lemma-homology-basis-degree} that $\deg_x \pi_{a-1}([u_j]) \leq \deg_x [u_j] = \deg [u_j] =n_j$. Next, we prove that $\deg_x \pi_{a-1}([u_j])=n_j$. Note that $[u_j]$ is represented by a homogeneous cycle $u_j$ in $C_P(\Gamma)$ with $\deg u_j = n_j$. Recall that the $x$-filtration on $C_P(\Gamma)$ is the increasing filtration associated to an $x$-grading on $C_P(\Gamma)$. Denote by $u_{j,i}$ the homogeneous component of $u_j$ with respect to this $x$-grading. Then $u_{j,i}=0$ if $i> n_j$ and, by Lemma \ref{lemma-degree-a}, $u_{j,i} \in aC_P(\Gamma)$ if $i<n_j$. So $\pi_a(u_{j,n_j})=\pi_a(u_j)$ is a homogeneous cycle in $C_N(\Gamma)$ of polynomial degree $n_j$ representing the homology class $\pi_a([u_j])$. Lemma \ref{lemma-basis-pi-0} implies that $\pi_a([u_j])\neq 0$. Recall that Lemma \ref{lemma-hat-H_P-Gamma-fil} is proved in \cite{Wu7} by a construction very similar to the proof of Lemma \ref{lemma-H_P-Gamma-fil}. To summarize, we know that 
\begin{itemize}
	\item $\hat{H}_P^{\ve+1}(\Gamma) \cong H_N^{\ve+1}(\Gamma)\cong 0$,
	\item every homogeneous cycle in $C_N^{\ve}(\Gamma)$ can be completed to a cycle in $\hat{C}_P^\ve(\Gamma)$ by adding terms with strictly lower polynomial degrees, and this correspondence gives rise to a well defined isomorphism $\hat{\phi}_n: H_N^{\ve,n}(\Gamma) \rightarrow \fil_x^n \hat{H}_P^\ve (\Gamma) / \fil_x^{n-1} \hat{H}_P^\ve (\Gamma)$.
\end{itemize}
Clearly, $\pi_{a-1}(u_j)$ is a cycle in $\hat{C}_P^\ve(\Gamma)$ obtained from $\pi_a(u_{j,n_j})$ by adding terms with polynomial degrees strictly less than $\deg \pi_a(u_{j,n_j}) = n_j$. Denote by $\pi^{(n)}: \fil_x^{n} \hat{H}_P^\ve (\Gamma) \rightarrow \fil_x^{n} \hat{H}_P^\ve (\Gamma) / \fil_x^{n-1} \hat{H}_P^\ve (\Gamma)$ the standard quotient map. We have a commutative diagram
\begin{equation}\label{diagram-quotients-commute}
\xymatrix{
H_P^{\ve,n}(\Gamma) \ar[rr]^{\pi_{a-1}} \ar[d]^{\pi_a} && \fil_x^n\hat{H}_P^{\ve}(\Gamma) \ar[d]^{\pi^{(n)}} \\
H_N^{\ve,n}(\Gamma) \ar[rr]^<<<<<<<<<<{\hat{\phi}_n}_<<<<<<<<<<{\cong} && \fil_x^n\hat{H}_P^{\ve}(\Gamma)/\fil_x^{n-1}\hat{H}_P^{\ve}(\Gamma)
},
\end{equation}
where $H_P^{\ve,n}(\Gamma)$ (resp. $H_N^{\ve,n}(\Gamma)$) is the component of $H_P(\Gamma)$ (resp. $H_N(\Gamma)$) with $\zed_2$-degree $\ve$ and polynomial degree $n$. Thus, $\pi^{(n_j)} (\pi_{a-1}([u_j]))= \hat{\phi}_{n_j}([\pi_a(u_{j,n_j})]) = \hat{\phi}_{n_j}(\pi_a([u_j]))\neq 0$. So $\pi_{a-1}([u_j]) \notin \fil_x^{n_j-1} \hat{H}_P^\ve (\Gamma)$ and, therefore, $\deg_x \pi_{a-1}([u_j])=n_j$. 

It remains to show that $\sum_j c_j \pi_{a-1}([u_j]) \in \fil_x^l\hat{H}_P(\Gamma)$ if and only if $c_j=0$ whenever $n_j >l$. Recall that $\deg_x \pi_{a-1}([u_j])=n_j$. If $c_j=0$ whenever $n_j >l$, then we clearly have that $\sum_j c_j \pi_{a-1}([u_j]) \in \fil_x^l\hat{H}_P(\Gamma)$. Now assume that there is at least one $j$ such that $n_j>l$ and $c_j \neq 0$. Then $n:=\max \{n_j~|~c_j\neq 0\}>l$. Consider $[u]:=\sum_j c_j a^{n-n_j} [u_j]$. $[u]$ is a homogeneous element of $H_P(\Gamma)$ of polynomial degree $n$, and $\pi_{a-1}([u]) = \sum_j c_j \pi_{a-1}([u_j])$. From diagram \eqref{diagram-quotients-commute}, we have
\[
\pi^{(n)} (\sum_j c_j \pi_{a-1}([u_j])) = \pi^{(n)} (\pi_{a-1}([u])) = \hat{\phi}_n (\pi_a([u])) = \hat{\phi}_n( \sum_{n_j=n} c_j \pi_a([u_j])). 
\]
By Lemma \ref{lemma-basis-pi-0}, $\{\pi_a([u_j])\}$ is a basis for $H_N(\Gamma)$. By the definition of $n$, we know that $n_j=n$ and $c_j\neq 0$ for at least one $j$. Thus, $\sum_{n_j=n} c_j \pi_a([u_j]) \neq 0$. But $\hat{\phi}_n: H_N^{\ve,n}(\Gamma) \rightarrow \fil_x^n \hat{H}_P^\ve (\Gamma) / \fil_x^{n-1} \hat{H}_P^\ve (\Gamma)$ is an isomorphism. So $\pi^{(n)} (\pi_{a-1}([u]))=\hat{\phi}_n( \sum_{n_j=n} c_j \pi_a([u_j])) \neq 0$. Thus $\sum_j c_j \pi_{a-1}([u_j]) = \pi_{a-1}([u]) \notin \fil_x^{l} \hat{H}_P (\Gamma) (\subset \fil_x^{n-1} \hat{H}_P (\Gamma))$.
\end{proof}

\begin{remark}
In conclusion of this subsection, we note that Lemmas \ref{lemma-homogeneous-basis-fil} and \ref{lemma-homogeneous-basis-fil-pi-1} imply that any direct sum decomposition of $H_P(\Gamma)$ in the category of graded $\C[a]$-modules is also a direct sum decomposition in the category of filtered $\C$-spaces and induces a direct sum decomposition of $\hat{H}_P(\Gamma)$ in the category of filtered $\C$-spaces.
\end{remark}

\subsection{Decomposition of $H_P(L)$}\label{subsec-homology-decomp} In this subsection, we give a proof of Lobb's decomposition theorem (Theorem \ref{thm-H_P-decomp}.)

As before, $a$ is a homogeneous variable of degree $2k$.

\begin{lemma}\cite{Lobb-2-twists}\label{lemma-graded-chain-decomp}
Assume that $(C^\ast,d^\ast)= \cdots\xrightarrow{d^{n-1}} C^n \xrightarrow{d^{n}} C^{n+1} \xrightarrow{d^{n+1}}\cdots$ is a bounded chain complex of finitely generated graded-free $\C[a]$-module and its differential $d^\ast$ preserves the grading of $C^\ast$. Then, in the category of chain complexes of graded $\C[a]$-modules, $(C^\ast,d^\ast)$ is a direct sum of chain complexes of the forms $F_{i,s}$ and $T_{i,m,s}$ given in \eqref{def-F-i-s}--\eqref{def-T-i-m-s}.
\end{lemma}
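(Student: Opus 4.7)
The plan is to proceed by induction on the homological length of $C^*$, namely the number of consecutive homological degrees at which $C^*$ is nonzero. The base case, where $C^*$ is concentrated in one degree $n$, is immediate: the finitely generated graded-free $\C[a]$-module $C^n$ decomposes as $\bigoplus_i \C[a]\{s_i\}$, which gives $\bigoplus_i F_{n, s_i}$.

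For the inductive step, set $n = \min\{i : C^i \neq 0\}$ and focus on $d^n : C^n \to C^{n+1}$. The essential tool is a graded version of Smith normal form over $\C[a]$: since $\C[a]$ is a graded PID whose homogeneous ideals are exactly $(a^m)$ for $m \geq 0$, any grading-preserving $\C[a]$-linear map between finitely generated graded-free $\C[a]$-modules admits homogeneous bases $\{e_1, \dots, e_{p+q}\}$ of $C^n$ and $\{f_1, \dots, f_{p+r}\}$ of $C^{n+1}$ satisfying $d^n(e_i) = a^{m_i} f_i$ for $1 \leq i \leq p$ (with $m_i \geq 0$ and $\deg e_i = \deg f_i + 2km_i$) and $d^n(e_j) = 0$ for $j > p$. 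I would establish this by the classical elementary-divisor argument adapted to the graded setting: pick a homogeneous $e \in C^n$ whose image $d^n(e)$ has the minimal $a$-adic valuation $m$ among all nonzero images of homogeneous elements, factor $d^n(e) = a^m f$ with $f \in C^{n+1}$ homogeneous and not divisible by $a$, use minimality to split off $\C[a]\cdot e$ in $C^n$ and $\C[a]\cdot f$ in $C^{n+1}$ as graded direct summands compatible with $d^n$, and iterate on the complementary pair.

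Given such bases, the relation $d^{n+1} \circ d^n = 0$ yields $a^{m_i} d^{n+1}(f_i) = 0$ for each $i \leq p$, and because $C^{n+2}$ is graded-free hence torsion-free over $\C[a]$, we obtain $d^{n+1}(f_i) = 0$. Consequently the graded submodule $A^*$ with $A^n = \bigoplus_{i} \C[a]\cdot e_i$, $A^{n+1} = \bigoplus_{i \leq p} \C[a]\cdot f_i$, and $A^k = 0$ otherwise is a subcomplex; and the graded complement $B^*$ with $B^n = 0$, $B^{n+1} = \bigoplus_{k > p} \C[a]\cdot f_k$, and $B^k = C^k$ for $k \geq n+2$ carries a well-defined induced differential with $C^* = A^* \oplus B^*$ as chain complexes of graded $\C[a]$-modules. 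By construction $A^* = \bigoplus_{j > p} F_{n, \deg e_j} \,\oplus\, \bigoplus_{i \leq p} T_{n+1, m_i, \deg f_i}$, where any $m_i = 0$ summand is read as the identity/contractible complex. Since $B^*$ has strictly shorter homological length, the inductive hypothesis completes the decomposition. The main obstacle is verifying the graded Smith normal form carefully, so that every splitting step respects both the polynomial grading and the previously produced summands; once that is in place the remainder is routine, the one subtlety being that torsion-freeness of $C^{n+2}$ is what upgrades $a^{m_i} d^{n+1}(f_i) = 0$ to $d^{n+1}(f_i) = 0$.
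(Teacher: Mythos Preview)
Your argument is correct and rests on the same core idea as the paper's proof---graded Smith normal form over the graded PID $\C[a]$---but the packaging differs. The paper inducts on the total rank of $C^\ast$ and peels off a single $F_{i,s}$ or $T_{i,m,s}$ summand at each step: it orders homogeneous bases of $C^n$ and $C^{n+1}$ so that the entries of the matrix of $d^n$ are monomials in $a$ of increasing degree, locates the first nonzero entry in the first column, and clears its row and column by explicit elementary operations. You instead induct on homological length and apply the full Smith normal form to $d^n$ in one shot, stripping off all of $C^n$ together with its paired summands in $C^{n+1}$. Your organization is slightly slicker and makes one point explicit that the paper glosses over: the step $a^{m_i} d^{n+1}(f_i)=0 \Rightarrow d^{n+1}(f_i)=0$ via torsion-freeness of $C^{n+2}$, which is exactly what guarantees that each two-term piece is a genuine subcomplex. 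The paper's version has the advantage of being completely explicit about the row/column operations, so no appeal to an off-stage ``graded Smith normal form'' is needed.
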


\begin{proof}
We prove the lemma by an induction on the total rank of $C^\ast$. If $\rank C^\ast = 0$ or $1$, then the lemma is trivially true. Assume $\rank C^\ast = K \geq 2$ and the lemma is true if the rank of the chain complex is less than $K$. There is an $n$ such that $C^n\neq 0$ and $C^j=0$ $\forall~j<n$. Consider the section $0\rightarrow C^n \xrightarrow{d^{n}} C^{n+1} \xrightarrow{d^{n+1}}\cdots$ of $C^\ast$. Let $\{u_1,\dots,u_p\}$ be a homogeneous basis for $C^n$ with $\deg u_1 \leq \cdots\leq \deg u_p$, and $\{v_1,\dots,v_q\}$ a homogeneous basis for $C^{n+1}$ with $\deg v_1 \geq \cdots \geq \deg v_q$. For each $1\leq j \leq p$, we have $d^{n}(u_j) = \sum _{i=1}^q f_{i,j} v_i$, where each $f_{i,j}$ is a monomial in $a$ of degree $\deg f_{i,j} = \deg u_j -\deg v_i$. Note that $\deg f_{i,j}$ is increasing with respect to both $i$ and $j$. 

If $(f_{1,1},\dots,f_{q,1})=0$, then $C^\ast$ has a direct sum component $0\rightarrow \C[a]\cdot u_1 \rightarrow 0 \cong F_{n,\deg u_1}$. Thus, by induction hypothesis, the lemma is true for $C^\ast$. 

If $(f_{1,1},\dots,f_{q,1})\neq 0$, then there is an $l$ such that $f_{l,1} \neq 0$ and $f_{i,1}=0$ $\forall~1\leq i<l$. Define a $q\times q$ matrix $\Xi=(\xi_{i,j})$ by
\[
\xi_{i,j} = \begin{cases}
1& \text{if } i=j, \\
\frac{f_{i,1}}{f_{l,1}} & \text{if } i>l \text{ and } j=l, \\
0 & \text{otherwise.}
\end{cases}
\]
Then $\Xi$ is invertible and $\Xi^{-1} = (\tilde{\xi}_{i,j})$ is given by 
\[
\tilde{\xi}_{i,j} = \begin{cases}
1& \text{if } i=j, \\
-\frac{f_{i,1}}{f_{l,1}} & \text{if } i>l \text{ and } j=l, \\
0 & \text{otherwise.}
\end{cases}
\]
Note that, for any $i>l$, $\frac{f_{i,1}}{f_{l,1}}$ is a monomial of $a$ of degree $\deg f_{i,1} - \deg f_{l,1}= \deg v_l -\deg v_i$. We have
\[
(d^n u_1,\dots, d^n u_p) = (v_1,\dots,v_q)(f_{i,j})= (v_1,\dots,v_q)\Xi \Xi^{-1}(f_{i,j}).
\]
Let $\tilde{v}_j = \sum_{i=1}^q \xi_{i,j}v_i$ and $g_{i,j} = \sum_{\alpha=1}^q \xi_{i,\alpha} f_{\alpha,j}$. Then $\{\tilde{v}_1,\dots,\tilde{v}_q\}$ is a homogeneous basis for $C^{n+1}$ with $\deg \tilde{v}_i= \deg v_i$, $g_{i,j}$ is a monomial in $a$ with $\deg g_{i,j} = \deg f_{i,j}$, and $(d^n u_1,\dots, d^n u_p) = (\tilde{v}_1,\dots,\tilde{v}_q)(g_{i,j})$.

Note that $g_{l,1} = f_{l,1} \neq 0$ and $f_{i,1}=0$ $\forall ~ i \neq l$. Next, define a $p \times p$ matrix $\Theta= (\theta_{i,j})$ by
\[
\theta_{i,j} = \begin{cases}
1& \text{if } i=j, \\
-\frac{g_{l,j}}{g_{l,1}} & \text{if } i=1 \text{ and } j>1, \\
0 & \text{otherwise,}
\end{cases}
\]
where $\frac{g_{l,j}}{g_{l,1}}$ is a monomial of $a$ of degree $\deg g_{l,j} - \deg g_{l,1} = \deg u_j -\deg u_1$. Let $\tilde{u}_j = \sum_{i=1}^p \theta_{i,j} u_i$ and $h_{i,j} = \sum_{\alpha=1}^p g_{i,\alpha} \theta_{\alpha,j}$. Then $\{\tilde{u}_1,\dots,\tilde{u}_p\}$ is a homogeneous basis for $C^{n}$ with $\deg \tilde{u}_j = \deg u_j$, $h_{i,j}$ is a monomial in $a$ with $\deg h_{i,j}=\deg g_{i,j} =\deg f_{i,j}$, and $(d^n \tilde{u}_1,\dots, d^n \tilde{u}_p) = (\tilde{v}_1,\dots,\tilde{v}_q)(h_{i,j})$.

Note that $h_{l,1}=g_{l,1}=f_{l,1}\neq 0$, $h_{i,1}=0$ $\forall ~i \neq l$ and $h_{l,j}=0$ $\forall~ j \neq 1$. Thus, $C^\ast$ has a direct sum component 
\[
0\rightarrow \C[a]\cdot \tilde{u}_1 \xrightarrow{h_{l,1}} \C[a] \cdot \tilde{v}_l \rightarrow 0 \cong T_{n+1,\frac{\deg \tilde{u}_1 - \deg \tilde{v}_l}{2k},\deg \tilde{v}_l}.
\] 
By the induction hypothesis, the lemma is true for $C^\ast$.
\end{proof}

The existence of the decomposition in Theorem \ref{thm-H_P-decomp} follows from Lemma \ref{lemma-graded-chain-decomp}. To prove the uniqueness of this decomposition, we need the following lemma, which is a slight refinement of the standard invariance theorem for modules over a principal ideal domain.

\begin{lemma}\label{lemma-graded-module-decomp-unique}
Suppose that $\{(m_1,s_1),\dots,(m_p,s_p)\}$ and $\{(n_1,t_1),\dots,(n_q,t_q)\}$ are two sequences in $\zed_{>0}\times \zed$ satisfying:
\begin{itemize}
	\item $m_1\leq\cdots\leq m_p$, $n_1\leq\cdots\leq n_q$.
	\item If $i<j$ and $m_i=m_j$, then $s_i \leq s_j$.
	\item If $i<j$ and $n_i=n_j$, then $t_i \leq t_j$.
	\item As graded $\C[a]$-modules,
\[
\bigoplus_{i=1}^p (\C[a]/(a^{m_i}))\{s_i\} \cong \bigoplus_{j=1}^p (\C[a]/(a^{n_j}))\{t_j\}.
\]
\end{itemize}
Then $p=q$ and $m_i=n_i$, $s_i=t_i$ for every $1 \leq i \leq p$.
\end{lemma}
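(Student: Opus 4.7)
The plan is to extract, from the graded $\C[a]$-module structure of $M=\bigoplus_{i=1}^p(\C[a]/(a^{m_i}))\{s_i\}$, a canonical graded vector space for each $m>0$ whose Hilbert series records precisely the multiset $\{s_i : m_i = m\}$. Once this is done, the sorting conventions force the two sequences to coincide termwise.

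For any graded $\C[a]$-module $M$ and any integer $m\geq 1$, set
\[
V_m(M) \;:=\; \bigl(\ker(a\colon M\to M)\cap a^{m-1}M\bigr)\,\big/\,\bigl(\ker(a\colon M\to M)\cap a^{m}M\bigr).
\]
This is a graded $\C$-vector space depending only on the isomorphism class of $M$ as a graded $\C[a]$-module, and the construction commutes with finite direct sums. I would first carry out the computation of $V_m$ on a single homogeneous cyclic summand $C = (\C[a]/(a^{m'}))\{s'\}$: the element $a^{m'-1}\in C$ (in degree $s' + 2k(m'-1)$) generates $\ker a \cap C$, and it lies in $a^{m-1}C$ iff $m'\geq m$ and in $a^mC$ iff $m'\geq m+1$. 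Hence
\[
V_m(C) \;\cong\; \begin{cases} \C\{s' + 2k(m-1)\} & \text{if } m'=m, \\ 0 & \text{otherwise.}\end{cases}
\]

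Applying this to both sides of the given isomorphism, additivity of $V_m$ yields
\[
\bigoplus_{i\,:\,m_i = m}\C\{s_i + 2k(m-1)\} \;\cong\; V_m(M) \;\cong\; \bigoplus_{j\,:\,n_j = m}\C\{t_j + 2k(m-1)\}
\]
for every $m\geq 1$. Comparing Hilbert series (equivalently, graded dimensions) shows that for each $m\geq 1$ the multisets $\{s_i : m_i = m\}$ and $\{t_j : n_j = m\}$ coincide; in particular $\#\{i : m_i = m\} = \#\{j : n_j = m\}$, which forces $p = q$. Finally, the sorting hypotheses (the $m_i$ are non-decreasing, and within each block of equal $m_i$ the $s_i$ are non-decreasing; likewise for the $n_j,t_j$) uniquely reconstruct each sorted sequence from these multisets, so $m_i = n_i$ and $s_i = t_i$ for all $i$.

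There is no real obstacle: the only subtle point is choosing the right functorial gadget $V_m$ that simultaneously isolates the exponent $m$ (via the filtration by $a^{m-1}M$ and $a^mM$) and pins down the grading shift (via intersecting with $\ker a$ to pick out the top degree of each cyclic piece). The rest is bookkeeping with graded dimensions.
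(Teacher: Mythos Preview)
Your proof is correct and follows essentially the same approach as the paper: both extract, from the graded $\C[a]$-module, a family of graded $\C$-spaces defined via the $a$-adic filtration and compare graded dimensions. The only cosmetic difference is the choice of functor: the paper uses $M^{(l)}=a^lM/a^{l+1}M$, whose graded dimension in degree $c+2kl$ counts $\{i:m_i>l,\ s_i=c\}$ and then implicitly recovers each block by differencing in $l$, whereas your $V_m(M)=(\ker a\cap a^{m-1}M)/(\ker a\cap a^{m}M)$ isolates the block $m_i=m$ directly; either variant immediately yields equality of the multisets $\{s_i:m_i=m\}$ and hence the conclusion.
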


\begin{proof}
We adapt the proof of the invariance theorem in \cite[Section 3.9]{Jacobson-Basic-algebra-I} to prove Lemma \ref{lemma-graded-module-decomp-unique}. The only change is that, instead of counting dimensions, we count graded dimensions. Recall that, for a finite dimensional graded $\C$-space $V=\bigoplus_i V_i$, where $V_i$ is the homogeneous component of $V$ of degree $i$, the graded dimension of $V$ is $\gdim_\C V := \sum_i \beta^i \dim_\C V_i$, where $\beta$ is a homogeneous variable of degree $1$.

Let
\[
M:=\bigoplus_{i=1}^p (\C[a]/(a^{m_i}))\{s_i\} \cong \bigoplus_{j=1}^p (\C[a]/(a^{n_j}))\{t_j\}.
\]
Denote by $z_i$ the multiplicative unit $1$ in $(\C[a]/(a^{m_i}))\{s_i\}$, which is a homogeneous element of $M$ of degree $s_i$. For any $l \geq 0$, define $M^{(l)}:=a^l M/a^{l+1}M$. Then $M^{(l)}$ is a finite dimensional graded $\C$-space. If $l \geq m_p$, then $M^{(l)}=0$ and $\gdim_\C M^{(l)}=0$. If $0\leq l < m_p$, then there is a unique $j$ such that $m_1\leq\cdots\leq m_j \leq l <m_{j+1}\leq \cdots\leq m_p$. One can see that $a^lM= \bigoplus_{i=j+1}^p \C[a] a^lz_i$ and $\{a^lz_{j+1}+a^{l+1}M,\dots, a^lz_{p}+a^{l+1}M\}$ is a homogeneous basis for the $\C$-space $M^{(l)}$. So $\gdim_\C M^{(l)} = \beta^{2kl} \sum_{i=j+1}^p \beta^{s_i}$. For any integer $c$, define $S_{l,c} = \{i~|~ 1\leq i\leq p, ~s_i=c,~m_i>l\}$. We observe that the coefficient of $\beta^{c+2kl}$ in $\gdim_\C M^{(l)}$ is equal to the cardinality of $S_{l,c}$. Similarly, defining $S'_{l,c} = \{i~|~ 1\leq i\leq q, ~t_i=c,~n_i>l\}$, we have that the coefficient of $\beta^{c+2kl}$ in $\gdim_\C M^{(l)}$ is equal to the cardinality of $S'_{l,c}$. Thus, for any $(l,c) \in \zed_{\geq 0}\times \zed$, the cardinalities of $S_{l,c}$ and $S'_{l,c}$ are equal. The lemma follows from this.
\end{proof}

It is now very easy to prove Theorem \ref{thm-H_P-decomp} and Corollary \ref{cor-H_N-decomp}.

\begin{proof}[Proof of Theorem \ref{thm-H_P-decomp} and Corollary \ref{cor-H_N-decomp}]
Fix a diagram $D$ of $L$ and a marking of $D$. Let $x_1,\dots,x_m$ be the variables assigned to the marked points of $\Gamma$ and $R=\C[x_1,\dots,x_m,a]$. By Corollary \ref{cor-graph-homology-free}, $H(C_P(D),d_{mf})$ with its polynomial grading is a finitely generated graded-free $\C[a]$-module. By the definition of $d_\chi$, we know that it preserves the polynomial grading.

According to Lemma \ref{lemma-graded-chain-decomp}, in the category of chain complexes of graded $\C[a]$-modules, $(H(C_P(D),d_{mf}),d_\chi)$ decomposes into a direct sum of chain complexes of the forms $F_{i,s}$ and $T_{i,m,s}$ given in \eqref{def-F-i-s}--\eqref{def-T-i-m-s}. Note that each factor of $F_{i,s}$ in this decomposition contributes a direct sum component $\C[a]\|i\|\{s\}$ to $H_P(L)$ and each factor of $T_{i,m,s}$ contributes a direct sum component $(\C[a]/(a^m))\|i\|\{s\}$ to $H_P(L)$. To prove the existence of decomposition \eqref{eq-H_P-decomp}, it remains to determine the free part of $H_P(L)$. By Lemma \ref{lemma-homogeneous-basis-fil-pi-1}, the above decomposition of $(H(C_P(D),d_{mf}),d_\chi)$ induces a decomposition of $(H(\hat{C}_P(D),d_{mf}),d_\chi)$ in the category of filtered chain complexes of $\C$-spaces. Each factor of $F_{i,s}$ (resp. $T_{i,m,s}$) in the decomposition of $(H(C_P(D),d_{mf}),d_\chi)$ corresponds to a factor of $\hat{F}_{i,s}$ (resp. $\hat{T}_{i,m,s}$) in the decomposition of $(H(\hat{C}_P(D),d_{mf}),d_\chi)$, and the grading of $F_{i,s}$ (resp. $T_{i,m,s}$) induces the filtration on the corresponding $\hat{F}_{i,s}$ (resp. $\hat{T}_{i,m,s}$). The homology of $\hat{T}_{i,m,s}$ vanishes and, as a filtered $\C$-space, the homology of $\hat{F}_{i,s}$ is $\C\|i\|\{s\} \cong (\C[a]/(a-1))\|i\|\{s\}$. One can see from this that, as a graded $\C[a]$-module, the free part of $H_P(L)$ is isomorphic to $\hat{\mathcal{H}}_P(L) \otimes_\C \C[a]$. Thus, we have proved the existence of decomposition \eqref{eq-H_P-decomp}. 
 
The uniqueness of decomposition \eqref{eq-H_P-decomp} follows from Lemma \ref{lemma-graded-module-decomp-unique}. This completes the proof of Theorem \ref{thm-H_P-decomp}.

To prove Corollary \ref{cor-H_N-decomp}, note that, by Lemma \ref{lemma-basis-pi-0}, $(H(C_N(D),d_{mf}), d_\chi) \cong H(C_P(D),d_{mf})/aH(C_P(D),d_{mf})$ as chain complexes of graded $\C$-spaces. Moreover, as chain complexes of graded $\C$-spaces, 
\begin{eqnarray*}
F_{i,s}/aF_{i,s} & \cong & 0\rightarrow \C\|i\| \rightarrow 0, \\
T_{i,m,s}/aT_{i,m,s} & \cong & \begin{cases}
 0\rightarrow \C\|i-1\|\{s+2km\} \xrightarrow{0} \C\|i\|\{s\} \rightarrow 0 & \text{if } m\geq 1, \\
 0\rightarrow \C\|i-1\|\{s+2km\} \xrightarrow{1} \C\|i\|\{s\} \rightarrow 0 & \text{if } m=0.
 \end{cases}
\end{eqnarray*}
So Corollary \ref{cor-H_N-decomp} also follows from the decomposition of the chain complex $(H(C_P(D),d_{mf}),d_\chi)$.
\end{proof}

\subsection{Decompositions of $E_r(L)$ and $\hat{E}_r(L)$}\label{subsec-spectral-sequence-decomp}

In this subsection, we prove the decompositions of $E_r(L)$ and $\hat{E}_r(L)$.

First, we compute the spectral sequences of the filtered chain complexes $F_{i,s}$, $T_{i,m,s}$, $\hat{F}_{i,s}$ and $\hat{T}_{i,m,s}$. We use the notations given in \cite[Section 2.2]{McCleary-users-guide-to-SS}. Since $\fil_x$ is an increasing filtration and the notations in  \cite[Section 2.2]{McCleary-users-guide-to-SS} are for a decreasing filtration, we need to adjust their definitions accordingly. 

Let $(C^\ast, d,\fil)$ be a filtered chain complex such that $d$ raises the homological grading by $1$ and $\fil$ is increasing. Set
\begin{eqnarray}
\label{eq-def-Z-SS} Z_r^{p,q} & = & \fil^p C^{p+q} \cap d^{-1}( \fil^{p-r} C^{p+q+1}), \\
\label{eq-def-B-SS} B_r^{p,q} & = & \fil^p C^{p+q} \cap d(\fil^{p+r} C^{p+q-1}), \\
\label{eq-def-E-SS} E_r^{p,q} & = & Z_r^{p,q}/(Z_{r-1}^{p-1,q+1} + B_{r-1}^{p,q}).
\end{eqnarray}
Then $\{E_r^{p,q}\}$ is the spectral sequence of $(C^\ast, d,\fil)$.

\begin{proof}[Proof of Lemma \ref{lemma-ss-components}]
We only compute $E_r(F_{i,s})$ and $E_r(T_{i,m,s})$. The computation of $E_r(\hat{F}_{i,s})$ and $E_r(\hat{T}_{i,m,s})$ is very similar and left to the reader.

Recall that the differential map of $F_{i,s}$ is $0$. So $B_r^{p,q}(F_{i,s})=0$ and 
\[
Z_r^{p,q}(F_{i,s}) = \fil^p F_{i,s}^{p+q} = \begin{cases}
\C[a]\{s\} & \text{if } p\geq s, ~q=i-p, \\
0 & \text{otherwise.} 
\end{cases}
\]
So
\[
E_r^{p,q}(F_{i,s}) = \fil^p F_{i,s}^{p+q} / \fil^{p-1} F_{i,s}^{p+q} \cong \begin{cases}
\C[a]\{s\} & \text{if } p=s,~q=i-s, \\
0 & \text{otherwise.}
\end{cases}
\]
The observations about $\{E_r(F_{i,s})\}$ in Lemma \ref{lemma-ss-components} follow from this.

Recall that the filtered chain complex $T_{i,m,s}$ is given by 
\[
\fil_x^p T_{i,m,s} = \begin{cases}
0\rightarrow \C[a]\|i-1\|\{s+2km\} \xrightarrow{a^m} \C[a]\|i\|\{s\} \rightarrow 0 & \text{if } p\geq s+2km, \\
0\rightarrow \C[a]\|i\|\{s\} \rightarrow 0 & \text{if } s \leq p< s+2km, \\
0 & \text{if }  p<s.
\end{cases}
\]
Note that $Z_r^{p,q}(T_{i,m,s})=0$ unless $q=i-1-p$ or $i-p$. So $E_r^{p,q}(T_{i,m,s})=0$ unless $q=i-1-p$ or $i-p$.

We compute $E_r^{p,i-1-p}(T_{i,m,s})$ first. Note that
\[
\fil_x^p T_{i,m,s}^{i-1} = \begin{cases}
\C[a]\{s+2km\} & \text{if } p\geq s+2km, \\
0 & \text{if } p<s+2km,
\end{cases}
\]
\[
d^{-1} (\fil_x^{p-r} T_{i,m,s}^{i}) = \begin{cases} 
\C[a]\{s+2km\} & \text{if } p-r \geq s, \\
0 & \text{if } p-r < s.
\end{cases}
\]
Thus,
\[
Z_r^{p,i-1-p}(T_{i,m,s}) = \fil_x^p T_{i,m,s}^{i-1} \cap d^{-1} (\fil_x^{p-r} T_{i,m,s}^{i}) = \begin{cases} 
\C[a]\{s+2km\} & \text{if }  p\geq s+2km \text{ and } p \geq s+r \\
0 & \text{otherwise.}
\end{cases}
\]
Also, $B_r^{p,i-1-p}(T_{i,m,s}) = \fil_x^p T_{i,m,s}^{i-1} \cap d (\fil_x^{p+r} T_{i,m,s}^{i-2}) = 0$. Therefore,
\[
E_r^{p,i-1-p}(T_{i,m,s}) = Z_r^{p,i-1-p}(T_{i,m,s})/ Z_r^{p-1,i-p}(T_{i,m,s}) \cong \begin{cases} 
\C[a]\{s+2km\} & \text{if } p= s+2km \geq s+r \\
0 & \text{otherwise.}
\end{cases}
\]

Next, we compute $E_r^{p,i-p}(T_{i,m,s})$. Note that 
\[
Z_r^{p,i-p}(T_{i,m,s}) = \fil_x^p T_{i,m,s}^{i} \cap d^{-1} (\fil_x^{p-r} T_{i,m,s}^{i+1}) = \fil_x^p T_{i,m,s}^{i} = \begin{cases} 
\C[a]\{s\} & \text{if } p\geq s, \\
0 & \text{if } p< s,
\end{cases}
\]
and 
\[
d (\fil_x^{p+r} T_{i,m,s}^{i-1}) = \begin{cases}
a^m \C[a]\{s\} & \text{if } p+r \geq s+2km, \\
0 & \text{if } p+r < s+2km.
\end{cases}
\]
So
\[
B_r^{p,i-p}(T_{i,m,s}) = \begin{cases}
a^m \C[a]\{s\} & \text{if }p\geq s \text{ and } p+r \geq s+2km, \\
0 & \text{if } p+r < s+2km.
\end{cases}
\]
Recall that $E_r^{p,i-p}(T_{i,m,s}) = Z_r^{p,i-p}(T_{i,m,s})/(Z_r^{p-1,i-p+1}(T_{i,m,s}) + B_r^{p,i-p}(T_{i,m,s}))$. Putting these together, we get
\[
E_r^{p,i-p}(T_{i,m,s})  \cong \begin{cases}
(\C[a]/(a^m))\{s\} & \text{if } p=s \text{ and } r \geq 2km+1, \\
\C[a]\{s\} & \text{if } p=s \text{ and } r \leq 2km, \\
0 & \text{otherwise.}
\end{cases} 
\]
This completes the computation of $\{E_r(T_{i,m,s})\}$. Note that $\{E_r(T_{i,m,s})\}$ collapses exactly at its $E_{2km+1}$-page.
\end{proof}

Next, we prove Theorem \ref{thm-spectral-sequence-decomp}.

\begin{proof}[Proof of Theorem \ref{thm-spectral-sequence-decomp}]
Fix a diagram $D$ of $L$ and a marking of $D$. Let $x_1,\dots,x_m$ be the variables assigned to the marked points of $\Gamma$ and $R=\C[x_1,\dots,x_m,a]$. By Corollary \ref{cor-graph-homology-free}, $H(C_P(D),d_{mf})$ with its polynomial grading is a finitely generated graded-free $\C[a]$-module. By the definition of $d_\chi$, we know it preserves the polynomial grading.

According to Lemma \ref{lemma-graded-chain-decomp}, in the category of chain complexes of graded $\C[a]$-modules, $(H(C_P(D),d_{mf}),d_\chi)$ decomposes into a direct sum of chain complexes of the forms $F_{i,s}$ and $T_{i,m,s}$ given in \eqref{def-F-i-s}--\eqref{def-T-i-m-s}. By Lemma \ref{lemma-homogeneous-basis-fil}, this is also a decomposition in the category of filtered chain complexes, in which the filtrations on $F_{i,s}$ and $T_{i,m,s}$ are given by \eqref{def-F-i-s-fil}--\eqref{def-T-i-m-s-fil}. Thus, the spectral sequence of $(H(C_P(D),d_{mf}),d_\chi)$ is the direct sum of the spectral sequences of its components in this decomposition. Decomposition \eqref{eq-spectral-sequence-decomp-E} in Theorem \ref{thm-spectral-sequence-decomp} then follows from this and Lemma \ref{lemma-ss-components}.

By Lemma \ref{lemma-homogeneous-basis-fil-pi-1}, the above decomposition of $(H(C_P(D),d_{mf}),d_\chi)$ induces a decomposition of \linebreak $(H(\hat{C}_P(D),d_{mf}),d_\chi)$ in the category of filtered chain complexes into a direct sum of chain complexes of the forms $\hat{F}_{i,s}$ and $\hat{T}_{i,m,s}$ given in \eqref{def-hat-F-i-s-fil}--\eqref{def-hat-T-i-m-s-fil}. The spectral sequence of $(H(\hat{C}_P(D),d_{mf}),d_\chi)$ is the direct sum of the spectral sequences of its components in this decomposition. Decomposition \eqref{eq-spectral-sequence-decomp-hat-E} in Theorem \ref{thm-spectral-sequence-decomp} then follows from this and Lemma \ref{lemma-ss-components}.
\end{proof}

\subsection{Exact couples}\label{subsec-couples} Let us first recall the definition of derived couples. 

\begin{lemma}\label{lemma-derived-couple}
Let $(A,E,f,g,h)$ be an exact couple as defined in Definition \ref{def-exact-couple}. Define 
\begin{enumerate}[1.]
	\item $A'=f(A)$,
	\item $E' = H(E,d)=\ker d / d(E)$, where $d=g\circ h:E \rightarrow E$,
	\item $f'=f|_{A'}$,
	\item $g'(\alpha) = g(\beta)$ where $\alpha = f(\beta) \in A'$.
	\item $h'(\eta + d(E)) = h(\eta)$ for any $\eta \in \ker d$,
\end{enumerate}
Then 
\begin{itemize}
	\item $A'$ and $E'$ are $\zed^{\oplus 2}$-graded $\C$-linear spaces,
	\item $A' \xrightarrow{f'} A'$, $A' \xrightarrow{g'} E'$ and $E' \xrightarrow{h'} A'$ are well defined homogeneous homomorphisms of $\zed^{\oplus 2}$-graded $\C$-linear spaces,
	\item the triangle 
	\[
	\xymatrix{
	A' \ar[rr]^{f'} && A'  \ar[ld]^{g'} \\
	& E' \ar[lu]^{h'} &
	}
	\]
	is exact.
\end{itemize}
That is, $(A',E',f',g',h')$ is itself an exact couple. $(A',E',f',g',h')$ is called the derived couple of $(A,E,f,g,h)$. We shall write $(A',E',f',g',h')=(A,E,f,g,h)'$. 
\end{lemma}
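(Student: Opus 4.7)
The plan is to carry out the standard derived-couple construction as a diagram chase, using only the three consequences of exactness of the original triangle: $g \circ f = 0$, $h \circ g = 0$, and $f \circ h = 0$. I will verify five things in order: (i) $d \circ d = 0$; (ii) $g'$ is well defined; (iii) $h'$ is well defined and lands in $A'$; (iv) all the new maps are homogeneous; and (v) exactness at each of the three vertices of the derived triangle.

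For (i), $d^2 = g \circ h \circ g \circ h = 0$ because $h \circ g = 0$, so $E' = \ker d/d(E)$ makes sense. For (ii), if $\alpha = f(\beta_1) = f(\beta_2)$ then $\beta_1 - \beta_2 \in \ker f = \im h$, so $\beta_1 - \beta_2 = h(\eta)$ for some $\eta$; hence $g(\beta_1) - g(\beta_2) = g(h(\eta)) = d(\eta) \in d(E)$. Moreover $d(g(\beta)) = g(h(g(\beta))) = 0$, so $g(\beta) \in \ker d$, and $g'$ defines a map $A' \to E'$. For (iii), given $\eta \in \ker d$, the identity $g(h(\eta)) = d(\eta) = 0$ forces $h(\eta) \in \ker g = \im f = A'$; and if $\eta_1 - \eta_2 = d(\xi) = g(h(\xi))$, then $h(\eta_1) - h(\eta_2) = h(g(h(\xi))) = 0$, so $h'$ is a well-defined map $E' \to A'$. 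For (iv), $f'$, $g'$, $h'$ inherit their bidegrees from $f$, $g$, $h$ respectively by construction.

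The substantive step is (v), the exactness of the derived triangle, which I will prove at each vertex separately. At the second $A'$: the composition $g' \circ f'$ vanishes because $g \circ f = 0$; conversely, if $\alpha = f(\beta) \in A'$ satisfies $g'(\alpha) = 0$, then $g(\beta) = d(\xi) = g(h(\xi))$ for some $\xi$, so $\beta - h(\xi) \in \ker g = \im f$, say $\beta - h(\xi) = f(\gamma)$, and using $f \circ h = 0$ we get $\alpha = f(\beta) = f(f(\gamma)) = f'(f(\gamma)) \in \im f'$. At $E'$: $h' \circ g' = 0$ because $h \circ g = 0$; conversely, if $h'([\eta]) = 0$ then $\eta \in \ker h = \im g$, say $\eta = g(\beta)$, and then $g'(f(\beta)) = g(\beta) = \eta$ shows $[\eta] \in \im g'$. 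At the first $A'$: $f' \circ h' = 0$ because $f \circ h = 0$; conversely, if $\alpha \in A'$ satisfies $f'(\alpha) = f(\alpha) = 0$, then $\alpha \in \ker f = \im h$, say $\alpha = h(\eta)$, and the relation $d(\eta) = g(h(\eta)) = g(\alpha) = 0$ (since $\alpha \in A' = \ker g$) shows $\eta \in \ker d$, so $\alpha = h'([\eta]) \in \im h'$.

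There is no real obstacle here; the argument is pure diagram chasing. The only point requiring a little care is checking in (iii) that $h'$ actually takes values in the subspace $A' = f(A)$ rather than merely in $A$ — this is precisely where the cycle condition $d(\eta) = 0$ couples to the exactness relation $\ker g = \im f$ and is the conceptual reason the derived couple construction works.
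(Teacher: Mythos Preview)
Your proof is correct and complete: the diagram chase you give is the standard verification of the derived-couple construction. The paper does not supply its own argument here but simply cites \cite[Proposition 2.7]{McCleary-users-guide-to-SS}, so your write-up is precisely the content being deferred to; there is nothing to compare.
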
 

\begin{proof}
See for example \cite[Proposition 2.7]{McCleary-users-guide-to-SS}.
\end{proof}

The following is a simple observation.

\begin{corollary}\label{cor-ec-h-vanish}
In the notations of Lemma \ref{lemma-derived-couple}, $h'=0$ if $h=0$.
\end{corollary}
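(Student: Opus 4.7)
The plan is to unwind the definition of $h'$ given in Lemma \ref{lemma-derived-couple} and observe that it is built directly from $h$ by restriction and passage to a quotient, so vanishing of $h$ forces vanishing of $h'$.

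More precisely, I would recall that $E' = \ker d / d(E)$, where $d = g \circ h$, and that $h' \colon E' \to A'$ is defined on representatives by the formula $h'(\eta + d(E)) = h(\eta)$ for any $\eta \in \ker d$. Under the hypothesis $h=0$, every element $h(\eta)$ on the right-hand side is zero, so $h'(\eta + d(E)) = 0$ for every $\eta \in \ker d$, i.e.\ $h'$ is the zero map on all of $E'$.

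Since this is essentially an immediate consequence of the formula defining $h'$, there is no real obstacle; the proof is a single line. The only thing worth noting in passing is that, when $h = 0$, we also have $d = g \circ h = 0$, so $\ker d = E$ and $d(E) = 0$, whence $E' = E$; thus the conclusion $h' = 0$ is consistent with (and in fact a direct restatement of) the hypothesis on the nose.
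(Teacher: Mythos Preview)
Your proof is correct and is exactly the one-line argument the paper has in mind; indeed, the paper states this corollary as a ``simple observation'' without giving any proof at all. Your additional remark that $d=0$ and hence $E'=E$ is accurate and harmless.
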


For a chain complex $C$ of graded-free $\C[a]$-modules, denote by $(A^{(1)}(C),E^{(1)}(C),f^{(1)}_C,g^{(1)}_C,h^{(1)}_C)$ the exact couple 
\[	
\xymatrix{
H(C) \ar[rr]^{a} && H(C)  \ar[ld]^{\pi_a} \\
& H(C/aC) \ar[lu]^{\Delta} &
}
\]
induced by the short exact sequence
\[
0 \rightarrow C \xrightarrow{a} C \xrightarrow{\pi_a} C/aC \rightarrow 0,
\]
where $\pi_a$ is the standard quotient map. Define a sequence $\{(A^{(r)}(C),E^{(r)}(C),f^{(r)}_C,g^{(r)}_C,h^{(r)}_C)\}$ of exact couples such that $(A^{(r)}(C),E^{(r)}(C),f^{(r)}_C,g^{(r)}_C,h^{(r)}_C)=(A^{(r-1)}(C),E^{(r-1)}(C),f^{(r-1)}_C,g^{(r-1)}_C,h^{(r-1)}_C)'$.

\begin{lemma}\label{lemma-ec-components}
Let $F_{i,s}$ and $T_{i,m,s}$ be the chain complexes defined in \eqref{def-T-i-m-s} and \eqref{def-F-i-s}. Then, as $\zed^{\oplus 2}$-graded $\C$-linear spaces, 
\begin{eqnarray}
\label{eq-F-i-s-ec} E^{(r)}(F_{i,s}) & \cong &  \C\|i\|\{s\} ~\forall~ r\geq 1,\\
\label{eq-T-i-m-s-ec} E^{(r)}(T_{i,m,s}) & \cong &  \begin{cases}
\C\|i-1\|\{2km+s\} \oplus \C\|i\|\{s\} & \text{if } 1 \leq r\leq m, \\
0 & \text{if } r \geq m+1.
\end{cases}
\end{eqnarray}
\end{lemma}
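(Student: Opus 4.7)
The plan is to compute the sequence $\{(A^{(r)},E^{(r)},f^{(r)},g^{(r)},h^{(r)})\}$ directly from the explicit shapes \eqref{def-F-i-s}--\eqref{def-T-i-m-s}, using Lemma \ref{lemma-derived-couple} and Corollary \ref{cor-ec-h-vanish}.

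For $F_{i,s}$, both $H(F_{i,s}) \cong \C[a]\|i\|\{s\}$ and $H(F_{i,s}/aF_{i,s}) \cong \C\|i\|\{s\}$ sit entirely in homological degree $i$, so the long exact sequence of $0 \to F_{i,s} \xrightarrow{a} F_{i,s} \xrightarrow{\pi_a} F_{i,s}/aF_{i,s} \to 0$ collapses to
\[
0 \to \C[a]\|i\|\{s\} \xrightarrow{a} \C[a]\|i\|\{s\} \xrightarrow{\pi_a} \C\|i\|\{s\} \to 0,
\]
forcing $h^{(1)}_{F_{i,s}} = 0$. Iterating Corollary \ref{cor-ec-h-vanish} then gives $h^{(r)}_{F_{i,s}} = 0$ and hence $d^{(r)} = 0$ for every $r \geq 1$, so $E^{(r)}(F_{i,s}) = E^{(1)}(F_{i,s}) = \C\|i\|\{s\}$.

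For $T_{i,m,s}$ I would first identify $H(T_{i,m,s}) \cong \C[a]/(a^m)\|i\|\{s\}$ (concentrated in homological degree $i$, since $a^m$ is injective on $\C[a]$) and note that the quotient differential $a^m$ vanishes modulo $a$, so $H(T_{i,m,s}/aT_{i,m,s}) \cong \C\|i-1\|\{s+2km\} \oplus \C\|i\|\{s\}$. Plugging these into the long exact sequence pins down $h^{(1)}_{T_{i,m,s}}$ uniquely: it vanishes on the $\C\|i\|\{s\}$-summand and identifies $\C\|i-1\|\{s+2km\}$ isomorphically with the submodule $a^{m-1}\C[a]/(a^m)\|i\|\{s\}$ of $A^{(1)} = \C[a]/(a^m)\|i\|\{s\}$.

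Next I would run an induction on $r$ showing $A^{(r)}(T_{i,m,s}) = a^{r-1}\C[a]/(a^m)\|i\|\{s\}$; this forces $A^{(m+1)} = 0$, and hence $E^{(r)}(T_{i,m,s}) = 0$ for $r \geq m+1$. For $1 \leq r \leq m$, the submodule $a^{m-1}\C[a]/(a^m)\|i\|\{s\}$ still lies inside $A^{(r)}$, so the image of $h^{(r)}$ is unchanged and the underlying bigraded space of $E^{(r)}$ remains $\C\|i-1\|\{s+2km\} \oplus \C\|i\|\{s\}$, provided $d^{(r)} = 0$ for $r \leq m-1$. To verify this vanishing, unwinding the iterative definition of $g^{(r)}$ (using that $f^{(r-1)}$ is multiplication by $a$) gives
\[
g^{(r)}(a^{m-1}) = g^{(1)}(a^{m-r}) = \pi_a(a^{m-r}),
\]
which vanishes for $r \leq m-1$ and becomes the generator of $\C\|i\|\{s\} \subset E^{(1)}$ for $r = m$. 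Therefore $d^{(r)} = 0$ for $1 \leq r \leq m-1$, while $d^{(m)}$ is the isomorphism $\C\|i-1\|\{s+2km\} \to \C\|i\|\{s\}$ that wipes out $E^{(m+1)}$, matching the statement.

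The main bookkeeping hurdle will be consistently tracking the bidegree shifts and verifying inductively that the image of $h^{(r)}$ continues to sit inside the shrinking submodule $A^{(r)}$ of $A^{(1)}$; once the identification of $h^{(1)}$ with the embedding of $a^{m-1}\C[a]/(a^m)\|i\|\{s\}$ into $A^{(1)}$ is in place, the remaining algebra is essentially mechanical.
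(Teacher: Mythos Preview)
Your proposal is correct and follows essentially the same route as the paper: both arguments identify $H$ and $H(\,\cdot\,/a\,\cdot\,)$ explicitly, observe $h^{(1)}_{F_{i,s}}=0$ to handle $F_{i,s}$ via Corollary~\ref{cor-ec-h-vanish}, and for $T_{i,m,s}$ run an induction showing $A^{(r)}=a^{r-1}\C[a]/(a^m)$ with $g^{(r)}$ given by $\pi_a\circ a^{-(r-1)}$, so that $d^{(r)}=0$ for $r\le m-1$ and $d^{(m)}$ is an isomorphism. Your observation that $A^{(m+1)}=0$ already forces $E^{(m+1)}=0$ by exactness of the derived couple is a minor shortcut, but otherwise the arguments coincide.
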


\begin{proof}
For the chain complex $F_{i,s}$, note that $h^{(1)}_{F_{i,s}}=0$ in the exact couple $(A^{(1)}(F_{i,s}),E^{(1)}(F_{i,s}),f^{(1)}_{F_{i,s}},g^{(1)}_{F_{i,s}},h^{(1)}_{F_{i,s}})$. By Corollary \ref{cor-ec-h-vanish}, this means $h^{(r)}_{F_{i,s}}=0$ $\forall ~r \geq 1$. So the differential on $E^{(r)}(F_{i,s})$ is $0$ $\forall~r \geq 1$. Thus, $E^{(r)}(F_{i,s})  \cong E^{(1)}(F_{i,s}) \cong \C\|i\|\{s\}$ $\forall ~r \geq 1$. So we have proved isomorphism \eqref{eq-F-i-s-ec}.

Now consider the chain complex $T_{i,m,s}$. Note that $H(T_{i,m,s}) \cong \C[a]/(a^m)\|i\|\{s\}$, $H(T_{i,m,s}/aT_{i,m,s}) \cong \C\|i-1\|\{2km+s\} \oplus \C\|i\|\{s\}$ and the exact couple 
\[
(A^{(1)}(T_{i,m,s}),E^{(1)}(T_{i,m,s}),f^{(1)}_{T_{i,m,s}},g^{(1)}_{T_{i,m,s}},h^{(1)}_{T_{i,m,s}}) = (H(T_{i,m,s}), H(T_{i,m,s}/aT_{i,m,s}),a, \pi_a, \Delta)
\] 
is the exact sequence
\[
0\rightarrow \C\|i-1\|\{2km+s\} \xrightarrow{\Delta} \C[a]/(a^m)\|i\|\{s\} \xrightarrow{a} \C[a]/(a^m)\|i\|\{s\} \xrightarrow{\pi_a} \C[a]\|i\|\{s\} \rightarrow 0,
\]
where the connecting homomorphism $\C\xrightarrow{\Delta} \C[a]/(a^m)$ is given by $\Delta (1) = a^{m-1}$. 

For $1\leq r \leq m-1$ denote by $a^r \cdot \C[a]/(a^m)$ the subspace of $\C[a]/(a^m)$ spanned by $\{a^r, a^{r+1},\dots,a^{m-1}\}$ and by $a^{-r}: a^r \cdot \C[a]/(a^m) \rightarrow \C[a]/(a^m)$ and linear mapping given by $a^{-r} (a^{r+i}) = a^i$ for $i=0,\dots,m-1-r$. A simple induction shows that, for $1 \leq r \leq m$, there is an isomorphism of exact couples
\[
(A^{(r)}(T_{i,m,s}),E^{(r)}(T_{i,m,s}),f^{(r)}_{T_{i,m,s}},g^{(r)}_{T_{i,m,s}},h^{(r)}_{T_{i,m,s}}) \cong (a^{r-1} \cdot H(T_{i,m,s}), H(T_{i,m,s}/aT_{i,m,s}),a, \pi_a\circ a^{-r+1}, \Delta).
\] 
Thus, $E^{(r)}(T_{i,m,s}) \cong H(T_{i,m,s}/aT_{i,m,s}) \cong \C\|i-1\|\{2km+s\} \oplus \C\|i\|\{s\}$ if $1 \leq r\leq m$.

When $r=m$, the differential on $E^{(m)}(T_{i,m,s})\cong \C\|i-1\|\{2km+s\} \oplus \C\|i\|\{s\}$ is 
\[
d^{(m)} =  g^{(r)}_{T_{i,m,s}} \circ h^{(r)}_{T_{i,m,s}}= \pi_a\circ a^{-m+1}\circ \Delta,
\] 
which is an isomorphism $\C\|i-1\|\{2km+s\} \xrightarrow{\cong} \C\|i\|\{s\}$. So $E^{(m+1)}(T_{i,m,s}) = H(E^{(m)}(T_{i,m,s}),d^{(m)}) \cong 0$. This completes the proof of isomorphism \eqref{eq-T-i-m-s-ec}.
\end{proof}

\begin{proof}[Proof of Theorem \ref{thm-exact-couple-lee-gornik}]
Comparing Lemma \ref{lemma-ec-components} to Lemma \ref{lemma-ss-components}, one can see that Theorem \ref{thm-exact-couple-lee-gornik} follows from Theorems \ref{thm-H_P-decomp} and \ref{thm-spectral-sequence-decomp}.
\end{proof}

\section{The $\bigwedge^\ast \C^{N-1}$-action on $H_N(L)$}\label{sec-action}

As we have seen, every polynomial $P=P(x,a)$ of form \eqref{def-P} induces an exact couple $(H_P(L),H_N(L),a, \pi_a,\Delta)$, which equips $H_N(L)$ with a differential $d_P^{(1)}:=\pi_a \circ \Delta$. In this section, we study this differential $d_P^{(1)}$. Our goal is to prove Theorem \ref{thm-delta-action} and establish the $\bigwedge^\ast \C^{N-1}$-action on $H_N(L)$. 

\subsection{Naturality} In this subsection, we fix a $P=P(x,a)$ of form \eqref{def-P}.

From \cite{KR1}, we know that every link cobordism induces, up to an overall scaling by a non-zero scalar, a homomorphism of the $\slmf(N)$ Khovanov-Rozansky homology $H_N$. We briefly recall the definition of this homomorphism here.

To define this homomorphism, Khovanov and Rozansky first decompose the link cobordism into a finite sequence of Reidemeister and Morse moves, that is, a movie. For each Reidemeister move and Morse move, they define in \cite {KR1} a corresponding chain map of the chain complex $(H(C_N,d_{mf}),d_\chi)$. They then define the chain map associated to this cobordism to be the composition of the chain maps associated to the Reidemeister and Morse moves in this movie. They proved in \cite[Proposition 37]{KR1} that, up to an overall scaling by a non-zero scalar, the homomorphism on $H_N$ induced by this chain map does not depend on the choice of the movie.

In \cite{Krasner,Wu-color-equi}, Khovanov and Rozansky's chain maps associated to Reidemeister and Morse moves are generalized to $\C[a]$-linear homogeneous chain maps of the chain complex $(H(C_P,d_{mf}),d_\chi)$. So each movie presentation of a link cobordism induces a $\C[a]$-linear homogeneous chain map of the complex $(H(C_P,d_{mf}),d_\chi)$.\footnote{It does not seem too hard to generalize Khovanov and Rozansky's proof in \cite{KR1} to show that, up to an overall scaling by a non-zero scalar, the homomorphism on $H_P$ induced by this chain map does not depend on the choice of the movie presentation of the cobordism. But we do not need this to prove our results.} Comparing the constructions in \cite{KR1,Krasner,Wu-color-equi}, we have the following lemma.

\begin{lemma}\label{lemma-cobordism-morphisms}
Let $S$ be a link cobordism from link $L_0$ to $L_1$. Fix diagrams of $L_0$, $L_1$ and a movie presentation of $S$. Denote by $(H(C_N(L_0),d_{mf}),d_\chi) \xrightarrow{f_N} (H(C_N(L_1),d_{mf}),d_\chi)$ and $(H(C_P(L_0),d_{mf}),d_\chi) \xrightarrow{f_P} (H(C_P(L_1),d_{mf}),d_\chi)$ the chain maps induced by this movie presentation of $S$. Then the following diagram commutes.
\[
\xymatrix{
H(C_P(L_0),d_{mf}) \ar[rr]^{f_P} \ar[d]^{\pi_a}&& H(C_P(L_1),d_{mf}) \ar[d]^{\pi_a} \\
H(C_N(L_0),d_{mf}) \ar[rr]^{f_N}&& H(C_N(L_1),d_{mf})
}
\]
\end{lemma}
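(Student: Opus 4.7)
The plan is to reduce the claim to a verification on individual elementary moves in the movie, and then to observe that each elementary $f_P^m$ is, by construction, a lift of the corresponding $f_N^m$ along $\pi_a$. More precisely, by definition, both $f_P$ and $f_N$ are compositions of the chain maps $f_P^{m_1},\dots,f_P^{m_\ell}$ and $f_N^{m_1},\dots,f_N^{m_\ell}$ associated to the sequence of Reidemeister and Morse moves $m_1,\dots,m_\ell$ appearing in the fixed movie presentation of $S$. Since $\pi_a$ is itself a chain map (it is $\C[a]$-linear with $\pi_a(a\cdot -)=0$ and commutes with $d_{mf}$ and $d_\chi$ at each intermediate link diagram), once we know that the square commutes for each elementary move $m_j$, the square for the composition $f_P = f_P^{m_\ell} \circ \cdots \circ f_P^{m_1}$ commutes as well.

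The main step is therefore a case-by-case inspection of the elementary chain maps. For each move type (the Reidemeister moves I, IIa, IIb, III and the Morse moves of birth, death, and saddle), the chain map $f_P^m$ is defined in \cite{Krasner, Wu-color-equi} by an explicit matrix whose entries are homogeneous elements of $\C[x_1,\dots,x_m,a]$, and the corresponding chain map $f_N^m$ from \cite{KR1} is defined by the same formulas read in $\C[x_1,\dots,x_m]$. The generalization in \cite{Krasner,Wu-color-equi} is performed so that the reduction of $f_P^m$ modulo $a$ recovers $f_N^m$ entry-by-entry; this reflects the fact that the Koszul matrix factorizations $C_P(\Gamma)$ and $C_N(\Gamma)$ as well as the comparison maps $\chi_0,\chi_1$ of Lemma \ref{lemma-def-chi} are related in the same way. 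Consequently, for every elementary move $m_j$, the diagram
\[
\xymatrix{
H(C_P(-),d_{mf}) \ar[rr]^{f_P^{m_j}} \ar[d]^{\pi_a} && H(C_P(-),d_{mf}) \ar[d]^{\pi_a} \\
H(C_N(-),d_{mf}) \ar[rr]^{f_N^{m_j}} && H(C_N(-),d_{mf})
}
\]
commutes on the nose.

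I do not anticipate any serious obstacle; the argument is essentially bookkeeping, since the only real content is the observation that the $\C[a]$-linear formulas in \cite{Krasner,Wu-color-equi} were set up precisely to specialize to the $\C$-linear formulas in \cite{KR1} under $a\mapsto 0$. The one point that requires mild care is the saddle move, whose chain map is built out of the homomorphisms $\chi_0$ and $\chi_1$ recalled explicitly in the proof of Lemma \ref{lemma-def-chi}; however, the matrices there have entries in $R=\C[x_1,\dots,x_m,a]$ that reduce modulo $a$ to the Khovanov–Rozansky matrices, so compatibility with $\pi_a$ is immediate. Composing the elementary squares then yields the desired commutativity for $f_P$ and $f_N$.
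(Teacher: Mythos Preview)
Your proposal is correct and takes essentially the same approach as the paper: the paper's proof is simply ``See the constructions in \cite{KR1,Krasner,Wu-color-equi},'' and you have spelled out what that reference means, namely that the equivariant chain maps for each elementary move are by design $\C[a]$-linear lifts of the Khovanov--Rozansky maps and hence commute with $\pi_a$.
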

\begin{proof}
See the constructions in \cite{KR1,Krasner,Wu-color-equi}.
\end{proof}

Next, we interpret $d_P^{(1)}$ as the connecting homomorphism of a long exact sequence, which slightly simplifies the proof of the naturality and significantly simplifies the proof of the anti-commutativity later on. 

For a link $L$, choose one of its diagrams. By Corollary \ref{cor-graph-homology-free} and Lemma \ref{lemma-basis-pi-0}, $H(C_P(L),d_{mf})$ is a free $\C[a]$-module and $H(C_N(L),d_{mf})\cong H(C_P(L),d_{mf})/aH(C_P(L),d_{mf})$. Also, from the proof of Lemma \ref{lemma-basis-pi-0}, one can see that $H(C_P(L)/a^2C_P(L),d_{mf}) \cong H(C_P(L),d_{mf})/a^2H(C_P(L),d_{mf})$. Therefore, the short exact sequence $0\rightarrow \C[a]/(a) \xrightarrow{a} \C[a]/(a^2) \xrightarrow{\pi_a} \C[a]/(a) \rightarrow 0$ induces a short exact sequence
\begin{equation}\label{eq-short-exact-d-1}
0 \rightarrow H(C_N(L),d_{mf}) \xrightarrow{a} H(C_P(L)/a^2C_P(L),d_{mf}) \xrightarrow{\pi_a} H(C_N(L),d_{mf}) \rightarrow 0.
\end{equation}

\begin{lemma}\label{lemma-d-1-short}
Let $\mathscr{H}_P(L) = H(H(C_P(L)/a^2C_P(L),d_{mf}),d_\chi)$. Then short exact sequence \eqref{eq-short-exact-d-1} induces a long exact sequence
\[
\cdots \xrightarrow{\pi_a} H_N^{i-1}(L)\xrightarrow{d_P^{(1)}} H_N^i(L)\xrightarrow{a} \mathscr{H}_P^i(L) \xrightarrow{\pi_a} H_N^i(L) \xrightarrow{d_P^{(1)}}\cdots.
\]
\end{lemma}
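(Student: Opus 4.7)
The plan is to recognize that \eqref{eq-short-exact-d-1} is a short exact sequence of chain complexes of $\C$-spaces (with differential $d_\chi$), apply the standard zig-zag lemma to get a long exact sequence in homology, and then identify the resulting connecting homomorphism with $d_P^{(1)}$ via a diagram chase.

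First I would verify that \eqref{eq-short-exact-d-1} really is a short exact sequence of $d_\chi$-chain complexes. Writing $C = H(C_P(L),d_{mf})$, which is a graded-free $\C[a]$-module by Corollary \ref{cor-graph-homology-free}, exactness of
\[
0\to C/aC\xrightarrow{a} C/a^2C\xrightarrow{\pi_a} C/aC\to 0
\]
is obtained by tensoring the exact sequence $0\to\C[a]/(a)\xrightarrow{a}\C[a]/(a^2)\xrightarrow{\pi_a}\C[a]/(a)\to 0$ with the free $\C[a]$-module $C$; freeness lets me cancel $a$ to check the kernel of $a:C/aC\to C/a^2C$ is zero. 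The two maps are $\C[a]$-linear, hence commute with $d_\chi$, so this is indeed a short exact sequence of chain complexes. Applying the zig-zag lemma yields a long exact sequence in $d_\chi$-homology whose terms are $H_N^i(L)$ and $\mathscr{H}_P^i(L)$ with the desired maps, except that the connecting map is a priori some $\delta$ rather than $d_P^{(1)}$.

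The remaining task, and the only substantive step, is to identify $\delta=d_P^{(1)}=\pi_a\circ\Delta$. I would do this by a direct comparison of the two snake-lemma constructions: $\Delta$ is the connecting map for $0\to C\xrightarrow{a} C\xrightarrow{\pi_a} C/aC\to 0$, while $\delta$ is the connecting map for \eqref{eq-short-exact-d-1}. Given a $d_\chi$-cycle $\alpha\in C/aC$, pick a lift $\hat\alpha\in C$; then $d_\chi\hat\alpha=a\gamma$ for some $\gamma\in C$ (since $\pi_a d_\chi\hat\alpha=0$), so by definition $\Delta[\alpha]=[\gamma]\in H_P^{i+1}(L)$ and therefore $d_P^{(1)}[\alpha]=[\pi_a\gamma]\in H_N^{i+1}(L)$. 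Now pushing $\hat\alpha$ to $C/a^2C$ gives a lift of $\alpha$ across $\pi_a:C/a^2C\to C/aC$, and $d_\chi[\hat\alpha]_{a^2}=[a\gamma]_{a^2}$, which is precisely the image of $\pi_a\gamma\in C/aC$ under the inclusion $a:C/aC\to C/a^2C$. By the construction of the connecting homomorphism, $\delta[\alpha]=[\pi_a\gamma]$, matching $d_P^{(1)}[\alpha]$.

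The proof is essentially routine homological algebra; the only thing to be careful about is the bookkeeping in the diagram chase identifying the two connecting homomorphisms, since one involves the quotients modulo $a$ and $a^2$ while the other uses $C$ itself. The key input that makes everything go through cleanly is the freeness of $H(C_P(L),d_{mf})$ as a $\C[a]$-module, which guarantees both the exactness of \eqref{eq-short-exact-d-1} and the identification $\mathscr{H}_P^i(L)=H(C_P(L),d_{mf})^i/a^2 H(C_P(L),d_{mf})^i$ at the level of $d_\chi$-homology.
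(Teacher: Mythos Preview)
Your proof is correct and arrives at the same identification $\delta=d_P^{(1)}$ as the paper. The only difference is packaging: the paper sets up a morphism of short exact sequences
\[
\xymatrix{
0 \ar[r]& H(C_P(L),d_{mf}) \ar[r]^{a} \ar[d]^{\pi_a}& H(C_P(L),d_{mf}) \ar[r]^{\pi_a} \ar[d]^{\pi_{a^2}} & H(C_N(L),d_{mf}) \ar[r] \ar[d]^{\id}& 0 \\
0 \ar[r]& H(C_N(L),d_{mf}) \ar[r]^<<<<<{a}& H(C_P(L)/a^2C_P(L),d_{mf}) \ar[r]^<<<<<{\pi_a}& H(C_N(L),d_{mf}) \ar[r]& 0}
\]
and invokes naturality of the connecting homomorphism to read off $\delta=\pi_a\circ\Delta$ from the induced ladder of long exact sequences, whereas you unpack that naturality square explicitly at the element level. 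The underlying chase is identical; the paper's version is slightly slicker because it avoids tracking lifts by hand, while yours has the virtue of being self-contained.
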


\begin{proof}
Denote by $\delta$ the connecting homomorphism in the above long exact sequence. We only need to prove that $\delta=d_P^{(1)}$. Consider the following commutative diagram with short exact rows.
\[
\xymatrix{
0 \ar[r]& H(C_P(L),d_{mf}) \ar[r]^{a} \ar[d]^{\pi_a}& H(C_P(L),d_{mf}) \ar[r]^{\pi_a} \ar[d]^{\pi_{a^2}} & H(C_N(L),d_{mf}) \ar[r] \ar[d]^{\id}& 0 \\
0 \ar[r]& H(C_N(L),d_{mf}) \ar[r]^<<<<<{a}& H(C_P(L)/a^2C_P(L),d_{mf}) \ar[r]^<<<<<{\pi_a}& H(C_N(L),d_{mf}) \ar[r]& 0}
\]
It induces the following commutative diagram with long exact rows.
\[
\xymatrix{
\cdots \ar[r]^<<<<<{\pi_a} & H_N^{i-1}(L) \ar[r]^{\Delta} \ar[d]^{\id} & H_P^{i}(L) \ar[r]^<<<<<{a} \ar[d]^{\pi_a} & \cdots \\
\cdots \ar[r]^<<<<<{\pi_a} & H_N^{i-1}(L) \ar[r]^{\delta} & H_N^{i}(L) \ar[r]^<<<<<{a} & \cdots
}
\]
Thus, $\delta = \pi_a \circ \Delta = d_P^{(1)}$.
\end{proof}

\begin{lemma}\label{lemma-delta-natural}
Let $S$ be a link cobordism from link $L_0$ to $L_1$. Denote by $H_N(L_0) \xrightarrow{f_N} H_N(L_1)$ the homomorphism induced by $S$. Then the following diagram commutes.
\[
\xymatrix{
H_N^{i-1}(L_0) \ar[rr]^{d_P^{(1)}} \ar[d]^{f_N}&& H_N^{i}(L_0) \ar[d]^{f_N} \\
H_N^{i-1}(L_1) \ar[rr]^{d_P^{(1)}}&& H_N^{i}(L_1)
}
\]
\end{lemma}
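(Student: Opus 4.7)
The plan is to reduce this to naturality of the connecting homomorphism in a long exact sequence, using Lemma \ref{lemma-d-1-short} as the bridge. Fix diagrams of $L_0$ and $L_1$ together with a movie presentation of $S$. By the constructions in \cite{Krasner,Wu-color-equi} summarized just before Lemma \ref{lemma-cobordism-morphisms}, this movie induces a $\C[a]$-linear chain map
\[
f_P : (H(C_P(L_0),d_{mf}), d_\chi) \to (H(C_P(L_1),d_{mf}), d_\chi),
\]
which in turn induces the homomorphism $f_N$ after reduction modulo $a$. The key observation is that, since $f_P$ is $\C[a]$-linear, it preserves the submodule $a^2 H(C_P,d_{mf})$ and therefore descends to a chain map $\tilde{f}_P$ on $H(C_P/a^2 C_P, d_{mf})$, using the identification $H(C_P/a^2C_P,d_{mf}) \cong H(C_P,d_{mf})/a^2 H(C_P,d_{mf})$ from the proof of Lemma \ref{lemma-basis-pi-0}.

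Next I would assemble the commutative diagram of short exact sequences
\[
\xymatrix{
0 \ar[r] & H(C_N(L_0),d_{mf}) \ar[r]^{a} \ar[d]^{f_N} & H(C_P(L_0)/a^2 C_P(L_0),d_{mf}) \ar[r]^<<<<<{\pi_a} \ar[d]^{\tilde{f}_P} & H(C_N(L_0),d_{mf}) \ar[r] \ar[d]^{f_N} & 0 \\
0 \ar[r] & H(C_N(L_1),d_{mf}) \ar[r]^{a} & H(C_P(L_1)/a^2 C_P(L_1),d_{mf}) \ar[r]^<<<<<{\pi_a} & H(C_N(L_1),d_{mf}) \ar[r] & 0.
}
\]
The commutativity of the right-hand square is exactly Lemma \ref{lemma-cobordism-morphisms}; the left-hand square commutes because $\tilde{f}_P \circ a = a \circ f_N$ as $\C[a]$-linear maps on $H(C_N)$ sitting inside $H(C_P/a^2 C_P)$ as the image of multiplication by $a$, again using the $\C[a]$-linearity of $f_P$.

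Applying the snake lemma (naturality of connecting homomorphisms) then yields a commuting ladder of long exact sequences; in particular the square
\[
\xymatrix{
H_N^{i-1}(L_0) \ar[rr]^{\delta_0} \ar[d]^{f_N} && H_N^{i}(L_0) \ar[d]^{f_N} \\
H_N^{i-1}(L_1) \ar[rr]^{\delta_1} && H_N^{i}(L_1)
}
\]
commutes, where $\delta_j$ is the connecting homomorphism for $L_j$. By Lemma \ref{lemma-d-1-short}, $\delta_j = d_P^{(1)}$, which is what we wanted. The only place that requires care is keeping track of $\C[a]$-linearity at the chain level so that $f_P$ genuinely descends to $H(C_P/a^2C_P,d_{mf})$; this is automatic from the construction but is the single point at which the argument could fail for a non-equivariant chain map.
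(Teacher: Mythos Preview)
Your proof is correct and follows essentially the same route as the paper: both arguments use the $\C[a]$-linearity of $f_P$ to descend to a map of the short exact sequences \eqref{eq-short-exact-d-1}, then invoke naturality of the connecting homomorphism together with Lemma \ref{lemma-d-1-short} to identify that connecting map with $d_P^{(1)}$. One small quibble: the right-hand square is not literally Lemma \ref{lemma-cobordism-morphisms} (which concerns $H(C_P)$ rather than $H(C_P/a^2C_P)$), but it follows immediately from it by passing to the quotient, so this does not affect the argument.
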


\begin{proof}
Pick diagrams for $L_0$, $L_1$ and choose a movie presentation of $S$. Denote by $(H(C_N(L_0),d_{mf}),d_\chi) \xrightarrow{f_N} (H(C_N(L_1),d_{mf}),d_\chi)$ and $(H(C_P(L_0),d_{mf}),d_\chi) \xrightarrow{f_P} (H(C_P(L_1),d_{mf}),d_\chi)$ the chain maps induced by this movie presentation of $S$. Of course, $H_N(L_0) \xrightarrow{f_N} H_N(L_1)$ is, up to scaling, the homomorphism induced by the chain map $(H(C_N(L_0),d_{mf}),d_\chi) \xrightarrow{f_N} (H(C_N(L_1),d_{mf}),d_\chi)$. Recall that $f_P$ is $\C[a]$-linear and \linebreak $H(C_P(L)/a^2C_P(L),d_{mf}) \cong H(C_P(L),d_{mf})/a^2H(C_P(L),d_{mf})$. So $f_P$ induces a chain map \linebreak $H(C_P(L)/a^2C_P(L),d_{mf}) \xrightarrow{f_P} H(C_P(L)/a^2C_P(L),d_{mf})$. Thus, we have the following commutative diagram with short exact rows.
\[
\xymatrix{
0 \ar[r]& H(C_N(L_0),d_{mf}) \ar[r]^<<<<<{a} \ar[d]^{f_N} & H(C_P(L_0)/a^2C_P(L),d_{mf}) \ar[r]^<<<<<{\pi_a} \ar[d]^{f_P} & H(C_N(L_0),d_{mf}) \ar[r] \ar[d]^{f_N} & 0 \\
0 \ar[r]& H(C_N(L_1),d_{mf}) \ar[r]^<<<<<{a}& H(C_P(L_1)/a^2C_P(L),d_{mf}) \ar[r]^<<<<<{\pi_a}& H(C_N(L_1),d_{mf}) \ar[r]& 0}
\]
By Lemma \ref{lemma-d-1-short}, this diagram induces the following commutative diagram with long exact rows.
\[
\xymatrix{
\cdots \ar[r]^>>>>>{\pi_a} & H_N^{i-1}(L_0) \ar[r]^{d_P^{(1)}} \ar[d]^{f_N} & H_N^{i}(L_0) \ar[d]^{f_N} \ar[r]^>>>>>{a}  &  \cdots \\
\cdots \ar[r]^>>>>>{\pi_a} & H_N^{i-1}(L_1) \ar[r]^{d_P^{(1)}} & H_N^{i}(L_1) \ar[r]^>>>>>{a}  &  \cdots  
}
\]
This proves the lemma.
\end{proof}

Note that Part (3) of Theorem \ref{thm-delta-action} follows from Lemma \ref{lemma-delta-natural}.

\subsection{Anti-commutativity} In this subsection, we fix a homogeneous polynomial
\begin{equation}\label{def-P-a-1-a-2}
P=P(x,a_1,a_2) = x^{N+1} + xF(x,a_1,a_2)
\end{equation}
of degree $2N+2$, where 
\begin{itemize}
	\item $x$, $a_1$ and $a_2$ are homogeneous variables of degrees $2$, $2k_1$ and $2k_2$, respectively,
	\item $\deg F(x,a_1,a_2) =2N$ and $F(x,0,0)=0$.
\end{itemize}
We define 
\begin{equation}\label{def-P-a-1-a-2-partial}
P_1=P(x,a_1,0) \hspace{2pc} \text{and} \hspace{2pc} P_2=P(x,0,a_2).
\end{equation} 

The goal of this subsection is to show that $d_{P_1}^{(1)}$ and $d_{P_2}^{(1)}$ anti-commute, which implies Part (2) of Theorem \ref{thm-delta-action}.

\begin{lemma}\label{lemma-diagram-chasing}
Let $R$ be a commutative ring and $A$, $B$, $C$ and $D$ chain complexes of $R$-modules, whose differentials raise the homological grading by $1$. Assume there is an exact sequence of chain complexes
\[
0 \rightarrow A \xrightarrow{f} B \xrightarrow{g} C \xrightarrow{h} D \rightarrow 0.
\]
Then this exact sequence induces an $R$-homomorphism $\mathbf{\Delta}: H^i(D) \rightarrow H^{i+2}(A)$ for every homological degree $i$.
\end{lemma}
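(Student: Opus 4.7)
The plan is to reduce the four-term exact sequence to two short exact sequences and define $\mathbf{\Delta}$ as the composition of the two associated connecting homomorphisms. Let $E = \ker h = \mathrm{im}\,g$, a subcomplex of $C$ since $g$ and $h$ are chain maps. Then $g$ factors as $B \xrightarrow{\bar{g}} E \xrightarrow{\iota} C$ with $\bar{g}$ surjective and $\iota$ the inclusion, producing two short exact sequences of chain complexes
\[
0 \to A \xrightarrow{f} B \xrightarrow{\bar{g}} E \to 0 \qquad \text{and} \qquad 0 \to E \xrightarrow{\iota} C \xrightarrow{h} D \to 0.
\]
The snake lemma yields connecting $R$-homomorphisms $\Delta_1 \colon H^{i+1}(E) \to H^{i+2}(A)$ and $\Delta_2 \colon H^i(D) \to H^{i+1}(E)$, and I would set $\mathbf{\Delta} := \Delta_1 \circ \Delta_2$. $R$-linearity is then automatic.

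Equivalently, and more useful for any later naturality argument, $\mathbf{\Delta}$ can be described by a direct double zig-zag: given a cycle $\delta \in D^i$, first pick $c \in C^i$ with $h(c) = \delta$ (possible since $h$ is surjective); then $h(dc) = d\delta = 0$, so $dc = g(b)$ for some $b \in B^{i+1}$; then $g(db) = d(dc) = 0$, so $db = f(a)$ for a unique $a \in A^{i+2}$ (by injectivity of $f$); finally $f(da) = d(db) = 0$ forces $da = 0$, again by injectivity. One then sets $\mathbf{\Delta}[\delta] := [a]$. A routine comparison shows this agrees with $\Delta_1 \circ \Delta_2$.

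The main step is to verify that $[a]$ is independent of the choices. There are four cases, each a standard diagram chase: $a$ is uniquely determined by $b$ from the injectivity of $f$; replacing $b$ by $b + f(a')$ (the only ambiguity in lifting $dc$ through $g$) shifts $db$ by $df(a') = f(da')$, so $a$ is shifted by $da'$ and $[a]$ is unchanged; replacing $c$ by $c + g(b')$ (the only ambiguity in lifting $\delta$ through $h$) shifts $dc$ by $g(db')$, which can be absorbed by replacing $b$ with $b + db'$, leaving $db$ fixed; and replacing $\delta$ by $\delta + d\delta'$ is handled by lifting $\delta'$ to $c' \in C^{i-1}$ and taking the new $c$ to be $c + dc'$, which leaves $dc$ and hence $a$ unchanged. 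No genuine obstacle arises; the content of the lemma is purely homological algebra, and the only thing that needs care is the bookkeeping of the two successive zig-zags.
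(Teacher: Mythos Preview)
Your proposal is correct and matches the paper's proof almost exactly: the paper also notes that $\mathbf{\Delta}$ is the composition of the two connecting homomorphisms from the split short exact sequences, then gives the same explicit double zig-zag and the same well-definedness check. The only difference is emphasis: the paper foregrounds the diagram-chasing description because the later anti-commutativity argument (Lemma~\ref{lemma-delta-anti-commute-general}) needs to track specific lifts through the chase, not just invoke the abstract composition.
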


\begin{proof}
Of course, one can split the exact sequence $0 \rightarrow A \xrightarrow{f} B \xrightarrow{g} C \xrightarrow{h} D \rightarrow 0$ into short exact sequences $0 \rightarrow A \xrightarrow{f} B \rightarrow B/f(A) \rightarrow 0$ and $0 \rightarrow B/f(A) \xrightarrow{g} C \xrightarrow{h} D \rightarrow 0$. Then $\mathbf{\Delta}$ can be defined as the composition of the connecting homomorphisms from these two short exact sequences. But what we actually need later on is that $\mathbf{\Delta}$ is defined by diagram chasing and does not depend on the choices made in that chasing. So this is how we will prove the lemma here.

\[
\xymatrix{
& \vdots & \vdots & \vdots & \vdots & \\
0 \ar[r] & A^{i+2} \ar[r]^{f} \ar[u]^{d_A} & B^{i+2} \ar[r]^{g} \ar[u]^{d_B}  & C^{i+2} \ar[r]^{h} \ar[u]^{d_C}  & D^{i+2} \ar[r] \ar[u]^{d_D}  & 0 \\
0 \ar[r] & A^{i+1} \ar[r]^{f} \ar[u]^{d_A} & B^{i+1} \ar[r]^{g} \ar[u]^{d_B}  & C^{i+1} \ar[r]^{h} \ar[u]^{d_C}  & D^{i+1} \ar[r] \ar[u]^{d_D}  & 0 \\
0 \ar[r] & A^{i} \ar[r]^{f} \ar[u]^{d_A} & B^{i} \ar[r]^{g} \ar[u]^{d_B}  & C^{i} \ar[r]^{h} \ar[u]^{d_C}  & D^{i} \ar[r] \ar[u]^{d_D}  & 0 \\
0 \ar[r] & A^{i-1} \ar[r]^{f} \ar[u]^{d_A} & B^{i-1} \ar[r]^{g} \ar[u]^{d_B}  & C^{i-1} \ar[r]^{h} \ar[u]^{d_C}  & D^{i-1} \ar[r] \ar[u]^{d_D}  & 0 \\
& \vdots \ar[u]^{d_A} & \vdots \ar[u]^{d_B}  & \vdots\ar[u]^{d_C}  & \vdots \ar[u]^{d_D}  &  \\
}
\]

In the above diagram, let $x \in D^i$ be a cycle. That is, $d_D(x)=0$. Since $h$ is surjective, there is a chain $y\in C^i$ such that $h(y)=x$. Then $h(d_C(y))= d_D(x)=0$. So $d_C(y) \in \ker h = g(B^{i+1})$. Thus, there exists a $z \in B^{i+1}$ such that $g(z)=d_C(y)$. Then $g(d_B(z))= d_C(d_C(y))=0$. So $d_B(z) \in \ker g =f(A)$. That is, there exists $w \in A^{i+2}$ such that $f(w)= d_B(z)$. But $f(d_A(w))= d_B(d_B(z))=0$ and $f$ is injective. So $d_A(w)=0$, that is, $w$ is a cycle in $A^{i+2}$.

Next, we show that the mapping $[x] \mapsto [w]$ is a well defined homomorphism on homology, that is, it does not depend on the choices made in the above construction.

Assume $x' \in D^i$ is a cycle such that $[x']=[x]$. Then there is a $\tilde{x} \in D^{i-1}$ with $d_D(\tilde{x}) = x'-x$. Since $h$ is surjective, there is a $\tilde{y} \in C^{i-1}$ satisfying $h(\tilde{y})=\tilde{x}$. Now let $y' \in C^i$ be any chain such that $h(y')=x'$. Then $h(y') = x' = x + d_D (\tilde{x}) = h(y+d_C(\tilde{y}))$. Thus, $y'-y-d_C(\tilde{y}) \in \ker h = g(B^i)$. This means that there exists a $\tilde{z} \in B^i$ such that $g(\tilde{z})= y'-y-d_C(\tilde{y})$. Now let $z' \in B^{i+1}$ be any chain satisfying $g(z')=d_C (y')$. Then $g(z') = d_C (y') = d_C (y + d_C(\tilde{Y}) + g(\tilde{z})) = g(z+d_B(\tilde{z}))$. This implies that $z'-z-d_B(\tilde{z}) \in \ker g = f(B^{i+1})$. So there exists a $\tilde{w} \in A^{i+1}$ such that $f(\tilde{w}) = z'-z-d_B(\tilde{z})$. Finally, let $w' \in A^{i+2}$ be any chain with $f(w')= d_B(z')$. Then $f(w') = d_B(z') = d_B(z+d_B(\tilde{z}) + f(\tilde{w})) = f(w+d_A(\tilde{w}))$. But $f$ is injective. So $w'=w+d_A(\tilde{w})$. This shows that $w'$ is a cycle and $[w']=[w]$.

From the above, we know that $\mathbf{\Delta}: H^i(D) \rightarrow H^{i+2}(A)$ given by $\mathbf{\Delta}([x]) =[w]$ is well defined. It is straightforward to show that $\mathbf{\Delta}$ is $R$-linear.
\end{proof}

\begin{lemma}\label{lemma-delta-anti-commute-general}
Let $P_1$ and $P_2$ be the polynomials defined in \eqref{def-P-a-1-a-2-partial}. Then $d_{P_1}^{(1)} \circ d_{P_2}^{(1)} = -d_{P_2}^{(1)} \circ d_{P_1}^{(1)}$.
\end{lemma}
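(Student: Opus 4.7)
The plan is to apply Lemma \ref{lemma-diagram-chasing} to the 4-term exact sequence of $d_\chi$-complexes obtained by tensoring the Koszul resolution of $\C$ over $\C[a_1,a_2]$ with $(H(C_P(L),d_{mf}),d_\chi)$. The Koszul resolution
\[
0\to \C[a_1,a_2] \xrightarrow{\binom{a_2}{-a_1}} \C[a_1,a_2]^{\oplus 2} \xrightarrow{(a_1,a_2)} \C[a_1,a_2] \to \C \to 0
\]
is exact because $(a_1,a_2)$ is a regular sequence. By Corollary \ref{cor-graph-homology-free}, $H(C_P(L),d_{mf})$ is a free $\C[a_1,a_2]$-module, and the two-variable analogue of Lemma \ref{lemma-basis-pi-0} shows that $H(C_N(L),d_{mf}) \cong H(C_P(L),d_{mf})/(a_1,a_2)H(C_P(L),d_{mf})$ and that $(a_1,a_2)$ is a regular sequence on $H(C_P(L),d_{mf})$. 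Tensoring the Koszul resolution with $H(C_P,d_{mf})$ therefore preserves exactness and yields the exact sequence of $d_\chi$-chain complexes
\[
0 \to H(C_P,d_{mf}) \xrightarrow{\binom{a_2}{-a_1}} H(C_P,d_{mf})^{\oplus 2} \xrightarrow{(a_1,a_2)} H(C_P,d_{mf}) \xrightarrow{\pi} H(C_N,d_{mf}) \to 0.
\]

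Applying Lemma \ref{lemma-diagram-chasing} to this sequence yields, for each $i$, a homomorphism $\mathbf{\Delta}:H^i_N(L) \to H^{i+2}(H(C_P,d_{mf}),d_\chi)$. I will trace through the diagram chase: given $[v]\in H^i_N(L)$, lift $v$ to $y\in H(C_P,d_{mf})$; since $\pi(d_\chi y)=d_\chi v=0$, there exist $z_1,z_2\in H(C_P,d_{mf})$ with $d_\chi y = a_1 z_1 + a_2 z_2$. Applying $d_\chi$ again, $d_\chi^2 y=0$ becomes $a_1 d_\chi z_1 + a_2 d_\chi z_2 = 0$, and the regularity of $(a_1,a_2)$ on the free module $H(C_P,d_{mf})$ produces an element $w\in H(C_P,d_{mf})$, unique modulo $(a_1,a_2)$, with $d_\chi z_2 = a_1 w$ and $d_\chi z_1 = -a_2 w$. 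The chase in Lemma \ref{lemma-diagram-chasing} then identifies $\mathbf{\Delta}[v]=[w]$.

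To conclude, I relate the intermediate classes to the individual Bocksteins. Reducing the relation $d_\chi y = a_1 z_1 + a_2 z_2$ modulo $(a_{3-j},a_j^2)$ and comparing with Lemma \ref{lemma-d-1-short} applied to $P_j$ yields $d_{P_j}^{(1)}[v]=[\pi(z_j)]\in H^{i+1}_N(L)$. For the composition $d_{P_1}^{(1)}\circ d_{P_2}^{(1)}[v]$, lift $\pi(z_2)$ back to $z_2$ and observe that $d_\chi z_2 = a_1 w + a_2\cdot 0$, so Lemma \ref{lemma-d-1-short} applied to $P_1$ gives $d_{P_1}^{(1)}d_{P_2}^{(1)}[v]=[\pi(w)]$. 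Symmetrically, $d_\chi z_1 = a_1\cdot 0 + a_2(-w)$ gives $d_{P_2}^{(1)}d_{P_1}^{(1)}[v]=[\pi(-w)]=-[\pi(w)]$. Comparing these two expressions yields $d_{P_1}^{(1)}\circ d_{P_2}^{(1)}=-d_{P_2}^{(1)}\circ d_{P_1}^{(1)}$, as required.

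The main obstacle will be the first step: verifying that applying $H(-,d_{mf})$ to the tensored Koszul sequence remains exact. This relies on the freeness of $H(C_P(L),d_{mf})$ over $\C[a_1,a_2]$ from Corollary \ref{cor-graph-homology-free} together with a short two-variable extension of the surjectivity and kernel computation in Lemma \ref{lemma-basis-pi-0}; once these are in hand, exactness of the tensored Koszul follows formally from $(a_1,a_2)$ being a regular sequence on a free module, and the rest of the argument is a clean diagram chase.
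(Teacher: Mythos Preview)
Your argument is correct. The route is closely related to the paper's but not identical: both prove anti-commutativity by setting up a four-term exact sequence of $d_\chi$-complexes over $\C[a_1,a_2]$ and identifying the two halves of the resulting degree-two diagram chase with $d_{P_1}^{(1)}$ and $d_{P_2}^{(1)}$ via Lemma~\ref{lemma-d-1-short}. The difference is in which four-term sequence is used. The paper truncates at second order, building a $3\times3$ grid of quotients $\mathscr{C}_{i,j}=H(C_P,d_{mf})/(a_1^i,a_2^j)$ for $i,j\in\{1,2\}$ and extracting from it the exact sequence $0\to\mathscr{C}_{1,1}\to\mathscr{C}_{2,1}\oplus\mathscr{C}_{1,2}\to\mathscr{C}_{2,2}\to\mathscr{C}_{1,1}\to0$; its map $\mathbf{\Delta}$ lands directly in $H_N$. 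You instead use the untruncated Koszul resolution of $(a_1,a_2)$ on the free module $H(C_P,d_{mf})$, so your $\mathbf{\Delta}$ lands in $H_P$ and you project to $H_N$ at the end. Your version is leaner (no grid to chase through, and regularity of $(a_1,a_2)$ on a free module is immediate), while the paper's version keeps every object finite over $\C$ and matches the setup of Lemma~\ref{lemma-d-1-short} on the nose. One small remark: your invocation of Lemma~\ref{lemma-diagram-chasing} is used only to guarantee that the class $[w]\in H_P$ (hence $[\pi(w)]\in H_N$) is independent of the choices of lift $y$ and splitting $(z_1,z_2)$; the actual conclusion $d_{P_1}^{(1)}d_{P_2}^{(1)}=-d_{P_2}^{(1)}d_{P_1}^{(1)}$ comes from the explicit identifications you make with $\pi(z_1),\pi(z_2),\pi(w)$, not from $\mathbf{\Delta}$ itself.
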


Note that, applying Lemma \ref{lemma-delta-anti-commute-general} to $P(x,b_i,b_j) = x^{N+1} + b_ix^i + b_j x^j$, we get Part (2) of Theorem \ref{thm-delta-action}.

\begin{proof}[Proof of Lemma \ref{lemma-delta-anti-commute-general}]
Let $L$ be any link and $D$ one of its diagrams. Recall that $P$ is the polynomial in \eqref{def-P-a-1-a-2}. Set $R=\C[a_1,a_2]$. 

Consider the following diagram.
\begin{equation}\label{eq-coefficient-diagram}
\xymatrix{
& 0 \ar[d]& 0 \ar[d]& 0 \ar[d]& \\
0 \ar[r]& R/(a_1,a_2) \ar[r]^{a_1} \ar[d]^{-a_2} \ar@{}[dr]|{-}&  R/(a_1^2,a_2) \ar[r]^{\pi_{a_1}} \ar[d]^{a_2} \ar@{}[dr]|{+} & R/(a_1,a_2) \ar[d]^{a_2} \ar[r] & 0 \\
0 \ar[r]& R/(a_1,a_2^2) \ar[r]^{a_1} \ar[d]^{\pi_{a_2}} \ar@{}[dr]|{+} &  R/(a_1^2,a_2^2) \ar[r]^{\pi_{a_1}} \ar[d]^{\pi_{a_2}} \ar@{}[dr]|{+} & R/(a_1,a_2^2) \ar[d]^{\pi_{a_2}} \ar[r] & 0 \\
0 \ar[r]& R/(a_1,a_2) \ar[r]^{a_1} \ar[d] & R/(a_1^2,a_2) \ar[r]^{\pi_{a_1}} \ar[d] & R/(a_1,a_2) \ar[r] \ar[d] & 0 \\
& 0 & 0 & 0 & 
}
\end{equation}
Note that, in diagram \eqref{eq-coefficient-diagram}, 
\begin{itemize}
	\item all rows are exact,
	\item all columns are exact,
	\item the upper left square anti-commutes, which is indicated by a ``$-$",
	\item the other three squares commute, which is indicated by ``$+$"s.
\end{itemize}
Moreover, we get from diagram \eqref{eq-coefficient-diagram} an exact sequence
\begin{equation}\label{eq-coefficient-sequence}
0 \rightarrow R/(a_1,a_2) \xrightarrow{\left(%
\begin{array}{r}
  a_1 \\
  -a_2 
\end{array}%
\right)
}
\left.%
\begin{array}{c}
  R/(a_1^2,a_2) \\
  \oplus \\
  R/(a_1,a_2^2)
\end{array}%
\right.
\xrightarrow{(a_2,a_1)} 
R/(a_1^2,a_2^2)
\xrightarrow{\pi_{a_1} \circ \pi_{a_2} = \pi_{a_2} \circ \pi_{a_1}}
R/(a_1,a_2) \rightarrow 0.
\end{equation}

By Corollary \ref{cor-graph-homology-free}, $H(C_P(D),d_{mf})$ is a chain complex of graded-free $R$-modules. For $i,j=1,2$ we denote by $\mathscr{C}_{i,j}$ the chain complex $(H(C_P(D),d_{mf})/(a_1^i,a_2^j)H(C_P(D),d_{mf}), d_\chi)$. Note that $\mathscr{C}_{1,1} \cong (H(C_N(D),d_{mf}), d_\chi)$, whose homology is $H_N(L)$. Exact sequence \eqref{eq-coefficient-sequence} induces an exact sequence
\begin{equation}\label{eq-coefficient-chain-sequence}
0 \rightarrow \mathscr{C}_{1,1} \xrightarrow{\left(%
\begin{array}{r}
  a_1 \\
  -a_2 
\end{array}%
\right)
}
\left.%
\begin{array}{c}
  \mathscr{C}_{2,1} \\
  \oplus \\
  \mathscr{C}_{1,2}
\end{array}%
\right.
\xrightarrow{(a_2,a_1)} 
\mathscr{C}_{2,2}
\xrightarrow{\pi_{a_1} \circ \pi_{a_2} = \pi_{a_2} \circ \pi_{a_1}}
\mathscr{C}_{1,1} \rightarrow 0.
\end{equation}

By Lemma \ref{lemma-diagram-chasing}, exact sequence \eqref{eq-coefficient-chain-sequence} induces a homomorphism $\mathbf{\Delta}:H_N^i(L) \rightarrow H_N^{i+2}(L)$. We prove the lemma by showing that
\begin{equation}\label{eq-anti-commute-Delta}
\mathbf{\Delta} = d_{P_1}^{(1)} \circ d_{P_2}^{(1)} = -d_{P_2}^{(1)} \circ d_{P_1}^{(1)}.
\end{equation}
To prove \eqref{eq-anti-commute-Delta}, we demonstrate that each of $d_{P_1}^{(1)} \circ d_{P_2}^{(1)}$ and $-d_{P_2}^{(1)} \circ d_{P_1}^{(1)}$ can be realized by a diagram chasing used to define $\mathbf{\Delta}$.

Again, recall that $H(C_P(D),d_{mf})$ is a chain complex of graded-free $R$-modules. Tensoring $H(C_P(D),d_{mf})$ with every item in diagram \eqref{eq-coefficient-diagram}, we get a diagram of chain complexes
\begin{equation}\label{eq-coefficient-chain-diagram}
\xymatrix{
& 0 \ar[d]& 0 \ar[d]& 0 \ar[d]& \\
0 \ar[r]& \mathscr{C}_{1,1} \ar[r]^{a_1} \ar[d]^{-a_2} \ar@{}[dr]|{-}& \mathscr{C}_{2,1} \ar[r]^{\pi_{a_1}} \ar[d]^{a_2} \ar@{}[dr]|{+} & \mathscr{C}_{1,1} \ar[d]^{a_2} \ar[r] & 0 \\
0 \ar[r]& \mathscr{C}_{1,2} \ar[r]^{a_1} \ar[d]^{\pi_{a_2}} \ar@{}[dr]|{+} &  \mathscr{C}_{2,2} \ar[r]^{\pi_{a_1}} \ar[d]^{\pi_{a_2}} \ar@{}[dr]|{+} & \mathscr{C}_{1,2} \ar[d]^{\pi_{a_2}} \ar[r] & 0 \\
0 \ar[r]& \mathscr{C}_{1,1} \ar[r]^{a_1} \ar[d] & \mathscr{C}_{2,1} \ar[r]^{\pi_{a_1}} \ar[d] & \mathscr{C}_{1,1} \ar[r] \ar[d] & 0 \\
& 0 & 0 & 0 & 
},
\end{equation}
in which 
\begin{itemize}
	\item all rows are exact,
	\item all columns are exact,
	\item the upper left square anti-commutes, which is indicated by a ``$-$",
	\item the other three squares commute, which is indicated by ``$+$"s.
\end{itemize}
This is the diagram that we will chase. Note that, for each homological degree $i$, there is a diagram of the form in \ref{eq-coefficient-chain-diagram}. So our diagram chasing involves three levels of a $3$-dimensional diagram. In stead of drawing the rather complex $2$-dimensional projection of this $3$-dimensional diagram, we look at each homological level individually with the understanding that the each differential map points from one spot on one level to the same spot one level higher.

\begin{equation}\label{eq-coefficient-chain-diagram-i}
\xymatrix{
& 0 \ar[d]& 0 \ar[d]& 0 \ar[d]& \\
0 \ar[r]& \mathscr{C}_{1,1}^i \ar[r]^{a_1} \ar[d]^{-a_2} \ar@{}[dr]|{-}& \mathscr{C}_{2,1}^i \ar[r]^{\pi_{a_1}} \ar[d]^{a_2} \ar@{}[dr]|{+} & \mathscr{C}_{1,1}^i \ar[d]^{a_2} \ar[r] & 0 \\
0 \ar[r]& \mathscr{C}_{1,2}^i \ar[r]^{a_1} \ar[d]^{\pi_{a_2}} \ar@{}[dr]|{+} &  (z\in) \mathscr{C}_{2,2}^i \ar[r]^{\pi_{a_1}} \ar[d]^{\pi_{a_2}} \ar@{}[dr]|{+} & (y_2\in) \mathscr{C}_{1,2}^i \ar[d]^{\pi_{a_2}} \ar[r] & 0 \\
0 \ar[r]& \mathscr{C}_{1,1}^i \ar[r]^{a_1} \ar[d] & (y_1\in) \mathscr{C}_{2,1}^i \ar[r]^{\pi_{a_1}} \ar[d] & (x \in) \mathscr{C}_{1,1}^i \ar[r] \ar[d] & 0 \\
& 0 & 0 & 0 & 
}
\end{equation}

Let us start with homological degree $i$. As in diagram \eqref{eq-coefficient-chain-diagram-i}, let $x$ be any cycle in the $\mathscr{C}_{1,1}^i$ at the lower right corner. By the exactness, there is a $y_1$ in the $\mathscr{C}_{2,1}^i$ in the bottom row such that $\pi_{a_1}(y_1)=x$. Use exactness again, there is a $z$ in the $\mathscr{C}_{2,2}^i$ at the center such that $\pi_{a_2}(z)=y_1$. Let $y_2 = \pi_{a_1}(z)$ in the $\mathscr{C}_{1,2}^i$ in the right column. Since the lower right square commutes, we have that $\pi_{a_2}(y_2)=x$. This finishes the chase at homological degree $i$.

\begin{equation}\label{eq-coefficient-chain-diagram-i+1}
\xymatrix{
& 0 \ar[d]& 0 \ar[d]& 0 \ar[d]& \\
0 \ar[r]& \mathscr{C}_{1,1}^{i+1} \ar[r]^{a_1} \ar[d]^{-a_2} \ar@{}[dr]|{-}& (\alpha_2,\beta_1\in)\mathscr{C}_{2,1}^{i+1} \ar[r]^{\pi_{a_1}} \ar[d]^{a_2} \ar@{}[dr]|{+} & (w_2\in) \mathscr{C}_{1,1}^{i+1} \ar[d]^{a_2} \ar[r] & 0 \\
0 \ar[r]& (\alpha_1,\beta_2\in) \mathscr{C}_{1,2}^{i+1} \ar[r]^{a_1} \ar[d]^{\pi_{a_2}} \ar@{}[dr]|{+} &  (dz\in) \mathscr{C}_{2,2}^{i+1}\ar[r]^{\pi_{a_1}} \ar[d]^{\pi_{a_2}} \ar@{}[dr]|{+} & (dy_2\in) \mathscr{C}_{1,2}^{i+1} \ar[d]^{\pi_{a_2}} \ar[r] & 0 \\
0 \ar[r]& (w_1\in) \mathscr{C}_{1,1}^{i+1} \ar[r]^{a_1} \ar[d] & (dy_1\in) \mathscr{C}_{2,1}^{i+1} \ar[r]^{\pi_{a_1}} \ar[d] & (dx=0 \in) \mathscr{C}_{1,1}^{i+1} \ar[r] \ar[d] & 0 \\
& 0 & 0 & 0 & 
}
\end{equation}

Now we move to homological degree $i+1$. First, we map $x,~y_1,~y_2$ and $z$ by the differential maps. Recall that $x$ is a cycle. So $\pi_{a_1}(y_1)=\pi_{a_2}(y_2)=dx=0$. 

By exactness, there is a $w_1$ in the $\mathscr{C}_{1,1}^{i+1}$ at the lower left corner such that $a_1(w_1) = d y_1$. The chase $x \leadsto y_1 \leadsto w_1$ is the chase used in the definition of the connecting homomorphism of the long exact sequence from the bottom row of diagram \eqref{eq-coefficient-chain-diagram}. So $w_1$ is a cycle. Moreover, by Lemma \ref{lemma-d-1-short}, this connecting homomorphism is $d_{P_1}^{(1)}$. So 
\begin{equation}\label{eq-chase-half-1}
d_{P_1}^{(1)}([x]) = [w_1].
\end{equation}
Similarly, there is a cycle $w_2$ in the $\mathscr{C}_{1,1}^{i+1}$ at the lower left corner such that $a_2(w_2) = d y_2$ and  
\begin{equation}\label{eq-chase-half-2}
d_{P_2}^{(1)}([x]) = [w_2].
\end{equation}

By exactness, there is an $\alpha_1$ in the $\mathscr{C}_{1,2}^{i+1}$ in the left column such that $\pi_{a_2} (\alpha_1) = w_1$. Similarly, there is an $\alpha_2$ in the $\mathscr{C}_{2,1}^{i+1}$ in the top row such that $\pi_{a_1} (\alpha_2) = w_2$.

Note that $\pi_{a_2}(dz-a_1(\alpha_1)) = dy_1-a_1(w_1)=0$. So there is a $\beta_1$ in the $\mathscr{C}_{2,1}^{i+1}$ in the top row such that 
\begin{equation}\label{eq-def-beta-1}
a_2 (\beta_1) = dz-a_1(\alpha_1). 
\end{equation}
Similarly, $\pi_{a_1}(dz-a_2(\alpha_2)) = 0$ and there is a $\beta_2$ in the $\mathscr{C}_{1,2}^{i+1}$ in the left column such that 
\begin{equation}\label{eq-def-beta-2}
a_1(\beta_2) = dz-a_2(\alpha_2).
\end{equation}

\begin{equation}\label{eq-coefficient-chain-diagram-i+2}
\xymatrix{
& 0 \ar[d]& 0 \ar[d]& 0 \ar[d]& \\
0 \ar[r]& (\gamma_1,\gamma_2\in)\mathscr{C}_{1,1}^{i+2} \ar[r]^{a_1} \ar[d]^{-a_2} \ar@{}[dr]|{-}& (d\alpha_2,d\beta_1\in)\mathscr{C}_{2,1}^{i+2} \ar[r]^{\pi_{a_1}} \ar[d]^{a_2} \ar@{}[dr]|{+} & (dw_2=0\in) \mathscr{C}_{1,1}^{i+2} \ar[d]^{a_2} \ar[r] & 0 \\
0 \ar[r]& (d\alpha_1,d\beta_2\in) \mathscr{C}_{1,2}^{i+2} \ar[r]^{a_1} \ar[d]^{\pi_{a_2}} \ar@{}[dr]|{+} &  (ddz=0\in) \mathscr{C}_{2,2}^{i+2}\ar[r]^{\pi_{a_1}} \ar[d]^{\pi_{a_2}} \ar@{}[dr]|{+} &  \mathscr{C}_{1,2}^{i+2} \ar[d]^{\pi_{a_2}} \ar[r] & 0 \\
0 \ar[r]& (dw_1=0\in) \mathscr{C}_{1,1}^{i+2} \ar[r]^{a_1} \ar[d] &  \mathscr{C}_{2,1}^{i+2} \ar[r]^{\pi_{a_1}} \ar[d] &  \mathscr{C}_{1,1}^{i+2} \ar[r] \ar[d] & 0 \\
& 0 & 0 & 0 & 
}
\end{equation}

Finally, we look at homological grading $i+2$. By equation \eqref{eq-def-beta-1}, 
\[
(a_2,a_1)\left(%
\begin{array}{r}
  d\beta_1 \\
  d\alpha_1 
\end{array}%
\right) =ddz=0.
\]
By the exactness of sequence \eqref{eq-coefficient-chain-sequence}, there is a $\gamma_1$ in the $\mathscr{C}_{1,1}^{i+2}$ at the top left corner such that
\[
\left(%
\begin{array}{r}
  a_1(\gamma_1) \\
  -a_2(\gamma_1) 
\end{array}%
\right)=
\left(%
\begin{array}{r}
  d\beta_1 \\
  d\alpha_1 
\end{array}%
\right).
\]
Since $\left(%
\begin{array}{r}
  a_1 \\
  -a_2 
\end{array}%
\right)$ is injective and $\left(%
\begin{array}{r}
  a_1(d\gamma_1) \\
  -a_2(d\gamma_1) 
\end{array}%
\right) = \left(%
\begin{array}{r}
  dd\beta_1 \\
  dd\alpha_1
\end{array}%
\right)=0$, we know that $\gamma_1$ is a cycle. Clearly, the chase $x \leadsto z \leadsto \left(%
\begin{array}{r}
  \beta_1 \\
  \alpha_1
\end{array}%
\right) \leadsto \gamma_1$ defines $\mathbf{\Delta}([x])$. So $[\gamma_1] = \mathbf{\Delta}([x])$. Similarly, there is a cycle $\gamma_2$ in the $\mathscr{C}_{1,1}^{i+2}$ at the top left corner such that
\[
\left(%
\begin{array}{r}
  a_1(\gamma_2) \\
  -a_2(\gamma_2) 
\end{array}%
\right)=
\left(%
\begin{array}{r}
  d\alpha_2 \\
  d\beta_2 
\end{array}%
\right).
\]
The chase $x \leadsto z \leadsto \left(%
\begin{array}{r}
  \alpha_2 \\
  \beta_2
\end{array}%
\right) \leadsto \gamma_2$ defines $\mathbf{\Delta}([x])$. So $[\gamma_2] = \mathbf{\Delta}([x])$. Altogether, we have
\begin{equation}\label{eq-full-chase}
\mathbf{\Delta}([x]) = [\gamma_1] =[\gamma_2].
\end{equation}

Note that $a_2(-\gamma_1) = d \alpha_1$. So the chase $w_1 \leadsto \alpha_1 \leadsto -\gamma_1$ defines the connecting homomorphism of the long exact sequence from the left column in \eqref{eq-coefficient-chain-diagram}, which, by Lemma \ref{lemma-d-1-short}, is $d_{P_2}^{(1)}$. So
\begin{equation}\label{eq-chase-2nd-half-1}
d_{P_2}^{(1)}([w_1]) = -[\gamma_1].
\end{equation}
Similarly, note that $a_1(\gamma_2)=d \alpha_2$. By Lemma \ref{lemma-d-1-short}, the chase $w_2 \leadsto \alpha_2 \leadsto \gamma_2$ gives that
\begin{equation}\label{eq-chase-2nd-half-2}
d_{P_1}^{(1)}([w_2]) = [\gamma_2].
\end{equation} 

Putting equations \eqref{eq-chase-half-1}, \eqref{eq-chase-half-2}, \eqref{eq-full-chase}, \eqref{eq-chase-2nd-half-1} and \eqref{eq-chase-2nd-half-2} together, we get that
\[
\mathbf{\Delta}([x]) = d_{P_1}^{(1)}(d_{P_2}^{(1)}([x])) = - d_{P_2}^{(1)}(d_{P_1}^{(1)}([x])).
\]
This proves the lemma.
\end{proof}

\subsection{Action of $d_P^{(1)}$} In this subsection, we describe the action of $d_P^{(1)}$ on $H_N(L)$ in terms of torsion components of $H_P(L)$ and prove Parts (1) and (4) of Theorem \ref{thm-delta-action}.

Let $P=P(x,a)$ be of form \eqref{def-P} and $L$ a link. Recall that, according to Theorem \ref{thm-H_P-decomp}, $H_P(L)$ decomposes into components of the forms $\C[a]\|i\|\{s\}$ and $\C[a]/(a^m)\|i\|\{s\}$. By Corollary \ref{cor-H_N-decomp}, $\C[a]\|i\|\{s\}$ contributes a component $\C\|i\|\{s\}$ to $H_N(L)$ and $\C[a]/(a^m)\|i\|\{s\}$ contributes a component $\C\|i\|\{s\} \oplus \C\|i-1\|\{s+2km\}$ to $H_N(L)$. The next lemma describes the action of $d_{P}^{(1)}$ on such components and follows from the proof of Lemma \ref{lemma-ec-components}.

\begin{lemma}\label{lemma-d-(1)-tor}
\begin{enumerate}
	\item $d_{P}^{(1)}$ restricts to $0$ on the component $\C\|i\|\{s\}$ of $H_N(L)$ induced by the component $\C[a]\|i\|\{s\}$ of $H_P(L)$.
	\item If $m>1$, then $d_{P}^{(1)}$ restricts to $0$ on the component $\C\|i\|\{s\} \oplus \C\|i-1\|\{s+2km\}$ of $H_N(L)$ induced by the component $\C[a]/(a^m)\|i\|\{s\}$ of $H_P(L)$.
	\item On the component $\C\|i\|\{s\} \oplus \C\|i-1\|\{s+2k\}$ of $H_N(L)$ induced by the component $\C[a]/(a)\|i\|\{s\}$ of $H_P(L)$, the restriction of $d_{P}^{(1)}$ is given by:
	\begin{itemize}
	  \item $d_{P}^{(1)}|_{\C\|i\|\{s\}}=0$,
	  \item $d_{P}^{(1)}|_{\C\|i-1\|\{s+2k\}}$ is an isomorphism $\C\|i-1\|\{s+2k\} \xrightarrow{\cong}\C\|i\|\{s\}$.
  \end{itemize}
\end{enumerate}
\end{lemma}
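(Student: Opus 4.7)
The plan is to reduce the computation to the model cases already handled in the proof of Lemma~\ref{lemma-ec-components}. First, by Lemma~\ref{lemma-graded-chain-decomp} applied to $(H(C_P(D), d_{mf}), d_\chi)$, this chain complex decomposes in the category of chain complexes of graded $\C[a]$-modules as a direct sum of copies of $F_{i,s}$ and $T_{i,m,s}$. Since this decomposition is $\C[a]$-linear, it is compatible with multiplication by $a$ and with reduction modulo $a$, so the short exact sequence
\[
0 \to H(C_P(D),d_{mf}) \xrightarrow{a} H(C_P(D),d_{mf}) \xrightarrow{\pi_a} H(C_N(D),d_{mf}) \to 0
\]
of chain complexes splits as a direct sum of the analogous short exact sequences for each $F_{i,s}$ and each $T_{i,m,s}$ summand. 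Passing to $d_\chi$-homology, the exact couple $(H_P(L), H_N(L), a, \pi_a, \Delta)$ decomposes accordingly, and in particular the differential $d_P^{(1)} = \pi_a \circ \Delta$ preserves this decomposition. It therefore suffices to compute $d_P^{(1)}$ on each individual summand.

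For a summand $F_{i,s}$, the chain complex has trivial differential and $H(F_{i,s}) \cong \C[a]\|i\|\{s\}$ is $a$-torsion free, so the connecting homomorphism $\Delta$ of the corresponding short exact sequence vanishes and Part~(1) follows. For a summand $T_{i,m,s}$, the explicit computation performed in the proof of Lemma~\ref{lemma-ec-components} shows that $\Delta$ annihilates the $\C\|i\|\{s\}$ summand of $H(T_{i,m,s}/aT_{i,m,s})$ (any lift to $T_{i,m,s}$ already lies in the kernel of the differential, which at position $i$ is zero), whereas on the generator $[1]$ of $\C\|i-1\|\{s+2km\}$ one lifts to $1 \in \C[a]\|i-1\|\{s+2km\}$, applies the differential $a^m$, and divides by $a$ to obtain $\Delta([1]) = [a^{m-1}] \in \C[a]/(a^m) = H(T_{i,m,s})$. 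Composing with $\pi_a$ gives $d_P^{(1)}([1]) = [a^{m-1}] \in \C[a]/(a) \cong \C$, which equals $[1]$ when $m=1$ and vanishes when $m \geq 2$. This yields Parts~(2) and~(3).

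I do not anticipate any serious obstacle: the only point requiring verification is that the Lobb decomposition is compatible with the exact couple, and this is immediate from the $\C[a]$-linearity of the decomposition together with the naturality of the connecting homomorphism of a short exact sequence of chain complexes. Beyond that, the argument is a direct transcription of the model computation in Lemma~\ref{lemma-ec-components}.
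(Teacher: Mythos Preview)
Your proposal is correct and follows essentially the same approach as the paper: both reduce to the component-by-component computation carried out in the proof of Lemma~\ref{lemma-ec-components}, with your version making explicit the one point the paper leaves implicit, namely that the $\C[a]$-linear Lobb decomposition of $(H(C_P(D),d_{mf}),d_\chi)$ is compatible with the short exact sequence defining the exact couple and hence with $d_P^{(1)}=\pi_a\circ\Delta$.
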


\begin{proof}
The restrictions of $d_{P}^{(1)}$ on these components are computed in the proof of Lemma \ref{lemma-ec-components}. The current lemma follows from that computation.
\end{proof}

In \cite[4.2.3 Example]{Lee2}, Lee observed that the homomorphism $\Phi$ matches a pair of generators of bi-degree difference $(1,4)$. This is a special case of Part (3) of Lemma \ref{lemma-d-(1)-tor}\footnote{The normalization in \cite{Lee2} is different from ours.}.

\begin{corollary}\label{cor-d-(1)-tor-1}
Let $P=P(x,a)$ be of form \eqref{def-P} and $L$ a link. Then $d_{P}^{(1)}\neq 0$ on $H_N(L)$ if and only if at least one of the components of $H_P(L)$ in decomposition \eqref{eq-H_P-decomp} is of the form $\C[a]/(a)\|i\|\{s\}$.
\end{corollary}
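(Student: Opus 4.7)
The plan is to derive this corollary as an immediate consequence of Lemma \ref{lemma-d-(1)-tor}, once we verify that the differential $d_P^{(1)}$ respects the direct sum decomposition of $H_P(L)$ given by Theorem \ref{thm-H_P-decomp}.

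First, I would recall from Lemma \ref{lemma-graded-chain-decomp} that the chain complex $(H(C_P(D),d_{mf}), d_\chi)$ decomposes, in the category of chain complexes of graded $\C[a]$-modules, as a direct sum of pieces of the forms $F_{i,s}$ and $T_{i,m,s}$. Since reduction mod $a$ is additive and takes direct sums to direct sums, this induces a compatible decomposition of $(H(C_N(D),d_{mf}), d_\chi)$, and, passing to homology, a splitting of the short exact sequence
\[
0 \rightarrow H(C_P(D),d_{mf}) \xrightarrow{a} H(C_P(D),d_{mf}) \xrightarrow{\pi_a} H(C_N(D),d_{mf}) \rightarrow 0
\]
into the short exact sequences attached to the summands. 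Consequently, the whole exact couple $(H_P(L), H_N(L), a, \pi_a, \Delta)$ decomposes accordingly, and the differential $d_P^{(1)} = \pi_a \circ \Delta$ on $H_N(L)$ is the direct sum of its restrictions to the $H_N$-pieces coming from each $F_{i,s}$ and $T_{i,m,s}$.

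Next, I would invoke Lemma \ref{lemma-d-(1)-tor} to read off these restrictions. On the contribution of an $F_{i,s}$-summand (which is a free component $\C[a]\|i\|\{s\}$ of $H_P(L)$ and a single $\C\|i\|\{s\}$-summand of $H_N(L)$), the restriction of $d_P^{(1)}$ vanishes. On the contribution of a $T_{i,m,s}$-summand with $m \geq 2$, the restriction of $d_P^{(1)}$ also vanishes. Only on the contribution of a $T_{i,1,s}$-summand (i.e.\ a torsion component $\C[a]/(a)\|i\|\{s\}$ of $H_P(L)$, giving the pair $\C\|i\|\{s\} \oplus \C\|i-1\|\{s+2k\}$ in $H_N(L)$) is the restriction of $d_P^{(1)}$ non-zero; in fact it realizes an isomorphism from $\C\|i-1\|\{s+2k\}$ onto $\C\|i\|\{s\}$.

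Combining these two observations, $d_P^{(1)}$ is the zero map on $H_N(L)$ if and only if no $T_{i,1,s}$-summand occurs in the decomposition of $(H(C_P(D),d_{mf}), d_\chi)$, equivalently, if and only if no component of $H_P(L)$ in \eqref{eq-H_P-decomp} is of the form $\C[a]/(a)\|i\|\{s\}$. Since each summand of type $T_{i,1,s}$ contributes a non-zero isomorphism between distinct direct summands of $H_N(L)$, these contributions cannot cancel one another, so there is no obstacle beyond the bookkeeping above; the argument is essentially a direct reading of Lemma \ref{lemma-d-(1)-tor} under the splitting supplied by Lemma \ref{lemma-graded-chain-decomp}.
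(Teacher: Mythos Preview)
Your proposal is correct and follows essentially the same approach as the paper: the paper's proof is the single line ``This follows from Lemma \ref{lemma-d-(1)-tor},'' and the decomposition of the exact couple that you spell out is exactly what underlies that lemma (it is established in the proof of Lemma \ref{lemma-ec-components}). Your added remark that the nonzero contributions from different $T_{i,1,s}$-summands cannot cancel because they land in distinct direct summands is a useful clarification, but the argument is otherwise identical.
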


\begin{proof}
This follows from Lemma \ref{lemma-d-(1)-tor}.
\end{proof}

\begin{lemma}\label{lemma-quotient-isomorphism-P-1-P-2}
Suppose 
\begin{eqnarray*}
P_1 & = & P_1(x,a) = x^{N+1} + \sum_{i=1}^{\left\lfloor \frac{N}{k} \right\rfloor} \lambda_{1,i} a^i x^{N+1-ik}, \\
P_2 & = & P_2(x,a) = x^{N+1} + \sum_{i=1}^{\left\lfloor \frac{N}{k} \right\rfloor} \lambda_{2,i} a^i x^{N+1-ik},
\end{eqnarray*}
where $a$ is a homogeneous variable of degree $2k$. Assume that there exists an integer $m$ such that $1\leq m \leq \left\lfloor \frac{N}{k} \right\rfloor$ and $\lambda_{1,i}=\lambda_{2,i}$ for $1\leq i\leq m-1$. Let $D$ be a link diagram with a marking. Define 
\begin{eqnarray*}
\mathscr{C}_{P_1,m}(D) & := & C_{P_1}(D)/a^m C_{P_1}(D),\\
\mathscr{C}_{P_2,m}(D) & := & C_{P_2}(D)/a^m C_{P_2}(D).
\end{eqnarray*}
Then $(\mathscr{C}_{P_1,m}(D), d_\chi)$ and $(\mathscr{C}_{P_2,m}(D), d_\chi)$ are identical as chain complexes of graded matrix factorizations of $0$ over $\C[a]/(a^m)$. Therefore, $H(H(\mathscr{C}_{P_1,m}(D),d_{mf}),d_\chi) \cong H(H(\mathscr{C}_{P_2,m}(D),d_{mf}),d_\chi)$ as $\zed^{\oplus 2}$-graded $\C[a]$-modules.
\end{lemma}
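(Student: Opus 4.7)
The plan is to trace through Krasner's construction of $C_P(D)$ step by step and observe that every polynomial ingredient that enters depends on $P$ only through its class modulo $a^m$.

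First, I would record the basic congruence. Since $\lambda_{1,i}=\lambda_{2,i}$ for $1\le i\le m-1$, we have
\[
P_1-P_2=\sum_{i=m}^{\lfloor N/k\rfloor}(\lambda_{1,i}-\lambda_{2,i})\,a^i x^{N+1-ik}\in a^m\,\C[x,a].
\]
The elementary but crucial fact I would use repeatedly is: if $f\in a^m R$ (where $R=\C[x_1,\dots,x_m,a]$) and $f=hg$ for some $g\in R$ with $h$ coprime to $a$ in the UFD $R$, then $g\in a^m R$ as well. In other words, the divisions that appear in the construction are compatible with reduction modulo $a^m$.

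Next, I would check the MOY building blocks of Section \ref{sec-homology-def}. The quantities $v_{i;p}$, $u^{\langle 1\rangle}_{i,j;p,q}$, $u^{\langle 2\rangle}_{i,j;p,q}$ defined in \eqref{eq-def-v}--\eqref{eq-def-u2} are obtained from $P$ (and the auxiliary symmetric polynomial $G$, which depends on $P$ linearly) by dividing an element of $a^m R$ by $x_i-x_p$, by $x_i+x_j-x_p-x_q$, or by $x_ix_j-x_px_q$, each of which is coprime to $a$ in $R$. By the basic fact above, $v^{(P_1)}_{i;p}\equiv v^{(P_2)}_{i;p}\pmod{a^m}$ and similarly for the two $u$'s. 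Consequently, the entries of the differentials of the Koszul matrix factorizations $C_{P_j}(\Gamma_{i;p})$ and $C_{P_j}(\Gamma_{i,j;p,q})$ in \eqref{eq-def-mf-Gamma-ij}--\eqref{eq-def-mf-Gamma-ijpq} agree mod $a^m$, so $C_{P_1}(\Gamma)/a^m\equiv C_{P_2}(\Gamma)/a^m$ as graded matrix factorizations over $\C[a]/(a^m)$ for every MOY resolution $\Gamma$ of $D$.

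I would then do the same for the crossing chain maps $\chi_0$ and $\chi_1$, using the explicit matrix formulas in the proof of Lemma \ref{lemma-def-chi}. Their only $P$-dependent entry is
\[
z=-u^{\langle 2\rangle}_{i,j;p,q}+\frac{u^{\langle 1\rangle}_{i,j;p,q}+x_i u^{\langle 2\rangle}_{i,j;p,q}-v_{j;q}}{x_i-x_p},
\]
which is a polynomial combination of $v$'s and $u$'s (followed by one more division by $x_i-x_p$, again coprime to $a$). Hence $z^{(P_1)}\equiv z^{(P_2)}\pmod{a^m}$, and so $\chi_0^{(P_1)}\equiv\chi_0^{(P_2)}$, $\chi_1^{(P_1)}\equiv\chi_1^{(P_2)}\pmod{a^m}$ as homomorphisms of the corresponding matrix factorizations. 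Combining this with the previous step shows that, after reduction mod $a^m$, the two chain complexes of matrix factorizations $\mathscr{C}_{P_1,m}(D)$ and $\mathscr{C}_{P_2,m}(D)$ have literally the same underlying modules, the same $d_{mf}$, and the same $d_\chi$; the gradings are clearly preserved. The isomorphism of bigraded $\C[a]$-modules on homology follows by taking $H(H(\,\cdot\,,d_{mf}),d_\chi)$.

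The main (and essentially only) technical point to be careful about is the divisibility step: one must argue that each polynomial division occurring in the definitions of $v$, $u^{\langle 1\rangle}$, $u^{\langle 2\rangle}$, and $z$ respects the $a^m$-congruence, which rests on the fact that the denominators are polynomials in the $x$-variables only and are therefore coprime to $a$ in the UFD $R$. Everything else is a direct unpacking of the definitions in Section \ref{sec-homology-def}.
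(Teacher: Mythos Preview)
Your proposal is correct and follows essentially the same approach as the paper: the paper's proof simply asserts that it is ``obvious from Definition \ref{def-mf-MOY}'' that the MOY matrix factorizations agree mod $a^m$ and that ``from the construction in Lemma \ref{lemma-def-chi}'' the $\chi$-maps match, whereas you have actually unpacked these claims by isolating the key divisibility argument (the denominators $x_i-x_p$, $x_i+x_j-x_p-x_q$, $x_ix_j-x_px_q$ are coprime to $a$, so $a^m$-congruences survive the divisions in $v$, $u^{\langle 1\rangle}$, $u^{\langle 2\rangle}$, and $z$). Your write-up is thus a fleshed-out version of the paper's two-sentence sketch.
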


\begin{proof}
Let $x_1,\dots,x_m$ be the variables associated to marked points on $D$. For any MOY resolution $\Gamma$ of $D$, it is obvious from Definition \ref{def-mf-MOY} that $\mathscr{C}_{P_1,m}(\Gamma) := C_{P_1}(\Gamma)/a^m C_{P_1}(\Gamma)$ and $\mathscr{C}_{P_2,m}(\Gamma) := C_{P_2}(\Gamma)/a^m C_{P_2}(\Gamma)$ are the same matrix factorization of $0$ over the ring $\C[x_1,\dots,x_m,a]/(a^m)$. Let $\Gamma_0$ and $\Gamma_1$ be two MOY resolutions of $D$ that are different at exactly one crossing. That is, $\Gamma_0$ and $\Gamma_1$ resolve all but one crossings of $D$ the same way, and that one remaining crossing is resolved to a pair of parallel arcs in $\Gamma_0$ and a wide edge in $\Gamma_1$. From the construction in Lemma \ref{lemma-def-chi}, one can see that the following diagrams commute.
\[
\xymatrix{
\mathscr{C}_{P_1,m}(\Gamma_0) \ar[rr]^{\chi_0} \ar[d]^{\id} && \mathscr{C}_{P_1,m}(\Gamma_1)  \ar[d]^{\id} \\
\mathscr{C}_{P_2,m}(\Gamma_0) \ar[rr]^{\chi_0} && \mathscr{C}_{P_2,m}(\Gamma_1) 
}
\hspace{2pc}
\xymatrix{
\mathscr{C}_{P_1,m}(\Gamma_0)  \ar[d]^{\id} && \mathscr{C}_{P_1,m}(\Gamma_1) \ar[ll]_{\chi_1} \ar[d]^{\id} \\
\mathscr{C}_{P_2,m}(\Gamma_0)  && \mathscr{C}_{P_2,m}(\Gamma_1) \ar[ll]_{\chi_1}
}
\]
Thus, $(\mathscr{C}_{P_1,m}(D), d_\chi)$ and $(\mathscr{C}_{P_2,m}(D), d_\chi)$ are identical as chain complexes of matrix factorizations of $0$ over $\C[x_1,\dots,x_m,a]/(a^m)$. And the lemma follows.
\end{proof}

Lemma \ref{lemma-torsion-lower-bound} follows easily from Lemma \ref{lemma-quotient-isomorphism-P-1-P-2}.

\begin{proof}[Proof of Lemma \ref{lemma-torsion-lower-bound}]
Fix a diagram $D$ of $L$ and apply Lemma \ref{lemma-quotient-isomorphism-P-1-P-2} to $x^{N+1}$ and $P(x,a) = x^{N+1} + \sum_{i=m}^{\left\lfloor\frac{N}{k}\right\rfloor} \lambda_i a^i x^{N+1-ki}$. Then we have $H(H(\mathscr{C}_{x^{N+1},m}(D),d_{mf}),d_\chi) \cong H(H(\mathscr{C}_{P,m}(D),d_{mf}),d_\chi)$ as $\zed^{\oplus 2}$-graded $\C[a]$-modules. But $H(H(\mathscr{C}_{x^{N+1},m}(D),d_{mf}),d_\chi) \cong H_N(L) \otimes_\C \C[a]/(a^m)$. This means that, all direct sum components of $H(H(\mathscr{C}_{P,m}(D),d_{mf}),d_\chi)$ must be of the form $\C[a]/(a^m)\|i\|\{s\}$. If $H_P(L)$ has a torsion component of the form $\C[a]/(a^l)\|i\|\{s\}$ with $1\leq l<m$. Then the chain complex $(H(C_P(D), d_{mf}),d_\chi)$ has a direct sum component 
\[
T_{i,l,s} = 0\rightarrow \C[a]\|i-1\|\{s+2kl\} \xrightarrow{a^l} \C[a]\|i\|\{s\} \rightarrow 0.
\]
Since $H(C_P(D), d_{mf})$ is a free $\C[a]$-module, we know that 
\[
H(\mathscr{C}_{P,m}(D),d_{mf}) \cong H(C_P(D), d_{mf})/a^m H(C_P(D), d_{mf}).
\]
So $(H(\mathscr{C}_{P,m}(D),d_{mf}),d_\chi)$ contains a direct sum component 
\[
T_{i,l,s}/a^m T_{i,l,s} = 0\rightarrow \C[a]/(a^m)\|i-1\|\{s+2kl\} \xrightarrow{a^l} \C[a]/(a^m)\|i\|\{s\} \rightarrow 0.
\]
Therefore, $H(H(\mathscr{C}_{P,m}(D),d_{mf}),d_\chi)$ has a direct sum component $\C[a]/(a^l)\|i-1\|\{s+2km\} \oplus \C[a]/(a^{l})\|i\|\{s\}$. By Lemma \ref{lemma-graded-module-decomp-unique}, this is a contradiction. Thus, $H_P(L)$ does not contain torsion components isomorphic to $\C[a]/(a^l)\|i\|\{s\}$.
\end{proof}

Next, we apply Lemma \ref{lemma-torsion-lower-bound} and Corollary \ref{cor-d-(1)-tor-1} to prove Part (1) of Theorem \ref{thm-delta-action}.

\begin{corollary}\label{cor-delta-N=0}
\begin{enumerate}
	\item If $P(x,a) = x^{N+1} + \sum_{i=2}^{\left\lfloor\frac{N}{k}\right\rfloor} \lambda_i a^i x^{N+1-ki}$, then $d_P^{(1)}=0$ for any link. 
	\item $\delta_N=0$ for any link, where $\delta_N$ is defined in Subsection \ref{subsec-intro-action}.
\end{enumerate}
\end{corollary}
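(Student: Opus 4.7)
For Part (1), the argument is immediate from the machinery already assembled. The hypothesis says $P$ has no $\lambda_1 a x^{N+1-k}$ term, so Lemma \ref{lemma-torsion-lower-bound} applies with $m=2$ and forces every torsion exponent $m_{i,l}$ in decomposition \eqref{eq-H_P-decomp} of $H_P(L)$ to satisfy $m_{i,l}\geq 2$. Hence no summand of the form $\C[a]/(a)\|i\|\{s\}$ occurs, and Corollary \ref{cor-d-(1)-tor-1} gives $d_P^{(1)}=0$.

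For Part (2), Lemma \ref{lemma-torsion-lower-bound} is unavailable because $P_N(x,b_N)=x^{N+1}+b_N x^N$ has $\lambda_1=1\neq 0$. The key observation is that $\deg b_N = 2 = \deg x$, so $b_N$ can be absorbed by translating the $x$-variables. I would introduce the auxiliary polynomial
\[
Q(x,b_N):=\Big(x+\tfrac{b_N}{N+1}\Big)^{N+1} - \Big(\tfrac{b_N}{N+1}\Big)^{N+1}.
\]
Expanding, $Q(0,b_N)=0$ so $Q$ is of the form \eqref{def-P} with $k=1$, and a direct computation of the linear-in-$b_N$ term gives $Q = x^{N+1} + b_N x^N + O(b_N^2)$; in particular $Q\equiv P_N \pmod{b_N^2}$.

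Next I would compute $H_Q(L)$ via the change of variables $y_i := x_i + b_N/(N+1)$ at each marked point of a chosen diagram $D$ of $L$. Because $\deg x_i = \deg b_N = 2$, this substitution is a grading-preserving automorphism of $R = \C[x_1,\dots,x_m,b_N] = \C[y_1,\dots,y_m,b_N]$. Under it, $Q(x_i,b_N) = y_i^{N+1} - (b_N/(N+1))^{N+1}$, and the constant correction cancels in every difference $Q(x_i)-Q(x_p)$ and $Q(x_i)+Q(x_j)-Q(x_p)-Q(x_q)$. Consequently each $v_{i;p}$ and $u_{i,j;p,q}^{\langle\bullet\rangle}$ from \eqref{eq-def-v}--\eqref{eq-def-u2} reduces in the $y$-variables to the formulas for the standard $\slmf(N)$ potential $y^{N+1}$, and so do the $\chi$-maps of Lemma \ref{lemma-def-chi}. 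Hence $C_Q(D) \cong C_N(D)\otimes_\C \C[b_N]$ as chain complexes of $\zed^{\oplus 2}$-graded matrix factorizations over $\C[b_N]$. Taking homology, $H_Q(L) \cong H_N(L)\otimes_\C \C[b_N]$ is a free $\C[b_N]$-module, so decomposition \eqref{eq-H_P-decomp} for $Q$ contains no torsion summands, and Corollary \ref{cor-d-(1)-tor-1} yields $d_Q^{(1)} = 0$.

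To transfer this vanishing from $Q$ to $P_N$, I would apply Lemma \ref{lemma-quotient-isomorphism-P-1-P-2} with $m=2$: the congruence $Q\equiv P_N\pmod{b_N^2}$ forces $(\mathscr{C}_{Q,2}(D),d_\chi) = (\mathscr{C}_{P_N,2}(D),d_\chi)$ as chain complexes of matrix factorizations of $0$ over $\C[b_N]/(b_N^2)$, so the short exact sequences $0\to H_N(L)\xrightarrow{b_N}\mathscr{H}_{P}(L)\xrightarrow{\pi_{b_N}} H_N(L)\to 0$ of Lemma \ref{lemma-d-1-short} coincide for $P=Q$ and $P=P_N$. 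Their connecting homomorphisms therefore agree, giving $\delta_N = d_{P_N}^{(1)} = d_Q^{(1)} = 0$. The main obstacle is the bookkeeping in the change-of-variables step, in particular verifying that the explicit formulas for $\chi_0$ and $\chi_1$ from the proof of Lemma \ref{lemma-def-chi} transform correctly under $x_i\mapsto y_i$; note that the $x$-filtration is \emph{not} preserved under this substitution, but that is irrelevant since Corollary \ref{cor-d-(1)-tor-1} depends only on the $\zed^{\oplus 2}$-graded $\C[b_N]$-module structure of $H_Q(L)$.
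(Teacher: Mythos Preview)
Your Part~(1) matches the paper's proof exactly.

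For Part~(2), your approach is correct and shares the paper's core idea---exploiting $\deg b_N = \deg x$ to perform a grading-preserving translation $x\mapsto x\pm b_N/(N+1)$---but the organization differs. The paper applies this translation directly to $P_N$, obtaining a polynomial $P(y,b_N)=(y-\tfrac{b_N}{N+1})^{N+1}+b_N(y-\tfrac{b_N}{N+1})^N$ whose $b_N y^N$ coefficient vanishes; the change of variables then gives $H_{P_N}(L)\cong H_P(L)$ as graded $\C[b_N]$-modules, and Lemma~\ref{lemma-torsion-lower-bound} forbids $\C[b_N]/(b_N)$-torsion in $H_P(L)$, whence $\delta_N=0$ by Corollary~\ref{cor-d-(1)-tor-1}. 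Your route instead introduces an auxiliary $Q\equiv P_N\pmod{b_N^2}$ that translates to the \emph{trivial} potential $y^{N+1}$ (modulo a constant), yielding the stronger intermediate conclusion that $H_Q(L)$ is free; you then transfer $d_Q^{(1)}=0$ to $d_{P_N}^{(1)}$ via the $\bmod\,b_N^2$ comparison of Lemma~\ref{lemma-quotient-isomorphism-P-1-P-2} (this transfer is precisely Corollary~\ref{cor-P-truncation}). Your route buys a cleaner target potential at the cost of an extra transfer step; the paper's route is more direct but lands on a more complicated $P$.

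One technicality: your claim that the wide-edge factorizations and $\chi$-maps ``reduce'' literally under the substitution is slightly too strong. Under $y_i=x_i+b_N/(N+1)$ the second Koszul row's right entry becomes $y_iy_j-y_py_q-\tfrac{b_N}{N+1}(y_i+y_j-y_p-y_q)$ rather than $y_iy_j-y_py_q$, so a row operation (the paper cites \cite[Corollary~2.16 and Lemma~2.18]{Wu-color}) is needed to identify the matrix factorizations, and then the \emph{uniqueness} clause of Lemma~\ref{lemma-def-chi}---not the explicit formulas---matches the $\chi$-maps up to homotopy and scaling. This yields an isomorphism of chain complexes over the homotopy category of graded matrix factorizations, which suffices for $H_Q(L)\cong H_N(L)\otimes_\C\C[b_N]$ as $\zed^{\oplus 2}$-graded $\C[b_N]$-modules; you correctly anticipate this at the end.
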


\begin{proof}
Let $L$ be any link and $D$ a diagram of $L$.

We prove Part (1) of the corollary first. By Lemma \ref{lemma-torsion-lower-bound}, $H_P(L)$ contains no torsion components of the form $\C[a]/(a)\|i\|\{s\}$. Then, by Corollary \ref{cor-d-(1)-tor-1}, $d_P^{(1)}=0$ on $H_N(L)$. This proves Part (1).

Now we prove Part (2). Recall that $\delta_N = d_{P_N}^{(1)}$, where $P_N=P_N(x,a)=x^{N+1}+b_N x^N$ and $b_N$ is a homogeneous variable of degree $2$. Define $P=P(y,b_N) = (y-\frac{b_N}{N+1})^{N+1} + b_N (y-\frac{b_N}{N+1})^{N}$. Note that $P$ is of form \eqref{def-P} and the coefficient of $b_Ny^N$ in $P$ is $0$. Thus,  by Lemma \ref{lemma-torsion-lower-bound}, $H_P(L)$ contains no torsion components of the form $\C[a]/(a)\|i\|\{s\}$.

Put a marking on $D$ and let $x_1,\dots,x_m$ be the variables associated to marked points on $D$. We introduce another collection of homogeneous variables $y_1,\dots,y_m$ of degree $2$. Denote by $\xi:\C[x_1,\dots,x_m,b_N] \rightarrow \C[y_1,\dots,y_m,b_N]$ the ring isomorphism given by $\xi(b_N) = b_N$ and $\xi(x_i) = y_i-\frac{b_N}{N+1}$ for $i=1,\dots,m$. In the remainder of this proof, we write $R_x=\C[x_1,\dots,x_m,b_N]$ and $R_y = \C[y_1,\dots,y_m,b_N]$.

Let $\Gamma$ be any MOY resolution of $D$. Assume $\Gamma_{i;p}$ and $\Gamma_{i,j;p,q}$ depicted in Figure \ref{fig-pieces-of-Gamma} are pieces of $\Gamma$. By definition \ref{def-mf-MOY}, it is clear that $\xi$ induces an isomorphism $\xi: C_{P_N}(\Gamma_{i;p}) \rightarrow C_{P}(\Gamma_{i;p})$. For $\Gamma_{i,j;p,q}$, $\xi$ induces an isomorphism
\begin{eqnarray*}
\xi: C_{P_N}(\Gamma_{i,j;p,q}) = \left(%
\begin{array}{cc}
  \ast & x_i+x_j-x_p-x_q \\
  \ast & x_ix_j-x_px_q \\
\end{array}%
\right)_{R_x}\{-1\} & \xrightarrow{\cong} & \left(%
\begin{array}{cc}
  \ast & \xi(x_i+x_j-x_p-x_q) \\
  \ast & \xi(x_ix_j-x_px_q) \\
\end{array}%
\right)_{R_y}\{-1\} \\
& = & \left(%
\begin{array}{cc}
  \ast & y_i+y_j-y_p-y_q \\
  \ast & y_iy_j-y_py_q -\frac{b_N}{N+1}(y_i+y_j-y_p-y_q) \\
\end{array}%
\right)_{R_y}\{-1\} \\
& \cong & \left(%
\begin{array}{cc}
  \ast & y_i+y_j-y_p-y_q \\
  \ast & y_iy_j-y_py_q  \\
\end{array}%
\right)_{R_y}\{-1\} \\
& \cong & C_{P}(\Gamma_{i,j;p,q}).
\end{eqnarray*}
In the above computation, we used \cite[Corollary 2.16 and Lemma 2.18]{Wu-color}. This shows that, for any MOY resolution $\Gamma$ of $D$, $\xi$ induces an isomorphism $\xi: C_{P_N}(\Gamma) \xrightarrow{\cong} C_{P}(\Gamma)$.

Now let $\Gamma_0$ and $\Gamma_1$ be two MOY resolutions of $D$ that are different at exactly one crossing. That is, $\Gamma_0$ and $\Gamma_1$ resolve all but one crossings of $D$ the same way, and that one remaining crossing is resolved to a pair of parallel arcs in $\Gamma_0$ and a wide edge in $\Gamma_1$. Assume $\Gamma_{i;p}\sqcup \Gamma_{j;q}$ and $\Gamma_{i,j;p,q}$ in Figure \ref{fig-chi} are the pieces of $\Gamma_0$ and $\Gamma_1$ from resolving this crossing. Then the homomorphisms $\xymatrix{C_{P_N}(\Gamma_{i;p}\sqcup \Gamma_{j;q}) \ar@<.5ex>[r]^{\chi_0} & C_{P_N}(\Gamma_{i,j;p,q}) \ar@<.5ex>[l]^{\chi_1}}$ induce homomorphisms $\xymatrix{C_{P}(\Gamma_{i;p}\sqcup \Gamma_{j;q}) \ar@<.5ex>[rr]^{\xi\circ\chi_0\circ\xi^{-1}} && C_{P}(\Gamma_{i,j;p,q}) \ar@<.5ex>[ll]^{\xi\circ\chi_1\circ\xi^{-1}}}$. Note that
\begin{itemize}
	\item $\xi\circ\chi_0\circ\xi^{-1}$ and $\xi\circ\chi_1\circ\xi^{-1}$ are both homogeneous homomorphisms of degree $1$.
	\item Since $\chi_0$, $\chi_1$ are homotopically non-trivial and $\xi$ is an isomorphism, $\xi\circ\chi_0\circ\xi^{-1}$ and $\xi\circ\chi_1\circ\xi^{-1}$ are also homotopically non-trivial.
\end{itemize}
From the uniqueness part of Lemma \ref{lemma-def-chi}, one can see that, up to homotopy and scaling by non-zero scalars, $\xi\circ\chi_0\circ\xi^{-1}$ and $\xi\circ\chi_1\circ\xi^{-1}$ are the homomorphisms $\xymatrix{C_{P}(\Gamma_{i;p}\sqcup \Gamma_{j;q}) \ar@<.5ex>[r]^{\chi_0} & C_{P}(\Gamma_{i,j;p,q}) \ar@<.5ex>[l]^{\chi_1}}$ defined in Lemma \ref{lemma-def-chi}. Thus, the following diagrams commute up to homotopy and scaling by non-zero scalars.
\[
\xymatrix{
C_{P_N}(\Gamma_0) \ar[rr]^{\chi_0} \ar[d]^{\xi} && C_{P_N}(\Gamma_1)  \ar[d]^{\xi} \\
C_{P}(\Gamma_0) \ar[rr]^{\chi_0} && C_{P}(\Gamma_1) 
}
\hspace{2pc}
\xymatrix{
C_{P_N}(\Gamma_0)  \ar[d]^{\xi} && C_{P_N}(\Gamma_1) \ar[ll]_{\chi_1} \ar[d]^{\xi} \\
C_{P}(\Gamma_0)  && C_{P}(\Gamma_1) \ar[ll]_{\chi_1}
}
\]

The above shows that $C_{P_N}(D)$ and $C_{P}(D)$ are isomorphic as objects in the category of chain complexes over the homotopy category of graded matrix factorizations of $0$ over $\C[b_N]$. Thus, $H_{P_N}(L) \cong H_P(L)$ as $\zed^{\oplus 2}$-graded $\C[b_N]$-modules. Therefore, $H_{P_N}(L)$ contains no torsion components of the form $\C[a]/(a)\|i\|\{s\}$. By Corollary \ref{cor-d-(1)-tor-1}, $\delta_N=d_{P_N}^{(1)}=0$ on $H_N(L)$. This proves Part (2) of the corollary.
\end{proof}

We have proved Parts (1-3) of Theorem \ref{thm-delta-action} so far. It remains to prove Part (4). We start with the following corollary of Lemma \ref{lemma-quotient-isomorphism-P-1-P-2}.

\begin{corollary}\label{cor-P-truncation}
Suppose $b_i$ is a homogeneous variable of degree $2N+2-2i$ and 
\begin{eqnarray*}
P_1 & = & P_1(x,b_i) = x^{N+1} + \sum_{j=1}^{\left\lfloor \frac{N}{N+1-i} \right\rfloor} \lambda_{j} b_i^j x^{N+1-j(N+1-i)}, \\
P_2 & = & P_2(x,b_i) = x^{N+1} + \lambda_1 b_i x^i,
\end{eqnarray*}
where $\lambda_1,\dots,\lambda_{\left\lfloor \frac{N}{N+1-i} \right\rfloor} \in \C$. Then, for any link $L$,  $d_{P_1}^{(1)} = d_{P_2}^{(1)}$ on $H_N(L)$.
\end{corollary}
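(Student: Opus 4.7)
The plan is to reduce Corollary \ref{cor-P-truncation} to Lemma \ref{lemma-quotient-isomorphism-P-1-P-2} applied with $m=2$, combined with the description of $d_P^{(1)}$ as a connecting homomorphism from Lemma \ref{lemma-d-1-short}. The point is that, although the polynomials $P_1$ and $P_2$ differ in their higher $b_i$-coefficients, those higher coefficients are invisible at the level of matrix factorizations modulo $b_i^2$, and $d_P^{(1)}$ sees only this truncation.

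First, I would observe that in the formulation of Lemma \ref{lemma-quotient-isomorphism-P-1-P-2} the variable $a$ plays the role of $b_i$, so the relevant degree is $2k = 2(N+1-i)$. The coefficient of the linear term $b_i x^i$ in both $P_1$ and $P_2$ is $\lambda_1$; that is, $\lambda_{1,1} = \lambda_{2,1} = \lambda_1$. Thus Lemma \ref{lemma-quotient-isomorphism-P-1-P-2} applies with $m=2$ and gives an identification
\[
(\mathscr{C}_{P_1,2}(D), d_\chi) \;=\; (\mathscr{C}_{P_2,2}(D), d_\chi)
\]
as chain complexes of graded matrix factorizations of $0$ over $\C[b_i]/(b_i^2)$, for any diagram $D$ of $L$ with a marking.

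Next, I would invoke Lemma \ref{lemma-d-1-short} (with $a$ replaced by $b_i$): the differential $d_P^{(1)}$ is the connecting homomorphism of the long exact sequence induced by the short exact sequence of chain complexes
\[
0 \to H(C_N(L),d_{mf}) \xrightarrow{b_i} H(\mathscr{C}_{P,2}(D),d_{mf}) \xrightarrow{\pi_{b_i}} H(C_N(L),d_{mf}) \to 0,
\]
which itself arises from tensoring the short exact sequence $0 \to \C[b_i]/(b_i) \xrightarrow{b_i} \C[b_i]/(b_i^2) \xrightarrow{\pi_{b_i}} \C[b_i]/(b_i) \to 0$ with the free $\C[b_i]$-module $H(C_P(L),d_{mf})$. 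Since the identification from the previous step is an identification of chain complexes of $\C[b_i]/(b_i^2)$-modules, it is automatically compatible with the multiplication-by-$b_i$ and quotient maps on the right hand side. Consequently the two short exact sequences (for $P_1$ and for $P_2$) are literally the same, so their connecting homomorphisms coincide. Taking homology with respect to $d_\chi$, we obtain $d_{P_1}^{(1)} = d_{P_2}^{(1)}$ on $H_N(L)$, as desired.

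There is no serious obstacle; the only thing requiring a small check is that Lemma \ref{lemma-quotient-isomorphism-P-1-P-2} is invoked verbatim in the setting where the ground variable has degree $2(N+1-i)$ rather than the generic $2k$ used in its statement, and that the identification of truncated complexes is functorial in the maps $b_i$ and $\pi_{b_i}$ induced by the coefficient short exact sequence. Both points are immediate from the construction, so the argument is essentially a one-step reduction.
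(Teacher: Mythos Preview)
Your proposal is correct and follows essentially the same approach as the paper: apply Lemma \ref{lemma-quotient-isomorphism-P-1-P-2} with $m=2$ to identify the truncated chain complexes $\mathscr{C}_{P_1,2}(D)$ and $\mathscr{C}_{P_2,2}(D)$, then invoke Lemma \ref{lemma-d-1-short} to conclude that the connecting homomorphisms (and hence $d_{P_1}^{(1)}$ and $d_{P_2}^{(1)}$) agree. The paper makes the compatibility with $b_i$ and $\pi_{b_i}$ explicit via a commutative diagram with identity vertical maps, but this is exactly the content of your observation that the identification respects the $\C[b_i]/(b_i^2)$-module structure.
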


\begin{proof}
Fix a diagram $D$ of $L$ and a marking on $D$. Define $\mathscr{C}_{P_1,m}(D)$ and $\mathscr{C}_{P_2,m}(D)$ as in Lemma \ref{lemma-quotient-isomorphism-P-1-P-2}. According to that lemma, for $m=1,2$, $(\mathscr{C}_{P_1,m}(D), d_\chi)$ and $(\mathscr{C}_{P_2,m}(D), d_\chi)$ are identical as chain complexes of matrix factorizations of $0$ over $\C[b_i]/(b_i^m)$. In particular, note that identity, as an isomorphism between the above two chain complexes, is $\C[b_i]$-linear. Recall that $H(C_{P_i}(D),d_{mf})$ is a free $\C[b_i]$-module, $\mathscr{C}_{P_i,1}(D) \cong C_N(D)$ and $H(\mathscr{C}_{P_i,2}(D),d_{mf}) \cong H(C_{P_i}(D),d_{mf}) / a^2H(C_{P_i}(D),d_{mf})$. So we have the following commutative diagram with exact rows.
\[
\xymatrix{
0 \ar[r]& H(C_N(D),d_{mf}) \ar[r]^{b_i} \ar[d]^{\id} & H(\mathscr{C}_{P_1,2}(D),d_{mf}) \ar[r]^{\pi_{b_i}} \ar[d]^{\id} & H(C_N(D),d_{mf}) \ar[r] \ar[d]^{\id} & 0 \\
0 \ar[r]& H(C_N(D),d_{mf}) \ar[r]^{b_i}& H(\mathscr{C}_{P_2,2}(D),d_{mf}) \ar[r]^{\pi_{b_i}} & H(C_N(D),d_{mf}) \ar[r] & 0 
}
\]
By Lemma \ref{lemma-d-1-short}, this induces the following commutative diagram with exact rows.
\[
\xymatrix{
\cdots \ar[r]^{\pi_{b_i}}& H_N^j(L) \ar[r]^{d_{P_1}^{(1)}} \ar[d]^{\id} & H_N^{j+1}(L) \ar[r]^{b_i} \ar[d]^{\id} & \cdots \\
\cdots \ar[r]^{\pi_{b_i}}& H_N^j(L) \ar[r]^{d_{P_2}^{(1)}} & H_N^{j+1}(L) \ar[r]^{b_i} & \cdots 
}
\]
Thus, $d_{P_1}^{(1)} = d_{P_2}^{(1)}$ on $H_N(L)$.
\end{proof}

The following lemma concludes the proof of Part (4) of Theorem \ref{thm-delta-action}.

\begin{lemma}\label{lemma-delta-scaling}
Suppose $b_i$ is a homogeneous variable of degree $2N+2-2i$ and $\lambda$ a non-zero scalar. Define $P_i=x^{N+1} + b_i x^i$ and $\check{P}_i=x^{N+1} + \lambda b_i x^i$. Then, for any link $L$, $d_{\check{P}_i}^{(1)} = \lambda d_{P_i}^{(1)} = \lambda \delta_i$ on $H_N(L)$.
\end{lemma}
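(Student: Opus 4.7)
The plan is to realize $\check P_i$ as the pullback of $P_i$ under a rescaling of the variable $b_i$, and then track how this rescaling affects the construction of $d_{P_i}^{(1)}$ from Lemma \ref{lemma-d-1-short}.

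First I would consider the $\C$-algebra automorphism $\phi:\C[b_i] \to \C[b_i]$ given by $\phi(b_i)=\lambda b_i$. Since $\phi(P_i(x,b_i))=x^{N+1}+\lambda b_i x^i=\check P_i(x,b_i)$, applying $\phi$ to the entries of the Koszul matrix factorizations in Definition \ref{def-mf-MOY} produces, for each MOY resolution $\Gamma$ of a fixed marked diagram $D$ of $L$, an isomorphism $\Phi:C_{P_i}(\Gamma)\to C_{\check P_i}(\Gamma)$ of matrix factorizations over $\C[x_1,\dots,x_m,b_i]$, compatible with $d_{mf}$. By the same argument as in the proof of Corollary \ref{cor-delta-N=0} (using the uniqueness clause of Lemma \ref{lemma-def-chi}), the maps $\chi_0,\chi_1$ commute up to homotopy and scaling with these $\Phi$'s, so $\Phi$ extends to an isomorphism of chain complexes of matrix factorizations $\Phi:C_{P_i}(D)\to C_{\check P_i}(D)$ that is $\C$-linear and satisfies $\Phi(b_i\cdot y)=\lambda b_i\cdot \Phi(y)$ for all $y$.

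Next I would observe two consequences of this twisted equivariance. Reducing modulo $b_i$, the map $\Phi$ descends to an isomorphism $\bar\Phi_1:C_N(D)\to C_N(D)$ which is literally the identity, because all terms proportional to $b_i$ are killed. Reducing modulo $b_i^2$, the map $\Phi$ descends to an isomorphism $\bar\Phi_2:C_{P_i}(D)/b_i^2 C_{P_i}(D)\to C_{\check P_i}(D)/b_i^2 C_{\check P_i}(D)$. Passing to $d_{mf}$-homology and using Corollary \ref{cor-graph-homology-free} to identify $H(\mathscr{C}_{P,2}(D),d_{mf})$ with $H(C_P(D),d_{mf})/b_i^2 H(C_P(D),d_{mf})$, we obtain a commutative diagram of short exact sequences of $d_\chi$-chain complexes
\[
\xymatrix@C=1.5pc{
0 \ar[r] & H(C_N(D),d_{mf}) \ar[r]^{b_i} \ar[d]_{\id} & H(\mathscr{C}_{P_i,2}(D),d_{mf}) \ar[r]^{\pi_{b_i}} \ar[d]_{\bar\Phi_2} & H(C_N(D),d_{mf}) \ar[r] \ar[d]_{\id} & 0 \\
0 \ar[r] & H(C_N(D),d_{mf}) \ar[r]^{\lambda b_i} & H(\mathscr{C}_{\check P_i,2}(D),d_{mf}) \ar[r]^{\pi_{b_i}} & H(C_N(D),d_{mf}) \ar[r] & 0
}
\]
Commutativity of the right square is immediate from $\pi_{b_i}\circ\bar\Phi_2=\bar\Phi_1\circ\pi_{b_i}=\pi_{b_i}$. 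Commutativity of the left square follows from $\Phi(b_i y)=\lambda b_i\Phi(y)$ and the fact that $\bar\Phi_2$ restricts to the identity on the submodule of $b_i$-multiples.

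Finally, I would carry out the standard diagram chase defining the connecting homomorphism, as in Lemma \ref{lemma-d-1-short}, simultaneously along both rows. Given $[x]\in H_N^j(L)$ represented by a $d_\chi$-cycle $x$, choose a lift $y\in H(\mathscr{C}_{P_i,2}(D),d_{mf})$ of $x$; then $d_\chi y=b_i\cdot w$ for a unique representative $w$ of $d_{P_i}^{(1)}[x]$ in $H(C_N(D),d_{mf})$. Applying $\bar\Phi_2$, the element $\bar\Phi_2(y)$ lifts $x$ on the lower row, and
\[
d_\chi\bar\Phi_2(y)=\bar\Phi_2(d_\chi y)=\bar\Phi_2(b_i w)=\lambda b_i\cdot w,
\]
so by the definition of the connecting homomorphism in the lower row, $d_{\check P_i}^{(1)}[x]=\lambda[w]=\lambda d_{P_i}^{(1)}[x]$. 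This holds for every $[x]$, yielding $d_{\check P_i}^{(1)}=\lambda d_{P_i}^{(1)}=\lambda\delta_i$ as required.

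The only delicate point is verifying the twisted equivariance $\Phi(b_i y)=\lambda b_i\Phi(y)$ at the level of the full chain complex $C_{P_i}(D)$, which amounts to checking that $\Phi$ genuinely intertwines the $\chi_0,\chi_1$ morphisms up to scaling; this is precisely the obstacle already handled in the proof of Corollary \ref{cor-delta-N=0}, so no new input is needed.
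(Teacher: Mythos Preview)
Your approach is essentially the paper's: both use the ring automorphism $b_i\mapsto\lambda b_i$ to pass from $P_i$ to $\check P_i$ and then read off the effect on the connecting homomorphism of Lemma~\ref{lemma-d-1-short}. Two small imprecisions are worth fixing.

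First, invoking the uniqueness clause of Lemma~\ref{lemma-def-chi} (as in Corollary~\ref{cor-delta-N=0}) is unnecessary and potentially harmful, since it only yields compatibility of $\Phi$ with $\chi_0,\chi_1$ \emph{up to homotopy and scaling}, and stray scalars at different crossings would spoil the chain-map property. Here the situation is simpler than in Corollary~\ref{cor-delta-N=0}: your automorphism fixes all $x$-variables, so applying it to the explicit matrices for $\chi_0,\chi_1$ given in the proof of Lemma~\ref{lemma-def-chi} yields \emph{exactly} the corresponding matrices for $\check P_i$. Thus $\Phi$ is a genuine chain isomorphism on the nose, and the twisted equivariance $\Phi(b_i y)=\lambda b_i\Phi(y)$ holds strictly.

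Second, in your two-row diagram the lower row carries the map $\lambda b_i$, but by Lemma~\ref{lemma-d-1-short} the connecting homomorphism equal to $d_{\check P_i}^{(1)}$ is that of the short exact sequence with left map $b_i$, not $\lambda b_i$. The paper handles this by inserting a third row
\[
0 \to H(C_N(D),d_{mf}) \xrightarrow{b_i} H(\mathscr{C}_{\check P_i,2}(D),d_{mf}) \xrightarrow{\pi_{b_i}} H(C_N(D),d_{mf}) \to 0
\]
connected to your second row by $(\lambda\cdot\id,\id,\id)$; naturality then gives $d_{\check P_i}^{(1)}=\lambda\Delta=\lambda\delta_i$. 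Your direct chase is in fact already computing this: writing $d_\chi\bar\Phi_2(y)=\lambda b_i\cdot w=b_i\cdot(\lambda w)$ and applying the definition of the connecting map for the row with $b_i$ gives $d_{\check P_i}^{(1)}[x]=\lambda[w]$. Just make explicit that it is this third row, not your displayed second row, whose connecting homomorphism is $d_{\check P_i}^{(1)}$.
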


\begin{proof}
Fix a diagram $D$ of $L$ and a marking of $D$. Consider the ring automorphism $\zeta:\C[b_i] \rightarrow \C[b_i]$ given by $\zeta(b_i) = \lambda b_i$. Note that $\zeta$ induces on $\C[b_i]/(b_i)$ the identity automorphism $\C[b_i]/(b_i) \xrightarrow {\id} \C[b_i]/(b_i)$. For any MOY resolution $\Gamma$ of $D$, $\zeta$ induces an isomorphism $\zeta: C_{P_i}(\Gamma) \rightarrow C_{\check{P}_i}(\Gamma)$, which, in turn, induces a chain complex isomorphism $\zeta: C_{P_i}(D) \rightarrow C_{\check{P}_i}(D)$. Note that this chain complex isomorphism induces the identity chain map $C_{P_i}(D)/b_iC_{P_i}(D) \cong C_N(D) \xrightarrow {\id} C_N(D) \cong C_{\check{P}_i}(D)/b_iC_{\check{P}_i}(D)$. Using $\zeta$, we get the following commutative diagram with exact rows. 
\begin{equation}\label{diagram-delta-scaling-1}
\xymatrix{
0 \ar[r] & H(C_N(D),d_{mf}) \ar[r]^>>>>>{b_i} \ar[d]^{\id} & H(C_{P_i}(D)/b_i^2C_{P_i}(D),d_{mf}) \ar[r]^>>>>>{\pi_{b_i}} \ar[d]^{\zeta}_{\cong} & H(C_N(D),d_{mf}) \ar[r] \ar[d]^{\id} & 0 \\
0 \ar[r]& H(C_N(D),d_{mf}) \ar[r]^>>>>>{\lambda b_i} \ar[d]^{\lambda \id} & H(C_{\check{P}_i}(D)/b_i^2C_{\check{P}_i}(D),d_{mf}) \ar[r]^>>>>>{\pi_{b_i}} \ar[d]^{\id} & H(C_N(D),d_{mf}) \ar[r] \ar[d]^{\id} & 0 \\
0 \ar[r]& H(C_N(D),d_{mf}) \ar[r]^>>>>>{b_i} & H(C_{\check{P}_i}(D)/b_i^2C_{\check{P}_i}(D),d_{mf}) \ar[r]^>>>>>{\pi_{b_i}} & H(C_N(D),d_{mf}) \ar[r] & 0
}
\end{equation}
By Lemma \ref{lemma-d-1-short}, diagram \eqref{diagram-delta-scaling-1} induces a commutative diagram with exact rows
\[
\xymatrix{
\cdots \ar[r]^{\pi_{b_i}}& H_N^j(L) \ar[r]^{\delta_i} \ar[d]^{\id} & H_N^{j+1}(L) \ar[r]^{b_i} \ar[d]^{\id} & \cdots \\
\cdots \ar[r]^{\pi_{b_i}}& H_N^j(L) \ar[r]^{\Delta} \ar[d]^{\id} & H_N^{j+1}(L) \ar[r]^{\lambda b_i} \ar[d]^{\lambda\id} & \cdots \\
\cdots \ar[r]^{\pi_{b_i}}& H_N^j(L) \ar[r]^{d_{\check{P}_i}^{(1)}} & H_N^{j+1}(L) \ar[r]^{b_i} & \cdots 
},
\]
where $\Delta$ is the connecting homomorphism induced by the second row of diagram \eqref{diagram-delta-scaling-1}. Thus, we have $d_{\check{P}_i}^{(1)} = \lambda \Delta = \lambda \delta_i$.
\end{proof}

\subsection{A recapitulation of the proof of Theorem \ref{thm-delta-action}} The proof of Theorem \ref{thm-delta-action} is spread out in the first three subsections of this section. Here we give a quick recap of this proof.
\begin{itemize}
	\item Part (1) is proved in Corollary \ref{cor-delta-N=0}.
	\item Applying Lemma \ref{lemma-delta-anti-commute-general} to $P=x^{N+1}+b_i x^i + b_j x^j$, one gets Part (2).
	\item Part (3) is a special case of Lemma \ref{lemma-delta-natural}.
	\item Corollaries \ref{cor-delta-N=0}, \ref{cor-P-truncation} and Lemma \ref{lemma-delta-scaling} imply that, for a polynomial $P(x,a)=x^{N+1} + \sum_{i=1}^{\left\lfloor \frac{N}{k} \right\rfloor} \lambda_i a^i x^{N+1-ik}$ with $\deg a = 2k$ and $\lambda_i \in \C$, 
\[
d_P^{(1)} = \begin{cases}
0 & \text{if } \lambda_1 =0 \text{ or } k=1, \\
\lambda_1 \delta_{N+1-k} & \text{otherwise.}
\end{cases}
\] 
This proves Part (4).
\end{itemize}

\subsection{An example}\label{subsec-example-link} Next we compute $H_{P_i}(L)$ for the closed $2$-braid $L$ in Figure \ref{fig-link-example}, which allows us to conclude that, on $H_N(L)$, the differentials $\delta_1,\dots,\delta_{N-1}$ are non-zero, but $\delta_i\delta_j =0$ for any $1\leq i,j\leq N-1$.

\begin{figure}[ht]
\[
\xymatrix{
 \setlength{\unitlength}{1pt}
\begin{picture}(120,80)(-20,-10)

\qbezier(0,0)(5,0)(10,10)
\qbezier(10,10)(15,20)(20,20)

\qbezier(9,12)(0,30)(0,35)
\qbezier(11,8)(15,0)(20,0)

\qbezier(20,0)(25,0)(30,10)
\qbezier(30,10)(35,20)(40,20)

\qbezier(20,20)(25,20)(29,12)
\qbezier(31,8)(35,0)(40,0)

\qbezier(40,0)(45,0)(50,10)
\qbezier(50,10)(55,20)(60,20)

\qbezier(40,20)(45,20)(49,12)
\qbezier(51,8)(55,0)(60,0)

\qbezier(60,0)(65,0)(70,10)
\qbezier(70,10)(80,30)(80,35)

\qbezier(60,20)(65,20)(69,12)
\qbezier(71,8)(75,0)(80,0)

\qbezier(0,0)(-20,0)(-20,30)
\qbezier(-20,30)(-20,60)(40,60)
\qbezier(40,60)(100,60)(100,30)
\qbezier(100,30)(100,0)(80,0)

\qbezier(0,35)(0,50)(40,50)
\qbezier(80,35)(80,50)(40,50)

\put(40,60){\vector(-1,0){0}}

\put(40,50){\vector(-1,0){0}}

\put(40,59){\line(0,1){2}}

\put(40,49){\line(0,1){2}}

\put(40,63){$x_2$}

\put(40,40){$x_1$}

\put(35,-10){$L$}

\end{picture} & \setlength{\unitlength}{1pt}
\begin{picture}(120,80)(-20,-10)

\qbezier(0,0)(-20,0)(-20,30)
\qbezier(-20,30)(-20,60)(40,60)
\qbezier(0,35)(0,20)(20,20)

\qbezier(40,60)(100,60)(100,30)
\qbezier(100,30)(100,0)(80,0)
\qbezier(80,35)(80,20)(60,20)

\put(0,0){\line(1,0){80}}
\put(20,20){\line(1,0){40}}

\qbezier(0,35)(0,50)(40,50)
\qbezier(80,35)(80,50)(40,50)

\put(40,60){\vector(-1,0){0}}

\put(40,50){\vector(-1,0){0}}

\put(40,59){\line(0,1){2}}

\put(40,49){\line(0,1){2}}

\put(40,63){$x_2$}

\put(40,40){$x_1$}

\put(35,-10){$\Gamma_0$}

\end{picture} & \setlength{\unitlength}{1pt}
\begin{picture}(120,80)(-20,-10)

\qbezier(0,0)(-20,0)(-20,30)
\qbezier(-20,30)(-20,60)(40,60)
\qbezier(0,0)(5,0)(15,10)
\qbezier(0,35)(0,25)(15,10)

\qbezier(40,60)(100,60)(100,30)
\qbezier(100,30)(100,0)(80,0)
\qbezier(80,0)(75,0)(65,10)
\qbezier(80,35)(80,25)(65,10)

\qbezier(0,35)(0,50)(40,50)
\qbezier(80,35)(80,50)(40,50)

\put(40,60){\vector(-1,0){0}}

\put(40,50){\vector(-1,0){0}}

\put(40,59){\line(0,1){2}}

\put(40,49){\line(0,1){2}}

\put(40,63){$x_2$}

\put(40,40){$x_1$}

\put(35,-10){$\Gamma_1$}

\linethickness{3pt}

\put(15,10){\line(1,0){50}}

\end{picture}
}
\]

\caption{$L$ and two of its MOY resolutions}\label{fig-link-example-marked} 

\end{figure}
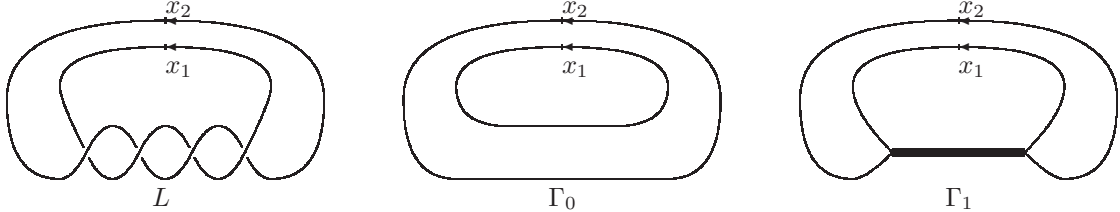

In our computation, we use the diagram of $L$ with two marked points in Figure \ref{fig-link-example-marked}. We also denote by $\Gamma_0$, $\Gamma_1$ the two MOY resolutions of $L$ in Figure \ref{fig-link-example-marked}. Before going any further, let us recall the Gaussian Elimination Lemma.

\begin{lemma}\cite[Lemma 4.2]{Bar-fast}\label{gaussian-elimination}
Let $\mathcal{C}$ be an additive category and
\[
\mathtt{I}=\cdots\rightarrow C\xrightarrow{\left(%
\begin{array}{c}
  \alpha\\
  \beta \\
\end{array}%
\right)}
\left.%
\begin{array}{c}
  A\\
  \oplus \\
  D
\end{array}%
\right.
\xrightarrow{
\left(%
\begin{array}{cc}
  \phi & \delta\\
  \gamma & \varepsilon \\
\end{array}%
\right)}
\left.%
\begin{array}{c}
  B\\
  \oplus \\
  E
\end{array}%
\right.
\xrightarrow{
\left(%
\begin{array}{cc}
  \mu & \nu\\
\end{array}%
\right)} F \rightarrow \cdots
\]
a chain complex over $\mathcal{C}$. Assume that $A\xrightarrow{\phi} B$ is an isomorphism in $\mathcal{C}$ with inverse $\phi^{-1}$. Then $\mathtt{I}$ is homotopic to 
\[
\mathtt{II}=
\cdots\rightarrow C \xrightarrow{\beta} D
\xrightarrow{\varepsilon-\gamma\phi^{-1}\delta} E\xrightarrow{\nu} F \rightarrow \cdots.
\]
\end{lemma}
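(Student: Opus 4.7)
The plan is to exhibit a chain isomorphism from $\mathtt{I}$ to a complex $\mathtt{I}'$ in which the middle differential is block-diagonal, and then to observe that $\mathtt{I}'$ splits as a direct sum of $\mathtt{II}$ and a contractible two-term complex.

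First I would introduce the automorphism $\Phi_1 = \begin{pmatrix} \id_A & -\phi^{-1}\delta \\ 0 & \id_D \end{pmatrix}$ of $A\oplus D$ and the automorphism $\Phi_2 = \begin{pmatrix} \id_B & 0 \\ -\gamma\phi^{-1} & \id_E \end{pmatrix}$ of $B\oplus E$; both are invertible since they are unipotent triangular, with inverses obtained by negating the off-diagonal entry. A direct matrix calculation gives
\[
\Phi_2 \begin{pmatrix} \phi & \delta \\ \gamma & \varepsilon \end{pmatrix} \Phi_1 = \begin{pmatrix} \phi & 0 \\ 0 & \varepsilon - \gamma\phi^{-1}\delta \end{pmatrix}.
\]
Combining $\Phi_1$, $\Phi_2$ with the identity maps at every other term yields an isomorphism of chain complexes $\mathtt{I} \cong \mathtt{I}'$, where $\mathtt{I}'$ has incoming differential $\Phi_1^{-1}\begin{pmatrix}\alpha \\ \beta\end{pmatrix} = \begin{pmatrix}\alpha + \phi^{-1}\delta\beta \\ \beta\end{pmatrix}$, the block-diagonal middle differential displayed above, and outgoing differential $(\mu,\nu)\Phi_2^{-1} = (\mu + \nu\gamma\phi^{-1},\; \nu)$.

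Next I would exploit the chain-complex relations coming from $d^{2}=0$ in $\mathtt{I}$ at the two junctions adjacent to $A\oplus D$ and $B\oplus E$. Reading off the $A$-row and the $B$-column one gets $\phi\alpha + \delta\beta = 0$ and $\mu\phi + \nu\gamma = 0$, hence $\alpha + \phi^{-1}\delta\beta = 0$ and $\mu + \nu\gamma\phi^{-1} = 0$. Substituting into the incoming and outgoing differentials of $\mathtt{I}'$, the $A$-component of the incoming map and the $B$-component of the outgoing map both vanish. Thus $\mathtt{I}'$ decomposes as a direct sum of chain complexes: the two-term piece $0 \to A \xrightarrow{\phi} B \to 0$ (with zero maps in and out) and the complex $\mathtt{II}$. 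Since $\phi$ is an isomorphism in the additive category $\mathcal{C}$, the two-term piece is contractible, so $\mathtt{I} \cong \mathtt{I}' \simeq \mathtt{II}$.

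The only real obstacle is bookkeeping: verifying the matrix products and checking that the two basis-change automorphisms assemble into a genuine isomorphism of chain complexes (i.e.\ that the new incoming and outgoing differentials transform exactly as described). Everything else is formal manipulation valid in any additive category, so no deep technical point arises.
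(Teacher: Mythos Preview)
Your argument is correct and is exactly the standard proof of Gaussian elimination; the paper does not give its own proof but simply cites \cite[Lemma 4.2]{Bar-fast}, where the same change-of-basis argument appears.
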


By \cite[Theorem 1.1]{Wu-2braids}\footnote{Strictly speaking, \cite[Theorem 1.1]{Wu-2braids} is stated only for $P(x,a)=x^{N+1}-ax$. But it is straightforward to check that this theorem and its proof remain true for a general $P(x,a)$ of form \eqref{def-P}.}, for any polynomial $P=P(x,a)$ of form \eqref{def-P}, $C_P(L)$ is homotopic to the chain complex
{\small
\begin{equation}\label{complex-C-P-L}
0 \rightarrow C_P(\Gamma_0)\{-4N+4\} \xrightarrow{\chi_0} C_P(\Gamma_1)\{-4N+3\} \xrightarrow{0} C_P(\Gamma_1)\{-4N+1\} \xrightarrow{x_1-x_2} C_P(\Gamma_1)\{-4N-1\} \xrightarrow{0} C_P(\Gamma_1)\{-4N-3\} \rightarrow 0,
\end{equation}}

\noindent where $C_P(\Gamma_0)\{-4N+4\}$ is at homological degree $0$ and $\chi_0$ is the homomorphism associated to wide edge in $\Gamma_1$. Thus, 
\begin{equation}\label{complex-C-P-L-decomp}
C_P(L)  \simeq  \mathbf{C}_1 \oplus \mathbf{C}_2 \oplus \mathbf{C}_3,
\end{equation}
where
\begin{eqnarray*}
\mathbf{C}_1 & = & 0 \rightarrow C_P(\Gamma_0)\|0\|\{-4N+4\} \xrightarrow{\chi_0} C_P(\Gamma_1)\|1\|\{-4N+3\} \rightarrow 0,\\
\mathbf{C}_2 & = & 0 \rightarrow C_P(\Gamma_1)\|2\|\{-4N+1\} \xrightarrow{x_1-x_2} C_P(\Gamma_1)\|3\|\{-4N-1\} \rightarrow 0, \\
\mathbf{C}_3 & = & 0 \rightarrow C_P(\Gamma_1)\|4\|\{-4N-3\} \rightarrow 0.
\end{eqnarray*}

\begin{figure}[ht]
\[
\xymatrix{
 \setlength{\unitlength}{1pt}
\begin{picture}(120,80)(-20,-10)

\qbezier(0,0)(5,0)(10,10)
\qbezier(10,10)(15,20)(20,20)

\qbezier(9,12)(0,30)(0,35)
\qbezier(11,8)(15,0)(20,0)

\qbezier(0,0)(-20,0)(-20,30)
\qbezier(-20,30)(-20,60)(40,60)
\qbezier(40,60)(100,60)(100,30)
\qbezier(100,30)(100,0)(20,0)
\qbezier(80,35)(80,20)(20,20)

\qbezier(0,35)(0,50)(40,50)
\qbezier(80,35)(80,50)(40,50)

\put(40,60){\vector(-1,0){0}}

\put(40,50){\vector(-1,0){0}}

\put(40,59){\line(0,1){2}}

\put(40,49){\line(0,1){2}}

\put(40,63){$x_2$}

\put(40,40){$x_1$}

\put(35,-10){$U_{-1}$}

\end{picture} && \setlength{\unitlength}{1pt}
\begin{picture}(120,80)(-20,-10)

\qbezier(0,0)(-20,0)(-20,30)
\qbezier(-20,30)(-20,60)(40,60)

\qbezier(40,60)(100,60)(100,30)
\qbezier(100,30)(100,0)(80,0)

\put(0,0){\line(1,0){80}}

\put(40,60){\vector(-1,0){0}}

\put(40,59){\line(0,1){2}}

\put(40,63){$x_2$}

\put(35,-10){$U_0$}

\end{picture} 
}
\]

\caption{Two diagrams of the unknot}\label{fig-unknots} 

\end{figure}
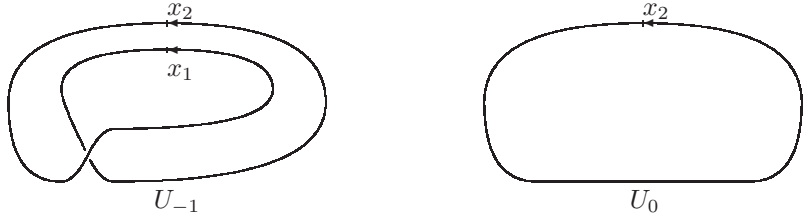

Consider the two diagrams of the unknot in Figure \ref{fig-unknots}. Note that $\mathbf{C}_1 = C_P(U_{-1}) \{-3N+3\}$. So
\begin{equation}\label{homology-C-1}
H(H(\mathbf{C}_1,d_{mf}),d_\chi) = H_P(U_{-1})\{-3N+3\} \cong H_P(U_{0})\{-3N+3\} \cong \C[x_2,a]/(\frac{\partial P(x_2,a)}{\partial x_2})\|0\|\{-4N+4\}.
\end{equation}
In particular, note that $H(H(\mathbf{C}_1,d_{mf}),d_\chi)$ is a free $\C[a]$-module. 

By \cite[Proposition 10]{Krasner}, $\mathbf{C}_3 \simeq \bigoplus_{i=0}^{N-2}C_P(U_0)\|4\|\{-3N-5-2i\}$.\footnote{We are not tracking the $\zed_2$-grading here.} So 
\begin{equation}\label{homology-C-3}
H(H(\mathbf{C}_3,d_{mf}),d_\chi) \cong  \bigoplus_{i=0}^{N-2}H_P(U_0)\|4\|\{-3N-5-2i\} \cong \bigoplus_{i=0}^{N-2} \C[x_2,a]/(\frac{\partial P(x_2,a)}{\partial x_2}) \|4\|\{-4N-4-2i\}.
\end{equation}
Note that $H(H(\mathbf{C}_3,d_{mf}),d_\chi)$ is again a free $\C[a]$-module. 

It remains to compute $H(H(\mathbf{C}_2,d_{mf}),d_\chi)$. Write $P(x,a) = \sum_{i=1}^{N+1} f_i x^i$, where $f_{N+1}=1$ and each $f_i$ is a monomial of $a$ of degree $2N+2-2i$. (For degree reasons, many of these $f_i$'s vanish.) By \cite[the proof of Lemma 2.18]{Wu7}, as an endomorphism of $C_P(\Gamma_0)$,
\begin{equation}\label{eq-pd-vanish}
\mathsf{m}(\sum_{i=0}^{N} (i+1)f_{i+1} x_1^i) = \mathsf{m}(\frac{\partial P(x_1,a)}{\partial x_1}) \simeq 0,
\end{equation}
where $\mathsf{m}(\ast)$ is the endomorphism given by the multiplication by $\ast$.

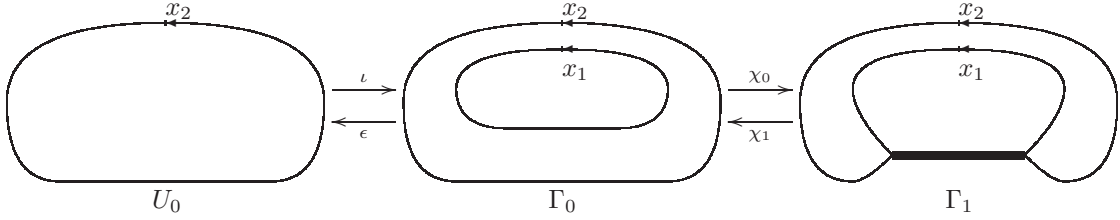
\begin{figure}[ht]
\[
\xymatrix{
\setlength{\unitlength}{1pt}
\begin{picture}(120,80)(-20,-10)

\qbezier(0,0)(-20,0)(-20,30)
\qbezier(-20,30)(-20,60)(40,60)

\qbezier(40,60)(100,60)(100,30)
\qbezier(100,30)(100,0)(80,0)

\put(0,0){\line(1,0){80}}

\put(40,60){\vector(-1,0){0}}

\put(40,59){\line(0,1){2}}

\put(40,63){$x_2$}

\put(35,-10){$U_0$}

\end{picture} \ar@<3.5pc>[r]^{\iota} & \setlength{\unitlength}{1pt}
\begin{picture}(120,80)(-20,-10)

\qbezier(0,0)(-20,0)(-20,30)
\qbezier(-20,30)(-20,60)(40,60)
\qbezier(0,35)(0,20)(20,20)

\qbezier(40,60)(100,60)(100,30)
\qbezier(100,30)(100,0)(80,0)
\qbezier(80,35)(80,20)(60,20)

\put(0,0){\line(1,0){80}}
\put(20,20){\line(1,0){40}}

\qbezier(0,35)(0,50)(40,50)
\qbezier(80,35)(80,50)(40,50)

\put(40,60){\vector(-1,0){0}}

\put(40,50){\vector(-1,0){0}}

\put(40,59){\line(0,1){2}}

\put(40,49){\line(0,1){2}}

\put(40,63){$x_2$}

\put(40,40){$x_1$}

\put(35,-10){$\Gamma_0$}

\end{picture} \ar@<3.5pc>[r]^{\chi_0} \ar@<-2.5pc>[l]^{\epsilon} & \setlength{\unitlength}{1pt}
\begin{picture}(120,80)(-20,-10)

\qbezier(0,0)(-20,0)(-20,30)
\qbezier(-20,30)(-20,60)(40,60)
\qbezier(0,0)(5,0)(15,10)
\qbezier(0,35)(0,25)(15,10)

\qbezier(40,60)(100,60)(100,30)
\qbezier(100,30)(100,0)(80,0)
\qbezier(80,0)(75,0)(65,10)
\qbezier(80,35)(80,25)(65,10)

\qbezier(0,35)(0,50)(40,50)
\qbezier(80,35)(80,50)(40,50)

\put(40,60){\vector(-1,0){0}}

\put(40,50){\vector(-1,0){0}}

\put(40,59){\line(0,1){2}}

\put(40,49){\line(0,1){2}}

\put(40,63){$x_2$}

\put(40,40){$x_1$}

\put(35,-10){$\Gamma_1$}

\linethickness{3pt}

\put(15,10){\line(1,0){50}}

\end{picture} \ar@<-2.5pc>[l]^{\chi_1}
}
\]

\caption{Definition of $\alpha$ and $\beta$}\label{fig-def-alpha-beta} 

\end{figure}

Next, we explicitly write down the inclusions and projections in the decomposition 
\begin{equation}\label{decomp-MOY-I}
C_P(\Gamma_1) \simeq \bigoplus_{i=0}^{N-2}C_P(U_0)\{N-2-2i\}.
\end{equation} 
Consider the homomorphisms in Figure \ref{fig-def-alpha-beta}, where
\begin{itemize}
	\item $\iota$ and $\epsilon$ are the homomorphisms associated to the circle creation and annihilation (see \cite{Krasner} for their definitions,)
	\item $\chi_0$ and $\chi_1$ are the $\chi$-maps associated to the wide edge in $\Gamma_1$.
\end{itemize}
Recall that 
\begin{enumerate}
	\item $\iota$ and $\epsilon$ are homogeneous of degree $-N+1$ and $\C[x_2,a]$-linear. For $1\leq i \leq N-1$, 
	\begin{equation}\label{eq-comp-e-i}
	\epsilon \circ \mathsf{m}(x_1^i) \circ \iota \simeq \begin{cases}
	\id_{C_P(U_0)} & \text{if } i=N-1, \\
	0 & \text{if }, i=0,1,\dots, N-2.
	\end{cases}
	\end{equation}
	\item $\chi_0$ and $\chi_1$ are homogeneous of degree $1$ and $\C[x_1,x_2,a]$-linear. $\chi_1 \circ \chi_0 \simeq (x_1-x_2) \id_{C_P(\Gamma_0)}$.
\end{enumerate}

Define $\alpha=\chi_0 \circ \iota$ and $\beta = \epsilon \circ \chi_1$. Note that these are homogeneous homomorphisms of degree $-N+2$. For $i=0,1,\dots,N-2$, define $\alpha_i: C_P(U_0)\{2+2i-N\} \rightarrow C_P(\Gamma_1)$ and $\beta_i: C_P(\Gamma_1) \rightarrow C_P(U_0)\{2+2i-N\}$ by $\alpha_i = \mathsf{m}(\sum_{p=0}^i \sum_{l=0}^{i-p} (N-p+1)f_{N-p+1} x_1^l x_2^{i-p-l}) \circ \alpha$ and $\beta_i= \frac{1}{N+1}\beta \circ \mathsf{m}(x_1^{N-i-2})$. Note that $\alpha_i$ and $\beta_i$ are homogeneous homomorphisms of degree $0$. For any $0 \leq i,j \leq N-2$,
\begin{eqnarray*}
\beta_j \circ \alpha_i & = & \frac{1}{N+1}\beta \circ \mathsf{m}(x_1^{N-j-2}\sum_{p=0}^i \sum_{l=0}^{i-p} (N-p+1)f_{N-p+1} x_1^l x_2^{i-p-l}) \circ \alpha \\
& = & \frac{1}{N+1}\epsilon \circ \chi_1 \circ \mathsf{m}(x_1^{N-j-2}\sum_{p=0}^i \sum_{l=0}^{i-p} (N-p+1)f_{N-p+1} x_1^l x_2^{i-p-l}) \circ \chi_0 \circ \iota \\
& = & \frac{1}{N+1}\epsilon  \circ \mathsf{m}((x_1-x_2)x_1^{N-j-2}\sum_{p=0}^i \sum_{l=0}^{i-p} (N-p+1)f_{N-p+1} x_1^l x_2^{i-p-l}) \circ \iota \\
& = & \frac{1}{N+1}\sum_{p=0}^i (N-p+1)f_{N-p+1} \cdot\epsilon  \circ \mathsf{m}(x_1^{N-j-2} (x_1-x_2)\sum_{l=0}^{i-p} x_1^l x_2^{i-p-l}) \circ \iota \\
& = & \frac{1}{N+1}\sum_{p=0}^i (N-p+1)f_{N-p+1} \cdot\epsilon  \circ \mathsf{m}(x_1^{N-j-2} (x_1^{i-p+1}-x_2^{i-p+1})) \circ \iota \\
& = & \frac{1}{N+1}\sum_{p=0}^i (N-p+1)f_{N-p+1} \cdot(\epsilon  \circ \mathsf{m}(x_1^{N+i-j-p-1}) \circ \iota - \epsilon  \circ \mathsf{m}(x_1^{N-j-2}x_2^{i-p+1}) \circ \iota) \\
\text{[by \eqref{eq-comp-e-i}]} & \simeq & \sum_{p=0}^i (N-p+1)f_{N-p+1} \cdot \epsilon  \circ \mathsf{m}(x_1^{N+i-j-p-1}) \circ \iota
\end{eqnarray*}

If $j>i$, then $N+i-j-p-1 \leq N-2$ for $p=0,\dots,i$. So, by \eqref{eq-comp-e-i}, $\beta_j \circ \alpha_i \simeq 0$ in this case. 

If $j<i$ then, by \eqref{eq-pd-vanish}, 
\begin{eqnarray*}
\beta_j \circ \alpha_i & \simeq & \frac{1}{N+1}\epsilon  \circ \mathsf{m}(x_1^{i-j-1}\sum_{p=0}^i (N-p+1)f_{N-p+1} x_1^{N-p}) \circ \iota \\
&  \simeq & -\frac{1}{N+1}\epsilon  \circ \mathsf{m}(x_1^{i-j-1}\sum_{p=i+1}^N (N-p+1)f_{N-p+1} x_1^{N-p}) \circ \iota \\
&  \simeq & -\frac{1}{N+1}\sum_{p=i+1}^N (N-p+1)f_{N-p+1} \epsilon  \circ \mathsf{m}(x_1^{N-p+i-j-1}) \circ \iota \\
& \simeq & 0,
\end{eqnarray*}
where, in the last step, we used \eqref{eq-comp-e-i} and that $N+i-j-p-1 \leq N-2$ for $p=i+1,\dots,N$.

If $i=j$, then, by \eqref{eq-comp-e-i} and that $f_{N+1}=1$, we have
\[
\beta_i \circ \alpha_i \simeq \frac{1}{N+1}\sum_{p=0}^i (N-p+1)f_{N-p+1} \cdot \epsilon  \circ \mathsf{m}(x_1^{N-p-1}) \circ \iota \simeq \cdot \epsilon  \circ \mathsf{m}(x_1^{N-1}) \circ \iota \simeq \id_{C_P(U_0)}.
\]

Altogether, we get that, for $0 \leq i,j \leq N-2$,
\begin{equation}\label{eq-a-b-comp}
\beta_j \circ \alpha_i  \simeq \begin{cases}
\id_{C_P(U_0)} & \text{if } i=j,\\
0 & \text{otherwise.}
\end{cases}
\end{equation}
Thus, we can use $\alpha_i$ and $\beta_j$ as the inclusions and projections in decomposition \eqref{decomp-MOY-I}. 

Recall that the differential map of $\mathbf{C}_2$ is the endomorphism $\mathsf{m}(x_1-x_2)$ of $C_P(\Gamma_1)$. Its action on the components of $C_P(\Gamma_1)$ in decomposition \eqref{decomp-MOY-I} is given by 
\begin{equation}
\beta_j \circ \mathsf{m}(x_1-x_2) \circ \alpha_i  = \beta_j \circ \mathsf{m}(x_1) \circ \alpha_i - \beta_j \circ \mathsf{m}(x_2) \circ \alpha_i = \beta_{j-1} \circ \alpha_i - x_2\cdot \beta_j \circ \alpha_i \simeq \begin{cases}
-x_2\cdot \id_{C_P(U_0)} & \text{if } i=j,\\
\id_{C_P(U_0)} & \text{if } i=j-1,\\
0 & \text{otherwise.}
\end{cases}
\end{equation}
Thus, using decomposition \eqref{decomp-MOY-I}, we have 
\[
\mathbf{C}_2 \cong 0 \rightarrow \left[%
\begin{array}{c}
  C_P(U_0)\{2-N\}\\
  \oplus \\
  C_P(U_0)\{4-N\} \\
  \oplus \\
  \vdots \\
  \oplus \\
  C_P(U_0)\{N-4\} \\
  \oplus \\
  C_P(U_0)\{N-2\} \\
\end{array}%
\right]\|2\|\{-4N+1\}
\xrightarrow{D_{N-1}}
\left[%
\begin{array}{c}
  C_P(U_0)\{4-N\}\\
  \oplus \\
  C_P(U_0)\{6-N\} \\
  \oplus \\
  \vdots \\
  \oplus \\
  C_P(U_0)\{N-2\} \\
  \oplus \\
  C_P(U_0)\{2-N\} \\
\end{array}%
\right]\|3\|\{-4N-1\}
\rightarrow 0,
\]
where the differential $D_{N-1}$ is the $(N-1) \times (N-1)$ matrix
\[
D_{N-1} = \left(%
\begin{array}{cccccc}
1 & -x_2 & 0 & \cdots & 0 & 0 \\
0 & 1 & -x_2 & \cdots & 0 & 0 \\
0 & 0 & 1 & \cdots & 0 & 0 \\
\cdots & \cdots &\cdots &\cdots &\cdots &\cdots  \\
0 & 0 & 0 & \cdots & -x_2 & 0 \\
0 & 0 & 0 & \cdots & 1 & -x_2 \\
-x_2 & 0 & 0 & \cdots & 0 & 0 \\
\end{array}%
\right).
\]
Here note the difference in the ordering of components in the two columns in the chain complex. 

Now apply Gaussian elimination (Lemma \ref{gaussian-elimination}) to the ``$1$" at the upper left corner of $D_{N-1}$. We get that 
\[
\mathbf{C}_2 \simeq 0 \rightarrow \left[%
\begin{array}{c}
  C_P(U_0)\{4-N\} \\
  \oplus \\
  \vdots \\
  \oplus \\
  C_P(U_0)\{N-4\} \\
  \oplus \\
  C_P(U_0)\{N-2\} \\
\end{array}%
\right]\|2\|\{-4N+1\}
\xrightarrow{D_{N-2}}
\left[%
\begin{array}{c}
  C_P(U_0)\{6-N\} \\
  \oplus \\
  \vdots \\
  \oplus \\
  C_P(U_0)\{N-2\} \\
  \oplus \\
  C_P(U_0)\{2-N\} \\
\end{array}%
\right]\|3\|\{-4N-1\}
\rightarrow 0,
\]
where the differential $D_{N-2}$ is the $(N-2) \times (N-2)$ matrix
\[
D_{N-2} = \left(%
\begin{array}{cccccc}
1 & -x_2 & 0 & \cdots & 0 & 0 \\
0 & 1 & -x_2 & \cdots & 0 & 0 \\
0 & 0 & 1 & \cdots & 0 & 0 \\
\cdots & \cdots &\cdots &\cdots &\cdots &\cdots  \\
0 & 0 & 0 & \cdots & -x_2 & 0 \\
0 & 0 & 0 & \cdots & 1 & -x_2 \\
-x_2^2 & 0 & 0 & \cdots & 0 & 0 \\
\end{array}%
\right).
\]
Clearly, we can apply Gaussian elimination to the ``$1$" at the upper left corner of $D_{N-1}$ again and again. After $N-2$ Gaussian eliminations, we get that
\begin{equation}\label{complex-C-2-simplified}
\mathbf{C}_2 \simeq 0 \rightarrow   C_P(U_0) \|2\|\{-3N-1\} \xrightarrow{-x_2^{N-1}}  C_P(U_0)\|3\|\{-5N+1\} \rightarrow 0.
\end{equation}

From now on, we specialize to the case $P=P_i(x,b_i) = x^{N+1} + b_i x^i$, where $\deg b_i = 2N+2-2i$. In this case, $C_{P_i}(U_0) \cong M \|0\|\{-N+1\}$ and 
\[
H(H(\mathbf{C}_2,d_{mf}), d_\chi) \cong H( 0 \rightarrow   M \|2\|\{-4N\} \xrightarrow{-x_2^{N-1}} M \|3\|\{-6N+2\} \rightarrow 0),
\]
where $M$ is the graded free $\C[b_i]$-module 
\begin{equation}\label{def-M}
M=\C[x_2,b_i]/((N+1)x_2^N +ib_ix_2^{i-1}) = \bigoplus_{j=0}^{N-1} \C[b_i] \cdot x_2^{j} \cong \bigoplus_{j=0}^{N-1} \C[b_i]\{2j\}.
\end{equation}
It is straightforward to check that 
\begin{eqnarray}
\label{homology-C-2-2} H^2(H(\mathbf{C}_2,d_{mf}),d_\chi) & \cong  & \ker (\mathsf{m}(x_2^{N-1}))\{-4N\} \\
\nonumber & = & \bigoplus_{j=0}^{i-2} \C[b_i] \cdot ((N+1)x_2^{N-i+j+1} +ib_i x_2^j)\{-4N\} \\ 
\nonumber & \cong & \bigoplus_{j=0}^{i-2} \C[b_i] \{2(-N-i+j+1)\}.
\end{eqnarray}
and
\begin{eqnarray}
\label{homology-C-2-3} H^3(H(\mathbf{C}_2,d_{mf}),d_\chi) & \cong & \mathrm{coker} (\mathsf{m}(x_2^{N-1}))\{-6N+2\} \\
\nonumber & = & (\bigoplus_{j=0}^{i-2} \C[b_i] \cdot x_2^j)\{-6N+2\} \oplus (\bigoplus_{j=i-1}^{N-2} \C[b_i]/(b_i) \cdot x_2^j)\{-6N+2\} \\
\nonumber & \cong & (\bigoplus_{j=0}^{i-2} \C[b_i] \{-6N+2j+2\}) \oplus (\bigoplus_{j=i-1}^{N-2} \C[b_i]/(b_i) \{-6N+2j+2\}).
\end{eqnarray}

Combining \eqref{complex-C-P-L-decomp}, \eqref{homology-C-1}, \eqref{homology-C-3}, \eqref{def-M}, \eqref{homology-C-2-2} and \eqref{homology-C-2-3}, we get the following lemma.

\begin{lemma}\label{lemma-H-P-i-L}
Let $L$ be the closed $2$-braid in Figure \ref{fig-link-example} and $P_i(x,b_i) = x^{N+1} + b_i x^i$, where $1\leq i \leq N-1$ and $\deg b_i = 2N+2-2i$. Then
\begin{eqnarray*}
H_{P_i}^0(L) & \cong & \bigoplus_{j=0}^{N-1} \C[b_i]\{-4N+4+2j\}, \\
H_{P_i}^1(L) & \cong & 0, \\
H_{P_i}^2(L) & \cong & \bigoplus_{j=0}^{i-2} \C[b_i] \{2(-N-i+j+1)\}, \\
H_{P_i}^3(L) & \cong & (\bigoplus_{j=0}^{i-2} \C[b_i] \{-6N+2j+2\}) \oplus (\bigoplus_{j=i-1}^{N-2} \C[b_i]/(b_i) \{-6N+2j+2\}), \\
H_{P_i}^4(L) & \cong & \bigoplus_{l=0}^{N-2} \bigoplus_{j=0}^{N-1} \C[b_i] \{-4N-4-2l+2j\}, \\
H_{P_i}^l(L) & \cong & 0 \text{ if } l<0 \text{ or } l>4.
\end{eqnarray*}
\end{lemma}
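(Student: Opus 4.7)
The plan is to fix the diagram of $L$ with the two marked points shown in Figure \ref{fig-link-example-marked} and reduce $C_{P_i}(L)$ to a very short, explicit chain complex of matrix factorizations of $0$ over $\C[x_1,x_2,b_i]$. First I would invoke the general $2$-braid simplification from \cite[Theorem 1.1]{Wu-2braids}, which for any $P(x,a)$ of the form \eqref{def-P} rewrites $C_P(L)$ as the five-term complex \eqref{complex-C-P-L} whose terms are shifted copies of $C_P(\Gamma_0)$ and $C_P(\Gamma_1)$ with differentials $\chi_0$, $0$, $x_1-x_2$, $0$. Because two of these differentials vanish, the complex splits as $\mathbf{C}_1 \oplus \mathbf{C}_2 \oplus \mathbf{C}_3$, and it suffices to compute the homology of each summand separately.

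Next I would dispose of the two easy pieces. The complex $\mathbf{C}_1$ is (up to shift) $C_P(U_{-1})$, and Reidemeister I invariance identifies it with $C_P(U_0)$, giving $H(H(\mathbf{C}_1,d_{mf}),d_\chi) \cong \C[x_2,a]/(\partial_{x_2}P)$ in homological degree $0$. For $\mathbf{C}_3$ I would use Krasner's MOY decomposition $C_P(\Gamma_1)\simeq \bigoplus_{i=0}^{N-2}C_P(U_0)\{N-2-2i\}$ \cite[Prop.~10]{Krasner} to express its homology as a graded free $\C[a]$-module. Both of these steps are almost purely bookkeeping of gradings.

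The central and most delicate step is the analysis of $\mathbf{C}_2$, a two-term complex with differential $\mathsf{m}(x_1-x_2)$ acting on $C_P(\Gamma_1)$. To understand $x_1-x_2$ through Krasner's decomposition I would build explicit inclusion and projection maps $\alpha_i: C_P(U_0)\{2+2i-N\}\to C_P(\Gamma_1)$ and $\beta_j:C_P(\Gamma_1)\to C_P(U_0)\{2+2j-N\}$ by composing the $\chi$-maps with the unit/counit $\iota,\epsilon$ of the Frobenius structure on $C_P(U_0)$ and twisting by a well-chosen polynomial in $x_1,x_2$ built from $\partial P/\partial x$. The key computations are (i) the orthogonality relation $\beta_j\alpha_i \simeq \delta_{ij}\,\id$, which follows from $\chi_1\chi_0\simeq (x_1-x_2)\id$, the telescoping identity $(x_1-x_2)\sum_{l=0}^{m}x_1^lx_2^{m-l}=x_1^{m+1}-x_2^{m+1}$, the triangular vanishing in \eqref{eq-comp-e-i}, and crucially the relation $\mathsf{m}(\partial_{x_1}P)\simeq 0$ on $C_P(\Gamma_0)$; and (ii) the matrix of $x_1-x_2$ in this basis, which turns out to be bidiagonal with $1$ on the superdiagonal, $-x_2$ on the diagonal, and a lone $-x_2$ in the corner. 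I would then iterate Gaussian elimination (Lemma \ref{gaussian-elimination}) on the superdiagonal $1$'s; after $N-2$ steps the differential collapses to the single map $\mathsf{m}(-x_2^{N-1}):C_P(U_0)\to C_P(U_0)$ with appropriate shifts. The hard part will be carrying out the orthogonality calculation cleanly, since it requires balancing the triangular shape of $\epsilon\circ\mathsf{m}(x_1^k)\circ\iota$ against the $\partial P/\partial x$ relation both above and below the diagonal.

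Finally I would specialize to $P=P_i$, which gives $C_{P_i}(U_0)\cong M\|0\|\{-N+1\}$ with $M=\C[x_2,b_i]/((N+1)x_2^N+ib_ix_2^{i-1})$, a graded-free $\C[b_i]$-module of rank $N$. Multiplication by $x_2^{N-1}$ on $M$ is easy to analyse explicitly: its kernel is spanned by the $i-1$ elements $(N+1)x_2^{N-i+j+1}+ib_ix_2^j$ for $0\le j\le i-2$, each a free $\C[b_i]$-generator, and its cokernel splits as the free $\C[b_i]$-span of $x_2^0,\ldots,x_2^{i-2}$ together with the $(b_i)$-torsion quotients coming from $x_2^{i-1},\ldots,x_2^{N-2}$. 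Collecting the gradings from $\mathbf{C}_1$, $\mathbf{C}_2$, $\mathbf{C}_3$ then yields the six formulas in the lemma.
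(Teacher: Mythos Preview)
Your proposal is correct and follows essentially the same approach as the paper: the splitting $\mathbf{C}_1\oplus\mathbf{C}_2\oplus\mathbf{C}_3$ via \cite{Wu-2braids}, the handling of $\mathbf{C}_1$ and $\mathbf{C}_3$ by Reidemeister~I and Krasner's MOY decomposition, the explicit $\alpha_i,\beta_j$ built from $\chi_0,\chi_1,\iota,\epsilon$ and $\partial P/\partial x$, the orthogonality computation, the iterated Gaussian elimination reducing $\mathbf{C}_2$ to $\mathsf{m}(-x_2^{N-1})$, and the final kernel/cokernel analysis on $M$ all match the paper's argument step for step. The only cosmetic difference is that the paper's matrix $D_{N-1}$ has the $1$'s on the diagonal and the $-x_2$'s on the superdiagonal (with a lone $-x_2$ in the bottom-left corner) rather than the other way around, a harmless basis-ordering discrepancy.
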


\begin{corollary}\label{cor-delta-i-j-L}
Let $L$ be the closed $2$-braid in Figure \ref{fig-link-example}. Then, for any $1\leq i \leq N-1$, we have
\[
\delta_i|_{H_{N}^l(L)} \begin{cases}
\neq 0 & \text{if } l=2, \\
=0 & \text{if } l \neq 2.
\end{cases}
\]
In particular, as endomorphisms of $H_N(L)$, $\delta_i \neq 0$, but $\delta_i\delta_j =0$ for any $1\leq i,j \leq N-1$.
\end{corollary}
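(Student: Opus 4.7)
The plan is to read off the torsion structure of $H_{P_i}(L)$ from Lemma \ref{lemma-H-P-i-L} and then apply Lemma \ref{lemma-d-(1)-tor} to pin down exactly where $\delta_i$ acts nontrivially. Inspecting the computation, the only torsion components in $H_{P_i}(L)$ appear in $H_{P_i}^3(L)$ and have the form $\C[b_i]/(b_i)\{-6N+2j+2\}$ for $j=i-1,\dots,N-2$; every other direct summand in Lemma \ref{lemma-H-P-i-L} is a free $\C[b_i]$-module. In particular, every torsion component is of the form $\C[b_i]/(b_i^1)\|3\|\{s\}$, i.e.\ minimal torsion sitting at homological degree $3$.

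Next I would invoke Lemma \ref{lemma-d-(1)-tor}(3). Since $\deg b_i = 2(N+1-i)$, each torsion summand $\C[b_i]/(b_i)\|3\|\{s\}$ contributes to $H_N(L)$ a pair $\C\|3\|\{s\}\oplus\C\|2\|\{s+2(N+1-i)\}$, on which $\delta_i = d_{P_i}^{(1)}$ vanishes on the degree $3$ summand and restricts to an \emph{isomorphism} $\C\|2\|\{s+2(N+1-i)\} \xrightarrow{\cong} \C\|3\|\{s\}$. Parts (1) and (2) of the same lemma tell us that $\delta_i$ acts as zero on every other summand of $H_N(L)$ (those coming from the free part and from higher torsion, of which there is none). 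Hence $\delta_i|_{H_N^l(L)} = 0$ for $l\neq 2$, while $\delta_i|_{H_N^2(L)}$ is a nonzero map (in fact, with image precisely the subspace of $H_N^3(L)$ spanned by generators arising from the torsion summands), giving the first assertion and $\delta_i \neq 0$.

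For the anti-commutation conclusion, observe that $\delta_i\delta_j: H_N^l(L) \to H_N^{l+2}(L)$ factors as $\delta_i \circ (\delta_j|_{H_N^l(L)})$. If $l\neq 2$ the inner map is already zero by the first part. If $l=2$ then $\delta_j|_{H_N^2(L)}$ lands in $H_N^3(L)$, and $\delta_i|_{H_N^3(L)}=0$, again by the first part. Thus $\delta_i\delta_j = 0$ on all of $H_N(L)$, for every pair $1\le i,j\le N-1$.

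There is essentially no obstacle here: the work is all done by Lemma \ref{lemma-H-P-i-L}, and the only thing to check is that its torsion output has the peculiar property that all of it lives in a single homological degree and all of it has exponent one. The ``main step,'' if any, is simply tracking the polynomial shifts and homological degrees carefully when applying Lemma \ref{lemma-d-(1)-tor}(3), to confirm that the nontrivial part of $\delta_i$ is concentrated at homological degree $2$ rather than $3$.
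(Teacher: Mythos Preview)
Your proposal is correct and follows exactly the same approach as the paper's proof: identify from Lemma~\ref{lemma-H-P-i-L} that all torsion in $H_{P_i}(L)$ is of the form $\C[b_i]/(b_i)$ concentrated at homological degree $3$, then apply Lemma~\ref{lemma-d-(1)-tor}. The paper's proof is a two-line summary of precisely the argument you spelled out in detail.
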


\begin{proof}
By Lemma \ref{lemma-H-P-i-L}, all torsion components of $H_{P_i}(L)$ are isomorphic to $\C[b_i]/(b_i)$ and are at homological degree $3$. The corollary follows from this and Lemma \ref{lemma-d-(1)-tor}.
\end{proof}

\end{document}